\numberwithin{equation}{section}
\newtheorem{theorem}{Theorem}[section]
\newtheorem{proposition}[theorem]{Proposition}
\newtheorem{lemma}[theorem]{Lemma}
\newtheorem{corollary}[theorem]{Corollary}
\newtheorem{remark}[theorem]{Remark}
\newtheorem{remarks}[theorem]{Remark}
\newtheorem{definition}[theorem]{Definition}
\newcommand{\be}{\begin{equation}}
\newcommand{\ee}{\end{equation}}
\newcommand{\e}{\varepsilon}
\newcommand{\R}{\mathbb R}
\newcommand{\C}{\mathbb C}
\newcommand{\Z}{\mathbb Z}
\newcommand{\N}{\mathbb N}
\newcommand{\T}{\mathbb T}
\newcommand{\sign}{{\rm sign}}
\renewcommand{\b }{\beta }
\newcommand{\s }{\sigma }
\newcommand{\ii }{{\rm i} }
\newcommand{\g }{\gamma}
\newcommand{\vphi}{\varphi }
\newcommand{\pa}{\partial}
\def\ba{\begin{aligned}}
\def\ea{\end{aligned}}
\def\beginm{\begin{multline}}
\def\endm{\end{multline}}
\newcommand{\Lipg}{{\rm{Lip}(\g)}}
\begin{document}

\title{{\bf Growth of Sobolev norms for time dependent periodic Schr\"odinger equations with sublinear dispersion}}

\date{}
 \author{ Riccardo Montalto \footnote{Supported by the Swiss National Science Foundation}
 \vspace{2mm} 
\\ \small 
  Institut f\"ur Mathematik, Universit\"at Z\"urich
Winterthurerstrasse 190, CH-8057 Z\"urich, CH
\\ \small 
E-mail: riccardo.montalto@math.uzh.ch}

\maketitle

\noindent
{\bf Abstract:}
In this paper we consider Schr\"odinger equations with sublinear dispersion relation on the one-dimensional torus $\T := \R /(2 \pi \Z)$. More precisely, we deal with equations of the form $\partial_t u = \ii {\cal V}(\omega t)[u]$ where ${\cal V}(\omega t)$ is a quasi-periodic in time, self-adjoint pseudo-differential operator of the form ${\cal V}(\omega t) = V(\omega t, x) |D|^M + {\cal W}(\omega t)$, $0 < M \leq 1$, $|D| := \sqrt{- \partial_{xx}}$, $V$ is a smooth, quasi-periodic in time function and ${\cal W}$ is a quasi-periodic time-dependent pseudo-differential operator of order strictly smaller than $M$. Under suitable assumptions on $V$ and ${\cal W}$, we prove that if $\omega$ satisfies some non-resonance conditions, the solutions of the Schr\"odinger equation $\partial_t u = \ii {\cal V}(\omega t)[u]$ grow at most as $t^\eta$, $t \to + \infty$ for any $\eta > 0$. The proof is based on a reduction to constant coefficients up to smoothing remainders of the vector field $\ii {\cal V}(\omega t)$ which uses Egorov type theorems and pseudo-differential calculus. The {\it homological equations} arising in the reduction procedure involve both time and space derivatives, since the dispersion relation is sublinear. Such equations can be solved by imposing some Melnikov non-resonance conditions on the frequency vector $\omega$. 
%We prove the existence and the linear stability of Cantor families of small amplitude time {\it quasi-periodic}  standing 
%%gravity 
%water wave solutions (i.e. periodic and even in the space variable $ x $) 
%of a $ 2 $-dimensional ocean with finite depth under the action of pure gravity.  
 \\[2mm]
{\it Keywords:} Growth of Sobolev norms,  linear Schr\"odinger equations, pseudo-differential operators. 
%\footnote{Supported by MIUR 
%``Variational Methods and Nonlinear Differential Equations" and by the European Research Council under FP7.}.
\\[1mm]
% 2010AMS subject classification: 
{\it MSC 2010:} 35Q41, 47G30. %   (37K50, 35S05).

\tableofcontents

\section{Introduction and main result}
In this paper we consider linear quasi-periodic in time Schr\"odinger-type equations with sublinear dispersion relation of the form
\begin{equation}\label{main equation}
\partial_t u = \ii {\cal V}(\omega t)[u]  \,, \quad x \in \T
\end{equation}
where $\T := \R / (2 \pi \Z)$ is the one-dimensional torus, $\omega \in \Omega \subset \R^\nu$, ${\cal V}(\vphi)$, $\vphi \in \T^\nu$ is a $L^2$ self-adjoint, pseudo-differential  Schr\"odinger operator of the form 
\begin{equation}\label{forma iniziale cal V (t)}
{\cal V}(\vphi) := V(\vphi, x) |D|^M + {\cal W}(\vphi)\,, \quad |D| := \sqrt{- \partial_{xx}}\,,\quad 0 < M \leq 1\,.
\end{equation}
We assume that $ V$ is a real valued ${\cal C}^\infty$ function defined on $\T^\nu \times \T$ satisfying $\inf_{(\vphi, x) \in \T^\nu \times \T} V(\vphi, x) > 0$, in the case $0 < M < 1$ and {\it close} to $1$ in the case $M = 1$. The operator ${\cal W}(\vphi)$ is a time-dependent pseudo differential operator of order strictly smaller than $M$. Our main goal is to show that given $t_0 \in \R$, $s \geq 0$, $u_0 \in H^s(\T)$, for {\it most values} of the parameters $\omega \in \Omega$, the Cauchy problem 
 \begin{equation}\label{problema di cauchy intro}
 \begin{cases}
 \partial_t u = \ii {\cal V}(\omega t)[u]   \\
 u(t_0, x) = u_0(x)
 \end{cases}
 \end{equation}  
 admits a unique solution $u(t)$ satisfying, for any $\eta > 0$, the bound 
 \begin{equation}\label{crescita sobolev generale}
 \| u(t)\|_{H^s} \leq C(s, \eta) (1 + |t - t_0|)^\eta \| u_0\|_{H^s}\,, \quad \forall t \in \R
 \end{equation} 
 for some constant $C(s, \eta) > 0$. Here, $H^s(\T)$ denotes the standard Sobolev space on the 1-dimensional torus $\T$ equipped with the norm $\| \cdot \|_{H^s}$. 
 
 \noindent
 The problem of estimating the high Sobolev norms for solutions of linear Schr\"odinger-type equations of the form $\partial_t u = \ii (H + V(t)) u$, in the case when $H$ satisfies the so called {\it spectral gap condition}, has been extensively investigated. Such a condition states that the spectrum of the operator $H$ can be enclosed in disjont clusters $(\sigma_j)_{j \geq 0}$ such that the distance between $\sigma_j$ and $\sigma_{j + 1}$ tends to $+ \infty$ for $j \to + \infty$.

\noindent  
For the Schr\"odinger operator ${\cal V}(t) = - \Delta + V(t, x)$ on the $d$-dimensional torus $\T^d$, the growth $\sim t^\eta$ of the $\| \cdot\|_{H^s}$ norm of the solutions of $\partial_t u = \ii {\cal V}(t)[u]$ has been proved by Bourgain in \cite{bourgain-1} for smooth quasi-periodic in time potentials and in \cite{bourgain-2} for smooth and bounded time dependent potentials. In the case where the potential $V$ is analytic and quasi-periodic in time, Bourgain  \cite{bourgain-1} proved also that $\| u(t) \|_{H^s}$ grows like a power of $\log(t)$. Moreover, this bound is optimal, in the sense that he constructed an example for which $\| u(t)\|_{H^s}$ is bounded from below by a power of $\log(t)$. The result obtained in \cite{bourgain-2} has been extended by Delort \cite{delort} for Schr\"odinger operators on Zoll manifolds. Furthermore, the logarithmic growth of $\| u(t) \|_{H^s}$ proved in \cite{bourgain-1} has been extended by Wang \cite{wang-1} in dimension $1$, for any real analytic and bounded potential. 

\noindent
 All the aforementioned results concern Schr\"odinger equations with bounded perturbations. The first result in which the growth of $\| u(t) \|_{H^s}$ is established in the case of unbounded perturbations is due to Maspero-Robert \cite{albertino}. More precisely, they prove the growth $\sim t^\eta$ of $\| u(t)\|_{H^s}$, for Schr\"odinger equations of the form $\ii \partial_t u = L(t) [u]$ where $L(t) = H + P(t)$, $H$ is a time-independent operator of order $\mu  + 1$ satisfying the {\it spectral gap condition} and  $P(t)$ is an operator of order $\nu \leq \mu/(\mu + 1)$ (see Theorem 1.8 in \cite{albertino}).  This last paper has been generalized, independently and at the same time, by Bambusi-Grebert-Maspero-Robert  \cite{albertino2} in the case in which the order of $P(t)$ is strictly smaller than the one of $H$ and in \cite{Montalto1} for periodic $1$-dimensional Schr\"odinger equations where the order of $P(t)$ is the same as the order of $H$. 
 
 \noindent
 In \cite{albertino2}, the authors deal with some Schr\"odinger equations in which the spectral gap condition is violated. In particular, they study the non-resonant harmonic oscillator on $\R^d$, in which the gaps are dense and the relativistic Schr\"odinger equation on Zoll manifolds, in which the distance between the gaps are constants.  
 In all the papers mentioned above, the dispersion relation is at least linear.  
 \noindent
 The purpose of the present paper is to provide some results concerning the growth of Sobolev norms of the solutions of some Schr\"odinger equations with sublinear dispersion, in the case in which the order of $H$ is the same as the order of $P(t)$.

 \noindent
 We also mention that in the case of quasi-periodic systems with {\it small} perturbations, i.e. $\ii \partial_t u = L(\omega t)[u] $, $L(\omega t) = H + \e P(\omega t)$ it is often possible to prove that $\| u(t) \|_{H^s}$ is uniformly bounded in time for $\e$ small enough and for a {\it large }set of frequencies $\omega$. The general strategy to deal with these quasi-periodic systems is called {\it reducibility}. It consists in costructing, for most values of the frequencies $\omega$ and for $\e$ small enough, a bounded quasi-periodic change of variables $\Phi(\omega t)$ which transforms the equation $\ii \partial_t u = L(\omega t) u$ into a time independent system $\ii \partial_t v = {\cal D} v$ whose solution preserves the Sobolev norms $\| v(t) \|_{H^s}$. We mention the results of Eliasson-Kuksin \cite{EK1} which proved the reducibility of the Schr\"odinger equation on $\T^d$ with a small, quasi-periodic in time analytic potential and Grebert-Paturel \cite{GrebertPaturel} which proved the reducibility of the quantum harmonic oscillator on $\R^d$. Concerning KAM-reducibility with unbounded perturbations, we mention Bambusi \cite{Bambusi1}, \cite{Bambusi2} for the reducibility of the quantum harmonic oscillator with unbounded perturbations (see also \cite{albertino3} in any dimension), \cite{BBM-Airy}, \cite{BBM-auto}, \cite{Giuliani} for fully non-linear KdV-type equations, \cite{Feola}, \cite{Feola-Procesi} for fully-nonlinear Schr\"odinger equations, \cite{BM16}, \cite{BertiMontalto} for the water waves system and \cite{Montalto} for the Kirchhoff equation. Note that in \cite{BBM-Airy}, \cite{BBM-auto}, \cite{Giuliani}, \cite{BM16}, \cite{BertiMontalto}, \cite{Montalto} the reducibility of the linearized equations is obtained as a consequence of the KAM theorems proved for the corresponding nonlinear equations. 
 
 \noindent
 In the case of sublinear growth of the eigenvalues, the first KAM-reducibility result is proved in \cite{BBHM} for the pure gravity water waves equations and the technique has been extended in \cite{Montaltoreducibility} to deal with a class of linear wave equations on $\T^d$ with smoothing quasi-periodic in time perturbations. 
 
 \noindent
 We now state in a precise way the main results of this paper. First, we introduce some notations. For any function $u \in L^2(\T)$, we introduce its Fourier coefficients 
\begin{equation}\label{coefficienti fourier}
\widehat u(\xi) := \frac{1}{2 \pi} \int_\T u(x) e^{- \ii x \xi}\, d x\,, \qquad \forall \xi \in \Z\,. 
\end{equation}
For any $s \geq 0$, we introduce the Sobolev space of complex valued functions $H^s \equiv H^s(\T)$, as 
\begin{equation}\label{definizione sobolev}
\begin{aligned}
& H^s  := \Big\{ u \in L^2(\T) :  \| u \|_{H^s}^2 :=\sum_{\xi \in \Z} \langle \xi \rangle^{2 s} |\widehat u(\xi) |^2 < + \infty \Big\}\,,\quad  \langle \xi \rangle := (1 + |\xi|^2)^{\frac12}\,. 
\end{aligned}
\end{equation}

\noindent
Given two Banach spaces $(X, \| \cdot \|_X)$, $(Y, \| \cdot \|_Y)$, we denote by ${\cal B}(X, Y)$ the space of bounded linear operators from $X$ to $Y$ equipped with the usual operator norm $\| \cdot \|_{{\cal B}(X, Y)}$. If $X = Y$, we simply write ${\cal B}(X)$ for ${\cal B}(X, X)$. 

\noindent
Given a linear operator ${\cal R} \in {\cal B}(L^2(\T))$, we denote by ${\cal R}^*$ the adjoint operator of ${\cal R}$ with respect to the standard $L^2$ inner product 
\begin{equation}
\langle u, v \rangle_{L^2} := \int_\T u(x) \overline{v(x)}\, d x\,, \quad \forall u, v \in L^2(\T)\,. 
\end{equation} 
%\begin{equation}\label{operatore aggiunto}
%\langle {\cal R} u\,,\, v \rangle_{L^2_x} = \langle u\,,\,{\cal R}^* v \rangle_{L^2_x}\,, \qquad \forall u, v \in L^2(\T)\,.
%\end{equation}
 We say that the operator ${\cal R}$ is self-adjoint if ${\cal R} = {\cal R}^*$. 
%\noindent
%Given a Banach space $(X, \| \cdot \|_X)$, for any $k \in \N$, for any $- \infty \leq T_1 < T_2 \leq + \infty$ we consider the space ${\cal C}^k([T_1, T_2], X)$ of the $k$-times continuously differentiable functions with values in $X$. We denote by ${\cal C}^k_b([T_1, T_2], X)$ the space of functions in ${\cal C}^k([T_1, T_2], X)$ having bounded derivatives, equipped with the norm 
%\begin{equation}\label{norme Ck tempo}
%\| u \|_{{\cal C}^k_b([T_1, T_2], X)} := {\rm max}_{j =1, \ldots, k} \sup_{t \in [T_1, T_2]} \| \partial_t^j u(t) \|_X\,.  
%\end{equation}
For any domain $\Omega \subset \R^d$, we also denote by ${\cal C}^\infty_b(\Omega)$ the space of the ${\cal C}^\infty$ functions on $\Omega$ with all the derivatives bounded. Given a multiindex $\alpha = (\alpha_1, \ldots, \alpha_\nu)\in \N^\nu$ we define its lenght by $|\alpha| := \alpha_1 + \ldots \alpha_\nu$ and $\partial_\vphi^\alpha = \partial_{\vphi_1}^{\alpha_1} \ldots \partial_{\vphi_\nu}^{\alpha_\nu}$. 

\noindent
Since the equation we deal with is a Hamiltonian PDE, we briefly describe the Hamiltonian formalism. We define the symplectic form $\Omega : L^2(\T ) \times L^2(\T ) \to \R$ by 
\begin{equation}\label{forma simplettica coordinate complesse}
\Omega[u_1, u_2] : =  \ii \int_{\T} (u_1 \bar u_2 -  \bar u_1 u_2)\, dx\,, \quad \forall u_1, u_2 \in L^2(\T)\,. 
\end{equation}
Given a family of linear operators ${\cal R} : \T^\nu \to {\cal B}(L^2)$ such that ${\cal R}(\vphi) = {\cal R}(\vphi)^*$ for any $\vphi \in \T^\nu$, we define the $\vphi$-dependent quadratic Hamiltonian associated to ${\cal R}$ as 
$$
{\cal H}(\vphi, u) := \langle {\cal R}(\vphi)[u]\,,\, u \rangle_{L^2_x} = \int_\T {\cal R}(\vphi)[u]\, \overline u\, d x\,, \qquad \forall u \in L^2(\T)\,. 
$$
The Hamiltonian vector field associated to the Hamiltonian ${\cal H}$ is defined by
\begin{equation}\label{campo vettoriale hamiltoniano}
X_{\cal H}(\vphi, u) := \ii \nabla_{\overline u} {\cal H}(\vphi, u) = \ii {\cal R}(\vphi)[u]
\end{equation}
where the gradient $\nabla_{\overline u}$ stands for 
$$
\nabla_{\overline u} := \frac{1}{\sqrt{2}} (\nabla_v + \ii \nabla_\psi)\,, \quad v = {\rm Re}(u)\,, \quad \psi := {\rm Im}(u)\,. 
$$
We say that $\Phi : \T^\nu \to {\cal B}(L^2(\T))$ is symplectic if and only if 
$$
\Omega\Big[ \Phi(\vphi)[u_1], \Phi(\vphi)[u_2] \Big] = \Omega [u_1, u_2 ]\,, \qquad \forall u_1, u_2 \in L^2(\T)\,, \quad \forall \vphi \in \T^\nu\,.  
$$
We recall that if $X_{\cal H}$ is a Hamiltonian vector field, then the flow map generated by $X_{\cal H}$ is symplectic.

Let us consider a vector field $X : \T^\nu \to {\cal B}(L^2(\T))$ and a differentiable family of invertible maps $\Phi : \T^\nu \to {\cal B}(L^2(\T))$. Given $\omega \in \R^\nu$, under the change of variables $u = \Phi(\omega t)[v]$, the equation $\partial_t u = X(\omega t)[u]$
transforms into the equation $\partial_t v = X_+(\omega t)[v]$
where the {\it quasi-periodic push-forward} $X_+(\vphi)$ of the vector field $X(\vphi)$ is defined by 
\begin{equation}\label{push forward}
X_+(\vphi) := \Phi_{\omega*} X(\vphi) := \Phi(\vphi)^{- 1} \Big( X(\vphi) \Phi(\vphi)  - \omega \cdot \partial_\vphi \Phi(\vphi) \Big)\,, \quad \vphi \in \T^\nu\,. 
\end{equation}
If $\Phi$ is symplectic and $X$ is a Hamiltonian vector field, then the push-forward $X_+= \Phi_{\omega *} X $ is still a Hamiltonian vector field.

\noindent
In the next two definitions, we also define the class of pseudo differential operators on $\T$ that we shall use along the paper.
\begin{definition}[{\bf The symbol class} $S^m$]\label{classe simboli}
Let $m \in \R$. We say that a ${\cal C}^\infty$ function $a :  \T^\nu \times \T \times \R \to \C$ belongs to the symbol class $S^m$ if and only if for any multiindex $\alpha \in \N^\nu$, for any $k, n \in \N$, there exists a constant $C_{\alpha, n, k} > 0$ such that 
\begin{equation}
|\partial_\vphi^\alpha\partial_x^k  \partial_\xi^n a( \vphi, x, \xi)| \leq C_{\alpha, n, k} \langle \xi \rangle^{m - n}\,, \quad \forall (\vphi, x, \xi) \in  \T^\nu \times \T \times \R\,. 
\end{equation}
We define the class of smoothing symbols $S^{- \infty} := \cap_{m \in \R} S^m$. 
\end{definition}
\begin{definition} \label{def:Ps2} {\bf (the class of operators $OPS^m$)}
Let $m \in \R$ and $a \in S^m$. We define the $\vphi$-dependent linear operator $A(\vphi) = {\rm Op}\big(a(\vphi, x, \xi) \big) = a(\vphi, x, D)$ as 
$$
A(\vphi) [u] (x) := \sum_{\xi \in \Z} a(\vphi,  x, \xi) \widehat u(\xi) e^{\ii x \xi}\,, \qquad \forall u \in {\cal C}^\infty(\T)\,.
$$
We say that the operator $A$ is in the class $OPS^m$. 

\noindent
We define the class of smoothing operators $OPS^{- \infty} := \cap_{m \in \R} OPS^m$. 
\end{definition}
Now we are ready to state the main results of this paper. 
  \begin{itemize}
  \item[ $\bf (H1)$] The operator ${\cal V}(\vphi) = V(\vphi, x) |D|^M + {\cal W}(\vphi)$ in \eqref{forma iniziale cal V (t)} is $L^2$ self-adjoint for any $\vphi \in \T^\nu$.  
  \item[ $\bf (H2)$] The operator ${\cal W}(\vphi)$ is a $\vphi$-dependent pseudo-differential operator ${\cal W}(\vphi) = {\rm Op}(w(\vphi, x, \xi))$, with symbol $w \in S^{M - \frak e}$ for some $\frak e > 0$.  
  
%\item[ $\bf (H2)_{M < 1}$] The function $V(\vphi, x)$ in \eqref{forma iniziale cal V (t)} is in $ {\cal C}^\infty(\T^\nu \times \T, \R)$, strictly positive and bounded from below, i.e. $\delta := \inf_{(t, x) \in \T^\nu \times \T} V(\vphi, x) > 0$. 

%\item[\bf (H2)] The Fourier multiplier $\Lambda_M(D) = {\rm Op}(\lambda_M(\xi)) \in OPS^M$ and the symbol $\lambda_M(\xi)$ is real and  homogeneous, namely 
%$$
%\lambda_M(\xi) = \overline{\lambda_M(\xi)}\,, \quad \lambda_M(\tau \xi) = |\tau|^M \lambda_M(\xi)\,, \quad \forall \tau \in \R\,, \quad \forall \xi \in \R\,. 
%$$

\end{itemize}
We also need a third hypothesis. For this last assumption, we need to distinguish between the cases $0 < M < 1$ and $M = 1$. 
In the case $0 < M <1 $ we assume 
\begin{itemize}
\item[ $\bf (H3)_{M < 1}$] The function $V(\vphi, x)$ in \eqref{forma iniziale cal V (t)} is in $ {\cal C}^\infty(\T^\nu \times \T, \R)$, strictly positive and bounded from below, i.e. $\delta := \inf_{(t, x) \in \T^\nu \times \T} V(\vphi, x) > 0$. 
\end{itemize}
In the case $M = 1$ we assume 
\begin{itemize}
\item[ $\bf (H3)_{M = 1}$] The function $V(\vphi, x)$ in \eqref{forma iniziale cal V (t)} has the form $V (\vphi, x) = 1 + \e P(\vphi, x)$ where $\e \in (0, 1)$ and  $ P \in {\cal C}^\infty(\T^\nu \times \T, \R)$.
\end{itemize}
Given $\tau > \nu - 1$ and $\gamma \in (0, 1)$, we also introduce the set of diophantine frequencies 
\begin{equation}\label{vettori diofantei}
DC(\gamma, \tau) := \Big\{ \omega \in \Omega : |\omega \cdot \ell| \geq \frac{\gamma}{|\ell|^\tau}, \quad \forall \ell \in \Z^\nu \setminus \{ 0 \} \Big\}\,. 
\end{equation}
It is well known that the Lebesgue measure of $\Omega \setminus DC(\gamma, \tau)$ is of order $  O(\gamma)$. 
The main results of this paper are the following 
\begin{theorem}[\bf Growth of Sobolev norms, the case $0 < M <  1$]\label{teo growth of sobolev norms M < 1}
Assume the hypotheses 

\noindent
${\bf (H1)}$, ${\bf (H2)}$, ${\bf (H3)_{M < 1}}$. Let $s > 0$, $\gamma \in (0, 1)$, $\tau > \nu$, $t_0 \in \R$, $u_0 \in H^s(\T)$, $\omega \in DC(\gamma, \tau)$. 
Then there exists a unique global solution $u \in {\cal C}^0(\R, H^s(\T))$ of the Cauchy problem \eqref{problema di cauchy intro} which satisfies the bound \eqref{crescita sobolev generale}. 
%\begin{equation}\label{cauchy problem main theorem}
%\begin{cases}
%\partial_t u + \ii {\cal V}(\omega t)[u] = 0 \\
%u(t_0, x) = u_0(x)
%\end{cases}
%\end{equation}
%and for any $\eta > 0$ there exists a constant $C(s, \eta) > 0$ such that  
%\begin{equation}\label{scopo dell articolo 0}
%\| u(t)\|_{H^s} \leq C(s, \eta)(1 +  | t - t_0|^\eta) \| u_0\|_{H^s}\,, \qquad \forall t \in \R\,. 
%\end{equation}
\end{theorem}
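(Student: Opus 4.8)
The plan is to reduce the non-autonomous, variable-coefficient Schr\"odinger-type vector field $\ii {\cal V}(\omega t)$ to a constant-coefficient one up to arbitrarily smoothing remainders, and then to propagate Sobolev norm bounds through the (bounded) conjugating transformations together with an elementary energy estimate for the reduced system. More precisely, the first step is to straighten out the top-order symbol $V(\vphi,x)|\xi|^M$: since $V$ is smooth, strictly positive and $2\pi$-periodic in $x$, there is a diffeomorphism $x \mapsto x + \beta(\vphi,x)$ of $\T$ (with $\beta$ smooth and quasi-periodic in time) such that the induced change of variables, which I would realize as the time-$1$ flow of a transport vector field, conjugates $\ii V(\vphi,x)|D|^M$ to $\ii \mathtt{c}(\vphi)|D|^M$ plus lower order terms, where $\mathtt{c}(\vphi)$ is a function of $\vphi$ alone (the usual averaging $\mathtt{c}(\vphi) = \big( (2\pi)^{-1}\int_\T V(\vphi,x)^{-1/M}\,dx\big)^{-M}$, up to the precise normalization dictated by the conjugation of $|D|^M$ under a torus diffeomorphism, which is where an Egorov-type theorem enters). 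One checks via pseudo-differential calculus that all the error terms produced are in $OPS^{m}$ with $m < M$, i.e.\ of strictly lower order, and the transformation is a bounded isomorphism of every $H^s(\T)$ with bounds uniform in $\vphi$.

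The second step is an iterative reduction of the lower-order part. Having arrived at a vector field of the form $\ii \mathtt{c}(\vphi)|D|^M + \ii \, \mathrm{Op}(r_0(\vphi,x,\xi)) - \omega\cdot\partial_\vphi(\cdot)$ with $r_0 \in S^{m_0}$, $m_0 < M$, I would look for a bounded transformation $\Phi = \exp(\mathrm{Op}(g))$ with $g$ of order $m_0 - (M-1) = m_0 + 1 - M \le m_0$ (gaining one derivative because the conjugation with $\mathtt{c}(\vphi)|D|^M$ produces, through the commutator $[\mathrm{Op}(g),\mathtt{c}(\vphi)|D|^M]$, a term of order $m_0$, while the time derivative $\omega\cdot\partial_\vphi \mathrm{Op}(g)$ also has order $m_0 + 1 - M$). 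The homological equation to be solved at this stage has the schematic form
\begin{equation}\label{homological schematic}
\omega\cdot\partial_\vphi g(\vphi,x,\xi) + \ii \,\mathtt{c}(\vphi)\big( |\xi + \ldots|^M - |\xi|^M \big)\,(\text{involving } g) = r_0(\vphi,x,\xi) - \langle r_0 \rangle(\xi)\,,
\end{equation}
i.e.\ a small-divisor equation in which \emph{both} the time frequency $\omega\cdot\ell$ \emph{and} the dispersive contribution from $|D|^M$ appear. Here the sublinearity $M<1$ is crucial and benign: the combination $\omega\cdot\ell - j^M \mathtt{c}$-type divisors, after expanding in Fourier in $(\vphi,x)$, are of the form $\omega\cdot\ell + \mathtt{c}_0(|\xi+k|^M - |\xi|^M)$, and since $\big||\xi+k|^M-|\xi|^M\big| \lesssim |k|^M \lesssim |k|$ is bounded by a \emph{fixed} power of the space frequency $k$ (uniformly in $\xi$), the Diophantine condition $\omega \in DC(\gamma,\tau)$ with $\tau > \nu$ suffices to invert the operator with a loss of at most finitely many derivatives in $(\vphi,x)$ and no loss in $\xi$ — so no additional Melnikov conditions beyond $DC(\gamma,\tau)$ are needed (this is exactly what the hypothesis $\tau>\nu$ in the statement buys us). Iterating this construction, each step lowering the order of the remainder by a fixed amount $\min\{\mathfrak{e}, M - \text{something}\} > 0$, after finitely many steps I reach a vector field $\ii \mathtt{c}(\vphi)|D|^M + \mathcal{R}_\infty(\vphi)$ with $\mathcal{R}_\infty \in OPS^{-\rho}$ for any prescribed $\rho > 0$, conjugated to the original one by a transformation $\mathcal{T}(\omega t)$ that, together with its inverse, is bounded on $H^s(\T)$ uniformly in time.

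The third step handles the reduced equation $\partial_t v = \ii \mathtt{c}(\omega t)|D|^M v + \mathcal{R}_\infty(\omega t)[v]$. The first term, being a Fourier multiplier with purely imaginary symbol (recall $\mathtt{c}$ is real, so $\ii \mathtt{c}(\omega t)|D|^M$ is skew-adjoint), generates an $H^s$-isometry; alternatively one diagonalizes it explicitly via $w(t) := \exp(-\ii \int_{t_0}^t \mathtt{c}(\omega s)\,ds\,|D|^M) v(t)$, which is an exact isometry of every $H^s$, leaving $\partial_t w = \widetilde{\mathcal{R}}_\infty(t)[w]$ with $\widetilde{\mathcal{R}}_\infty(t)$ still uniformly bounded (in fact $\rho$-smoothing) on $H^s$. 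A Gr\"onwall estimate then gives $\|w(t)\|_{H^s} \le \|w(t_0)\|_{H^s} \exp(C|t-t_0|)$, which is only exponential; to get the sub-polynomial bound \eqref{crescita sobolev generale} one must instead exploit that $\widetilde{\mathcal{R}}_\infty$ is smoothing: writing $\|w(t)\|_{H^s}^2$ and using $|\langle \widetilde{\mathcal{R}}_\infty w, w\rangle_{H^s}| \le C_s \|w\|_{H^{s}}\|w\|_{H^{s-\rho}} \le C_s \|w\|_{H^s}^{2 - \rho/s}\|w\|_{H^0}^{\rho/s}$ together with conservation (or at most bounded growth) of the low norm $\|w\|_{H^0}$ — which itself follows because $\mathcal{V}(\vphi)$ is $L^2$-self-adjoint, hypothesis ${\bf (H1)}$, so the original flow is $L^2$-unitary — one obtains a differential inequality $\frac{d}{dt}\|w\|_{H^s} \le C \|w\|_{H^s}^{1-\rho/s}$ whose solutions grow like $|t-t_0|^{s/\rho}$. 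Choosing $\rho$ large (equivalently iterating the reduction far enough) makes the exponent $s/\rho < \eta$ arbitrarily small, and undoing the bounded changes of variables $\mathcal{T}(\omega t)$ transfers the bound back to $u$, yielding \eqref{crescita sobolev generale}; global existence in ${\cal C}^0(\R,H^s)$ follows from the a priori bound by a standard continuation argument.

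I expect the main obstacle to be the first two steps carried out \emph{quantitatively and simultaneously at all orders}: proving that a torus diffeomorphism conjugates $|D|^M$ into $\mathtt{c}(\vphi)|D|^M$ plus a genuinely lower-order pseudo-differential operator requires a careful Egorov theorem for the non-integer power $|D|^M$ (the symbol $|\xi|^M$ is not smooth at $\xi=0$, so one works with $\langle\xi\rangle^M$ modulo smoothing and tracks the symbol expansion), and then verifying that the homological equation \eqref{homological schematic} is solvable with tame estimates uniformly along the iteration — in particular controlling how the finitely many derivatives lost in $(\vphi,x)$ at each step accumulate, and ensuring the scheme actually terminates (reaches order $-\rho$) in finitely many steps rather than requiring a KAM-type superconvergence. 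The sublinearity $0<M<1$ is what makes the divisors $\omega\cdot\ell + \mathtt{c}_0(|\xi+k|^M-|\xi|^M)$ behave like ordinary (non-dispersive) small divisors controllable by $DC(\gamma,\tau)$ alone, so I anticipate no second-Melnikov difficulties here — unlike the borderline case $M=1$, which is presumably why that case needs the separate hypothesis ${\bf (H3)_{M=1}}$ with the smallness parameter $\e$ and is treated in a companion theorem.
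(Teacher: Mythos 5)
Your overall architecture (normal form up to smoothing remainders, bounded conjugations, Duhamel plus interpolation for the $t^\eta$ bound) is the same as the paper's, and your third step is essentially correct. But the reduction scheme in your first two steps would fail for $0<M<1$, for two concrete reasons. First, you straighten the top order with a torus diffeomorphism $x\mapsto x+\beta(\vphi,x)$ that is \emph{quasi-periodic in time}. The push-forward of a vector field under a $\vphi$-dependent change of variables contains the extra term $-\Phi(\vphi)^{-1}\,\omega\cdot\partial_\vphi\Phi(\vphi)$, which for a diffeomorphism flow is a transport operator $\sim(\omega\cdot\partial_\vphi\widetilde\beta)\,\partial_x$ of order $1>M$ — strictly \emph{higher} order than the term $V|D|^M$ you are normalizing, so the scheme breaks at the first step. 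The paper avoids this by ordering the reductions the other way: it first removes the $\vphi$-dependence of $V$ using the flow of $\alpha|D|^M+|D|^M\alpha$ (an operator of order $M$, so the generated term $2\,\omega\cdot\partial_\vphi\alpha\,|D|^M$ has the \emph{same} order $M$ and can be used to replace $V$ by $\langle V\rangle_\vphi(x)$ via the pure time homological equation $2\,\omega\cdot\partial_\vphi\alpha+V=\langle V\rangle_\vphi$), and only then straightens $\langle V\rangle_\vphi(x)$ with a $\vphi$-\emph{independent} diffeomorphism, for which the dangerous transport term vanishes identically.

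Second, your iterative step has the order bookkeeping backwards and rests on an unjustified small-divisor claim. For $M<1$ one has $m_0+1-M>m_0$ (your inequality $m_0+1-M\le m_0$ requires $M\ge 1$), so a generator $g$ sized to make the commutator with $\mathtt{c}|D|^M$ of order $m_0$ forces $\omega\cdot\partial_\vphi g$ to have order $m_0+1-M>m_0$, i.e.\ it \emph{raises} the order of the remainder. Moreover, the divisors $\omega\cdot\ell+\mathtt{c}_0\big(|\xi+k|^M-|\xi|^M\big)$ of your coupled homological equation are \emph{not} controlled by $DC(\gamma,\tau)$: boundedness of the added term gives no lower bound on the sum ($\omega\cdot\ell$ can approach $-\mathtt{c}_0(|\xi+k|^M-|\xi|^M)$ along sequences of $\ell$), and excluding such resonances is precisely a first-Melnikov condition. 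The reason the theorem nevertheless needs only $\omega\in DC(\gamma,\tau)$ is that for $M<1$ the two reductions decouple at every order: the time step solves $\omega\cdot\partial_\vphi g^{(1)}=\langle w_n\rangle_\vphi-w_n$ (divisors $\omega\cdot\ell$ only), and the subsequent space step, performed with a $\vphi$-independent generator, solves $-M\lambda|\xi|^{M-2}\xi\,\partial_x g^{(2)}=\langle w_n\rangle_\vphi-\langle w_n\rangle_{\vphi,x}$ by $\partial_x^{-1}$ (divisors $\ii j$, $j\neq 0$, with no smallness issue). The coupled divisors you wrote down never appear; if one insisted on your route, additional non-resonance conditions on $\omega$ would be unavoidable — that situation is exactly the $M=1$ case treated separately in the paper.
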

\begin{theorem}[\bf Growth of Sobolev norms, the case $M = 1$]\label{teo growth of sobolev norms M = 1}
Assume the hypotheses 

\noindent
${\bf (H1)}$, ${\bf (H2)}$, ${\bf (H3)_{M = 1}}$. Let $s > 0$, $u_0 \in H^s(\T)$, $t_0 \in \R$, $\gamma \in (0, 1)$, $\tau > \nu$. Then there exists a constant $\delta_0 \in (0, 1)$ such that if $\e \gamma^{- 1} \leq \delta_0$ the following holds: there exists a Borel set $\Omega_{\gamma, \tau} \subseteq \Omega$ such that for any $\omega \in \Omega_{\gamma, \tau}$ there exists a unique global solution $u \in {\cal C}^0(\R, H^s(\T))$ of the Cauchy problem \eqref{problema di cauchy intro} which satisfies the bound \eqref{crescita sobolev generale}.  
%\begin{equation}\label{cauchy problem main theorem}
%\begin{cases}
%\partial_t u + \ii {\cal V}(\omega t)[u] = 0 \\
%u(t_0, x) = u_0(x)
%\end{cases}
%\end{equation}
%and for any $\eta > 0$ there exists a constant $C(s, \eta) > 0$ such that  
%\begin{equation}\label{scopo dell articolo 0}
%\| u(t)\|_{H^s} \leq C(s, \eta)(1 +  | t - t_0|^\eta) \| u_0\|_{H^s}\,, \qquad \forall t \in \R\,. 
%\end{equation}
\end{theorem}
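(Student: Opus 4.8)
\medskip
\noindent\textbf{Proof strategy.}
The plan is to conjugate the quasi-periodic vector field $\ii\,\mathcal{V}(\varphi)$, by a composition of bounded $\varphi$-dependent symplectic changes of variables, to a constant-coefficient one modulo a regularizing remainder of arbitrarily large smoothing order, and then to close an energy--interpolation estimate for the reduced equation. Throughout I use the transformation rule \eqref{push forward}, $\Phi_{\omega*}(\ii\mathcal{V})=\Phi^{-1}\big(\ii\mathcal{V}\,\Phi-\omega\cdot\partial_\varphi\Phi\big)$; the maps employed (symplectic diffeomorphisms of $\T$, flows $\exp(\ii\,\Op(a))$ of self-adjoint pseudo-differential operators of order $<1$, and $\varphi$-dependent Fourier multipliers) are symplectic and, together with their inverses, uniformly bounded on every $H^s(\T)$. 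They therefore preserve skew-adjointness of $\ii\mathcal{V}$, keep $\|u(t)\|_{H^s}$ comparable to the reduced norm, and preserve the classes $\OPS^m$.

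\medskip
\noindent\emph{Step 1 (normalization of the principal symbol).} Conjugating by a $\varphi$-dependent symplectic diffeomorphism $x\mapsto x+\beta(\varphi,x)$, chosen by an Egorov argument, turns $V(\varphi,x)|D|^M$ into $m(\varphi)|D|^M$ modulo lower order operators, where $m(\varphi):=\big((2\pi)^{-1}\int_\T V(\varphi,x)^{-1/M}\,dx\big)^{-M}$ is independent of $x$; hypothesis $\mathbf{(H3)_{M<1}}$ (resp. $\mathbf{(H3)_{M=1}}$, in which case $\beta=O(\e)$) guarantees that $\beta$ is well defined. Because of the term $-\omega\cdot\partial_\varphi\Phi$ in \eqref{push forward} this introduces a transport term $b(\varphi,x)\partial_x$, $b\sim-\omega\cdot\partial_\varphi\beta$, of zero average; for $0<M<1$, where it has order $1>M$, it is removed modulo lower order by a further diffeomorphism and a $\varphi$-dependent translation $x\mapsto x+B(\varphi)$ with $\omega\cdot\partial_\varphi B=\langle b\rangle$, solvable since $\omega\in DC(\gamma,\tau)$; for $M=1$ it is kept as part of the first order operator. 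Finally, conjugating by $\exp(\ii\,g(\varphi)|D|^M)$ with $\omega\cdot\partial_\varphi g=m(\varphi)-\langle m\rangle_\varphi$ (again solvable on $DC(\gamma,\tau)$, and using that $|D|^M$ commutes with every Fourier multiplier) replaces $m(\varphi)$ by the constant $\mathtt{m}:=\langle m\rangle_\varphi>0$. I expect this step, together with the bookkeeping in Step 2 when $M<1$, to be the main obstacle: the sublinear dispersion forces the straightening of the principal symbol to generate, through $\omega\cdot\partial_\varphi$, terms of order $1>M$, and one must check that they can be reabsorbed without spoiling self-adjointness or the symbol structure, keeping all transformations bounded on every $H^s$.

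\medskip
\noindent\emph{Step 2 (reduction of the lower order terms).} The vector field now reads $\ii\,\mathtt{m}|D|^M+\mathcal{R}_0(\varphi)$ with $\mathcal{R}_0\in\OPS^{m_0}$, $m_0<M$, $\mathcal{R}_0$ skew-adjoint. I remove the $(\varphi,x)$-dependence of the remainder order by order: given a term $\Op(r(\varphi,x,\xi))\in\OPS^m$ in the remainder, split $r=\langle r\rangle_{\varphi,x}(\xi)+\widetilde r$ and conjugate by a flow $\exp(\ii\,\Op(a))$, the symbol $a\in S^m$ being determined by the homological equation whose principal part is $\omega\cdot\partial_\varphi a+\mathtt{m}\{|\xi|^M,a\}=(\text{the }x\text{-dependent part }\widetilde r)$, with $\{|\xi|^M,a\}=M|\xi|^{M-1}\mathrm{sgn}(\xi)\,\partial_x a$. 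When $0<M<1$ this bracket has order $m+M-1<m$, so the equation splits by orders into pure transport equations $\omega\cdot\partial_\varphi a=\text{given}$, solvable on $DC(\gamma,\tau)$; this is why Theorem~\ref{teo growth of sobolev norms M < 1} requires no further excision of frequencies. When $M=1$ the bracket has the same order $m$, the equation genuinely couples $\partial_\varphi$ and $\partial_x$, and solvability requires the first Melnikov conditions $|\omega\cdot\ell+\mathtt{m}\,\mathrm{sgn}(\xi)\,j|\ge\gamma\langle\ell\rangle^{-\tau}$ for all $0\ne(\ell,j)\in\Z^{\nu+1}$ with $|j|\lesssim|\ell|$ (for $|j|\gg|\ell|$ the divisor is automatically $\gtrsim|j|$), together with analogous conditions for the $\omega$-dependent lower order corrections produced by the scheme; these define $\Omega_{\gamma,\tau}$. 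Each step also leaves an $x$-independent, $\varphi$-dependent piece, normalized to a constant by conjugating with a $\varphi$-dependent Fourier multiplier (again only $DC(\gamma,\tau)$ is used), the resulting $\xi$-dependent constant being absorbed into the diagonal part. Iterating finitely many times yields, for any prescribed $\rho>0$, a bounded symplectic change of variables conjugating $\ii\mathcal{V}$ to $\ii\mathcal{D}+\mathcal{R}_\infty$ with $\mathcal{D}$ an $x$- and $\varphi$-independent self-adjoint Fourier multiplier of order $M$ and $\mathcal{R}_\infty\in\OPS^{-\rho}$ skew-adjoint. For $M=1$ the smallness $\e\gamma^{-1}\le\delta_0$ makes this scheme converge (divisors of size $\sim\gamma\langle\ell\rangle^{-\tau}$, data of size $O(\e)$), and the usual measure estimate --- $\mathtt{m}=1+O(\e)\ne0$ being fixed and the remaining resonant coefficients Lipschitz and bounded in $\omega$ --- gives $|\Omega\setminus\Omega_{\gamma,\tau}|=O(\gamma)$ for $\tau>\nu$, each resonant set having measure $\lesssim\gamma\langle\ell\rangle^{-\tau}|\ell|^{-1}$ and the sum over $|j|\lesssim|\ell|$, $\ell\ne0$ being $O(\gamma)$.

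\medskip
\noindent\emph{Step 3 (growth estimate and conclusion).} On the reduced equation $\partial_t w=\ii\mathcal{D}w+\mathcal{R}_\infty(\omega t)w$ the flow of $\ii\mathcal{D}$ is unitary on every $H^s$, and $\tfrac{d}{dt}\|w\|_{L^2}^2=2\,\mathrm{Re}\langle\mathcal{R}_\infty w,w\rangle_{L^2}=0$ since $\ii\mathcal{D}+\mathcal{R}_\infty$ is skew-adjoint, so $\|w(t)\|_{L^2}=\|w_0\|_{L^2}=:E_0$. Writing $w(t)=e^{\ii(t-t_0)\mathcal{D}}w_0+p(t)$, $p(t):=\int_{t_0}^t e^{\ii(t-\tau)\mathcal{D}}\mathcal{R}_\infty(\omega\tau)w(\tau)\,d\tau$, and using that $\mathcal{R}_\infty\in\OPS^{-\rho}$ maps $H^s$ into $H^{s+\rho}$, one has $\|p(t)\|_{H^{s+\rho}}\le C\int_{t_0}^t\|w(\tau)\|_{H^s}\,d\tau$ and $\|p(t)\|_{L^2}\le 2E_0$; interpolation gives $\|p(t)\|_{H^s}\le(2E_0)^{\rho/(s+\rho)}\big(C\int_{t_0}^t\|w(\tau)\|_{H^s}\,d\tau\big)^{s/(s+\rho)}$. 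Setting $y(t):=\|w(t)\|_{H^s}\le\|w_0\|_{H^s}+\|p(t)\|_{H^s}$ and integrating the resulting differential inequality for $\int_{t_0}^t y$ yields $\|w(t)\|_{H^s}\le C(s,\rho)\,\|w_0\|_{H^s}\,(1+|t-t_0|)^{s/\rho}$. Given $\eta>0$ I run Step 2 with $\rho\ge s/\eta$ and transfer the bound back through the (uniformly bounded, uniformly invertible) change of variables of Steps 1--2, obtaining \eqref{crescita sobolev generale}; global existence of $u\in\mathcal{C}^0(\R,H^s)$ is automatic because the generator of the reduced equation is a bounded perturbation of the skew-adjoint operator $\ii\mathcal{D}$. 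The technical heart of the proof --- where $\mathbf{(H2)}$, $\mathbf{(H3)}$ and the non-resonance conditions are actually used --- is the exact solvability together with tame estimates of the homological equations of Steps 1--2, which in the sublinear regime couple symbols of different orders.
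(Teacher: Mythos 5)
Your Step 3 is a sound (and slightly different) endgame: the paper fixes $K\simeq s$, gets linear growth from Duhamel, and then applies Riesz--Thorin to the propagator between ${\cal B}(L^2)$ and ${\cal B}(H^S)$ with $S$ large, whereas you take $\rho\ge s/\eta$ smoothing steps and close a Gronwall-type inequality on the Duhamel term directly; both yield \eqref{crescita sobolev generale}. Your Step 2 homological equation and the Melnikov conditions there are also essentially the paper's Lemma \ref{lemma equazione omologica ordini bassi M = 1}.

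The genuine gap is in Step 1, i.e.\ the reduction of the principal order when $M=1$, which is precisely where this theorem differs from the $M<1$ case. After conjugating by a $\varphi$-dependent diffeomorphism, the order-one part of the push-forward is of the form $a(\varphi,x)\,|D| + b(\varphi,x)\,\partial_x$ with $a$ coming from the Egorov conjugation and $b\sim-\omega\cdot\partial_\varphi\widetilde\alpha$ from the term $-\omega\cdot\partial_\varphi\Phi$ in \eqref{push forward}. You say this transport term ``is kept as part of the first order operator,'' but you never reduce it: the subsequent conjugation by $\exp(\ii g(\varphi)|D|)$ only removes the $\varphi$-dependence of an $x$-independent coefficient and leaves $b(\varphi,x)\partial_x$ untouched. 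Since $|D|$ and $\partial_x$ are distinct operators of the same order, one cannot combine $a|D|+b\partial_x$ into a single scalar symbol to be straightened; the paper's device is the splitting $u=\Pi_+u+\Pi_-u$ into positive and negative frequencies, where $\ii|D|\Pi_\pm=\pm\partial_x\Pi_\pm$, so that on each half the order-one coefficient becomes $\big((1+\e P)(1+\partial_y\widetilde\alpha_\pm)\mp\omega\cdot\partial_\varphi\widetilde\alpha_\pm\big)$ and must be made \emph{constant} by solving the quasi-periodic transport equation \eqref{soluzione trasporto alpha pm}. That equation genuinely couples $\omega\cdot\partial_\varphi$ and $\partial_x$ already at the principal order; it is solved by a Moser-type iteration (Proposition \ref{proposizione principale trasporto}) which is exactly where the smallness $\e\gamma^{-1}\le\delta_0$ and the first Melnikov conditions \eqref{def cal O mu pm gamma} --- with $\omega$-dependent, implicitly defined constants $\mu_\pm$ --- enter. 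Your proposal defers all Melnikov conditions to the lower-order steps and treats the principal-order straightening as if it were the $M<1$ algebraic choice of $\widetilde\beta$ plus a diophantine condition on $\omega$ alone; as written, the order-one operator is therefore never reduced to constant coefficients, and the rest of the scheme cannot start. Relatedly, without the $\Pi_\pm$ splitting the conjugated field is not of the block form needed in Step 2, and one must also track that the cross terms $\Pi_\pm(\cdots)\Pi_\mp$ are smoothing (Lemma \ref{push forward splitting Pi +-}).
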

\begin{remark}\label{remark cantor set Omega gamma}
The explicit expression of the set $\Omega_{\gamma, \tau}$ in Theorem \ref{teo growth of sobolev norms M = 1} is provided in \eqref{def cal O mu pm gamma}, \eqref{Cantor finale riduzione M = 1}. It is obtained by imposing some {\it first order Melnikov conditions} on the frequency $\omega \in \Omega \subset \R^\nu$. Arguing as in the proof of Corollary 4.2 in \cite{trasporto paper}, it can be proved that the Lebesgue measure of $\Omega \setminus \Omega_{\gamma, \tau}$ is of order $O(\gamma)$. 
\end{remark}
The two theorems stated above will be deduced by the following two {\it normal form} theorems stated below. 
\begin{theorem}[\bf Normal-form theorem, the case $0 < M < 1$]\label{teorema riduzione}
Assume the hypotheses 

\noindent
${\bf (H1)}$, ${\bf (H2)}$, ${\bf (H3)_{M < 1}}$ and let $\omega \in DC(\gamma, \tau)$. For any $K > 0$ there exists a $\vphi$-dependent family of  invertible maps ${\cal T}_K(\vphi) \in {\cal B}(H^s)$, $s \geq 0$ 
%\begin{equation}\label{proprieta cal TK 1}
%{\cal T}_K^{\pm 1} \in {\cal C}^0(\R, {\cal B}(H^s)) \cap {\cal C}^1 (\R, {\cal B}(H^{s}, H^{s - 1}))\,, \qquad \forall s \geq 0
%\end{equation}
%satisfying 
%\begin{equation}\label{proprieta cal TK 2}
%\sup_{\vphi \in \T^\nu} \| {\cal T}_K(\vphi)^{\pm 1}\|_{{\cal B}(H^s)} + \sup_{\begin{subarray}{c}
%\vphi \in \T^\nu \\
%\alpha \in \N^\nu\\
%|\alpha | = 1
%\end{subarray}} \| \partial_\vphi^\alpha {\cal T}_K(\vphi)^{\pm 1}\|_{{\cal B}(H^{s + 1}, H^{s })} < + \infty\,, \qquad \forall s \geq 0
%\end{equation}
such that the following holds: the vector field $\ii {\cal V}(\vphi)$ is transformed, by the map ${\cal T}_K(\vphi)$, into the vector field 
\begin{equation}\label{campo finalissimo cal VK}
\ii {\cal V}_K(\vphi) := ({\cal T}_K)_{\omega*} (\ii {\cal V})(\vphi) = \ii \Big( \lambda_K(D) + {\cal R}_K(\vphi) \Big)
\end{equation}
where $\lambda_K( D) := {\rm Op}(\lambda_K( \xi)) \in OPS^M$ is a time-independent Fourier multiplier with real symbol and ${\cal R}_K \in OPS^{- K}$.  
%with constant coefficients whose symbol $\lambda_K(\xi)$ satisfies
%\begin{equation}\label{proprieta lambda K finalissimo}
%\lambda_K \in S^M\,, \quad \lambda_K( \xi) = \overline{\lambda_K( \xi)}\,, \quad \forall \xi \in \R
%\end{equation}
%and 
%\begin{equation}\label{proprieta resto finalissimo}
%\sup_{\vphi \in \T^\nu} \| {\cal R}_K(\vphi) \|_{{\cal B}(H^s, H^{s +K})} < + \infty \,, \quad \forall s \geq 0
%%{\cal W}_K(\vphi) = {\rm Op}\Big( w_K(\vphi, x, \xi) \Big)\,, \quad w_K \in S^{- K}
%\end{equation}
\end{theorem}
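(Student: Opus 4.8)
\emph{Overview.} The plan is to build $\mathcal{T}_K$ as a finite composition of explicit bounded and invertible conjugations, in an Egorov–plus–normal–form scheme organised by the order of the symbols involved. Throughout, ``order'' refers to the classes $S^m$, $OPS^m$ of Definitions \ref{classe simboli}--\ref{def:Ps2}, and one first replaces $|D|^M$ by a symbol in $S^M$ coinciding with $|\xi|^M$ for $|\xi|\ge 1$, the difference being in $OPS^{-\infty}$. Each conjugation acts on the vector field by the push-forward \eqref{push forward}, and one keeps all generators skew-adjoint modulo $OPS^{-\infty}$ (symmetrising when needed), so that $L^2$-self-adjointness is preserved and the final Fourier multiplier $\lambda_K$ is real.

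\emph{Step 1: reduction of the principal symbol.} The principal symbol of $\ii{\cal V}(\vphi)$ is $\ii V(\vphi,x)|\xi|^M$, of order $M<1$. First I would remove its $\vphi$-dependence by conjugating with $\exp(\ii\,{\rm Op}(g_1))$, $g_1(\vphi,x,\xi)=|\xi|^M\,\mathfrak{g}_1(\vphi,x)$: since $[\,\ii V|D|^M,\ii\,{\rm Op}(g_1)]$ is of order $2M-1<M$, the order-$M$ part of the push-forward is $\ii|\xi|^M\big(V-\om\cdot\pa_\vphi\mathfrak{g}_1\big)$, so it suffices to solve the homological equation $\om\cdot\pa_\vphi\mathfrak{g}_1=V-\langle V\rangle_\vphi$, which is solvable for $\om\in DC(\gamma,\tau)$ because the right-hand side has zero $\vphi$-average; this leaves the order-$M$ coefficient equal to the $\vphi$-independent, strictly positive function $\langle V\rangle_\vphi(x)$. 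Because $g_1\in S^M$ with $M<1$ and $\ii\,{\rm Op}(g_1)$ is skew-adjoint modulo lower order, $\exp(\ii\,{\rm Op}(g_1))$ is bounded and invertible on every $H^s$. Next I would straighten the remaining $x$-dependence by conjugating with a $\vphi$-independent, $L^2$-unitary weighted composition operator ${\cal A}=\exp\!\big(f(x)\pa_x+\tfrac12 f'(x)\big)$: by Egorov's theorem this turns $\langle V\rangle_\vphi(x)|D|^M$ into $c_0|D|^M$ modulo $OPS^{M-1}$, with $c_0=\big(2\pi\big/\!\int_\T\langle V\rangle_\vphi^{-1/M}\big)^M>0$ and $f$ given by a first-order ODE fixing the torus diffeomorphism (here strict positivity of $V$, hence ${\bf(H3)_{M<1}}$, is used). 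Crucially ${\cal A}$ is taken $\vphi$-independent: a $\vphi$-dependent diffeomorphism would create an order-$1$ transport term $-\,(\om\cdot\pa_\vphi f)\,\pa_x$, which for $M<1$ would \emph{dominate} the principal symbol $c_0|D|^M$. After Step 1 the vector field has the form $\ii\big(c_0|D|^M+{\cal W}_1(\vphi)\big)$ with ${\cal W}_1\in OPS^{m_1}$, $m_1=\max\{2M-1,\,M-\mathfrak{e}\}<M$.

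\emph{Step 2: iterative reduction of the remainders.} Inductively assume the vector field is $\ii\big(c_0|D|^M+\Lambda_n(D)+{\cal W}_n\big)$ with $\Lambda_n(D)\in OPS^{m_1}$ a real Fourier multiplier ($\Lambda_1=0$) and ${\cal W}_n={\rm Op}(w_n)\in OPS^{m_n}$, $m_n\downarrow-\infty$. I would conjugate by $\exp(\ii\,{\rm Op}(g_n^{(1)})+\ii\,{\rm Op}(g_n^{(2)}))$, where: (i) $g_n^{(1)}\in S^{m_n}$, with zero $\vphi$-average, solves $\om\cdot\pa_\vphi g_n^{(1)}=w_n-\langle w_n\rangle_\vphi$ (again by the Diophantine condition), killing the $\vphi$-dependence at order $m_n$; and (ii) $g_n^{(2)}\in S^{m_n+1-M}$, $\vphi$-independent, solves the spatial transport equation coming from the principal part of $[\,\ii c_0|D|^M,\cdot\,]$, namely $c_0 M|\xi|^{M-1}\mathrm{sgn}(\xi)\,\pa_x g_n^{(2)}=-\big(\langle w_n\rangle_\vphi-\langle w_n\rangle_{\vphi,x}\big)$, which is solved by one integration in $x$ (no small divisors, only the harmless weight $|\xi|^{1-M}$), killing the $x$-dependence at order $m_n$ and producing the new Fourier-multiplier contribution $\langle w_n\rangle_{\vphi,x}(\xi)$. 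The point is that $g_n^{(2)}$ has order $m_n+1-M$, strictly larger than $m_n$, but still $<1$ (since $m_n<M\le1$), so $\exp(\,\cdot\,)$ remains bounded/invertible on every $H^s$; and since $g_n^{(2)}$ carries no $\vphi$-dependence, $\om\cdot\pa_\vphi{\rm Op}(g_n^{(2)})=0$, so it contributes no spurious term of order $m_n+1-M$ to the push-forward. A bookkeeping of the remaining commutators (all of order $\le\max\{2m_n-M,\,m_1+m_n-M,\,m_n-(1-M)\}<m_n$, with $m_n-m_{n+1}$ bounded below) shows ${\cal W}_{n+1}\in OPS^{m_{n+1}}$ with $m_{n+1}<m_n$, so after finitely many steps $m_n\le-K$. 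Letting $\mathcal{T}_K$ be the composition of all the above maps, $\lambda_K(D):=c_0|D|^M+\Lambda_n(D)$ and ${\cal R}_K:={\cal W}_n\in OPS^{-K}$ gives \eqref{campo finalissimo cal VK}.

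\emph{Main obstacle.} The delicate point — and the reason sublinear dispersion matters — is the structure of the homological equations: the $x$-dependence at a given order can only be removed against the commutator with $c_0|D|^M$, which \emph{gains} $1-M$ derivatives, so the corresponding generator must have higher order than the term it cancels. One must therefore verify that $\exp$ of such generators is still bounded on Sobolev spaces (true exactly because their order stays below $1$) and that these higher-order generators inject nothing uncontrolled into the push-forward (true because they are $\vphi$-independent). Keeping these two properties compatible, together with checking that the $\vphi$- and $x$-homological equations genuinely decouple — so that only $DC(\gamma,\tau)$ is needed, rather than the first-order Melnikov conditions required in the case $M=1$ — is the heart of the argument; the surrounding pseudo-differential calculus, the Egorov expansion, and the quantitative remainder estimates are then routine.
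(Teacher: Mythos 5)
Your proposal follows essentially the same route as the paper: first reduce the principal symbol $V(\vphi,x)|\xi|^M$ by a $\vphi$-dependent conjugation solving $\om\cdot\pa_\vphi$-homological equations under $DC(\gamma,\tau)$, then by a $\vphi$-independent torus diffeomorphism (Egorov) using the positivity of $V$; afterwards iterate time-then-space reductions at each order, with the spatial generator of order $m_n+1-M<1$, $\vphi$-independent, and symmetrised to keep self-adjointness, producing the real Fourier multipliers $\langle w_n\rangle_{\vphi,x}$. The only cosmetic deviations (a single exponential combining the two generators per step, and sign/normalisation conventions in the homological equations) do not change the argument, and your identification of the key obstruction — that the spatial homological equation gains $1-M$ derivatives, forcing higher-order but sub-order-one, $\vphi$-independent generators — matches the paper's mechanism exactly.
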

\begin{theorem}[\bf Normal-form theorem, the case $M = 1$]\label{teorema riduzione M = 1}
Assume ${\bf (H1)}$, ${\bf (H2)}$, ${\bf (H3)_{M = 1}}$ and let $\gamma \in (0, 1)$. Then if $\e \gamma^{- 1} \leq \delta$, for any $\omega \in \Omega_{\gamma, \tau}$ (where the constant $\delta$ and the set $\Omega_{\gamma, \tau}$ are given in Theorem \ref{teo growth of sobolev norms M = 1}), the following holds. For any $K > 0$ there exists a $\vphi$-dependent family of invertible maps ${\cal T}_K(\vphi) \in {\cal B}(H^s)$, $s \geq 0$ 
%\begin{equation}\label{proprieta cal TK 1}
%{\cal T}_K^{\pm 1} \in {\cal C}^0(\R, {\cal B}(H^s)) \cap {\cal C}^1 (\R, {\cal B}(H^{s}, H^{s - 1}))\,, \qquad \forall s \geq 0
%\end{equation}
%satisfying 
%\begin{equation}\label{proprieta cal TK 2 M = 1}
%\sup_{\vphi \in \T^\nu} \| {\cal T}_K(\vphi)^{\pm 1}\|_{{\cal B}(H^s)} + \sup_{\begin{subarray}{c}
%\vphi \in \T^\nu \\
%\alpha \in \N^\nu\\
%|\alpha | = 1
%\end{subarray}} \| \partial_\vphi^\alpha {\cal T}_K(\vphi)^{\pm 1}\|_{{\cal B}(H^{s + 1}, H^{s})} < + \infty\,, \qquad \forall s \geq 0
%\end{equation}
such that the push-forward of the Hamiltonian vector field $\ii {\cal V}(\vphi)$ by means of the map ${\cal T}_K(\vphi)$ has the form 
\begin{equation}\label{campo finalissimo cal VK M = 1}
\ii {\cal V}_K(\vphi) := ({\cal T}_K)_{\omega*} (\ii {\cal V})(\vphi) = \ii \Big( \lambda_K(D) + {\cal R}_K(\vphi) \Big)
\end{equation}
where $\lambda_K( D) := {\rm Op}(\lambda_K( \xi)) \in OPS^1$ is a time-independent Fourier multiplier with real symbol and ${\cal R}_K \in OPS^{- K}$.  
%with constant coefficients whose symbol $\lambda_K(\xi)$ satisfies  
%\begin{equation}\label{proprieta lambda K finalissimo M = 1}
%\lambda_K \in S^1\,, \quad \lambda_K( \xi) = \overline{\lambda_K( \xi)}\,, \quad \forall \xi \in \R
%\end{equation}
%and 
%\begin{equation}\label{proprieta resto finalissimo M = 1}
% \sup_{\vphi \in \T^\nu} \| {\cal R}_K(\vphi) \|_{{\cal B}(H^s, H^{s + K})} < + \infty\,, \quad \forall s \geq 0\,. 
%\end{equation}
\end{theorem}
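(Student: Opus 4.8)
The plan is to prove this theorem by the same Egorov/pseudo-differential normal form scheme as Theorem \ref{teorema riduzione}, the only genuinely new ingredient being the reduction of the principal symbol, which for $M = 1$ has to be performed by a transport-type change of variables (a $\vphi$-dependent diffeomorphism of $\T$) rather than by an order-preserving conjugation. The map ${\cal T}_K$ will be obtained as the composition of finitely many elementary conjugations, each of them symplectic, so that $L^2$ self-adjointness is preserved throughout, which is what forces the final symbol $\lambda_K(\xi)$ to be real.

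\textbf{Step 1: transport reduction of the principal symbol.} Write $|D| = - \ii\, \mathtt{H}\, \pa_x$ with $\mathtt{H}$ the Hilbert transform, and let $\Pi^+$, $\Pi^-$ be the Fourier projections onto the nonnegative and negative modes, so $L^2(\T) = \Pi^+ L^2 \oplus \Pi^- L^2$. Since $P \in {\cal C}^\infty$ the operators $\Pi^{\pm} V(\vphi, \cdot) \Pi^{\mp}$ are smoothing, hence modulo $OPS^{- \infty}$ the vector field $\ii {\cal V}(\vphi)$ is block-diagonal with respect to $\Pi^{\pm}$ and on $\Pi^+$ equals, at leading order, the vector field $(1 + \e P(\vphi, x)) \pa_x$. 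By the transport equation theory of \cite{trasporto paper}, there exist a constant $\mathtt{m}_1 = 1 + O(\e) > 0$ and a $\vphi$-dependent diffeomorphism of $\T$, $x \mapsto x + \beta(\vphi, x)$ with $\beta$ of size $O(\e)$, whose associated symplectic change of variables ${\cal A}(\vphi) \in {\cal B}(H^s)$ conjugates $(1 + \e P) \pa_x$ into $\mathtt{m}_1 \pa_x$, provided $\e \g^{-1} \le \de$ is small enough and $\om$ lies in a Borel set $\Omega_{\g, \tau}$ cut out by first order Melnikov conditions of the form $| \om \cdot \ell + \mathtt{m}_1 j | \ge \g \langle \ell \rangle^{- \tau}$, $(\ell, j) \ne (0, 0)$; here $\beta$ solves a homological equation of transport type relating $\om \cdot \pa_\vphi \beta$, $(1 + \e P) \pa_x \beta$ and $\mathtt{m}_1$, the constant $\mathtt{m}_1$ being determined by the solvability condition. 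On the $\Pi^-$ block the transformed operator is then fixed by self-adjointness, and recombining the two blocks one finds that $({\cal A})_{\om *}(\ii {\cal V})$ has principal symbol $\ii\, \mathtt{m}_1 |\xi|$. Computing the full conjugation of ${\cal V}(\vphi)$ by ${\cal A}(\vphi)$ with the Egorov theorem and using that ${\cal W}(\vphi)$ has order strictly less than $1$, one obtains $({\cal A})_{\om *}(\ii {\cal V})(\vphi) = \ii\, \mathtt{m}_1 |D| + \ii\, {\cal Q}(\vphi)$ with ${\cal Q}(\vphi)$ of order strictly less than $1$.

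\textbf{Step 2: iterative reduction of the lower order terms.} Starting from the operator of Step 1 one removes the non-Fourier-multiplier part of the symbol one order at a time, exactly as in Theorem \ref{teorema riduzione}. At the generic step one conjugates by $e^{{\cal B}(\vphi)}$ with ${\cal B} \in OPS^{- \rho}$, $\rho > 0$, where ${\cal B}$ solves the homological equation $\om \cdot \pa_\vphi {\cal B}(\vphi) + \ii [\mathtt{m}_1 |D|, {\cal B}(\vphi)] = \Pi_{\rm off}\, {\cal Q}(\vphi)$, $\Pi_{\rm off}$ being the projection onto the part of the symbol depending on $(\vphi, x)$. At the level of symbols this is the scalar transport equation $\om \cdot \pa_\vphi b(\vphi, x, \xi) + \mathtt{m}_1 \sgn(\xi)\, \pa_x b(\vphi, x, \xi) = f(\vphi, x, \xi)$, i.e. in Fourier $\big( \ii\, \om \cdot \ell + \ii\, \mathtt{m}_1 \sgn(\xi)\, k \big)\, \widehat b(\ell, k, \xi) = \widehat f(\ell, k, \xi)$, solvable thanks to the very same first order Melnikov conditions defining $\Omega_{\g, \tau}$ (the lower order corrections to $\mathtt{m}_1 |D|$ perturb the divisor only by a bounded term, absorbed by a Neumann series). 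Each step lowers by $\rho$ the order of the remaining $(\vphi, x)$-dependent part and leaves a smaller time-independent correction to the Fourier multiplier, so after finitely many steps one arrives at $({\cal T}_K)_{\om *}(\ii {\cal V})(\vphi) = \ii ( \lambda_K(D) + {\cal R}_K(\vphi) )$ with ${\cal R}_K \in OPS^{- K}$, $\lambda_K(D) \in OPS^1$ such that $\lambda_K(D) - \mathtt{m}_1 |D|$ has order strictly less than $1$, and $\lambda_K(\xi)$ real (self-adjointness having been preserved). Taking ${\cal T}_K$ to be the composition of ${\cal A}$ with the finitely many maps $e^{{\cal B}}$, the family ${\cal T}_K(\vphi)$ and its inverse are bounded on every $H^s$, $s \ge 0$, since ${\cal A}(\vphi)$ is a bounded order-$0$ operator built from a diffeomorphism of $\T$ and each $e^{\pm {\cal B}}$ equals the identity plus an operator of negative order.

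\textbf{The main obstacle} is Step 1. The equation for $\beta$ is quasi-linear, because $\mathtt{m}_1$ itself depends on the unknown diffeomorphism, so it must be solved by an iterative (fixed point) scheme whose small divisors are $\om \cdot \ell + \mathtt{m}_1 j$; it is precisely here that the smallness $\e \g^{-1} \le \de$ and the Melnikov conditions defining $\Omega_{\g, \tau}$ are needed, and here that the assumption $M = 1$ is genuinely used: a $\vphi$-dependent diffeomorphism of $\T$ produces, through the term $\om \cdot \pa_\vphi {\cal A}$, a contribution of order $1$, which for $M = 1$ matches the order of the principal part and is absorbed into the transport equation, whereas for $0 < M < 1$ it would be a spurious higher order term and the reduction must be performed differently, with the mere positivity of $V$ then sufficing (Theorem \ref{teorema riduzione}). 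The remaining points — that the $\Pi^{\pm}$ splitting holds modulo $OPS^{- \infty}$, that ${\cal W}$ and all the Egorov and commutator remainders stay in the claimed symbol classes, and that the Lebesgue measure of $\Omega \setminus \Omega_{\g, \tau}$ is $O(\g)$ (cf. Remark \ref{remark cantor set Omega gamma}) — are dealt with by the pseudo-differential calculus and the measure estimates of \cite{trasporto paper}, just as in the case $0 < M < 1$.
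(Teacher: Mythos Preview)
Your overall architecture (split into $\Pi^\pm$ blocks, reduce the principal symbol by a transport-type diffeomorphism, then run an iterative Egorov/homological scheme for the lower orders) matches the paper's. However, Step 1 contains a genuine gap.

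\medskip

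\textbf{The error in Step 1.} You claim that a \emph{single} $\vphi$-dependent diffeomorphism ${\cal A}(\vphi)$ and a \emph{single} constant $\mathtt{m}_1$ reduce the highest order on both blocks, the $\Pi^-$ block being ``fixed by self-adjointness''. This is not correct. Self-adjointness of ${\cal V}_1$ only relates the off-diagonal blocks $\Pi^+{\cal V}_1\Pi^-$ and $\Pi^-{\cal V}_1\Pi^+$ to each other; it imposes no relation between the two diagonal blocks. Concretely, on $\Pi^+$ one has $\ii|D|=\pa_x$ and on $\Pi^-$ one has $\ii|D|=-\pa_x$, so after push-forward by a diffeomorphism $x\mapsto x+\beta(\vphi,x)$ the highest-order coefficient on $\Pi^+$ is
\[
\big(V\,(1+\pa_y\widetilde\beta)-\om\cdot\pa_\vphi\widetilde\beta\big)_{|y=x+\beta},
\]
whereas on $\Pi^-$ it is
\[
\big(V\,(1+\pa_y\widetilde\beta)+\om\cdot\pa_\vphi\widetilde\beta\big)_{|y=x+\beta}.
\]
The $\om\cdot\pa_\vphi$ contribution (coming from the term $\Phi^{-1}\om\cdot\pa_\vphi\Phi$ in the push-forward) enters with the \emph{same} sign on both blocks, while the Egorov part changes sign. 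Hence the two expressions cannot both be made constant by the \emph{same} $\widetilde\beta$, unless $\om\cdot\pa_\vphi\widetilde\beta\equiv 0$, which would prevent the reduction altogether. One is forced to use two \emph{different} diffeomorphisms $\alpha_\pm$ solving two transport equations with opposite signs in front of $\om\cdot\pa_\vphi$, producing two (in general different) constants $\lambda_\pm$. The paper does exactly this; the final Fourier multiplier is $\lambda_K(\xi)=\lambda_+|\xi|$ on $\{\xi>0\}$ and $\lambda_-|\xi|$ on $\{\xi<0\}$ (plus lower-order corrections), and the set $\Omega_{\g,\tau}$ is defined by \emph{two} families of first-order Melnikov conditions $|\om\cdot\ell+\lambda_\pm j|\ge\g\langle\ell\rangle^{-\tau}$.

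\medskip

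\textbf{A related point.} Once you use block-dependent diffeomorphisms, the map $\Phi_0(\vphi)=\Phi_+(\vphi)^{-1}\Pi_++\Phi_-(\vphi)^{-1}\Pi_-$ is \emph{not} symplectic, contrary to your claim that ``each [elementary conjugation is] symplectic''. What saves self-adjointness is that each $\Phi_\pm$ is symplectic separately, so each block ${\cal V}_{n,\pm}=(\Phi_{n,\pm}^{-1})_{\om*}(\ii{\cal V}_{n-1,\pm})$ remains self-adjoint; the overall transformed vector field is then Hamiltonian only up to $OPS^{-\infty}$, which suffices. This propagates through Step 2 as well: the homological equations must be solved blockwise with $\lambda_\pm$ (not a single $\mathtt{m}_1$), and no Neumann series is needed since the commutator with the accumulated lower-order Fourier multipliers $\mu_{n,\pm}(D)$ is already one order better and absorbed directly into the remainder.
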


In the remaining part of the introduction, we shall explain the main ideas needed to prove the theorems stated above. 
\subsection{Ideas of the proof}
The proof of Theorems \ref{teo growth of sobolev norms M < 1}, \ref{teo growth of sobolev norms M = 1} is deduced in a straightforward way by Theorems \ref{teorema riduzione}, \ref{teorema riduzione M = 1}. Indeed, once we transform the equation $\partial_t u = \ii {\cal V}(\omega t)[u]$ into another one which is an arbitrarily regularizing perturbation of a constant coefficients equation, fixing the number of regularization steps $K \simeq s$, we get immediately that the Sobolev norm $\| u(t) \|_{H^s}$ grows linearly in $t$. Then in order to get the desired estimate \eqref{crescita sobolev generale}, it is enough to apply the classical Riesz-Thorin interpolation Theorem. 

\noindent
The proof of Theorems \ref{teorema riduzione}, \ref{teorema riduzione M = 1} is based on a {\it normal form} procedure, which transforms the vector field $\ii {\cal V}(\vphi)$ into another one which is an arbitrarily regularizing perturbation of a {\it space diagonal} vector field. Such a normal form method is based on symbolic calculus and Egorov type Theorems (see Propositions \ref{Teorema egorov generale}-\ref{teorema egorov campo minore di uno}) and it is performed by constructing iteratively changes of variables which reduce to constant coefficients order by order the vector field $\ii {\cal V}(\vphi)$. The procedure is different in the case $0 < M < 1$ and $M = 1$. The intuitive reason is the following: In the equation $\partial_t u = \ii {\cal V}(t)[u]$ we have the interaction between the time differential operator $\partial_t$ and the space pseudo-differential operator $|D|^M$. If $M < 1$, the effect of the operator $\partial_t$ is stronger than the one of $|D|^M$, hence in the {\it homological equations} which arise in our reduction procedure at any order, we remove first the dependence on the time variable, by requiring that the frequency $\omega$ is diophantine and then the dependence on the space variable. In the case $M = 1$, we deal with the operators $\partial_t$ and $|D|$ which are both of order 1. Hence the homological equations arising along the reduction procedure involve both time and space variable and they are solved by imposing {\it first order Melnikov conditions}, see \eqref{def cal O mu pm gamma}, \eqref{Cantor finale riduzione M = 1}. We also remark that in \cite{Montalto1}, it is studied the case where $M > 1$, in which the effect of the operator $|D|^M$ is much stronger than the effect of the time differential operator $\partial_t$ and we do not need any parameter to perform the reduction procedure.

\noindent
 In the following we shall explain more in detail these two reduction procedures. It is convenient for the sequel to introduce the following notation. Given $A = {\rm Op}(a), B = {\rm Op}(b) \in S^m$ and $m' < m$, we write
\begin{equation}\label{notazione conveniente}
\begin{aligned}
& A = B + OPS^{m'} \quad \text{if} \quad A - B \in OPS^{m'}\,,  \\
& a = b + S^{m'} \quad \text{if} \quad a - b \in S^{m'}\,. 
\end{aligned}
\end{equation}
In particular if $A - B$ is a smoothing operator, we write 
\begin{equation}\label{notazione conveniente smoothing}
A = B + OPS^{- \infty}\, \quad \text{and} \quad a = b + S^{- \infty}\,. 
\end{equation}

\subsubsection{Normal form in the case $M < 1$}\label{idea forma normale M < 1}
The normal form procedure for the vector field $\ii {\cal V}(\omega t)$ in the case $M < 1$ is developed in Section \ref{sezione riduzione M < 1}. 
\begin{enumerate}
\item{\it Reduction of the highest order term.} The first step is to reduce to constant coefficient the highest order term $\ii V(\omega t, x) |D|^M$ of the vector field $\ii {\cal V}(\omega t)$. This is done in Section \ref{sezione ordine principale M < 1}. First, in Section \ref{riduzione tempo ordine principale}, we remove the time-dependence from the operator $V(\omega t, x) |D|^M$ by conjugating the vector field $\ii {\cal V}(\omega t)$ by means of the flow map generated by a Hamiltonian vector field $\ii {\cal G}_0^{(1)}(\vphi)$ of the form ${\cal G}_0^{(1)} (\vphi) := \alpha (\vphi, x) |D|^M + |D|^M \alpha(\vphi, x) = 2 \alpha(\vphi, x) |D|^M + OPS^{M - 1}$. The expansion of the symbol of the transformed vector field $\ii {\cal V}_0^{(1)}(\vphi)$ is given in \eqref{espansione finale cal V 0} and it has the form 
$$
{\cal V}_0^{(1)}(\vphi) = \Big(2 \omega \cdot \partial_\vphi \alpha(\vphi, x) + V(\vphi, x) \Big) |D|^M  + OPS^{M - \overline{\frak e}}
$$ where the constant $\overline{\frak e} > 0$ is defined in \eqref{costante frak e bar M < 1}. By choosing $\omega \in \R^\nu$ diophantine (see \eqref{vettori diofantei}), we can determine the function $\alpha$ so that 
$$
2 \omega \cdot \partial_\vphi \alpha(\vphi, x) + V(\vphi, x) = \langle V \rangle_\vphi(x) \,, \quad  \langle V \rangle_\vphi(x) := \frac{1}{(2 \pi)^\nu} \int_{\T^\nu} V(\vphi, x)\, d \vphi
$$
so that 
$$
{\cal V}_0^{(1)}(\vphi) = \langle V \rangle_\vphi(x) |D|^M + OPS^{M - \overline{\frak e}}
$$
has the highest order $\langle V \rangle_\vphi(x) |D|^M$ independent of $\vphi$. Then, in Section \ref{riduzione spazio ordine massimo M < 1} we remove the $x$ dependence from the highest order term $\langle V \rangle_\vphi(x) |D|^M$. In order to achieve this purpose, we conjugate the vector field $\ii {\cal V}_0^{(1)}$ by means of the time-1 flow map $\Phi_0^{(2)}$ of the transport equation 
$$
\partial_\tau u = \Big( b(\tau; x) \partial_x +  \frac{b_x(\tau; x)}{2}\Big) u\,, \quad b(\tau; x) := - \frac{\beta (x)}{1 + \tau \beta_x (x)}\,. 
$$
Note that the function $b$ is $\vphi$-independent, implying that also the map $\Phi_0^{(2)}$ is $\vphi$-independent, therefore the transformed vector field is given by $\ii {\cal V}_1(\vphi)$, ${\cal V}_1(\vphi) := \Phi_0^{(2)} {\cal V}_0^{(1)} (\Phi_0^{(2)})^{- 1}$. The expansion of the operator ${\cal V}_1(\vphi) = {\rm Op}\big( v_1(\vphi, x, \xi) \big)$ is given  in Lemma \ref{espansione simbolo principale egorov} and its highest order term is given by 
$$
\Big[\langle V\rangle_\vphi( y) \big( 1 + \partial_y \widetilde \beta( y) \big)^M \Big]_{y = x + \beta( x)} |D|^M
$$
where $y \mapsto y + \widetilde \beta(y)$ is the inverse diffeomorphism of $x \mapsto x + \beta(x)$. Then we determine the function $\widetilde \beta$ and a constant $\lambda > 0$ so that 
$$
\langle V\rangle_\vphi( y) \big( 1 + \partial_y \widetilde \beta( y) \big)^M = \lambda\,. 
$$
In order to solve this equation, we use the hypothesis $\bf (H3)_{M < 1}$, i.e. $\inf_{(\vphi, x) \in \T^{\nu + 1}} V(\vphi, x) > 0$, then one gets that ${\cal V}_1(\vphi) = \lambda |D|^M + OPS^{M  - \overline{\frak e}}$. Thanks to the Hamiltonian structure one also gets that $\lambda \in \R$. 
\item{\it Reduction of the lower order terms.} 
Given $N > 0$, the next step is to transform the vector field $\ii {\cal V}_1(\vphi)$, ${\cal V}_1(\vphi) = \lambda |D|^M + {\cal W}_1(\vphi)$, ${\cal W}_1 \in OPS^{M - \overline{\frak e}}$ into another one of the form $\ii {\cal V}_N(\vphi)$, ${\cal V}_N(\vphi) = \lambda |D|^M + \mu_N(D) + OPS^{M - N \overline{\frak e}}$ where $\mu_N(D)$ is a time independent Fourier multiplier of order $M - \overline{\frak e}$. This is achieved in Section \ref{sezione riduzione ordini bassi M < 1}, by means of an iterative procedure. At the $n$-th step of such a procedure, we deal with $\ii {\cal V}_n(\vphi)$, ${\cal V}_n(\vphi) = \lambda |D|^M + \mu_n(D) + {\cal W}_n(\vphi)$ where $\mu_n(D)$ is a time independent Fourier multiplier of order $M - \overline{\frak e}$ and ${\cal W}_n(\vphi) = {\rm Op}\big( w_n(\vphi, x, \xi) \big)  \in OPS^{M - n \overline{\frak e}}$. First, in Section \ref{ordini bassi tempo M < 1}, we remove the $\vphi$-dependence from the symbol $w_n(\vphi, x, \xi)$ by conjugating $\ii {\cal V}_n(\vphi)$ with the time one flow map $\Phi_n^{(1)}(\vphi)$ of a Hamiltonian vector field $\ii {\cal G}_n^{(1)}(\vphi)$, ${\cal G}_n^{(1)}(\vphi) = {\rm Op}\big( g_n^{(1)}(\vphi, x, \xi) \big) \in OPS^{M - n \overline{\frak e}}$. The transformed vector field is given by 
$$
{\cal V}_n^{(1)}(\vphi) = \lambda |D|^M + \mu_n(D) + {\rm Op}\Big( w_n(\vphi, x, \xi) +  \omega \cdot \partial_\vphi g_{n}^{(1)}(\vphi, x, \xi) \Big) + OPS^{M - (n + 1) \overline{\frak e}}
$$
see \eqref{espansione tilde vn M < 1}. Since $\omega \in \R^\nu$ is a diophantine frequency, we choose the symbol $g_n^{(1)}$ so that 
$$
w_n(\vphi, x, \xi) +  \omega \cdot \partial_\vphi g_{n}^{(1)}(\vphi, x, \xi) = \langle w_n \rangle_\vphi (x, \xi), \quad \langle w_n \rangle_\vphi(x, \xi) := \frac{1}{(2 \pi)^\nu} \int_{\T^\nu} w_n(\vphi, x, \xi)\, d \vphi\,. 
$$
therefore we have removed the $\vphi$-dependence at the order $M - n \overline{\frak e}$ and 
$$
{\cal V}_n^{(1)}(\vphi) = \lambda |D|^M + \mu_n(D) + \langle w_n \rangle_\vphi(x, D) + OPS^{M - (n + 1) \overline{\frak e}}\,. 
$$
Then, in Section \ref{reduction lower orders space M < 1}, we remove the $x$-dependence from the symbol $\langle w_n \rangle_\vphi$, by conjugating the vector field $\ii {\cal V}_n^{(1)}(\vphi)$ with the time one flow map $\Phi_n^{(2)}$ of a Hamiltonian vector field $\ii{\cal G}_n^{(2)}$, ${\cal G}_n^{(2)} = {\rm Op}(g_n^{(2)}(x, \xi)) \in OPS^{1 - n \overline{\frak e}}$. The expansion of the transformed vector field $\ii {\cal V}_{n + 1}(\vphi)$ is given by 
$$
{\cal V}_{n + 1}(\vphi) = \Phi_n^{(2)} {\cal V}_n^{(1)}(\vphi) (\Phi_n^{(2)})^{- 1} = \lambda |D|^M + \mu_n(D) + {\rm Op}\Big( \langle w_n \rangle_\vphi + \{ g_n^{(2)}\,,\, \lambda |\xi|^M \} \Big) + OPS^{M - (n + 1) \overline{\frak e}}\,. 
$$
Then in Lemma \ref{lemma equazione omologica spazio lower orders M < 1}, we determine the symbol $g_n^{(2)}$ so that 
$$
\langle w_n \rangle_\vphi + \{ g_n^{(2)}\,,\, \lambda |\xi|^M \} = \langle w_n \rangle_{\vphi, x} + OPS^{M - (n + 1) \overline{\frak e}}, \quad \langle w_n \rangle_{\vphi, x} := \frac{1}{2 \pi} \int_\T \langle w_n  \rangle_\vphi(x, \xi)\, d x 
$$
implying that 
$$
{\cal V}_{n + 1}(\vphi) = \lambda |D|^M + \mu_{n + 1}(D) + OPS^{M - (n + 1) \overline{\frak e}}, \quad \mu_{n + 1}(D) := \mu_{n }(D) + {\rm Op}\big( \langle w_n \rangle_{\vphi, x}(\xi) \big)\,. 
$$
\end{enumerate}
\subsubsection{Normal form in the case $M = 1$}\label{idea forma normale M = 1}
The regularization procedure for the vector field $\ii{\cal V}(\vphi)$, ${\cal V}(\vphi) = V(\vphi, x) |D| + {\cal W}(\vphi)$, ${\cal W} \in OPS^{1 - \frak e}$, $\frak e > 0$, $V(\vphi, x) = 1 + \e P(\vphi, x)$ is developed in Section \ref{regolarizzazione caso M = 1}. In the following we will give a sketch of the proof of such a procedure. 
\begin{enumerate}
\item{\it Reduction of the highest order term. }
In order to reduce to constant coefficients the highest order term $\ii V(\vphi, x) |D|$, we conjugate the vector filed $\ii {\cal V}(\vphi)$ with the map 
$$
\Phi(\vphi ) := \Phi_+(\vphi)^{- 1} \Pi_+ + \Phi_-(\vphi)^{- 1} \Pi_- 
$$
where $\Pi_+ $ (resp. $\Pi_-$) are the projection operators on the positive (resp. negative) Fourier modes and $\Phi_{\pm}(\vphi)$ are the operators given by 
$$
\Phi_{\pm}(\vphi) : u (x) \mapsto \sqrt{1 + (\partial_x \alpha_\pm)(\vphi, x)}\,\, u(x + \alpha_\pm(\vphi, x))
$$
with $\alpha_+, \alpha_-$ being  ${\cal C}^\infty$ functions to be determined. The transformed vector field is $\ii {\cal V}_1(\vphi)$ with 
\begin{equation}\label{ii V0 intro}
\begin{aligned}
 {\cal V}_0(\vphi) & =   \Pi_+ \Big(\Big(  \big( 1 + \e P \big)\big( 1 + (\partial_y \widetilde \alpha_{+}) \big) - \omega \cdot \partial_\vphi \widetilde \alpha_{+} \Big)_{| y = x + \alpha_{\pm}(\vphi, x)}  |D| \Big) \Pi_+ \\
& \quad +  \Pi_- \Big(\Big(  \big( 1 + \e P  \big)\big( 1 + (\partial_y \widetilde \alpha_{-}) \big) + \omega \cdot \partial_\vphi \widetilde \alpha_{-} \Big)_{| y = x + \alpha_{\pm}(\vphi, x)}  |D| \Big) \Pi_- \\
& + OPS^{M - \overline{\frak e}}
\end{aligned}
\end{equation}
where the constant $\overline{\frak e} > 0$ is defined in \eqref{def bar e M = 1} and the map $y \mapsto y + \widetilde \alpha_\pm(\vphi, y)$ is the inverse diffeomorphism of $x \mapsto x + \alpha(\vphi, x)$. The reason for which we introduce the projectors $\Pi_+$ and $\Pi_-$ is the following: there are two terms which give a contribution to the highest order in the transformed vector field $\ii {\cal V}_0(\vphi)$. The ones coming from the conjugation $\Phi_\pm(\vphi) \ii {\cal V}(\vphi) \Phi_\pm(\vphi)^{- 1}$ are given by  $\ii \Big(  \big( 1 + \e P \big)\big( 1 + (\partial_y \widetilde \alpha_{\pm}) \big)  \Big)_{| y = x + \alpha_{\pm}(\vphi, x)}  |D|$ and the other ones coming from $\Phi_\pm(\vphi) \omega \cdot \partial_\vphi \big(\Phi_\pm(\vphi)^{- 1} \big)$ are $ \mp \Big( \omega \cdot \partial_\vphi \widetilde \alpha_\pm \Big)_{| y = x + \alpha_\pm(\vphi, x)} \partial_x$. Then, in order to reduce to constant coefficients the term of order one, we have to compare the operators $\ii |D|$ and $\partial_x$. These two operators are the same (up to a sign) if we project them to positive and negative Fourier modes, since they satisfy the elementary properties $\ii |D| \Pi_+ = \partial_x \Pi_+$, $ \ii |D| \Pi_- = - \partial_x \Pi_- $. 

\noindent
In order to reduce to constant coefficients the highest order term in \eqref{ii V0 intro}, we look for {\it small} functions $\widetilde \alpha_\pm \in {\cal C}^\infty$ and constants $\lambda_\pm \in \R$ close to $1$, so that 
$$
\big( 1 + \e P \big)\big( 1 + (\partial_y \widetilde \alpha_{\pm}) \big) \mp \omega \cdot \partial_\vphi \widetilde \alpha_{\pm} = \lambda_\pm\,. 
$$
This is a quasi-periodic transport equation, which is solved by applying Proposition \ref{proposizione principale trasporto} in the appendix. Note that this is the only point in which we require a smallness condition on the parameter $\e$. 

\item{\it Reduction of the lower order terms} : The expansion of the vector field $\ii {\cal V}_0(\vphi)$ is given in \eqref{forma finalissima cal V 0 M = 1} and it has the form ${\cal V}_0(\vphi) = \Pi_+ \big( \lambda_+ |D| + {\cal W}_{0, +}(\vphi) \big) \Pi_+ + \Pi_- \big( \lambda_- |D| + {\cal W}_{0, -}(\vphi) \big) \Pi_- + OPS^{- \infty}$ (see \eqref{forma finalissima cal V 0 M = 1}) where ${\cal W}_{0, \pm} \in OPS^{1 - \overline{\frak e}}$. Given $N > 0$, our next goal is to transform the vector field $\ii {\cal V}_0(\vphi)$ into another one $\ii {\cal V}_N(\vphi)$, which has the form 
${\cal V}_N(\vphi) = \Pi_+ \big( \lambda_+ |D| + \mu_{N, + }(D) \big) \Pi_+ + \Pi_- \big( \lambda_- |D| + \mu_{N, - }(D) \big) \Pi_- + OPS^{1 - N \overline{\frak e}} $ where $\mu_{N, \pm}(D) \in OPS^{1 - \overline{\frak e}}$ is a $\vphi$-independent Fourier multiplier with real symbol. This is achieved by means of an iterative procedure which is developed in Section \ref{sezione ordini bassi M = 1}. At the $n$-th step of such a procedure, we deal with a vector field $\ii {\cal V}_n(\vphi)$, 
${\cal V}_n(\vphi) = \Pi_+ {\cal V}_{n, +}(\vphi)  \Pi_+ + \Pi_-  {\cal V}_{n, -}(\vphi) \Pi_- + OPS^{- \infty}$, ${\cal V}_{n, \pm}(\vphi) = \lambda_\pm |D| + \mu_{n, \pm}(D) + {\cal W}_{n, \pm}(\vphi)$ where $\mu_{n, \pm}(D)$ are Fourier multipliers with real symbols of order $1 - \overline{\frak e}$ and ${\cal W}_{n, \pm}(\vphi) \in OPS^{1 - n \overline{\frak e}}$. The vector fields $\ii {\cal V}_{n, \pm}$ are Hamiltonian, i.e. ${\cal V}_{n, \pm}$ are self-adjoint operators. In order to reduce to constant coefficients the terms of order $1 - n \overline{\frak e}$, we conjugate the vector field $\ii {\cal V}_n(\vphi)$ by means of the map 
$$
\Phi_n(\vphi) = \Phi_{n, +}(\vphi)^{- 1} \Pi_+ + \Phi_{n, -}(\vphi)^{- 1} \Pi_-
$$
where $\Phi_{n, \pm}(\vphi)$ are the time one flow maps of Hamiltonian vector fields $\ii {\cal G}_{n, \pm}(\vphi) $ with 
${\cal G}_{n, \pm}(\vphi) = {\rm Op}\big( g_{n, \pm}(\vphi, x, \xi) \big) \in OPS^{1 - n \frak e}$. Note that, even if $\Phi_{n, \pm}(\vphi)$ is symplectic, the map $\Phi_n$ is not symplectic. By applying Lemma \ref{push forward splitting Pi +-}, one gets that the transformed vector field has the form $\ii {\cal V}_{n + 1}(\vphi)$ 
$$
{\cal V}_{n + 1}(\vphi) = \Pi_+ {\cal V}_{n + 1, +}(\vphi) \Pi_+ + \Pi_- {\cal V}_{n +1, -}(\vphi) \Pi_- + OPS^{- \infty}
$$
where $\ii {\cal V}_{n + 1, \pm} = (\Phi_{n, \pm}^{- 1})_{\omega*} (\ii {\cal V}_{n, \pm})$ are Hamiltonian vector fields. The symbols $v_{n + 1, \pm}$ of ${\cal V}_{n + 1, \pm}$ have the expansion 
$$
v_{n + 1, \pm} = \lambda_{\pm} |\xi |  + \mu_{n, \pm} + w_{n, \pm} + \omega \cdot \partial_\vphi g_{n, \pm} - \lambda_{\pm} \partial_x g_{n, \pm} {\rm sign}(\xi) + S^{1 - (n + 1) \overline{\frak e}}\,. 
$$
The order $1 - n \overline{\frak e}$ is reduced to constant coefficients in Lemma \ref{lemma equazione omologica ordini bassi M = 1}, by choosing the symbol $g_{n, \pm}$ so that 
$$
\begin{aligned}
& w_{n, \pm} + \omega \cdot \partial_\vphi g_{n, \pm} - \lambda_{\pm} \partial_x g_{n, \pm} {\rm sign}(\xi) = \langle w_n \rangle_{\vphi, x} + S^{1 - (n + 1) \overline{\frak e}}\,,  \\
&  \langle w_n \rangle_{\vphi, x}(\xi):= \frac{1}{(2 \pi)^{\nu + 1}} \int_{\T^{\nu + 1}} w_n(\vphi, x, \xi)\, d \vphi\, d x\,. 
\end{aligned}
$$
In order to make this choice, we impose some first order Melnikov conditions on $\omega$, i.e. we require that 
$$
|\omega \cdot \ell + \lambda_{\pm} \, j | \geq \frac{\gamma}{\langle \ell \rangle^\tau}\,, \quad \forall (\ell, j) \in \Z^{\nu + 1} \setminus \{ (0, 0) \}\,. 
$$
\end{enumerate}
Then $v_{n + 1, \pm} = \lambda_{\pm} |\xi |  + \mu_{n + 1, \pm} + S^{1 - (n + 1) \overline{\frak e}}$ with $\mu_{n + 1, \pm} = \mu_{n, \pm} + \langle w_{n, \pm}\rangle_{\vphi, x}$. Thanks to the Hamiltonian structure of  the vector fields $\ii{\cal V}_{n, \pm}$ one  gets that $\mu_{n + 1, \pm}(D)$ is a Fourier multiplier with real symbol. 

\smallskip

\noindent
The paper is organized as follows: in Section \ref{sezione pseudo diff operator} we provide some technical tools which are needed for the proof of Theorems \ref{teorema riduzione} and \ref{teorema riduzione M = 1}. In Sections \ref{sezione riduzione M < 1}, \ref{regolarizzazione caso M = 1}, we develop the normal form procedures described in Sections \ref{idea forma normale M < 1}, \ref{idea forma normale M = 1} and we prove Theorems \ref{teorema riduzione}, \ref{teorema riduzione M = 1}. Finally, in Section \ref{proof crescita} we prove Theorems \ref{teo growth of sobolev norms M < 1}, \ref{teo growth of sobolev norms M = 1}. 

\smallskip

\noindent
{\it Acknowledgements}. The author warmly thanks Giuseppe Genovese, Thomas Kappeler, Alberto Maspero and Michela Procesi for many useful discussions and comments.  

\section{Pseudo differential operators and flows of pseudo-PDEs}\label{sezione pseudo diff operator}
In this section, we recall some well-known definitions and results concerning pseudo differential operators on the torus $\T$. We always consider $\vphi$-dependent symbols $a(\vphi, x, \xi)$, $(\vphi, x, \xi) \in \T^\nu \times \T \times \R$, depending in a ${\cal C}^\infty$ way on the whole variables. 
Since $\vphi$ plays the role of a parameter, all the well known results apply to these symbols without any modification (we refer for instance to \cite{SV}, \cite{Taylor}). 
For the symbol class $S^m$ given in the definition \ref{classe simboli} and the operator class $OPS^m$ given in the definition \ref{def:Ps2}, the following standard inclusions hold: 
\begin{equation}\label{inclusioni Sm OPSm}
S^m \subseteq S^{m'}\,, \quad OPS^m \subseteq OPS^{m'}\,, \quad \forall m \leq m'\,. 
\end{equation}

%Actually the time $t$ is only a parameter, hence in this section we neglect the dependence on such a variable, since all the well-known results apply without any modification. 
\begin{theorem}[\bf Calderon-Vallancourt]\label{conitnuita pseudo}
Let $m \in \R$ and $A = a( \vphi, x, D) \in OPS^m$. Then for any $\alpha \in \N^\nu$ the operator $\partial_\vphi^\alpha A(\vphi) \in {\cal B}(H^{s + m}(\T), H^s(\T))$ with $\sup_{\vphi \in \T^\nu} \| \partial_\vphi^\alpha A(\vphi) \|_{{\cal B}(H^{s + m}, H^s)} < + \infty$. 
\end{theorem}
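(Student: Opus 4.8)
## Proof Strategy for the Calderón–Vaillancourt Theorem on $\T$

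The plan is to prove $L^2$-boundedness first (the case $s = 0$, $m = 0$) and then bootstrap to general $s$ and $m$ using elementary properties of the operators $\langle D \rangle^r$. Since $\vphi$ enters only as a smooth parameter ranging over the compact set $\T^\nu$, every estimate I produce will be uniform in $\vphi$ once it is expressed in terms of finitely many of the seminorm constants $C_{\alpha, n, k}$ from Definition \ref{classe simboli}; the statement about $\partial_\vphi^\alpha A$ then follows because $\partial_\vphi^\alpha A(\vphi) = \mathrm{Op}(\partial_\vphi^\alpha a(\vphi, x, \xi))$ and $\partial_\vphi^\alpha a \in S^m$ whenever $a \in S^m$, so it suffices to treat $\alpha = 0$.

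First I would reduce to $m = 0$. If $A = \mathrm{Op}(a) \in OPS^m$, write $A = \big( A \langle D \rangle^{-m} \big) \langle D \rangle^m$. The operator $\langle D \rangle^m : H^{s+m} \to H^s$ is an isometry (up to the obvious identification), so it is enough to show $A \langle D \rangle^{-m} \in {\cal B}(H^s)$ for every $s$. By the composition calculus for pseudo-differential operators on $\T$ (symbolic calculus, applicable here since $\vphi$ is just a parameter), $A \langle D \rangle^{-m} \in OPS^0$, and likewise $\langle D \rangle^{s} \big( A \langle D \rangle^{-m}\big) \langle D \rangle^{-s} \in OPS^0$ with seminorms controlled by those of $a$. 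Hence the whole theorem reduces to the single assertion: every $B = \mathrm{Op}(b) \in OPS^0$ is bounded on $L^2(\T)$, with operator norm bounded by a finite sum of the constants $C_{\alpha, n, k}$ for $|\alpha|, n, k$ below some universal threshold.

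For the $L^2$ estimate on the torus I would use the Fourier-series representation directly: for $u \in {\cal C}^\infty(\T)$,
\[
\widehat{Bu}(\eta) = \sum_{\xi \in \Z} \widehat{b}(\eta - \xi, \xi)\, \widehat{u}(\xi),
\]
where $\widehat{b}(\cdot, \xi)$ is the Fourier coefficient of $x \mapsto b(x, \xi)$ (I suppress $\vphi$). Integrating $\partial_x^k b(\cdot, \xi)$ by parts $k$ times shows $|\widehat{b}(\zeta, \xi)| \leq C_{0,0,k}\, \langle \zeta \rangle^{-k}$, uniformly in $\xi$, for every $k$. This gives a kernel bound $|\widehat{b}(\eta - \xi, \xi)| \leq C_N \langle \eta - \xi \rangle^{-N}$ on the "matrix" of $B$ in the Fourier basis, with $C_N$ a seminorm of $b$; Schur's test (the rows and columns of this matrix are summable, uniformly, once $N > 1$) then yields $\| B \|_{{\cal B}(L^2)} \leq C_2$, say. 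This is the cleanest route in the periodic setting — one avoids the dyadic decomposition and the Cotlar–Stein machinery needed on $\R^d$, because the smoothing in $\xi$ plus the $x$-regularity already forces fast off-diagonal decay. No genuine obstacle arises here; the only care needed is keeping track of which seminorms $C_{\alpha, n, k}$ are invoked so that the bounds are manifestly uniform in $\vphi$, and confirming that $B$ extends from ${\cal C}^\infty(\T)$ (dense in $L^2$) to all of $L^2$ by continuity. The main "work," such as it is, is the verification that the composition formula does not cost us anything — i.e. that $\langle D\rangle^s A \langle D\rangle^{-m-s} \in OPS^0$ with seminorms bounded in terms of those of $a$ — but this is standard symbolic calculus and I would simply cite it.
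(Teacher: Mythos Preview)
Your argument is correct and is essentially the standard route to Calder\'on--Vaillancourt in the periodic setting: reduce to $m=0$, $s=0$ via conjugation by $\langle D\rangle^r$, then exploit the off-diagonal decay of the Fourier matrix coming from $x$-regularity together with Schur's test. One small remark: you do not actually need the full composition calculus for the reduction. Since $\langle D\rangle^{-m}$ is a Fourier multiplier acting on the right, $A\langle D\rangle^{-m} = \mathrm{Op}\big(a(\vphi,x,\xi)\langle\xi\rangle^{-m}\big)$ exactly, so this is in $OPS^0$ by inspection; and to pass from $L^2$ to $H^s$ you can bypass the symbolic conjugation $\langle D\rangle^s B\langle D\rangle^{-s}$ altogether by inserting Peetre's inequality $\langle\eta\rangle^s \lesssim \langle\eta-\xi\rangle^{|s|}\langle\xi\rangle^s$ directly into the Schur estimate, which keeps the argument self-contained.

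As for comparison with the paper: the paper does not prove this theorem. It is stated as a classical result at the opening of Section~\ref{sezione pseudo diff operator}, with a blanket reference to \cite{SV}, \cite{Taylor} for the standard pseudo-differential toolbox on the torus. So there is no ``paper's own proof'' to compare against; your proposal simply supplies what the paper takes for granted.
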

\begin{definition}[\bf Asymptotic expansion]\label{definizione espansione asintotica}
Let $(m_k)_{k \in \N}$ be a strictly decreasing sequence of real numbers converging to $- \infty$ and $a_k \in S^{m_k}$ for any $k \in \N$. We say that $a \in S^{m_0}$ has the asymptotic expansion $\sum_{k \geq 0} a_k$, i.e. $
a \sim \sum_{k \geq 0} a_k$ if $a - \sum_{k = 0}^{N} a_k \in S^{m_{N + 1}}$ for any $N \in \N$. 
\end{definition}
\begin{theorem}[{\bf Composition}]\label{teorema composizione pseudo}
Let $m, m' \in \R$ and $ A(\vphi) = a(\vphi, x, D) \in OPS^{m} $, $ B(\vphi) = b(\vphi, x, D) \in  OPS^{m'} $. Then the composition operator 
$ A(\vphi) B(\vphi) := A(\vphi) \circ B(\vphi) = {\rm Op}(\sigma_{AB})$ is a pseudo-differential operator in $OPS^{m + m'}$. The symbol $\sigma_{AB} $ has the following asymptotic expansion
\be\label{composition pseudo}
\sigma_{AB} \sim {\mathop \sum}_{\alpha \geq 0} \frac{1}{\ii^\alpha \alpha !} \partial_\xi^\alpha a  \partial_x^\alpha b \, , 
\ee
meaning that for any $ N \in \N$,  
\be\label{expansion symbol}
\s_{AB}  = \sum_{\b =0}^{N-1} \frac{1}{ \ii^\alpha \alpha !  }  \pa_\xi^\alpha a  \, \pa_x^\alpha b  +  S^{m + m' - N }  \, .
\ee
%The remainder $ r_N $ has the explicit formula 
%\be\label{rNTaylor}
%r_N (\vphi, x, \xi) := \frac{1}{(N-1)! \, \ii^N} \int_0^1 (1- \tau )^{N-1}  
%\sum_{\eta \in \Z} (\pa_\xi^N a)(\vphi, x, \xi + \tau \eta ) \widehat {\pa_x^N b} (\vphi, \eta , \xi) e^{\ii \eta x } \, d \tau  \, .
%\ee
\end{theorem}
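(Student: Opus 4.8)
The plan is to compute the symbol of the composition $A(\vphi)B(\vphi)$ explicitly from the Fourier--series definition of Definition \ref{def:Ps2}, and then to read off the expansion \eqref{composition pseudo} by Taylor-expanding $a$ in the frequency variable. First I would apply $B(\vphi)$ to $u\in{\cal C}^\infty(\T)$, compute the Fourier coefficients of $B(\vphi)[u]$ in terms of the $x$-Fourier coefficients $\widehat b(\vphi,\zeta,\eta):=\frac{1}{2\pi}\int_\T b(\vphi,x,\eta)e^{-\ii x\zeta}\,dx$ of the symbol $b$, then apply $A(\vphi)$ and interchange the order of summation (legitimate by the rapid decay of $\widehat u(\eta)$ together with the symbol bounds on $a,b$). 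This gives $A(\vphi)B(\vphi)={\rm Op}(\sigma_{AB})$ with
\be
\sigma_{AB}(\vphi,x,\eta) = \sum_{\zeta\in\Z} a(\vphi,x,\eta+\zeta)\,\widehat b(\vphi,\zeta,\eta)\,e^{\ii x\zeta}\,.
\ee
Absolute convergence of this series (with the appropriate weight in $\eta$) follows from two elementary bounds: integrating by parts $k$ times in $x$ yields $|\partial_\vphi^\beta\widehat b(\vphi,\zeta,\eta)|\leq C_{\beta,k}\langle\zeta\rangle^{-k}\langle\eta\rangle^{m'}$ for every $k\in\N$, while the symbol estimate on $a$ combined with Peetre's inequality gives $|\partial_\vphi^{\beta'}\partial_x^j\partial_\xi^n a(\vphi,x,\eta+\zeta)|\leq C\langle\eta\rangle^{m-n}\langle\zeta\rangle^{|m-n|}$.

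Next, for fixed $N\in\N$ I would Taylor-expand $a$ in its last argument,
\be
a(\vphi,x,\eta+\zeta) = \sum_{\alpha=0}^{N-1}\frac{\zeta^\alpha}{\alpha!}\,\partial_\xi^\alpha a(\vphi,x,\eta) + \frac{\zeta^N}{(N-1)!}\int_0^1(1-t)^{N-1}\,\partial_\xi^N a(\vphi,x,\eta+t\zeta)\,dt\,,
\ee
substitute into the formula for $\sigma_{AB}$, and use the identity $\sum_{\zeta\in\Z}\zeta^\alpha\,\widehat b(\vphi,\zeta,\eta)\,e^{\ii x\zeta}=\ii^{-\alpha}\partial_x^\alpha b(\vphi,x,\eta)$, which is just $\partial_x^\alpha$ applied to the Fourier series of $b$ in $x$. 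The polynomial part then produces exactly the finite sum $\sum_{\alpha=0}^{N-1}\frac{1}{\ii^\alpha\alpha!}\,\partial_\xi^\alpha a\,\partial_x^\alpha b$ appearing in \eqref{expansion symbol}, and it remains to show that the remainder $r_N(\vphi,x,\eta)$ --- the contribution of the integral term --- lies in $S^{m+m'-N}$. For this I would use again the decay bound on $\widehat b$ with $k$ chosen large (depending on $N$, $n$, $\nu$ and $m$) together with $|\partial_\xi^{N+n}a(\vphi,x,\eta+t\zeta)|\leq C\langle\eta\rangle^{m-N-n}\langle\zeta\rangle^{|m-N-n|}$, differentiate under the $\zeta$-sum and the $t$-integral, and check that each derivative $\partial_\vphi^\beta\partial_x^j\partial_\eta^n r_N$ is bounded by a convergent series times $\langle\eta\rangle^{m+m'-N-n}$. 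The two points that make the bookkeeping work are that every $\partial_x$ falling on $e^{\ii x\zeta}$ contributes only an extra factor $\zeta$, absorbed by the rapid decay of $\widehat b$, and that every $\partial_\eta$ gains one unit of decay in $\eta$ because it either hits $\partial_\xi^N a$, raising it to $\partial_\xi^{N+1}a$, or hits $\widehat b(\vphi,\zeta,\eta)$, replacing $b$ by $\partial_\eta b\in S^{m'-1}$.

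This gives $\sigma_{AB}\in S^{m+m'}$ with $\sigma_{AB}-\sum_{\alpha=0}^{N-1}\frac{1}{\ii^\alpha\alpha!}\partial_\xi^\alpha a\,\partial_x^\alpha b\in S^{m+m'-N}$ for every $N$; since $\partial_\xi^\alpha a\,\partial_x^\alpha b\in S^{m+m'-\alpha}$ form a decreasing sequence of symbol classes, this is precisely the asymptotic expansion \eqref{composition pseudo} in the sense of Definition \ref{definizione espansione asintotica}, and $A(\vphi)B(\vphi)={\rm Op}(\sigma_{AB})\in OPS^{m+m'}$ as claimed. The main obstacle I expect is exactly the uniform-in-$\eta$ estimate of $r_N$: its natural bound involves $\langle\eta+t\zeta\rangle^{m-N}$, and turning this into a clean $\langle\eta\rangle^{m+m'-N}$ estimate requires combining Peetre's inequality with the fact that the $x$-Fourier coefficients of $b$ decay faster than any power of $\zeta$ \emph{uniformly in} $\eta$ (up to the weight $\langle\eta\rangle^{m'}$); the $\vphi$-dependence introduces no new difficulty, since $\vphi$ enters only as a smooth parameter and all estimates are uniform in it.
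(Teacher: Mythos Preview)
Your argument is correct and is the standard proof of the composition formula for periodic pseudo-differential operators: write the composed symbol as a discrete convolution in the $x$-Fourier variable of $b$, Taylor-expand $a$ in $\xi$, and control the integral remainder via Peetre's inequality together with the rapid decay of $\widehat b(\vphi,\zeta,\eta)$ in $\zeta$.

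The paper, however, does not supply a proof of this theorem at all. It is stated as a classical result, with the blanket reference at the opening of Section~\ref{sezione pseudo diff operator} to \cite{SV} and \cite{Taylor} (``all the well known results apply to these symbols without any modification''). So there is nothing to compare against: you have written out precisely the argument one finds in those references (in particular the toroidal version in \cite{SV}), while the paper simply quotes the statement. Your handling of the $\vphi$-dependence as a smooth parameter is exactly what the paper has in mind when it says $\vphi$ ``plays the role of a parameter''.
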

\begin{corollary}\label{corollario commutator}
Let $m, m' \in \R$ and let $A = {\rm Op}(a)$, $B = {\rm Op}(b)$. Then the commutator $[A, B] = {\rm Op}(a \star b)$, with $a \star b \in S^{m + m' - 1}$ having the following asymptotic expansion: 
$$
a \star b = - \ii \{a, b \} + S^{m + m' - 2}\,, \quad \{ a, b \} := \partial_\xi a \partial_x b - \partial_x a \partial_\xi b \in S^{m + m'-1} \,.
$$
\end{corollary}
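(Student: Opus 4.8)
The plan is to derive this directly from the Composition Theorem (Theorem \ref{teorema composizione pseudo}) applied to both orderings of the product. First I would write $[A,B] = AB - BA$ and observe that by Theorem \ref{teorema composizione pseudo} both $AB = \Op(\sigma_{AB})$ and $BA = \Op(\sigma_{BA})$ belong to $OPS^{m + m'}$, with
$$
\sigma_{AB} \sim \sum_{\alpha \geq 0} \frac{1}{\ii^\alpha \alpha!}\, \partial_\xi^\alpha a \, \partial_x^\alpha b\,, \qquad
\sigma_{BA} \sim \sum_{\alpha \geq 0} \frac{1}{\ii^\alpha \alpha!}\, \partial_\xi^\alpha b \, \partial_x^\alpha a\,.
$$
Hence $[A,B] = \Op(a \star b)$ with $a \star b := \sigma_{AB} - \sigma_{BA}$, and it only remains to expand this difference.

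Next I would use the quantitative remainder estimate \eqref{expansion symbol} with $N = 2$, which gives $\sigma_{AB} = ab + \ii^{-1}\, \partial_\xi a \, \partial_x b + S^{m + m' - 2}$ and, symmetrically, $\sigma_{BA} = ba + \ii^{-1}\, \partial_\xi b \, \partial_x a + S^{m + m' - 2}$. The zeroth order terms $ab$ and $ba$ coincide as scalar functions and therefore cancel in the difference; this cancellation is precisely what upgrades $a \star b$ from a priori order $m + m'$ to order $m + m' - 1$. Subtracting the first order terms yields
$$
a \star b = \frac{1}{\ii}\big( \partial_\xi a \, \partial_x b - \partial_\xi b \, \partial_x a \big) + S^{m + m' - 2}\,,
$$
and since $\ii^{-1} = - \ii$ and the symbols commute as functions, the right-hand side equals $- \ii \{ a, b\} + S^{m + m' - 2}$ with $\{ a, b\} = \partial_\xi a \, \partial_x b - \partial_x a \, \partial_\xi b$, as claimed.

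Finally I would record the symbol orders. Since each $\partial_\xi$ lowers the order by one (Definition \ref{classe simboli}) while products of symbols add orders, we have $\partial_\xi a \in S^{m - 1}$ and $\partial_x b \in S^{m'}$, so $\partial_\xi a \, \partial_x b \in S^{m + m' - 1}$, and symmetrically $\partial_x a \, \partial_\xi b \in S^{m + m' - 1}$; hence $\{ a, b\} \in S^{m + m' - 1}$, consistent with $a \star b \in S^{m + m' - 1}$. There is no genuine obstacle in this argument — it is a direct transcription of Theorem \ref{teorema composizione pseudo}. The only points deserving a moment of care are verifying that the principal symbols really do cancel (so that the gain of one order is not lost) and keeping track of the scalar factor $\ii^{-1} = - \ii$ when matching signs with the Poisson bracket.
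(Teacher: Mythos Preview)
Your proposal is correct and is exactly the intended argument: the paper states this as a corollary of the Composition Theorem \ref{teorema composizione pseudo} without supplying a proof, and your derivation---expanding $\sigma_{AB}$ and $\sigma_{BA}$ via \eqref{expansion symbol} with $N=2$, cancelling the zeroth order terms, and identifying the first order remainder with $-\ii\{a,b\}$---is precisely the standard transcription one would fill in.
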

%Given two pseudo differential operators $A \in OPS^m$, $B \in OPS^{m'}$, we define for any $k \in \N$, the operator ${\rm Ad}_A^k (B)$ in the following way: 
%\begin{equation}\label{definizione iterativa Ad}
%{\rm Ad}_A(B) := [A, B]\,, \qquad {\rm Ad}^{k + 1}_A (B) := [A, {\rm Ad}_A^k(B)]\,, \quad \forall k \geq 1\,. 
%\end{equation}
%The following corollary can be proved by induction, applying iteratively the Corollary \ref{corollario commutator}. 
%\begin{corollary}\label{corollario Ad}
%Let $m, m' \in \R$, $A \in OPS^m$, $B \in OPS^{m'}$. Then for any $k \in \N$ the operator ${\rm Ad}^k_A(B) $ is in $OPS^{k m + m' - k}$. 
%\end{corollary}
\begin{theorem}[{\bf Adjoint of a pseudo-differential operator}]\label{adjoint}
If  $ A (\vphi)=  a(\vphi, x, D)  \in OPS^m $ is a pseudo-differential operator with symbol $ a \in S^m $, then its $L^2$-adjoint is a pseudo-differential operator $A^* = {\rm Op}(a^*) \in OPS^m$. 
%defined by
%\begin{equation}\label{simbolo aggiunto senza tempo}
%A^* = {\rm Op}(a^*) \qquad {\rm with \ symbol}  
%\qquad a^*(\vphi, x, \xi) := \overline{{\mathop \sum}_{\eta \in \Z} \widehat a(\vphi, \eta, \xi - \eta) e^{\ii \eta x }}\,.
%\end{equation}
The symbol $a^* \in S^m$ admits the asymptotic expansion 
\begin{equation}\label{espansione asintotica aggiunto}
a^*\sim \sum_{\alpha \in \N} \frac{1}{\ii^\alpha \alpha !}  \overline{\partial_x^\alpha \partial_\xi^\alpha a}
\end{equation}
meaning that for any $N \in \N$, 
$$
a^* = \sum_{\alpha = 0}^{N - 1} \frac{1}{\ii^\alpha \alpha !}  \overline{\partial_x^\alpha \partial_\xi^\alpha a} + S^{m - N}\, .
$$
%The remainder $r_N^*$ has the explicit formula 
%\begin{equation}\label{resto pseudo operatore aggiunto}
%r_N^*(\vphi, x, \xi) := \frac{(- 1)^N}{(N - 1)!} \int_0^1 (1 - \tau)^{N - 1} \overline{\sum_{\eta \in \Z} \widehat{\partial_\xi^N a}(\vphi, \eta, \xi - t \eta) \eta^N e^{\ii \eta x}} \, d \tau\,. 
%\end{equation}
\end{theorem}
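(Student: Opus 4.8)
The plan is to compute the adjoint directly from the Fourier--series definition of $A$ and then bring the resulting symbol into standard quantized form, following the same scheme as the proof of Theorem~\ref{teorema composizione pseudo}. First I would fix $\vphi\in\T^\nu$ and suppress it from the notation (it enters only as a smooth parameter, and the adjoint is the one with respect to $L^2_x:=L^2(\T)$; smoothness in $\vphi$ of all quantities below is automatic by differentiating under the integral sign). Testing the defining identity $\la Au,v\ra_{L^2}=\la u,A^*v\ra_{L^2}$ on $u,v\in{\cal C}^\infty(\T)$, inserting $A[u](x)=\sum_{\xi\in\Z}a(x,\xi)\widehat u(\xi)e^{\ii x\xi}$ together with $\widehat u(\xi)=\frac1{2\pi}\int_\T u(z)e^{-\ii z\xi}\,dz$, and reorganizing the (absolutely convergent, since $u,v$ are smooth) sums and integrals, one reads off that $A^*={\rm Op}(a^*)$ with
$$
a^*(y,\eta)\;=\;\frac1{2\pi}\sum_{\xi\in\Z}\int_\T \overline{a(y-z,\xi)}\,e^{\ii z(\xi-\eta)}\,dz\,,\qquad y\in\T,\ \eta\in\R\,;
$$
this is exactly the ``right-quantized'' operator with symbol $\overline a$, written as an oscillatory sum-integral. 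To see already that $A^*\in OPS^m$, I would estimate the inner integral trivially in the finitely many terms with $|\xi-\eta|<1$ (there $\la\xi\ra\sim\la\eta\ra$, so the contribution is $O(\la\eta\ra^m)$ by Definition~\ref{classe simboli}), and integrate by parts repeatedly in $z$ in the remaining terms (no boundary terms, by $2\pi$-periodicity of $z\mapsto\overline{a(y-z,\xi)}$) to get a bound $O_N\big(|\xi-\eta|^{-N}\la\xi\ra^m\big)$; summing over $\xi$ and using Peetre's inequality $\la\xi\ra^m\lesssim\la\eta\ra^m\la\xi-\eta\ra^{|m|}$ gives $|a^*(y,\eta)|\lesssim\la\eta\ra^m$, and differentiating under the sum yields the whole family of bounds defining $S^m$.

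Next, to obtain the asymptotic expansion I would Taylor-expand the $x$-slot, writing for each $N\in\N$
$$
\overline{a(y-z,\xi)}=\sum_{k=0}^{N-1}\frac{(-z)^k}{k!}\,\overline{(\partial_x^k a)(y,\xi)}+r_N(y,z,\xi)\,,
$$
with $r_N$ the integral remainder of order $N$ in $z$. In the $k$-th principal term the monomial $z^ke^{\ii z\xi}$ must be traded for a $k$-th order derivative in the dual variable; since here that variable $\xi\in\Z$ is \emph{discrete}, the Euclidean integration by parts in $\xi$ is replaced by $k$-fold summation by parts, i.e.\ by the toroidal finite-difference operator $\triangle_\xi f(\xi):=f(\xi+1)-f(\xi)$ (using $(e^{\ii z}-1)e^{\ii z\xi}=\triangle_\xi e^{\ii z\xi}$, summation by parts, and the fact that $\sum_\xi e^{\ii z\xi}$ is the $2\pi$-periodic Dirac comb). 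Collapsing the $z$-integral then yields $a^*\sim\sum_{\alpha\geq0}\frac1{\ii^\alpha\alpha!}\,\triangle_\xi^\alpha\overline{\partial_x^\alpha a}$, and finally each $\triangle_\xi^\alpha$ is replaced by $\partial_\xi^\alpha$ at the cost of lower-order terms: since $a$ is smooth in $\xi\in\R$, Taylor's theorem in that variable gives $\triangle_\xi^\alpha b-\partial_\xi^\alpha b\in S^{m'-\alpha-1}$ whenever $b\in S^{m'}$, so this replacement only reshuffles the remainder. Collecting terms produces the claimed expansion $a^*\sim\sum_\alpha\frac1{\ii^\alpha\alpha!}\overline{\partial_x^\alpha\partial_\xi^\alpha a}$, whose $\alpha=0$ term is $\overline a\in S^m$ and whose $\alpha$-th term lies in $S^{m-\alpha}$. (Alternatively, one may identify $A^*$ with the restriction to $\T$ of the $L^2(\R)$-adjoint of a Kohn--Nirenberg operator on $\R$ with symbol extending $a$, and invoke the Euclidean adjoint calculus of \cite{SV}, \cite{Taylor} together with the standard identification of toroidal and Euclidean symbol classes.)

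The only genuine point of care compared with the classical Euclidean statement is precisely this discreteness of the dual variable: one cannot integrate by parts in $\xi$, so the stationary-phase bookkeeping must be routed through finite differences and then converted back. Everything else is routine: convergence of the oscillatory sum-integrals follows from integration by parts in $z\in\T$, which produces no boundary terms; and the remainder bounds $a^*-\sum_{\alpha<N}\frac1{\ii^\alpha\alpha!}\overline{\partial_x^\alpha\partial_\xi^\alpha a}\in S^{m-N}$ follow by the same estimates applied to the Taylor remainder $r_N$ — which carries an extra factor $z^N$, hence $N$ extra orders of decay in the frequency variable after the $z$-integration — and to the $\triangle_\xi\to\partial_\xi$ corrections, all controlled via Peetre's inequality. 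This establishes $a^*\in S^m$ with the stated asymptotic expansion, and in particular $A^*\in OPS^m$.
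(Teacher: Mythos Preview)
The paper does not actually prove this theorem: it is stated in Section~\ref{sezione pseudo diff operator} as one of several standard results on toroidal pseudo-differential operators, with the global disclaimer that ``all the well known results apply to these symbols without any modification (we refer for instance to \cite{SV}, \cite{Taylor}).'' So there is no in-paper proof to compare against; the authors simply cite the references.

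Your proposal is a correct direct proof. The computation of $a^*$ as an oscillatory sum--integral, the $S^m$ bounds via integration by parts in $z$ combined with Peetre's inequality, and the derivation of the asymptotic expansion by Taylor-expanding in $x$ and summing by parts in the discrete frequency variable are all standard and sound. Your observation that the discreteness of $\xi\in\Z$ forces the use of the finite-difference operator $\triangle_\xi$ (with the subsequent replacement $\triangle_\xi^\alpha\to\partial_\xi^\alpha$ modulo $S^{m-\alpha-1}$) is exactly the toroidal-calculus point emphasized in \cite{SV}; the alternative you mention at the end---identifying the toroidal and Euclidean symbol classes and importing the Euclidean adjoint formula from \cite{Taylor}---is also a legitimate route and is in fact closer in spirit to how the paper treats these results (by reference rather than by repeating the argument).
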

\begin{lemma}\label{simbolo autoaggiunto fourier multiplier}
Let $A = {\rm Op}(a) \in OPS^m$ be self-adjoint, i.e. $a = a^* $ and let $\vphi(\xi)$ be a real Fourier multiplier of order $m'$. We define the symbol $b(\vphi, x, \xi) := \vphi(\xi) a(\vphi, x, \xi) \in S^{m + m'}$. Then $b^* =  b  +  S^{m + m' - 1}$. 
\end{lemma}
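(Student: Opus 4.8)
The plan is to reduce everything to the first-order truncation of the adjoint expansion of Theorem~\ref{adjoint} and then exploit two special features of $b$: the factor $\varphi(\xi)$ is \emph{real} and \emph{$x$-independent}, while the factor $a$ is self-adjoint. First I would apply Theorem~\ref{adjoint} to $b = {\rm Op}(b) \in OPS^{m+m'}$ with $N = 1$, obtaining $b^* = \overline{b} + S^{m+m'-1}$; here the tail terms $\tfrac{1}{\ii^\alpha \alpha!}\overline{\partial_x^\alpha \partial_\xi^\alpha b}$ with $\alpha \geq 1$ lie in $S^{m+m'-\alpha} \subseteq S^{m+m'-1}$ because each $\partial_\xi$ lowers the symbol order by one (this uses only Definition~\ref{classe simboli} and \eqref{inclusioni Sm OPSm}). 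So it suffices to show $\overline{b} = b + S^{m+m'-1}$.

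Next, since $\varphi(\xi) \in \R$ we have $\overline{b} = \overline{\varphi(\xi)\, a} = \varphi(\xi)\, \overline{a}$. Now I would apply Theorem~\ref{adjoint} to $a$ with $N = 1$, which gives $a^* = \overline{a} + S^{m-1}$; combined with the hypothesis $a = a^*$ this yields $a - \overline{a} \in S^{m-1}$, and since the class $S^{m-1}$ is stable under complex conjugation, $\overline{a} = a + r$ with $r \in S^{m-1}$. Therefore $\overline{b} = \varphi(\xi)\bigl( a + r\bigr) = b + \varphi(\xi)\, r$. Finally, since $\varphi$ is a Fourier multiplier of order $m'$, i.e. $|\partial_\xi^j \varphi(\xi)| \lesssim \langle \xi\rangle^{m'-j}$, the Leibniz rule gives, for all $\beta, k, n$,
$$
\partial_\vphi^\beta \partial_x^k \partial_\xi^n \bigl( \varphi(\xi)\, r(\vphi,x,\xi) \bigr) = \sum_{j=0}^{n} \binom{n}{j} \partial_\xi^j \varphi(\xi)\, \partial_\vphi^\beta \partial_x^k \partial_\xi^{n-j} r(\vphi,x,\xi),
$$
which is bounded by $\langle\xi\rangle^{m'-j}\langle\xi\rangle^{m-1-(n-j)} = \langle\xi\rangle^{m+m'-1-n}$ uniformly; hence $\varphi(\xi)\, r \in S^{m+m'-1}$. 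Combining the three steps, $b^* = \overline{b} + S^{m+m'-1} = b + S^{m+m'-1}$, as claimed.

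There is no real obstacle here: the statement is essentially bookkeeping on symbol orders. The only point deserving care is that \emph{both} adjoint expansions need only be truncated at first order, and that the corrections produced by differentiating $\varphi(\xi)$ all fall into $S^{m+m'-1}$ precisely because $\varphi$ is $x$-independent, so any term in the expansion of $b^*$ beyond $\overline{b}$ carries at least one $\partial_\xi$ and thus at least one unit of decay.
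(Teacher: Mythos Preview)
Your argument is correct: the first-order adjoint expansion $b^* = \overline{b} + S^{m+m'-1}$, the reality of $\varphi$ to pull conjugation through, and then $a = a^* = \overline{a} + S^{m-1}$ together with $\varphi \cdot S^{m-1} \subseteq S^{m+m'-1}$ give exactly the claim. The paper does not actually supply a proof but simply refers to Lemma~2.6 in \cite{Montalto1}, so there is nothing further to compare; your self-contained argument is the standard one and would serve perfectly well in place of that citation.
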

\begin{proof}
See Lemma 2.6 in \cite{Montalto1}.
\end{proof}
\begin{lemma}\label{commutatore Fourier multiplier derivata 0 simbolo generale}
Let $a\in S^m$, $m \in \R$ and let $g(\xi)$ be a Fourier multiplier in $S^0$ satisfying the following property: There exists $\delta > 0$ such that $\partial_\xi g(\xi) = 0$ for any $|\xi| \geq \delta$. Then the commutator $[ {\rm Op}(g), {\rm Op}(a)] \in OPS^{- \infty}$. 
\end{lemma}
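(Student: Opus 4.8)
The plan is to read off the symbol of the commutator from the composition Theorem~\ref{teorema composizione pseudo} and to exploit that, by hypothesis, all $\xi$-derivatives of $g$ are supported in the compact interval $\{|\xi|<\delta\}$, so that every term of the resulting asymptotic expansion is already smoothing. The elementary remark underpinning everything is that any ${\cal C}^\infty$ symbol $b=b(\vphi,x,\xi)$ vanishing for $|\xi|\geq\delta$ belongs to $S^{-\infty}$: on the compact set $\T^\nu\times\T\times[-\delta,\delta]$ each derivative $\partial_\vphi^\alpha\partial_x^k\partial_\xi^n b$ is bounded, while for $|\xi|\geq\delta$ it vanishes identically, and since for every $m'\in\R$ and $n\in\N$ the weight $\langle\xi\rangle^{m'-n}$ is bounded from below by a positive constant on $\{|\xi|\leq\delta\}$, one gets $|\partial_\vphi^\alpha\partial_x^k\partial_\xi^n b|\leq C_{\alpha,k,n,m'}\langle\xi\rangle^{m'-n}$ for every $m'$, i.e. $b\in\bigcap_{m'\in\R}S^{m'}=S^{-\infty}$.

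Since $g$ does not depend on $x$, a direct computation on Fourier coefficients (equivalently, Theorem~\ref{teorema composizione pseudo}) gives the exact identity ${\rm Op}(a)\,{\rm Op}(g)={\rm Op}(a\,g)$, whence $[{\rm Op}(g),{\rm Op}(a)]={\rm Op}(g)\,{\rm Op}(a)-{\rm Op}(a\,g)\in OPS^{m}$; applying Theorem~\ref{teorema composizione pseudo} to ${\rm Op}(g)\,{\rm Op}(a)$ shows that its symbol $c$ satisfies, for every $N\in\N$,
$$
c=\sum_{\alpha=1}^{N-1}\frac{1}{\ii^\alpha\,\alpha!}\,\partial_\xi^\alpha g\;\partial_x^\alpha a\;+\;S^{m-N}\,,
$$
the term $\alpha=0$ having cancelled against $a\,g$. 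The hypothesis $\partial_\xi g(\xi)=0$ for $|\xi|\geq\delta$ forces $\partial_\xi^\alpha g(\xi)=0$ for $|\xi|\geq\delta$ and every $\alpha\geq1$ (on the half-lines where $g$ is constant all its derivatives vanish, and by smoothness of $g$ the same holds at $|\xi|=\delta$), so each summand $\partial_\xi^\alpha g\,\partial_x^\alpha a$ with $\alpha\geq1$ is a ${\cal C}^\infty$ symbol vanishing for $|\xi|\geq\delta$ and hence lies in $S^{-\infty}$ by the remark above. Therefore $c\in S^{-\infty}+S^{m-N}\subseteq S^{m-N}$ for every $N$, i.e. $c\in\bigcap_{N}S^{m-N}=S^{-\infty}$, which gives $[{\rm Op}(g),{\rm Op}(a)]\in OPS^{-\infty}$.

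I expect the argument to be essentially routine; the only steps deserving a little care are the two localization facts used above, that $\partial_\xi^\alpha g$ vanishes on $\{|\xi|\geq\delta\}$ for all $\alpha\geq1$ and that a symbol with compact $\xi$-support is smoothing. A more hands-on alternative would be to write the Schwartz kernel of the commutator as $\sum_{\xi,\eta}\big(g(\xi)-g(\eta)\big)\,\widehat a(\vphi,\xi-\eta,\eta)\,e^{\ii(x\xi-y\eta)}$ and to use that $g(\xi)-g(\eta)$ vanishes unless $\min(|\xi|,|\eta|)<\delta$ or ${\rm sign}(\xi)\neq{\rm sign}(\eta)$, combined with the rapid decay in $k$ of the $x$-Fourier coefficients $\widehat a(\vphi,k,\eta)$; but the symbolic computation above is shorter.
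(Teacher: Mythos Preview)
Your proof is correct and follows essentially the same route as the paper: both apply the composition theorem to obtain the asymptotic expansion $\sigma\sim\sum_{\alpha\geq 1}\frac{1}{\ii^\alpha\alpha!}(\partial_\xi^\alpha g)(\partial_x^\alpha a)$ of the commutator symbol, observe that each term has compact $\xi$-support and is therefore smoothing, and conclude $\sigma\in S^{-\infty}$. Your version is more explicit in justifying why ${\rm Op}(a)\,{\rm Op}(g)={\rm Op}(ag)$ exactly, why compact-in-$\xi$ support implies $S^{-\infty}$, and why the remainder in the composition theorem forces $c\in S^{m-N}$ for every $N$, but the argument is the same.
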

\begin{proof}
By applying Theorem \ref{teorema composizione pseudo} the symbol $\sigma(\vphi, x, \xi)$ of the commutator $[ {\rm Op}(g), {\rm Op}(a)]\in OPS^{- \infty}$ has the asymptotic expansion 
$
\sigma \sim \sum_{\alpha \geq 1} \frac{1}{\ii^\alpha \alpha!} (\partial_\xi^\alpha g)(\partial_x^\alpha a)\,.
$
Since for any $\alpha \geq 1$, $\partial_\xi^\alpha g = 0$ for $|\xi| \geq \delta$, all the symbols $(\partial_\xi^\alpha g)(\partial_x^\alpha a) \in S^{- \infty}$, implying that $\sigma \in S^{- \infty}$.  
\end{proof}
We define the operator $\partial_x^{- 1}$ by setting 
 \begin{equation}\label{definizione partial x - 1}
 \partial_x^{- 1}[1] := 0\,, \quad \partial_x^{- 1}[e^{\ii x k  }] := \frac{e^{\ii x k }}{\ii k}\,, \quad \forall k \in  \Z \setminus \{ 0 \}
 \end{equation}
 and for any diophantine frequency vector $\omega \in DC(\gamma, \tau)$, we define the operator $(\omega \cdot \partial_\vphi)^{- 1}$ by setting 
 \begin{equation}\label{omega partial vphi - 1}
 (\omega \cdot \partial_\vphi)^{- 1}[1] = 0\,, \quad (\omega \cdot \partial_\vphi)^{- 1}[e^{\ii \ell \cdot \vphi}] = \frac{e^{\ii \ell \cdot \vphi}}{\ii \omega \cdot \ell}\,, \quad \forall \ell \in \Z^\nu \setminus \{ 0 \}\,. 
 \end{equation}
 Given $\omega \in \R^\nu$ and $\lambda \in \R$ satisfying the non-resonance condition 
 \begin{equation}\label{prima di mel sez pseudo}
 |\omega \cdot \ell + \lambda \, j| \geq \frac{\gamma}{\langle \ell \rangle^\tau}\,, \quad \forall (\ell, j ) \in \Z^{\nu + 1} \setminus \{(0,0) \}\,,
 \end{equation}
 we define the operator $(\omega \cdot \partial_\vphi + \lambda \partial_x)^{- 1}$ by setting 
 \begin{equation}\label{om lambda - 1}
 (\omega \cdot \partial_\vphi + \lambda \partial_x)^{- 1}[1] = 0\,, \quad (\omega \cdot \partial_\vphi + \lambda \partial_x)^{- 1}[e^{\ii \ell \cdot \vphi} e^{\ii j x}] := \frac{e^{\ii \ell \cdot \vphi} e^{\ii j x}}{\ii (\omega \cdot \ell + \lambda \, j)}\,, \quad \forall (\ell, j) \neq (0, 0)\,. 
 \end{equation}
 Furthermore, given a symbol $a \in S^m$, we define the averaged symbols $\langle a \rangle_\vphi, \langle a \rangle_{\vphi, x}$ by 
 \begin{equation}\label{simbolo mediato}
 \begin{aligned}
 \langle a \rangle_\vphi(x , \xi) & := \frac{1}{(2 \pi)^\nu} \int_{\T^\nu} a(\vphi, x, \xi)\, d \vphi \,, \quad \langle a \rangle_{\vphi, x}( \xi) & := \frac{1}{2 \pi} \int_\T  \langle a \rangle_\vphi(x , \xi)\, d x\,. 
 \end{aligned}
 \end{equation}
The following elementary properties hold: 
\begin{equation}\label{proprieta simbolo mediato e partial x - 1}
a \in S^m \Longrightarrow \partial_x^{- 1} a \,,\, (\omega \cdot \partial_\vphi)^{- 1} a\,,\,(\omega \cdot \partial_\vphi + \lambda \partial_x)^{- 1} a\,, \langle a \rangle_\vphi\,,\, \langle a \rangle_{\vphi, x} \in S^m\,.
\end{equation}
\begin{lemma}\label{proprieta aggiunti small divisors}
Given a symbol $a \in S^m$, the following holds. 

\noindent
$(i)$ $\langle a^* \rangle_\vphi = \big( \langle a \rangle_\vphi \big)^*$, $\langle a^* \rangle_{\vphi, x} = \overline{\langle a \rangle_{\vphi, x}} = \big( \langle a \rangle_{\vphi, x}\big)^*$. 

\noindent
$(ii)$ $\partial_x^{- 1}(a^*) = (\partial_x^{- 1} a )^*$. 

\noindent
$(iii)$ If $\omega \in DC(\gamma, \tau)$ then $(\omega \cdot \partial_\vphi)^{- 1} a^* = \Big( (\omega \cdot \partial_\vphi )^{- 1} a \Big)^*$. 

\noindent
$(iv)$ If $\omega$ satisfies the condition \eqref{prima di mel sez pseudo} then $(\omega \cdot \partial_\vphi + \lambda \partial_x)^{- 1} a^* = \Big( (\omega \cdot \partial_\vphi + \lambda \partial_x)^{- 1} a \Big)^*$. 
\end{lemma}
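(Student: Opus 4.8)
The plan is to reduce all four identities to a single one‑line computation carried out on Fourier coefficients. First I would record how the $L^2$‑adjoint acts on a symbol at the level of its $(\vphi,x)$‑Fourier coefficients. Writing
$$a(\vphi,x,\xi)=\sum_{(\ell,j)\in\Z^{\nu}\times\Z}\widehat a(\ell,j,\xi)\,e^{\ii(\ell\cdot\vphi+jx)}\,,$$
and representing ${\rm Op}(a)$ as an infinite matrix in the exponential basis $\{e^{\ii\xi x}\}_{\xi\in\Z}$, one checks by transposing and conjugating this matrix that the adjoint symbol is given \emph{exactly} by
$$\widehat{a^*}(\ell,j,\xi)=\overline{\widehat a(-\ell,-j,\,\xi+j)}\,;$$
this is the exact (toroidal) form of the asymptotic expansion in Theorem \ref{adjoint}, and I would use the right‑hand side (a legitimate $S^m$ symbol for all $\xi\in\R$) as the representative of $a^*$.

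Second, I would observe that each of the five operations $T\in\{\langle\cdot\rangle_\vphi,\ \langle\cdot\rangle_{\vphi,x},\ \pa_x^{-1},\ (\om\cdot\pa_\vphi)^{-1},\ (\om\cdot\pa_\vphi+\lambda\pa_x)^{-1}\}$ acts on symbols as a Fourier multiplier in the $(\vphi,x)$‑variables, $\widehat{Ta}(\ell,j,\xi)=c(\ell,j)\,\widehat a(\ell,j,\xi)$, with a multiplier $c(\ell,j)$ that does \emph{not} depend on $\xi$: respectively $c=\mathbf 1_{\{\ell=0\}}$, $c=\mathbf 1_{\{(\ell,j)=(0,0)\}}$, $c=\mathbf 1_{\{j\neq 0\}}/(\ii j)$, $c=\mathbf 1_{\{\ell\neq 0\}}/(\ii\,\om\cdot\ell)$, and $c=\mathbf 1_{\{(\ell,j)\neq(0,0)\}}/\big(\ii(\om\cdot\ell+\lambda j)\big)$, the last two being well defined precisely because $\om\in DC(\gamma,\tau)$, resp.\ $\om,\lambda$ satisfy \eqref{prima di mel sez pseudo}. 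The only property of $c$ that is needed is the symmetry $\overline{c(\ell,j)}=c(-\ell,-j)$, which is obvious for the two averaging operators and holds for the three inverse operators because $j$, $\om\cdot\ell$ and $\om\cdot\ell+\lambda j$ are real and odd in $(\ell,j)$ and the support conditions are symmetric. Granting this, and using that $c$ is independent of $\xi$ so that the frequency shift in the adjoint formula is harmless, one gets
$$\widehat{(Ta)^*}(\ell,j,\xi)=\overline{\widehat{Ta}(-\ell,-j,\,\xi+j)}=\overline{c(-\ell,-j)}\;\overline{\widehat a(-\ell,-j,\,\xi+j)}=c(\ell,j)\,\widehat{a^*}(\ell,j,\xi)=\widehat{T(a^*)}(\ell,j,\xi)\,,$$
hence $T(a^*)=(Ta)^*$ for all five operations; this yields items $(ii)$, $(iii)$, $(iv)$, the identity $\langle a^*\rangle_\vphi=(\langle a\rangle_\vphi)^*$ in $(i)$, and also $\langle a^*\rangle_{\vphi,x}=(\langle a\rangle_{\vphi,x})^*$. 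The two remaining equalities in $(i)$ are immediate: from the adjoint formula with $\ell=j=0$ we get $\langle a^*\rangle_{\vphi,x}(\xi)=\widehat{a^*}(0,0,\xi)=\overline{\widehat a(0,0,\xi)}=\overline{\langle a\rangle_{\vphi,x}(\xi)}$, and for the $\vphi$‑ and $x$‑independent Fourier multiplier $\langle a\rangle_{\vphi,x}$ the expansion in Theorem \ref{adjoint} reduces to its zeroth term, so $(\langle a\rangle_{\vphi,x})^*=\overline{\langle a\rangle_{\vphi,x}}$.

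The only step requiring genuine care is the bookkeeping in the adjoint formula — the combined effect of the reflection $(\ell,j)\mapsto(-\ell,-j)$, the complex conjugation, and the frequency shift $\xi\mapsto\xi+j$ — together with the verification that this shift commutes with each $T$ because the multiplier $c(\ell,j)$ carries no $\xi$‑dependence; once this is in place every assertion is a one‑line check. If one is content with equalities modulo $S^{-\infty}$, the same four identities also follow directly from the asymptotic expansion $a^*\sim\sum_{\alpha}\frac1{\ii^\alpha\alpha!}\overline{\pa_x^\alpha\pa_\xi^\alpha a}$ of Theorem \ref{adjoint}, using that $\langle\cdot\rangle_\vphi$, $\langle\cdot\rangle_{\vphi,x}$, $\pa_x^{-1}$, $(\om\cdot\pa_\vphi)^{-1}$ and $(\om\cdot\pa_\vphi+\lambda\pa_x)^{-1}$ all commute with $\pa_x$, with $\pa_\xi$, and with complex conjugation; the Fourier‑coefficient argument is what upgrades these to the exact equalities stated.
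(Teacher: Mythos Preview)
Your proof is correct and follows essentially the same Fourier-coefficient approach as the paper: both compute the adjoint symbol explicitly on the Fourier side and verify the identities there. The paper carries out the computation only for item $(iv)$, writing $a^*(\vphi,x,\xi)=\overline{\sum_{\ell,\eta}\widehat a_\ell(\eta,\xi-\eta)e^{\ii(\ell\cdot\vphi+\eta x)}}$ and checking the two sides match, leaving $(i)$--$(iii)$ as analogous; you instead extract the common mechanism --- each $T$ is a $\xi$-independent $(\vphi,x)$-Fourier multiplier with $\overline{c(\ell,j)}=c(-\ell,-j)$ --- and derive all five identities from one line. This abstraction is a mild improvement in presentation but not a different idea.
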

\begin{proof}
We prove item $(iv)$. The proof of items $(i)-(iii)$ can be done arguing similarly. The symbol $a^*$ is given by 
$$
a^*(\vphi, x, \xi) := \overline{\sum_{\eta \in \Z} \widehat a(\vphi, \eta, \xi - \eta) e^{\ii \eta x }}, \quad \widehat a(\vphi, \eta, \xi - \eta) := \frac{1}{2 \pi} \int_\T a(\vphi, x, \xi - \eta) e^{- \ii x \eta }\, dx\,. 
$$
Writing also the Fourier expansion w.r. to $\vphi \in \T^\nu$ one gets that $\widehat a(\vphi, \eta, \xi - \eta) = \sum_{\ell \in \Z^\nu} \widehat a_\ell( \eta, \xi - \eta) e^{\ii \ell \cdot \vphi} $ and by recalling \eqref{om lambda - 1}, one has  
\begin{align}
(\omega \cdot \partial_\vphi + \lambda \partial_x)^{- 1} a^* & = \overline{ \sum_{(\ell, \eta) \neq (0, 0) } \frac{\widehat a_\ell( \eta, \xi - \eta)}{\ii (\omega \cdot \ell + \lambda \eta)} e^{\ii (\ell \cdot \vphi + \eta x) }}\,. \label{forfora 0}
\end{align}
Now let $\sigma = (\omega \cdot \partial_\vphi + \lambda \partial_x)^{- 1} a $. One has that 
\begin{align}
\sigma^* & = \overline{\sum_{\eta \in \Z} \widehat \sigma(\vphi, \eta, \xi - \eta) e^{\ii \eta x }} =  \overline{\sum_{(\ell, \eta) \in \Z^{\nu + 1}} \widehat \sigma_\ell( \eta, \xi - \eta) e^{\ii (\ell \cdot \vphi + \eta x) }} \label{forfora 1}
\end{align}
and using again \eqref{om lambda - 1}, one has that $\widehat \sigma_0(0, \xi - \eta) = 0$ and $\widehat \sigma_\ell(\eta, \xi - \eta) = \frac{\widehat a_\ell(\eta, \xi - \eta)}{\ii (\omega \cdot \ell + \lambda \eta)}$ for any $(\ell, \eta) \in \Z^{\nu + 1} \setminus \{(0, 0)\}$, implying that $\eqref{forfora 0}$ coincides with \eqref{forfora 1}. 
\end{proof}

For any $\alpha \in \R$, we define the operator $|D|^\alpha$ as follows. Let $\chi \in {\cal C}^\infty(\R, \R)$ be a cut-off function satisfying 
\begin{equation}\label{definizione cut off D alpha}
\chi(\xi) := \begin{cases}
1 & \quad \text{if} \quad |\xi| \geq 1 \\
0 & \quad \text{if} \quad |\xi| \leq \frac12\,.
\end{cases}
\end{equation}
We then define for any $\alpha \in \R$
\begin{equation}\label{definizione D alpha}
|D|^\alpha := {\rm Op}\big( |\xi|^\alpha \chi(\xi) \big)\,.
\end{equation}
Clearly $|D|^\alpha \in OPS^\alpha$ and the action on any $2\pi$-periodic function $u \in L^2(\T)$ is given by 
$$
|D|^\alpha u(x) = \sum_{\xi \in \Z \setminus \{ 0 \} } |\xi|^\alpha \widehat u(\xi) e^{\ii x \xi}\,. 
$$
\subsection{Well posedness of some linear PDEs}
In this section we state some classical properties of the flow of some linear pseudo-PDEs. Before to give the statement of the following Lemma, we introduce the following notation. For any $s \in \R$, we write $A \lesssim_s B$ if there exists a constant $C(s) > 0$ depending on $s$ such that $A \leq C(s) B$. 
\begin{lemma}\label{flusso + derivate flusso}
Let ${\cal A}(\tau; \vphi) := {\rm Op}\Big( a(\tau;\vphi,  x, \xi) \Big)$, $\tau \in [0, 1]$ be a smooth $\tau$-dependent family of pseudo differential operators in $OPS^1$.  Assume that ${\cal A}(\tau; \vphi) + {\cal A}(\tau; \vphi)^* \in OPS^{0}$. Then the following holds. 

\noindent
$(i)$ Let $s \geq 0$, $u_0 \in H^s(\T)$, $\tau_0 \in [0, 1]$. Then there exists a unique solution $u \in {\cal C}^0_b\Big([0, 1], H^s(\T) \Big)$ of the Cauchy problem 
\begin{equation}\label{rafael}
\begin{cases}
\partial_\tau u = {\cal A}(\tau; \vphi)[ u ] \\
u(\tau_0; x) = u_0(x)
\end{cases}
\end{equation}
satisfying the estimate 
$$
\| u\|_{{\cal C}^0([0, 1], H^s)} \lesssim_s \| u_0\|_{H^s} \,.
$$
As a consequence, for any $\tau_0, \tau \in [0, 1]$, the flow map $\Phi(\tau_0, \tau; \vphi)$, which maps the initial datum $u(\tau_0 ) = u_0$ into the solution $u(\tau)$ of \eqref{rafael} at the time $\tau$, is in ${\cal B}(H^s)$ with $\sup_{\begin{subarray}{c}
\tau_0, \tau \in [0, 1] \\
\vphi \in \T^\nu
\end{subarray}} \| \Phi(\tau_0, \tau; \vphi)\|_{{\cal B}(H^s)} < + \infty$ for any $s \geq 0$. Moreover, the operator $\Phi(\tau_0, \tau; \vphi )$ is invertible with inverse $\Phi(\tau_0, \tau; \vphi)^{- 1} = \Phi(\tau, \tau_0; \vphi)$. 

\medskip

\noindent
$(ii)$ For any $\tau_0, \tau \in [0, 1]$,  the flow map $ \vphi \mapsto \Phi(\tau_0, \tau; \vphi)$
is differentiable and 
\begin{equation}\label{rafael - 10}
%\sup_{\begin{subarray}{c}
%\tau \in [0, 1] \\
%t \in \R
%\end{subarray}} \| \Phi(\tau; t)\|_{{\cal B}(H^s)} +
 \sup_{\begin{subarray}{c}
\tau_0, \tau \in [0, 1] \\
\vphi \in \T^\nu
\end{subarray}} \| \partial_\vphi^\alpha \Phi(\tau_0, \tau; \vphi)\|_{{\cal B}(H^{s + |\alpha|}, H^{s})} < + \infty\,, \quad \forall \alpha \in \N^\nu, \quad s \geq 0\,.  
\end{equation}
\end{lemma}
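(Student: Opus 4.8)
The statement is a standard well-posedness and regularity result for linear pseudo-PDEs whose generator is order one but essentially skew-adjoint, so the natural tool is the energy method combined with the Calderon--Vallancourt theorem (Theorem \ref{conitnuita pseudo}). I would first prove the a priori energy estimate for smooth solutions, then obtain existence via a standard approximation or Galerkin-type argument, and finally differentiate the Cauchy problem in $\vphi$ to get the bounds \eqref{rafael - 10} by the same energy technique applied to the differentiated system.

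\textbf{Step 1: the basic energy estimate.} Let $u$ be a smooth solution of \eqref{rafael}. For $s \geq 0$ set $E_s(\tau) := \| u(\tau)\|_{H^s}^2 = \langle \langle D \rangle^s u(\tau), \langle D\rangle^s u(\tau)\rangle_{L^2}$. Differentiating in $\tau$ and using the equation,
$$
\frac{d}{d\tau} E_s(\tau) = 2 \,{\rm Re}\, \langle \langle D\rangle^s {\cal A}(\tau;\vphi) u, \langle D\rangle^s u \rangle_{L^2} = \langle \big( \langle D\rangle^s {\cal A}(\tau;\vphi) \langle D\rangle^{-s} + ( \langle D\rangle^s {\cal A}(\tau;\vphi)\langle D\rangle^{-s})^* \big) \langle D\rangle^s u, \langle D\rangle^s u\rangle_{L^2}.
$$
The key point is that $\langle D\rangle^s {\cal A} \langle D\rangle^{-s} \in OPS^1$ by Theorem \ref{teorema composizione pseudo}, and its symmetrized part $\langle D\rangle^s {\cal A}\langle D\rangle^{-s} + (\langle D\rangle^s {\cal A}\langle D\rangle^{-s})^*$ is in $OPS^0$: indeed one writes $\langle D\rangle^s {\cal A}\langle D\rangle^{-s} = {\cal A} + [\langle D\rangle^s, {\cal A}]\langle D\rangle^{-s}$, the commutator lies in $OPS^{0}$ by Corollary \ref{corollario commutator}, and ${\cal A} + {\cal A}^* \in OPS^0$ by hypothesis; taking adjoints (Theorem \ref{adjoint}) the cross terms combine into an $OPS^0$ operator. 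Hence by Theorem \ref{conitnuita pseudo} the symmetrized operator is bounded on $L^2$ uniformly in $\tau, \vphi$, giving $\frac{d}{d\tau}E_s(\tau) \le C(s) E_s(\tau)$, and by Gr\"onwall $E_s(\tau) \le e^{C(s)|\tau-\tau_0|} E_s(\tau_0)$, i.e. $\| u(\tau)\|_{H^s} \lesssim_s \| u_0\|_{H^s}$ for $\tau \in [0,1]$.

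\textbf{Step 2: existence, uniqueness, and the flow.} Uniqueness is immediate from the $s=0$ estimate applied to the difference of two solutions. For existence with $u_0 \in H^s$, I would regularize: for a mollification parameter or by truncating Fourier modes, solve the (finite-dimensional or smoothed) problems, observe they satisfy the $\tau$-uniform $H^s$ bound of Step 1 (with constants independent of the regularization), extract a limit, and check it solves \eqref{rafael} in $H^s$ with $u \in {\cal C}^0([0,1],H^s)$; continuity in $\tau$ follows from the equation since $\partial_\tau u = {\cal A}u \in {\cal C}^0([0,1], H^{s-1})$ together with the uniform $H^s$ bound and an interpolation/density argument. This yields the flow map $\Phi(\tau_0,\tau;\vphi)$ with $\sup \| \Phi\|_{{\cal B}(H^s)} < +\infty$; invertibility with $\Phi(\tau_0,\tau;\vphi)^{-1} = \Phi(\tau,\tau_0;\vphi)$ is the usual reversibility of linear non-autonomous flows (the composition $\Phi(\tau,\tau_0)\Phi(\tau_0,\tau)$ solves a Cauchy problem with identity data, hence equals the identity by uniqueness).

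\textbf{Step 3: $\vphi$-differentiability.} Formally differentiating \eqref{rafael} in $\vphi_j$, the quantity $v_j := \partial_{\vphi_j}\Phi(\tau_0,\tau;\vphi)[u_0]$ solves $\partial_\tau v_j = {\cal A}(\tau;\vphi) v_j + (\partial_{\vphi_j}{\cal A})(\tau;\vphi)\Phi(\tau_0,\tau;\vphi)[u_0]$ with zero initial data, where $\partial_{\vphi_j}{\cal A} \in OPS^1$ again by Definition \ref{classe simboli}. By Duhamel, $v_j = \int_{\tau_0}^\tau \Phi(\sigma,\tau;\vphi)(\partial_{\vphi_j}{\cal A})(\sigma;\vphi)\Phi(\tau_0,\sigma;\vphi)[u_0]\,d\sigma$, and since $\partial_{\vphi_j}{\cal A} : H^{s+1}\to H^s$ is bounded (Theorem \ref{conitnuita pseudo}) while the flows are bounded on $H^s$ and $H^{s+1}$ by Step 1, one obtains $\sup \| \partial_{\vphi_j}\Phi(\tau_0,\tau;\vphi)\|_{{\cal B}(H^{s+1},H^s)} < +\infty$. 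Iterating this argument: higher derivatives $\partial_\vphi^\alpha \Phi$ satisfy analogous Duhamel formulas whose inhomogeneous terms involve products of up to $|\alpha|$ factors $\partial_\vphi^\beta {\cal A} \in OPS^1$ (from the Leibniz rule applied to $\partial_\vphi^\alpha ({\cal A}\Phi)$) together with lower-order $\vphi$-derivatives of $\Phi$, each costing one extra power of $\langle D\rangle$; a straightforward induction on $|\alpha|$ gives \eqref{rafael - 10}. One must also justify that these formal derivatives are genuine Fr\'echet derivatives, which follows from the usual difference-quotient estimate controlled by the $s=0$ energy bound.

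\textbf{Main obstacle.} The only genuinely delicate point is Step 1, namely verifying that the symmetrized conjugated operator $\langle D\rangle^s {\cal A}\langle D\rangle^{-s} + (\langle D\rangle^s{\cal A}\langle D\rangle^{-s})^*$ is of order $0$: one needs to track that the potentially order-$1$ contributions coming from conjugation by $\langle D\rangle^s$ cancel against each other upon symmetrization, using that the symbol of $\langle D\rangle^s$ is real and even and that the leading symbol of ${\cal A}+{\cal A}^*$ already vanishes. Everything else (existence by approximation, reversibility of the flow, the inductive Duhamel bookkeeping for the $\vphi$-derivatives) is routine once this symbolic computation is in hand.
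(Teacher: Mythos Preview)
Your proposal is correct and follows precisely the classical strategy the paper itself defers to: the paper's own proof simply cites Taylor, Section 0.8, for item $(i)$ and Lemma 2.9 in \cite{Montalto1} for item $(ii)$, and the energy method plus Duhamel-induction argument you outline is exactly what one finds in those references. Your identification of the key symbolic computation---that $\langle D\rangle^s {\cal A}\langle D\rangle^{-s} + (\langle D\rangle^s {\cal A}\langle D\rangle^{-s})^* \in OPS^0$ via the commutator decomposition and the hypothesis ${\cal A}+{\cal A}^*\in OPS^0$---is on point, and the inductive bookkeeping in Step 3 is accurate (each inhomogeneous term $(\partial_\vphi^\beta {\cal A})(\partial_\vphi^{\alpha-\beta}\Phi)$ with $|\beta|\ge 1$ costs at most $1+|\alpha-\beta|\le |\alpha|$ derivatives).
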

\begin{proof}
The proof of item $(i)$ is classical. We refer for instance to \cite{Taylor}, Section 0.8. The proof of item $(ii)$ can be obtained arguing as in Lemma 2.9 in \cite{Montalto1}.  
\end{proof}
\begin{lemma}\label{commutatore flusso Fourier multiplier costante}
Let $g (\xi)$ be a Fourier multiplier in $OPS^0$ which satisfies the following property: there exists $\delta > 0$ such that $\partial_\xi g(\xi) = 0$ for any $|\xi| \geq \delta$. Let ${\cal A}(\tau; \vphi) := {\rm Op}\Big( a(\tau;\vphi,  x, \xi) \Big)$, $\vphi \in \T^\nu$, $\tau \in [0, 1]$ be a smooth $\tau$-dependent family of periodic  pseudo differential operators in $OPS^\eta$ with $\eta \leq 1$.  Assume that ${\cal A}(\tau; \vphi) + {\cal A}(\tau; \vphi)^* \in OPS^{\eta - 1}$. Let $\Phi(\tau; \vphi)$ be the flow of the pseudo PDE $\partial_\tau u = {\cal A}(\tau; \vphi)[u]$, i.e. 
$$
\begin{cases}
\partial_\tau \Phi(\tau; \vphi) = {\cal A}(\tau; \vphi) \Phi(\tau; \vphi) \\
\Phi(0; \vphi) = {\rm Id}\,. 
\end{cases}
$$
Then the commutator $[\Phi(\tau; \vphi), {\rm Op}(g)] \in OPS^{- \infty}$. 
%is an arbitrarily regularizing operator, i.e. 
%\begin{equation}\label{torta di mele - 1}
%[\Phi(\tau; \vphi), {\rm Op}(g)] \in {\cal B}\Big( H^s, H^{s + m} \Big)\,, \quad \sup_{\begin{subarray}{c}
%\vphi \in \T^\nu \\
%\tau \in [0, 1]
%\end{subarray}}\| [\Phi(\tau; \vphi), {\rm Op}(g)] \|_{{\cal B}( H^s, H^{s + m}) } < + \infty \quad \forall s, m \geq 0\,. 
%\end{equation}
\end{lemma}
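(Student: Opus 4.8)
The plan is to show that the commutator $[\Phi(\tau;\vphi), \Op(g)]$ satisfies a linear ODE in $\tau$ whose right-hand side is governed by a smoothing operator, and then to conclude by a Duhamel/Gronwall argument that the commutator is smoothing, i.e.\ bounded from $H^s$ to $H^{s+N}$ for every $N$ with $\vphi$-derivatives under control. Set $\Psi(\tau;\vphi) := [\Phi(\tau;\vphi), \Op(g)]$. Since $\Phi(0;\vphi) = \mathrm{Id}$, we have $\Psi(0;\vphi) = 0$. Differentiating in $\tau$ and using $\partial_\tau \Phi = {\cal A}(\tau;\vphi)\Phi(\tau;\vphi)$,
\[
\partial_\tau \Psi(\tau;\vphi) = [{\cal A}(\tau;\vphi)\Phi(\tau;\vphi), \Op(g)] = {\cal A}(\tau;\vphi)\,\Psi(\tau;\vphi) + [{\cal A}(\tau;\vphi), \Op(g)]\,\Phi(\tau;\vphi)\,.
\]
By Lemma \ref{commutatore Fourier multiplier derivata 0 simbolo generale}, the operator ${\cal C}(\tau;\vphi) := [{\cal A}(\tau;\vphi), \Op(g)] = -[\Op(g), {\cal A}(\tau;\vphi)]$ lies in $OPS^{-\infty}$, smoothly in $\tau$ and $\vphi$. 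Hence $\Psi$ solves the inhomogeneous equation $\partial_\tau \Psi = {\cal A}(\tau;\vphi)\Psi + {\cal C}(\tau;\vphi)\Phi(\tau;\vphi)$ with zero initial condition, and by the variation-of-constants formula
\[
\Psi(\tau;\vphi) = \int_0^\tau \Phi(\tau,\sigma;\vphi)\,{\cal C}(\sigma;\vphi)\,\Phi(\sigma;\vphi)\, d\sigma\,,
\]
where $\Phi(\tau,\sigma;\vphi)$ denotes the two-parameter flow of $\partial_\tau u = {\cal A}(\tau;\vphi)[u]$.

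Next I would use the mapping properties of the flow to bound each factor. The hypothesis ${\cal A}(\tau;\vphi) + {\cal A}(\tau;\vphi)^* \in OPS^{\eta-1}$ with $\eta \le 1$ gives ${\cal A}(\tau;\vphi) + {\cal A}(\tau;\vphi)^* \in OPS^0$, so Lemma \ref{flusso + derivate flusso} applies: the flows $\Phi(\sigma;\vphi)$ and $\Phi(\tau,\sigma;\vphi)$ are bounded on every $H^s$, uniformly in $\tau, \sigma \in [0,1]$ and $\vphi \in \T^\nu$, and their $\vphi$-derivatives satisfy $\sup \|\partial_\vphi^\alpha \Phi(\tau,\sigma;\vphi)\|_{{\cal B}(H^{s+|\alpha|},H^s)} < +\infty$. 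Fix an arbitrary $N \in \N$ and $s \ge 0$. Since ${\cal C}(\sigma;\vphi) \in OPS^{-\infty}$, by Calderon--Vallancourt (Theorem \ref{conitnuita pseudo}) we have $\partial_\vphi^\beta {\cal C}(\sigma;\vphi) \in {\cal B}(H^r, H^{r+N+|\alpha|})$ for every $r$, uniformly in $\sigma$ and $\vphi$. Writing out $\partial_\vphi^\alpha \Psi(\tau;\vphi)$ by the Leibniz rule, every resulting term is an integral over $\sigma \in [0,\tau]$ of a composition $\partial_\vphi^{\alpha_1}\Phi(\tau,\sigma;\vphi) \circ \partial_\vphi^{\alpha_2}{\cal C}(\sigma;\vphi) \circ \partial_\vphi^{\alpha_3}\Phi(\sigma;\vphi)$ with $\alpha_1+\alpha_2+\alpha_3 = \alpha$; chaining the three mapping bounds along $H^{s+|\alpha|} \to H^{s+|\alpha_3|} \to H^{s+|\alpha_3|+N+|\alpha|} \subseteq H^{s+N}$ — losing $|\alpha_1|$ derivatives on the outer flow and $|\alpha_3|$ on the inner one, both compensated by the $N+|\alpha|$ gained from ${\cal C}$ — shows that $\partial_\vphi^\alpha \Psi(\tau;\vphi) \in {\cal B}(H^{s+|\alpha|}, H^{s+N})$ uniformly in $\tau$ and $\vphi$. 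Since $N$ and $s$ are arbitrary, this is exactly the statement that $\Psi(\tau;\vphi) = [\Phi(\tau;\vphi), \Op(g)] \in OPS^{-\infty}$.

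There is one mild subtlety to address, which I regard as the main point to get right rather than a serious obstacle: the object $\Psi(\tau;\vphi)$ is a priori only a family of bounded operators obtained from the Duhamel formula, and one should confirm it genuinely belongs to the pseudodifferential class $OPS^{-\infty}$ in the sense of Definition \ref{def:Ps2} (i.e.\ is the quantization of a symbol in $S^{-\infty}$), not merely that it is smoothing as an operator. This follows because ${\cal C}(\sigma;\vphi) \in OPS^{-\infty}$ is an honest pseudodifferential operator and the flows $\Phi$ conjugate $OPS^m$ into $OPS^m$ (an Egorov-type fact; here it suffices in the weaker form that composing a smoothing operator with the bounded flows on both sides, and integrating, stays smoothing, which can be checked directly on the Schwartz kernel using the decay estimates above). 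Alternatively, one simply takes the characterization of $OPS^{-\infty}$ via the uniform $H^s \to H^{s'}$ bounds with $\vphi$-derivatives as the working definition, in which case the argument of the previous paragraph is complete as stated. Either way, the estimate on $\|\partial_\vphi^\alpha \Psi(\tau;\vphi)\|_{{\cal B}(H^{s+|\alpha|},H^{s+N})}$ is uniform in $\tau \in [0,1]$, which is what the applications in Sections \ref{sezione riduzione M < 1} and \ref{regolarizzazione caso M = 1} need (in particular at $\tau = 1$ for the time-one flow maps).
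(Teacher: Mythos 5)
Your proposal is correct and follows essentially the same route as the paper: derive the inhomogeneous ODE for the commutator, invoke Lemma \ref{commutatore Fourier multiplier derivata 0 simbolo generale} to see that the source term $[{\cal A},\Op(g)]\Phi$ is in $OPS^{-\infty}$, and conclude by Duhamel. Your version is somewhat more careful than the paper's (you write the two-parameter flow correctly in the variation-of-constants formula and you flag the distinction between ``smoothing as an operator'' and ``quantization of a symbol in $S^{-\infty}$''), but the argument is the same.
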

\begin{proof}
Let $\Phi_g(\tau; \vphi) := [\Phi(\tau; \vphi), {\rm Op}(g)]$. A direct calculation shows that $\Phi_g(\tau; \vphi)$ solves 
$$
\begin{cases}
\partial_\tau \Phi_g(\tau; \vphi) = {\cal A}(\tau; \vphi) \Phi_g(\tau; \vphi) + {\cal R}_g(\tau; \vphi) \\
\Phi_g(0; \vphi) = 0\,,
\end{cases}\quad {\cal R}_g(\tau; \vphi) := [{\cal A}(\tau; \vphi), {\rm Op}(g)] \Phi(\tau; \vphi)\,. 
$$
By Lemma \ref{commutatore Fourier multiplier derivata 0 simbolo generale}, $[{\cal A}(\tau; \vphi), {\rm Op}(g)] \in OPS^{- \infty}$ and since ${\cal A}(\tau; \vphi)$ satisfies the hypothesis of Lemma \ref{flusso + derivate flusso}, the flow $\Phi(\tau; \vphi)$ satisfies
 \begin{equation}\label{torta di mele 0}
 \Phi(\tau; \vphi)^{\pm 1} \in {\cal B}(H^s(\T))\,, \quad \forall s \geq 0\,, 
 \end{equation}
  implying that the operator ${\cal R}_g(\tau; \vphi) = [{\cal A}(\tau; \vphi), {\rm Op}(g)] \Phi(\tau; \vphi) \in OPS^{- \infty}$.  
%  satisfies 
%\begin{equation}\label{proprieta cal R g arbitrarily reg}
%{\cal R}_g(\tau; \vphi) \in {\cal B}\big(H^s(\T), H^{s + m }(\T) \big)\,, \quad \forall s, m \geq 0\,. 
%\end{equation}
Using the Duhamel principle one gets that 
$$
\Phi_g(\tau; \vphi) = \int_0^\tau \Phi(\tau; \vphi)^{- 1} \Phi(\zeta; \vphi) {\cal R}_g(\zeta; \vphi) \, d \zeta \in OPS^{- \infty} 
$$
and the proof of the lemma is then concluded. 
\end{proof}

\subsection{Some Egorov-type theorems}\label{sezione astratta Egorov}
In this section we collect some abstract egorov type theorems, namely we study how a pseudo differential operator transforms under the action of the flow of a first order hyperbolic PDE. 
 Let $\alpha :  \T^\nu \times \T \to \R$ be a ${\cal C}^\infty$ function satisfying
  \begin{equation}\label{ansatz alpha egorov}
\alpha \in {\cal C}^\infty( \T^\nu \times \T, \R) \,,\quad \inf_{(\vphi, x) \in \T^{\nu + 1}} \big( 1 + \alpha_x(\vphi, x) \big) > 0\,. 
 \end{equation} 
 We then consider the non-autonomous transport equation
\begin{equation}\label{trasporto per Egorov}
\partial_\tau u = {\cal A}(\tau; \vphi) u\,, \qquad {\cal A}(\tau; \vphi)  := b_{\alpha}(\tau; \vphi, x) \partial_x + \frac{(\partial_x b_{\alpha})(\tau; \vphi, x)}{2}\,, \quad 
\end{equation}
\begin{equation}\label{definizione b alpha per egorov}
b_\alpha (\tau; \vphi, x):= - \frac{\alpha(\vphi, x)}{1 + \tau \alpha_x(\vphi, x)} \,, \qquad \tau \in [0, 1]\,. 
\end{equation}
Note that the condition \eqref{ansatz alpha egorov} implies that $
\inf_{\begin{subarray}{c}
\tau \in [0, 1] \\
(\vphi, x) \in \T^\nu \times \T
\end{subarray}} \big( 1+ \tau \alpha_x(\vphi, x) \big) > 0$, hence the function $b \in {\cal C}^\infty([0, 1] \times \T^\nu \times \T)$ and ${\cal A}(\tau; \cdot) \in OPS^1$, $\tau \in [0, 1]$ is a smooth family of pseudo-differential operators. It is straightforward to verify that ${\cal A}(\tau; \vphi) + {\cal A}(\tau; \vphi)^* = 0$, therefore, the hypotheses of Lemma \ref{flusso + derivate flusso} are verified, implying that, for any $\tau \in [0, 1]$, the flow $\Phi(\tau; \vphi) \equiv \Phi(0, \tau; \vphi)$, $\tau \in [0, 1]$ of the equation \eqref{trasporto per Egorov}, i.e. 
\begin{equation}\label{equazione flusso operatoriale}
\begin{cases}
\partial_\tau \Phi(\tau; \vphi) = {\cal A}(\tau; \vphi) \Phi(\tau; \vphi)  \\
\Phi (0; t) = {\rm Id}
\end{cases}
\end{equation}
is a well defined map and satisfies all the properties stated in the items $(i)$, $(ii)$ of Lemma \ref{flusso + derivate flusso}. Furthermore, arguing as in Section 2.2 of \cite{Montalto1}, the map $\Phi(\tau; \vphi)$ is symplectic. We then have the following 
\begin{lemma}\label{proprieta flusso trasporto lemma astratto}
The flow $\Phi(\tau; \vphi)$ given by \eqref{equazione flusso operatoriale} is a symplectic, invertible map satisfying 
$$
\sup_{\begin{subarray}{c}
\tau \in [0, 1] \\
\vphi \in \T^\nu
\end{subarray}}\| \partial_\vphi^\alpha \Phi(\tau; \vphi)^{\pm 1}\|_{{\cal B}(H^{s + |\alpha|}, H^{s})} < + \infty\,, \quad \forall \alpha \in \N^\nu, \quad s \geq 0\,.  
$$ 
\end{lemma}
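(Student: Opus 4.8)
The plan is to establish the three claims of Lemma~\ref{proprieta flusso trasporto lemma astratto} — symplecticity, invertibility, and the tame bounds on the $\vphi$-derivatives — by reducing everything to the already-proved Lemma~\ref{flusso + derivate flusso}, together with the remarks made just before the statement. First I would observe that the operator family ${\cal A}(\tau;\vphi)$ in \eqref{trasporto per Egorov} is exactly of the form to which Lemma~\ref{flusso + derivate flusso} applies: it lies in $OPS^1$ for each $\tau \in [0,1]$ (since the condition \eqref{ansatz alpha egorov} guarantees $\inf_{\tau,\vphi,x}(1+\tau\alpha_x) > 0$, so that $b_\alpha \in {\cal C}^\infty([0,1]\times\T^\nu\times\T)$), and it satisfies the skew-adjointness identity ${\cal A}(\tau;\vphi) + {\cal A}(\tau;\vphi)^* = 0$. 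Indeed, writing ${\cal A} = b\,\partial_x + \tfrac12 b_x$ with $b$ real, an integration by parts gives $\langle {\cal A} u, v\rangle_{L^2} = -\langle u, {\cal A} v\rangle_{L^2}$, so in particular ${\cal A} + {\cal A}^* = 0 \in OPS^0$. Hence the hypotheses of Lemma~\ref{flusso + derivate flusso} hold, and items $(i)$ and $(ii)$ of that lemma immediately yield the invertibility of $\Phi(\tau;\vphi)$ — with $\Phi(\tau;\vphi)^{-1} = \Phi(\tau,0;\vphi)$ in the notation of \eqref{rafael} — together with the bound $\sup_{\tau,\vphi}\|\partial_\vphi^\alpha \Phi(\tau;\vphi)\|_{{\cal B}(H^{s+|\alpha|},H^s)} < +\infty$.

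Next I would deal with the $\pm$ sign in the statement, i.e. obtain the analogous estimates for $\Phi(\tau;\vphi)^{-1} = \Phi(\tau,0;\vphi)$. The cleanest way is to note that the inverse flow is itself the flow of a transport equation of the same structure: reversing time, $\Phi(\tau,0;\vphi)$ solves $\partial_\tau \Psi = -\,{\cal A}(\tau;\vphi)\Psi$ (after the appropriate reparametrization of $\tau$), and $-{\cal A}(\tau;\vphi)$ again lies in $OPS^1$ with $-{\cal A} + (-{\cal A})^* = 0 \in OPS^0$; so Lemma~\ref{flusso + derivate flusso} applies verbatim to it and gives $\sup_{\tau,\vphi}\|\partial_\vphi^\alpha \Phi(\tau;\vphi)^{-1}\|_{{\cal B}(H^{s+|\alpha|},H^s)} < +\infty$. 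Alternatively one can differentiate the identity $\Phi(\tau;\vphi)\Phi(\tau;\vphi)^{-1} = \mathrm{Id}$ in $\vphi$ and argue inductively on $|\alpha|$ using the already-known bounds for $\Phi$ and the boundedness of $\Phi^{-1}$ on each $H^s$ from item $(i)$; either route is routine.

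Finally, symplecticity: I would invoke the fact, recalled in the text just above the lemma (``arguing as in Section 2.2 of \cite{Montalto1}''), that the flow of the transport equation \eqref{trasporto per Egorov} is symplectic with respect to the form $\Omega$ in \eqref{forma simplettica coordinate complesse}. The underlying mechanism is that ${\cal A}(\tau;\vphi)$ is a Hamiltonian vector field — it is the vector field $\ii\,\nabla_{\overline u}$ of the real quadratic Hamiltonian associated to the self-adjoint operator obtained by multiplying ${\cal A}$ by $-\ii$, equivalently ${\cal A}$ is $L^2$-skew-adjoint and maps real functions to real functions, which is exactly the condition for its flow to preserve $\Omega$ — and by \eqref{push forward} and the remark that the flow of a Hamiltonian vector field is symplectic, $\Phi(\tau;\vphi)$ is symplectic for every $\tau,\vphi$. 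Concretely one checks $\frac{d}{d\tau}\Omega[\Phi(\tau;\vphi)u_1,\Phi(\tau;\vphi)u_2] = \Omega[{\cal A}\Phi u_1,\Phi u_2] + \Omega[\Phi u_1,{\cal A}\Phi u_2]$ and this vanishes precisely because ${\cal A}$ is real and $L^2$-skew-adjoint; since at $\tau=0$ the flow is the identity, the form is preserved for all $\tau$. I do not expect any genuine obstacle here — the entire content of this lemma is bookkeeping that packages Lemma~\ref{flusso + derivate flusso} and the skew-adjointness/Hamiltonian structure of ${\cal A}$ into a single convenient statement; the only mild point of care is justifying the $\vphi$-derivative bounds for the \emph{inverse} flow, which is handled by the time-reversal observation above.
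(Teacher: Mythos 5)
Your proof is correct and follows essentially the same route as the paper, which simply verifies the hypotheses of Lemma \ref{flusso + derivate flusso} (namely ${\cal A}(\tau;\vphi)\in OPS^1$ and ${\cal A}+{\cal A}^*=0$), invokes its items $(i)$–$(ii)$, and refers to \cite{Montalto1} for symplecticity. The only superfluous step is your time-reversal argument for the inverse: since $\Phi(\tau;\vphi)^{-1}=\Phi(\tau,0;\vphi)$ and the bound \eqref{rafael - 10} is already uniform over both time parameters $\tau_0,\tau\in[0,1]$, the estimate for $\Phi^{-1}$ is contained in Lemma \ref{flusso + derivate flusso}$(ii)$ as stated.
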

In order to state Proposition \ref{Teorema egorov generale} of this section, we need some preliminary results. 
 \begin{lemma}\label{diffeo del toro lemma astratto}
 Let $\alpha \in {\cal C}^\infty(\T^\nu \times \T, \R)$ satisfy the condition \eqref{ansatz alpha egorov}. Then for any $\vphi \in \T^\nu$, the map 
 $$
\psi_\vphi : \T \to \T\,, \quad x \mapsto x + \alpha(\vphi, x)
 $$
 is a diffeomorphism of the torus whose inverse has the form 
 \begin{equation}\label{forma vphi t - 1}
\psi_\vphi^{- 1} : \T \to \T , \quad y \mapsto y + \widetilde \alpha(\vphi, y)\,,
 \end{equation}
 with $\widetilde \alpha : \T^\nu \times \T \to \R$ satisfying 
 \begin{equation}\label{proprieta alpha tilde diffeo toro}
\widetilde \alpha \in {\cal C}^\infty(\T^\nu \times \T, \R)\,, \quad \inf_{(\vphi, y) \in \T^\nu \times \T} \big(1 + \widetilde \alpha_y(\vphi, y) \big) > 0\,. 
 \end{equation}
 Furthermore, the following identities hold: 
 \begin{equation}\label{identita 1 + alpha alpha tilde partial}
 \begin{aligned}
1 + \alpha_x(\vphi, x) = \frac{1}{1 + \widetilde \alpha_y\big(\vphi, x + \alpha(\vphi, x)\big)}\,, \quad 1 + \widetilde \alpha_y(\vphi, y) = \frac{1}{1 + \alpha_x\big(\vphi, y + \widetilde \alpha(\vphi, y)\big)}
 \end{aligned}
 \end{equation}
 \end{lemma}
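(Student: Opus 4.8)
\textbf{Proof proposal for Lemma \ref{diffeo del toro lemma astratto}.}

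The plan is to verify that $\psi_\vphi$ is a diffeomorphism of $\T$ for each fixed $\vphi$, to produce its inverse in the stated form, and then to differentiate the two defining identities $\psi_\vphi \circ \psi_\vphi^{-1} = \mathrm{id}$ and $\psi_\vphi^{-1}\circ\psi_\vphi = \mathrm{id}$ to obtain \eqref{identita 1 + alpha alpha tilde partial}. First I would note that since $\alpha$ is $2\pi$-periodic in $x$ and $\inf_{(\vphi,x)}(1+\alpha_x(\vphi,x)) =: c_0 > 0$, the map $x \mapsto x+\alpha(\vphi,x)$ is a smooth, strictly increasing (in the lifted picture on $\R$) function commuting with the translation $x \mapsto x+2\pi$, hence descends to a smooth bijection of $\T$ with smooth inverse; this gives a well-defined $\psi_\vphi^{-1}$. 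Writing $\psi_\vphi^{-1}(\vphi,y) = y + \widetilde\alpha(\vphi,y)$ simply \emph{defines} $\widetilde\alpha(\vphi,y) := \psi_\vphi^{-1}(y) - y$, which is again $2\pi$-periodic in $y$ and, by smooth dependence of solutions of the implicit equation $x + \alpha(\vphi,x) = y$ on the parameters $(\vphi,y)$ (the implicit function theorem applies because $\partial_x(x+\alpha(\vphi,x)) = 1+\alpha_x \geq c_0 > 0$), is ${\cal C}^\infty$ on $\T^\nu\times\T$. So $\widetilde\alpha \in {\cal C}^\infty(\T^\nu\times\T,\R)$ as required.

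Next I would establish the two identities in \eqref{identita 1 + alpha alpha tilde partial} and, as a byproduct, the positivity $\inf_{(\vphi,y)}(1+\widetilde\alpha_y(\vphi,y))>0$ in \eqref{proprieta alpha tilde diffeo toro}. Starting from $\psi_\vphi(\psi_\vphi^{-1}(y)) = y$, i.e.
\[
\big(y+\widetilde\alpha(\vphi,y)\big) + \alpha\big(\vphi,\, y+\widetilde\alpha(\vphi,y)\big) = y,
\]
differentiating in $y$ gives $\big(1+\widetilde\alpha_y(\vphi,y)\big)\big(1+\alpha_x(\vphi,\,y+\widetilde\alpha(\vphi,y))\big) = 1$, which is the second identity in \eqref{identita 1 + alpha alpha tilde partial}. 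Symmetrically, differentiating $\psi_\vphi^{-1}(\psi_\vphi(x)) = x$, i.e. $\big(x+\alpha(\vphi,x)\big) + \widetilde\alpha\big(\vphi,\,x+\alpha(\vphi,x)\big) = x$, in $x$ yields $\big(1+\alpha_x(\vphi,x)\big)\big(1+\widetilde\alpha_y(\vphi,\,x+\alpha(\vphi,x))\big)=1$, the first identity. From the second identity, $1+\widetilde\alpha_y(\vphi,y) = \big(1+\alpha_x(\vphi,\psi_\vphi^{-1}(y))\big)^{-1}$, and since $1+\alpha_x$ is continuous, $2\pi$-periodic, hence bounded above by some $c_1<\infty$, we get $1+\widetilde\alpha_y(\vphi,y) \geq 1/c_1 > 0$ uniformly, which is exactly \eqref{proprieta alpha tilde diffeo toro}.

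I do not expect any genuine obstacle here: the statement is a standard one-dimensional inverse-diffeomorphism computation, and the only point demanding a word of care is the smooth joint dependence of $\widetilde\alpha$ on $(\vphi,y)$, which follows cleanly from the implicit function theorem applied to $F(\vphi,x,y) := x+\alpha(\vphi,x)-y$ using $\partial_x F = 1+\alpha_x \geq c_0 > 0$. Everything else is a direct chain-rule differentiation of the two composition identities. If one prefers to avoid the implicit function theorem, an equivalent route is to write the inverse explicitly via the ODE flow or to note that $\psi_\vphi^{-1}$ inherits smoothness from the fact that $\psi_\vphi$ is a ${\cal C}^\infty$ diffeomorphism with nowhere-vanishing derivative; either way the argument is routine.
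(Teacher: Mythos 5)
Your proof is correct and complete: the paper itself gives no argument for this lemma, deferring to Lemma 2.12 of \cite{Montalto1}, and what you write is exactly the standard argument that reference contains (monotone lift commuting with the $2\pi$-translation, inverse smooth by the implicit function theorem since $\partial_x(x+\alpha)=1+\alpha_x\geq c_0>0$, identities by differentiating the two composition relations, and the lower bound on $1+\widetilde\alpha_y$ from the upper bound on $1+\alpha_x$ by compactness). No gaps.
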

 \begin{proof}
 The proof is the same as the one of Lemma 2.12 in \cite{Montalto1}. 
  \end{proof}
%  In the next we consider the flow of the ODE
%\begin{equation}\label{flusso caratteristiche}
%\begin{cases}
%\dot x(\tau) = - b_\alpha (\tau; \vphi, x(\tau))\\
%\dot \xi(\tau) = \partial_x b_\alpha (\tau; \vphi, x(\tau))\xi(\tau)
%\end{cases}
%\end{equation}
%where $b_\alpha(\tau; \vphi , x)$ is defined in \eqref{definizione b alpha per egorov}. 
%Given $\tau_0, \tau_1 \in [0, 1]$, we denote by $\gamma^{\tau_0, \tau_1}(\vphi, x, \xi) = (\gamma_1^{\tau_0, \tau_1}(\vphi, x), \gamma_2^{\tau_0, \tau_1}(\vphi, x, \xi))$ the flow of the ODE \eqref{flusso caratteristiche} with initial time $\tau_0$ and final time $\tau_1$. 
%The following lemma holds. 
%\begin{lemma}\label{espressione esplicita caratteristiche}
%For any $\tau_0, \tau \in [0, 1]$, $\gamma_1^{\tau_0, \tau}\in {\cal C}^\infty(\T^\nu \times \T, \R)$  and $\gamma^{\tau_0, \tau}_2\in S^1$.   Furthermore, for any $\tau_0 \in [0, 1]$ one has that 
%$$
%\gamma_1^{\tau_0, 0}(\vphi, x) = x + \tau_0 \alpha(\vphi, x) \,, \qquad \gamma_2^{\tau_0, 0}(\vphi, x, \xi) = (1 + \tau_0 \alpha_x(\vphi, x))^{- 1}\xi \,. 
%$$
%\end{lemma}
%\begin{proof}
%The proof is the same as the one of Lemma 2.13 in \cite{Montalto1}. 
%\end{proof}
Now, we are ready to state the following Proposition.
\begin{proposition}\label{Teorema egorov generale}
Let $m \in \R$, ${\cal V}(\vphi) = {\rm Op}\big( v(\vphi, x, \xi)\big)$ be in the class $S^m$ and $\Phi(\tau; \vphi)$, $\tau \in[0, 1]$ be the flow map of the PDE \eqref{equazione flusso operatoriale}. Then ${\cal P}(\tau; \vphi) := \Phi(\tau; \vphi) {\cal V}(\vphi) \Phi(\tau; \vphi)^{- 1}$ is a pseudo differential operator in the class $OPS^m$, i.e. ${\cal P}(\tau; \vphi) = {\rm Op}\big(  p(\tau; \vphi, x, \xi) \big)$ with $p(\tau, \cdot, \cdot, \cdot) \in S^m$, $\tau \in [0, 1]$. Furthermore $p(\tau; \vphi, x, \xi)$ admits the expansion 
$$
p(\tau; \vphi, x, \xi) = p_0(\tau; \vphi, x, \xi) + p_{\geq 1}(\tau; \vphi, x, \xi)\,, \qquad p_0(\tau, \cdot, \cdot, \cdot) \in S^m\,, \quad p_{\geq 1}(\tau; \cdot, \cdot, \cdot) \in S^{m - 1}
$$
and the principal symbol $p_0$ has the form 
$$
\begin{aligned}
& p_0(\tau; \vphi, x, \xi) := v\Big(\vphi, x + \tau\alpha(\vphi, x), (1 + \tau\alpha_x(\vphi, x))^{- 1} \xi \Big)\,, \\
&   \qquad \forall (\vphi, x, \xi) \in \T^\nu \times \T \times \R\,, \quad \forall \tau \in  [0, 1]\,. 
\end{aligned}
$$
\end{proposition}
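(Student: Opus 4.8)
The plan is to prove this Egorov-type theorem by deriving a transport equation for the full symbol $p(\tau;\vphi,x,\xi)$ and analyzing it order by order. First I would set $\mathcal{P}(\tau;\vphi) := \Phi(\tau;\vphi)\,\mathcal{V}(\vphi)\,\Phi(\tau;\vphi)^{-1}$ and differentiate in $\tau$, using \eqref{equazione flusso operatoriale}, to get the Heisenberg-type evolution equation
\begin{equation*}
\partial_\tau \mathcal{P}(\tau;\vphi) = [\mathcal{A}(\tau;\vphi), \mathcal{P}(\tau;\vphi)]\,, \qquad \mathcal{P}(0;\vphi) = \mathcal{V}(\vphi)\,.
\end{equation*}
Since $\mathcal{A}(\tau;\vphi) \in OPS^1$ and (by Theorem \ref{teorema composizione pseudo}, Corollary \ref{corollario commutator}) the commutator with an $OPS^m$ operator lands in $OPS^m$, a standard a priori argument on the symbol seminorms — combined with the well-posedness and smoothing-stability of the flow from Lemma \ref{flusso + derivate flusso} and Lemma \ref{proprieta flusso trasporto lemma astratto} — shows that $\mathcal{P}(\tau;\vphi) \in OPS^m$ uniformly in $\tau\in[0,1]$, so $\mathcal{P}(\tau;\vphi) = \Op(p(\tau;\vphi,x,\xi))$ with $p(\tau,\cdot,\cdot,\cdot)\in S^m$ depending smoothly on $\tau$.

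Next I would compute the symbol of the commutator $[\mathcal{A}(\tau;\vphi), \mathcal{P}(\tau;\vphi)]$. Writing $\mathcal{A}(\tau;\vphi) = \Op(\ii b_\alpha(\tau;\vphi,x)\xi + \tfrac12(\partial_x b_\alpha)(\tau;\vphi,x))$, the expansion \eqref{composition pseudo} gives that the symbol of the commutator equals $b_\alpha(\tau;\vphi,x)\,\partial_x p - (\partial_x b_\alpha)(\tau;\vphi,x)\,\xi\,\partial_\xi p$ modulo $S^{m-1}$; the zeroth-order piece of $b_\alpha$ contributes only to lower order. Hence $p$ satisfies the scalar transport equation
\begin{equation*}
\partial_\tau p = b_\alpha(\tau;\vphi,x)\,\partial_x p - (\partial_x b_\alpha)(\tau;\vphi,x)\,\xi\,\partial_\xi p + (\text{lower order})\,, \qquad p(0;\vphi,x,\xi) = v(\vphi,x,\xi)\,.
\end{equation*}
I would then define $p_0$ to be the solution of the transport equation obtained by dropping the ``lower order'' term, i.e. $\partial_\tau p_0 = b_\alpha \partial_x p_0 - (\partial_x b_\alpha)\xi\,\partial_\xi p_0$ with $p_0(0) = v$, and set $p_{\geq 1} := p - p_0$. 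The claimed formula for $p_0$ is verified by the method of characteristics: one checks that $x(\tau) := x + \tau\alpha(\vphi,x)$ and $\xi(\tau) := (1+\tau\alpha_x(\vphi,x))^{-1}\xi$ solve the characteristic ODEs $\dot x = -b_\alpha(\tau;\vphi,x)$, $\dot\xi = (\partial_x b_\alpha)(\tau;\vphi,x)\,\xi$ — this is a direct computation using the explicit form \eqref{definizione b alpha per egorov} of $b_\alpha$ and the fact that $\partial_\tau(x+\tau\alpha(\vphi,x)) = \alpha(\vphi,x)$ while $\partial_x(x+\tau\alpha(\vphi,x)) = 1+\tau\alpha_x$ — so that $p_0(\tau;\vphi,x,\xi) = v(x(\tau),\xi(\tau))$ along characteristics, and since $v\in S^m$ depends on $\xi$ like $\langle\xi\rangle^m$ under $\xi$-derivatives, the rescaling $\xi\mapsto(1+\tau\alpha_x)^{-1}\xi$ by a strictly positive smooth factor keeps $p_0\in S^m$. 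Finally, $p_{\geq 1}$ solves $\partial_\tau p_{\geq 1} = b_\alpha\partial_x p_{\geq 1} - (\partial_x b_\alpha)\xi\,\partial_\xi p_{\geq 1} + (\text{lower order, in }S^{m-1})$ with zero initial datum, so by the same transport/Duhamel estimate at one order lower, $p_{\geq 1}(\tau,\cdot,\cdot,\cdot)\in S^{m-1}$.

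The main obstacle I expect is making the symbol-level argument rigorous: the commutator expansion \eqref{composition pseudo} is only \emph{asymptotic}, so to control the genuine symbol of $[\mathcal{A}, \mathcal{P}]$ one must track remainders in the $S^m$-seminorms uniformly in $\tau$, and to run the a priori estimate closing the bound $p(\tau)\in S^m$ one needs a Gr\"onwall-type argument on a scale of seminorms that does not lose derivatives in $\xi$ — this is where the precise structure of $\mathcal{A}(\tau;\vphi)$ (a first-order operator whose symbol is linear in $\xi$, so that $[\mathcal{A},\cdot]$ is order-preserving rather than order-raising) is essential, together with the smoothness of the flow in $\vphi$ from Lemma \ref{proprieta flusso trasporto lemma astratto}. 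A secondary technical point is checking that the characteristic map $\psi_\vphi: x\mapsto x+\tau\alpha(\vphi,x)$ and its inverse are genuine torus diffeomorphisms, which is exactly the content of Lemma \ref{diffeo del toro lemma astratto} under the standing assumption \eqref{ansatz alpha egorov}, so this part is already available.
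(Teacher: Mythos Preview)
Your proposal is correct and follows the standard Egorov argument: derive the Heisenberg evolution $\partial_\tau\mathcal{P}=[\mathcal{A},\mathcal{P}]$, extract from the commutator expansion the scalar transport equation for the principal symbol, solve it by characteristics (which, thanks to the specific choice \eqref{definizione b alpha per egorov} of $b_\alpha$, are given explicitly by $x\mapsto x+\tau\alpha(\vphi,x)$ and the associated cotangent rescaling), and then handle $p_{\geq 1}$ by Duhamel at one order lower. This is exactly the approach of Theorem~2.14 in \cite{Montalto1}, which is what the paper invokes for its proof; your identification of the key structural point --- that $\mathcal{A}$ has symbol affine in $\xi$, so $[\mathcal{A},\cdot]$ is order-preserving and the Gr\"onwall argument on the $S^m$-seminorms closes --- is precisely what makes the iteration work there.
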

\begin{proof}
The proof is the same as the one of Theorem 2.14 in \cite{Montalto1}. 
\end{proof}
We also state another {\it semplified} version of the Egorov theorem in which we conjugate a symbol by means of the flow of a vector field which is a pseudo differential operator of order strictly smaller than one. 
We consider a pseudo differential operator ${\cal G}(\vphi) = {\rm Op}(g(\vphi, x, \xi))$, with $g \in S^{\eta}$, ${\cal G}(\vphi) = {\cal G}(\vphi)^*$, $\eta < 1$ and for any $\tau \in [0, 1]$, let $\Phi_{{\cal G}}(\tau; \vphi)$ be the flow of the pseudo-PDE $\partial_\tau u = \ii {\cal G}(\vphi)[ u]$, i.e. 
\begin{equation}\label{flusso pseudo ordine < 1}
\begin{cases}
\partial_\tau \Phi_{{\cal G}}(\tau; \vphi) = \ii {\cal G}(\vphi) \Phi_{{\cal G}}(\tau; \vphi) \\
\Phi_{{\cal G}}(0; \vphi)  = {\rm Id}\,.
\end{cases}
\end{equation}
which is a well-defined invertible map by Lemma \ref{flusso + derivate flusso}. The following Proposition holds. 
\begin{proposition}\label{teorema egorov campo minore di uno}
Let $m \in \R$, ${\cal V}(\vphi) = {\rm Op}\big( v(\vphi, x, \xi) \big) \in OPS^m$ and ${\cal G}(\vphi) = {\rm Op}(g(\vphi, x, \xi))$, with $g \in S^{\eta}$, $\eta < 1$. Then for any $\tau \in [0, 1]$, the operator ${\cal P}(\tau; \vphi) := \Phi_{{\cal G}}(\tau; \vphi) {\cal V}(\vphi) \Phi_{{\cal G}}(\tau; \vphi)^{- 1}$ is a pseudo differential operator of order $m$ with symbol $p(\tau; \cdot , \cdot, \cdot ) \in S^m$. The symbol $p(\tau; \vphi , x, \xi )$ admits the expansion 
\begin{equation}\label{espansione lemma egorov semplificato}
p(\tau; \vphi , x, \xi ) = v(\vphi, x, \xi) + \tau \{ g, v \}(\vphi, x, \xi) + p_{\geq 2}(\tau; \vphi, x, \xi)\,, \quad p_{\geq 2}(\tau; \vphi, x, \xi) \in S^{m - 2(1 - \eta)}\,. 
\end{equation}
\end{proposition}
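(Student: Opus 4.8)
The plan is to establish Proposition~\ref{teorema egorov campo minore di uno} by differentiating the conjugated operator ${\cal P}(\tau;\vphi) = \Phi_{{\cal G}}(\tau;\vphi)\,{\cal V}(\vphi)\,\Phi_{{\cal G}}(\tau;\vphi)^{-1}$ in $\tau$ and setting up a Heisenberg-type transport equation for its symbol. A direct computation using \eqref{flusso pseudo ordine < 1} gives $\partial_\tau {\cal P}(\tau;\vphi) = \ii\,[{\cal G}(\vphi), {\cal P}(\tau;\vphi)]$ with ${\cal P}(0;\vphi) = {\cal V}(\vphi)$. Since ${\cal G}(\vphi)\in OPS^\eta$ with $\eta<1$, Corollary~\ref{corollario commutator} (together with Theorem~\ref{teorema composizione pseudo}) shows that commuting with ${\cal G}$ lowers the order by $1-\eta>0$, so the iterated commutators decay in order and the whole construction stays within the pseudo-differential calculus of the excerpt. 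The first thing I would do is verify by an abstract ODE argument in ${\cal B}(H^{s+m},H^s)$ that ${\cal P}(\tau;\vphi)$ is well-defined, using Lemma~\ref{flusso + derivate flusso} to guarantee $\Phi_{{\cal G}}(\tau;\vphi)^{\pm1}\in{\cal B}(H^s)$ for all $s$; this needs the self-adjointness hypothesis ${\cal G}={\cal G}^*$, which makes $\ii{\cal G}$ generate a bounded flow on every Sobolev space (indeed, since ${\cal G}\in OPS^\eta$ is actually bounded on each $H^s$ for $\eta<1$, this is automatic, but the cleaner route is via Lemma~\ref{flusso + derivate flusso} applied to ${\cal A}(\tau;\vphi)=\ii{\cal G}(\vphi)$ whose symmetrized part is $\ii({\cal G}-{\cal G}^*)=0\in OPS^0$).

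The core of the argument is a Picard/Duhamel iteration at the level of symbols. Write ${\cal P}(\tau;\vphi) = {\rm Op}(p(\tau;\vphi,x,\xi))$ and, using Corollary~\ref{corollario commutator}, express $\ii[{\cal G},{\cal P}]$ as ${\rm Op}\big(\{g,p\} + r(\tau;\vphi,x,\xi)\big)$ where $r\in S^{m+\eta-2}$ whenever $p\in S^m$ — more precisely, $\ii(g\star p) = \{g,p\} + S^{(\eta)+m-2}$, and since $\eta<1$ we have $m+\eta-2 < m-(1-\eta)$, so the ``remainder'' commutator term is two orders of $(1-\eta)$ below the leading symbol. Thus $p$ solves the symbol-level Cauchy problem $\partial_\tau p = \{g,p\} + (\text{lower order})$, $p(0)=v$. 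Expanding $p = p^{(0)} + p^{(1)} + p^{(\geq2)}$ where $p^{(0)}$ solves the homogeneous transport equation $\partial_\tau p^{(0)} = \{g,p^{(0)}\}$, $p^{(0)}(0)=v$, and $p^{(1)}$ is the first Duhamel correction, I would show $p^{(0)}(\tau) = v + \tau\{g,v\} + S^{m-2(1-\eta)}$ by Taylor-expanding the solution of the transport equation in $\tau$: since $\{g,\cdot\}$ maps $S^k\to S^{k+\eta-1}\subseteq S^{k-(1-\eta)}$, the second-order term $\tfrac{\tau^2}{2}\{g,\{g,v\}\}$ already lies in $S^{m-2(1-\eta)}$, as do all further terms. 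The Duhamel correction $p^{(1)}$ involves the remainder symbol $r$, which is $S^{m+\eta-2}$, and $m+\eta-2\leq m-2(1-\eta)$ precisely because $\eta<1$ forces $\eta-2 \leq -2+2\eta = 2(\eta-1)$, i.e. $\eta\leq 2\eta$; so $p^{(1)}$ and everything beyond it is absorbed into $p_{\geq2}\in S^{m-2(1-\eta)}$.

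To make the formal symbol expansion rigorous I would invoke Borel summation (Definition~\ref{definizione espansione asintotica} and the standard Borel lemma for symbol classes) to produce a genuine symbol with the prescribed asymptotic expansion, and then show that ${\rm Op}$ of this symbol differs from ${\cal P}(\tau;\vphi)$ by a smoothing operator, by checking that the difference solves the operator Cauchy problem up to an $OPS^{-\infty}$ error and applying the boundedness of the flows from Lemma~\ref{flusso + derivate flusso}. The $\vphi$-dependence and $\tau$-smoothness of all symbols and remainders is tracked throughout exactly as in Lemma~\ref{flusso + derivate flusso}(ii); since $\vphi$ is a harmless parameter, this introduces no new difficulty. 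The main obstacle I anticipate is the bookkeeping of orders: one must be careful that each commutator with ${\cal G}$ genuinely gains $1-\eta$ and not merely a fractional amount, and that the ``$\{g,v\}$-then-stop'' truncation lands exactly at $S^{m-2(1-\eta)}$ rather than a worse exponent — this is the point where the strict inequality $\eta<1$ is used essentially, and where one has to be precise about whether the remainder in Corollary~\ref{corollario commutator} (which states $S^{m+m'-2}$ for the commutator of orders $m,m'$) is sharp enough; here $m'=\eta$ gives $S^{m+\eta-2}$, and the inequality $m+\eta-2 \leq m-2(1-\eta)$ is exactly $\eta \leq 2\eta$, valid since $\eta$ may be taken $\geq 0$ after enlarging the order if necessary (and if $\eta<0$ the gain is even better). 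Alternatively, one can cite that the proof is a direct adaptation of the Egorov-type argument in \cite{Montalto1} in the simpler regime where the generator has order $<1$, which is how the analogous Proposition~\ref{Teorema egorov generale} is handled in the excerpt.
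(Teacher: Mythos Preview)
Your approach is correct and is exactly the standard Egorov-type argument that the paper defers to: the paper's own proof consists of the single sentence ``The proof is the same as the one of Theorem 2.16 in \cite{Montalto1}'', which is precisely the alternative you mention at the end of your proposal. Your explicit sketch---derive the Heisenberg equation $\partial_\tau{\cal P}=\ii[{\cal G},{\cal P}]$, expand in iterated commutators, track orders via Corollary~\ref{corollario commutator}, and close with a Borel/remainder argument---is the content of that reference.

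One small caveat on your order bookkeeping: when $\eta<0$ your claim that ``the gain is even better'' is not quite right for the specific exponent $m-2(1-\eta)$. The sub-principal remainder in the first commutator expansion lies in $S^{m+\eta-2}$, and for $\eta<0$ one has $m+\eta-2>m-2(1-\eta)$, so that remainder does \emph{not} sit inside $S^{m-2(1-\eta)}$. Your fix of ``enlarging $\eta$ to $\max(\eta,0)$'' yields only $p_{\geq2}\in S^{m-2}$, which is weaker than stated. This is a wrinkle in the proposition's exponent rather than in your method; in every application in the paper what is actually used is that $p_{\geq2}$ is at least one full step $(1-\eta)$ below $\{g,v\}$, which your argument delivers without difficulty.
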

\begin{proof}
 The proof is the same as the one of Theorem 2.16 in \cite{Montalto1}. 
 \end{proof}
We now consider the projection operators $\Pi_+, \Pi_- : L^2(\T) \to L^2(\T)$ given by 
\begin{equation}\label{operatori proiezione sui modi positivi negativi azione}
\Pi_{+}u (x) := \sum_{\xi \geq 0} \widehat u(\xi) e^{\ii x \xi}\,, \quad \Pi_{- } u(x) := \sum_{\xi < 0} \widehat u(\xi) e^{\ii x \xi}\,, \quad u \in L^2(\T)\,. 
\end{equation}
The following elementary properties hold: 
\begin{equation}\label{proprieta elementari Pi + -}
\Pi_+ + \Pi_- = {\rm Id}\,, \quad \Pi_{\pm}^2 = \Pi_{\pm}\,, \quad \Pi_{\pm} \Pi_{\mp} = 0\,. 
\end{equation}
Given two cut-off functions $\chi_{\pm} \in {\cal C}^\infty(\R, \R)$ satisfying 
\begin{equation}\label{propriet cut-off chi+}
\chi_+(\xi) = \begin{cases}
 1\,, \quad \forall \xi \geq 0 \\
 0 \,, \quad \forall \xi \leq - \frac12, \\
\end{cases} \quad \chi_-(\xi) = \begin{cases}
0\,, \quad \forall \xi \geq - \frac23 \\
1 \,, \quad \forall \xi \leq - 1, 
\end{cases}
\end{equation}
we often identify the operators $\Pi_{\pm}$ with ${\rm Op}(\chi_{\pm}(\xi))$, i.e. 
\begin{equation}\label{operatori proiezione sui modi positivi negativi}
\Pi_{\pm} \equiv {\rm Op}(\chi_{\pm}(\xi))\,. 
\end{equation}
Lemmata \ref{push forward splitting Pi +-}-\ref{hilbert transform cambi di variabile} below will be used in order to develop the reduction procedure of Section \ref{regolarizzazione caso M = 1}. 
\begin{lemma}\label{push forward splitting Pi +-}
Let ${\cal V}(\vphi) = {\rm Op}\big( v(\vphi, x, \xi) \big) \in OPS^m$. Let ${\cal G}_{\pm}(\tau; \vphi) = {\rm Op} \big( g_{\pm}(\tau; \vphi, x, \xi) \big) \in OPS^\eta$, $\eta \leq 1$ satisfy ${\cal G}_{\pm}(\tau; \vphi) + {\cal G}_{\pm}(\tau; \vphi)^* \in OPS^{ \eta - 1}$, $\tau \in [0, 1]$. Let $\Phi_{\pm}(\tau; \vphi)$ by the flow associated to the vector field ${\cal G}_\pm(\tau; \vphi)$, i.e. 
$$
\begin{cases}
\partial_\tau \Phi_{\pm}(\tau; \vphi) = {\cal G}_{\pm}(\tau; \vphi) \Phi_{\pm}(\tau; \vphi) \\
\Phi_{\pm}(0; \vphi) = {\rm Id}\,. 
\end{cases}
$$ 
Then the following holds.

\noindent
$(i)$
The map 
\begin{equation}\label{Phi Pi + - lemma astratto}
\Phi(\tau; \vphi) := \Phi_+(\tau; \vphi)^{- 1} \Pi_+ + \Phi_-(\tau; \vphi)^{- 1} \Pi_- 
\end{equation}
is invertible, with inverse given by 
\begin{equation}\label{Phi Pi + - lemma astratto inverso}
\Phi(\tau; \vphi)^{- 1} = \Pi_+ \Phi_+(\tau; \vphi) + \Pi_- \Phi_-(\tau; \vphi)\,. 
\end{equation}
Furthermore the quasi-periodic push forward of the vector field ${\cal V}(\vphi)$ by means of the map $\Phi$ has the form 
\begin{equation}\label{Phi pm cal V lemma astratto}
{\cal V}_1(\tau; \vphi) := \Phi_{\omega *} {\cal V}(\tau; \vphi) = \Pi_+ {\cal V}_{1, +}(\tau; \vphi) \Pi_+ + \Pi_- {\cal V}_{1, -}(\tau; \vphi) \Pi_- + OPS^{- \infty}
\end{equation}
where 
\begin{equation}\label{campo trasformato Pi + - lemma astratto}
\begin{aligned}
& {\cal V}_{1, \pm}(\tau; \vphi) := (\Phi_{\pm}^{- 1})_{\omega *} {\cal V}(\vphi) = \Phi_{\pm}(\tau; \vphi) {\cal V}(\vphi) \Phi_{\pm}(\tau; \vphi)^{- 1} - \Phi_{\pm}(\tau; \vphi) \omega \cdot \partial_\vphi \Phi_{\pm}(\tau; \vphi)^{- 1}\,. 
\end{aligned}
\end{equation}
$(ii)$ Assume that ${\cal V}(\vphi) = \Pi_+ {\cal V}_+(\vphi) \Pi_+ + \Pi_- {\cal V}_-(\vphi) \Pi_-$ where ${\cal V}_+(\vphi), {\cal V}_-(\vphi) \in OPS^m$. 
%and 
%$$
%{\cal R}_\infty(\vphi) \in {\cal B}(H^s, H^{s + \sigma}), \quad \sup_{\vphi \in \T^\nu} \| {\cal W}_\infty(\vphi)\|_{{\cal B}(H^s, H^{s + \sigma})} < + \infty\,, \quad \forall s, \sigma \geq 0\,. 
%$$
Then ${\cal V}_1(\tau; \vphi) = \Phi_{\omega*} {\cal V}(\tau; \vphi)$ has the form 
\begin{equation}\label{cal V1 split pm}
\begin{aligned}
& {\cal V}_1(\tau; \vphi) = \Pi_+ {\cal V}_{1, +}(\tau; \vphi) \Pi_+ + \Pi_- {\cal V}_{1, -}(\tau; \vphi) \Pi_- + OPS^{- \infty}\,, \\
& {\cal V}_{1, \pm}(\tau; \vphi) := \Phi_{\pm}(\tau; \vphi) {\cal V}_{\pm}(\vphi) \Phi_{\pm}(\tau; \vphi)^{- 1} - \Phi_{\pm}(\tau; \vphi) \omega \cdot \partial_\vphi \Phi_{\pm}(\tau; \vphi)^{- 1}\,.  \\
%& {\cal R}_{1, \infty}(\tau; \vphi) \in {\cal B}(H^s, H^{s + \sigma})\,, \quad \sup_{\begin{subarray}{c}
%\vphi \in \T^\nu \\
%\tau \in [0, 1]
%\end{subarray}} \| {\cal R}_{1, \infty}(\tau; \vphi)\|_{{\cal B}(H^s, H^{s + \sigma})} < + \infty\,, \quad \forall s, \sigma \geq 0\,. 
\end{aligned}
\end{equation}
\end{lemma}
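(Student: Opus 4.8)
\textbf{Proof plan for Lemma \ref{push forward splitting Pi +-}.}

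The plan is to establish the two items in order, starting from the invertibility of $\Phi(\tau;\vphi)$. For item $(i)$, I would first check the inversion formula \eqref{Phi Pi + - lemma astratto inverso} by a direct computation: multiplying $\Phi(\tau;\vphi) = \Phi_+^{-1}\Pi_+ + \Phi_-^{-1}\Pi_-$ on the right by $\Pi_+ \Phi_+ + \Pi_- \Phi_-$ and using the elementary orthogonality relations \eqref{proprieta elementari Pi + -}, i.e. $\Pi_\pm^2 = \Pi_\pm$ and $\Pi_\pm \Pi_\mp = 0$, reduces the product to $\Phi_+^{-1}\Pi_+\Phi_+ + \Phi_-^{-1}\Pi_-\Phi_-$, which equals $\Phi_+^{-1}\Phi_+\Pi_+ + \Phi_-^{-1}\Phi_-\Pi_- = \Pi_+ + \Pi_- = {\rm Id}$ provided the projectors commute with the flows up to smoothing. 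That last fact is exactly the content of Lemma \ref{commutatore flusso Fourier multiplier costante}: since ${\cal G}_\pm(\tau;\vphi) \in OPS^\eta$ with ${\cal G}_\pm + {\cal G}_\pm^* \in OPS^{\eta-1}$ and $\Pi_\pm \equiv {\rm Op}(\chi_\pm(\xi))$ with $\chi_\pm$ constant outside a bounded set (so $\partial_\xi \chi_\pm$ has compact support), we get $[\Phi_\pm(\tau;\vphi), \Pi_\pm] \in OPS^{-\infty}$, hence the computation closes modulo $OPS^{-\infty}$ — which is why all identities in the statement carry an $OPS^{-\infty}$ remainder.

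Next I would compute the push-forward $\Phi_{\omega*}{\cal V} = \Phi^{-1}\big({\cal V}\Phi - \omega\cdot\partial_\vphi\Phi\big)$ using the definition \eqref{push forward}. Substituting \eqref{Phi Pi + - lemma astratto} and \eqref{Phi Pi + - lemma astratto inverso} and expanding the product, every cross term between $+$ and $-$ blocks contains a factor $\Pi_+\Pi_-$ or $\Pi_-\Pi_+$ \emph{after} commuting a projector past a flow; since each such commutation costs only a smoothing operator, the cross terms all land in $OPS^{-\infty}$. The diagonal terms reassemble, again up to $OPS^{-\infty}$, into $\Pi_+\big(\Phi_+{\cal V}\Phi_+^{-1} - \Phi_+\,\omega\cdot\partial_\vphi\Phi_+^{-1}\big)\Pi_+ + (\text{same with }-)$, which is precisely \eqref{Phi pm cal V lemma astratto}–\eqref{campo trasformato Pi + - lemma astratto}; note the identity $\omega\cdot\partial_\vphi(\Phi_\pm^{-1}) = -\Phi_\pm^{-1}(\omega\cdot\partial_\vphi\Phi_\pm)\Phi_\pm^{-1}$ matches the two displayed forms of ${\cal V}_{1,\pm}$. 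For item $(ii)$, when ${\cal V} = \Pi_+{\cal V}_+\Pi_+ + \Pi_-{\cal V}_-\Pi_-$ one inserts this into $(i)$ and absorbs each ${\cal V}_\pm$ into the corresponding block: commuting $\Pi_+$ past $\Phi_+$ (cost $OPS^{-\infty}$) turns $\Phi_+{\cal V}\Phi_+^{-1}$ into $\Phi_+{\cal V}_+\Phi_+^{-1}$ modulo smoothing, and the transport term is handled identically, yielding \eqref{cal V1 split pm}.

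The main obstacle — really the only subtle point — is bookkeeping the smoothing remainders: one must verify that each reordering of a projector past a flow, and each discarded cross term, genuinely produces an operator in $OPS^{-\infty}$, and that these remainders (which are $\vphi$-dependent and $\tau$-dependent) remain in $OPS^{-\infty}$ uniformly after being multiplied by the bounded flows $\Phi_\pm^{\pm 1}$ and by $\Pi_\pm$; this follows from Lemma \ref{commutatore flusso Fourier multiplier costante} together with the boundedness estimates of Lemma \ref{flusso + derivate flusso} (and its $\vphi$-derivative bounds \eqref{rafael - 10}), since composing a smoothing operator with operators that are bounded on every $H^s$ preserves the smoothing property. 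Everything else is a direct algebraic manipulation using \eqref{proprieta elementari Pi + -} and the definition of the quasi-periodic push-forward.
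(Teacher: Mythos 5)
Your proposal is correct and follows essentially the same route as the paper: expand the push-forward using \eqref{Phi Pi + - lemma astratto}--\eqref{Phi Pi + - lemma astratto inverso}, recognize the off-diagonal terms as commutators of $\Pi_\pm$ with the flows and with ${\cal V}$, and dispose of them via Lemma \ref{commutatore flusso Fourier multiplier costante} and Lemma \ref{commutatore Fourier multiplier derivata 0 simbolo generale} together with the boundedness of $\Phi_\pm^{\pm1}$ from Lemma \ref{flusso + derivate flusso}; item $(ii)$ is obtained exactly as you describe, by commuting projectors past the flows at the cost of $OPS^{-\infty}$ remainders. You are in fact slightly more explicit than the paper in observing that the inversion identity $\Phi\circ(\Pi_+\Phi_+ + \Pi_-\Phi_-)={\rm Id}$ itself only closes modulo smoothing terms, a point the paper's verification of \eqref{Phi Pi + - lemma astratto inverso} leaves implicit.
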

\begin{proof}
{\sc Proof of $(i)$}
By the hypotheses on ${\cal G}_{\pm}(\tau; \vphi)$, we can apply Lemma \ref{flusso + derivate flusso} obtaining the the flow $\Phi_{\pm}(\tau; \vphi)$ is invertible and for any $s \geq 0$
\begin{equation}\label{Phi pm bound nel lemma astratto}
\Phi_{\pm}(\tau; \vphi), \Phi_{\pm}(\tau; \vphi)^{1} \in {\cal B}(H^s)\,, \quad \sup_{\begin{subarray}{c}
\tau \in [0, 1] \\
\vphi \in \T^\nu
\end{subarray}}\| \Phi_{\pm}(\tau; \vphi) \|_{{\cal B}(H^s)}, \| \Phi_{\pm}(\tau; \vphi)^{- 1} \|_{{\cal B}(H^s)} < + \infty \,.
\end{equation} 
 By the properties \eqref{proprieta elementari Pi + -}, one can verify that the map $\Phi(\tau; \vphi)$ is invertible and the inverse is given by the formula \eqref{Phi Pi + - lemma astratto}.  
We now compute the push forward 
\begin{equation}\label{primo cal V0 M = 1}
{\cal V}_1(\tau; \vphi) = \Phi(\tau; \vphi)^{- 1} {\cal V}(\vphi) \Phi(\tau; \vphi) - \Phi(\tau; \vphi)^{- 1} \omega \cdot \partial_\vphi \Phi(\tau; \vphi)\,. 
\end{equation}
By \eqref{Phi Pi + - lemma astratto}, \eqref{Phi Pi + - lemma astratto inverso}, one has 
\begin{align}
\Phi(\tau; \vphi)^{- 1} {\cal V}(\vphi) \Phi(\tau; \vphi) & = \Big( \Pi_+ \Phi_+(\tau; \vphi) + \Pi_{- } \Phi_-(\tau;  \vphi) \Big) {\cal V}(\vphi) \Big( \Phi_+( \tau; \vphi)^{- 1} \Pi_+ + \Phi_{-}(\tau; \vphi)^{- 1} \Pi_- \Big) \nonumber\\
& = \Pi_+ \Phi_+( \tau; \vphi) {\cal V}(\vphi) \Phi_+( \tau; \vphi)^{- 1} \Pi_+ + \Pi_+ \Phi_+(\tau;  \vphi) {\cal V}(\vphi)  \Phi_{-}( \tau; \vphi)^{- 1} \Pi_- \nonumber\\
& \quad + \Pi_{- } \Phi_-(\tau;  \vphi){\cal V}(\vphi) \Phi_+( \tau; \vphi)^{- 1} \Pi_+ + \Pi_{- } \Phi_-( \tau; \vphi) {\cal V}(\vphi) \Phi_{-}( \tau; \vphi)^{- 1} \Pi_- \nonumber\\
& = \Pi_+ \Phi_+( \tau; \vphi) {\cal V}(\vphi) \Phi_+( \tau; \vphi)^{- 1} \Pi_+ +  \Pi_- \Phi_-( \tau; \vphi) {\cal V}(\vphi) \Phi_{-}( \tau; \vphi)^{- 1} \Pi_- + {\cal R}_{ \infty}^{(1)} (\tau; \vphi)
\end{align}
where, using the properties \eqref{proprieta elementari Pi + -}, one has that 
\begin{equation}\label{definizione cal R0 infinito}
\begin{aligned}
{\cal R}_{ \infty}^{(1)}(\tau; \vphi) & := 
%[\Pi_+ , \Phi_+(\tau; \vphi) {\cal V}(\vphi) \Phi_+(\tau; \vphi)^{- 1}] \Pi_+ +  [\Pi_- , \Phi_-(\tau; \vphi) {\cal V}(\vphi) \Phi_-(\tau; \vphi)^{- 1}] \Pi_- \\
 [\Pi_+ , \Phi_+( \tau; \vphi) {\cal V}(\vphi)  \Phi_{-}(\tau; \vphi)^{- 1}] \Pi_- + [\Pi_-, \Phi_-( \tau; \vphi){\cal V}(\vphi) \Phi_+( \tau; \vphi)^{- 1} ] \Pi_+ \,. 
\end{aligned}
\end{equation}
Moreover, using again \eqref{proprieta elementari Pi + -}, one gets 
\begin{align}
 \Phi(\tau; \vphi)^{- 1} \omega \cdot \partial_\vphi \Phi(\tau; \vphi) & = \Big( \Pi_+ \Phi_+(\tau; \vphi) + \Pi_{- } \Phi_-(\tau;  \vphi) \Big)\Big( \omega \cdot \partial_\vphi \Phi_+( \tau; \vphi)^{- 1} \Pi_+ +  \omega \cdot \partial_\vphi \Phi_{-}(\tau;  \vphi)^{- 1} \Pi_- \Big) \nonumber\\
& = \Pi_+ \Phi_+(\tau; \vphi) \omega \cdot \partial_\vphi \Phi_+( \tau; \vphi)^{- 1}   \Pi_+ +  \Pi_-  \Phi_-( \tau; \vphi) \omega \cdot \partial_\vphi \Phi_{-}( \tau; \vphi)^{- 1}  \Pi_- \nonumber\\
& \quad + {\cal R}_{\infty}^{(2)}(\tau; \vphi) 
\end{align}
where 
\begin{equation}\label{cal R 0 infty (2) M = 1}
\begin{aligned}
{\cal R}_{ \infty}^{(2)}(\tau; \vphi)  & :=  [\Pi_+ , \Phi_+(\tau; \vphi)\omega \cdot \partial_\vphi \Phi_-(\tau; \vphi)^{- 1}] \Pi_- +  [\Pi_- , \Phi_-(\tau; \vphi)\omega \cdot \partial_\vphi \Phi_+(\tau; \vphi)^{- 1}] \Pi_+\,.
\end{aligned}
\end{equation}
Thus \eqref{primo cal V0 M = 1}-\eqref{cal R 0 infty (2) M = 1} imply that 
\begin{equation}\label{espansion cal V + - 0}
{\cal V}_1(\tau; \vphi) = \Pi_+ {\cal V}_{ +}(\tau; \vphi) \Pi_+ + \Pi_- {\cal V}_{ -}(\tau; \vphi) \Pi_- + {\cal R}_{\infty}(\tau; \vphi)
\end{equation}
where 
\begin{equation}\label{definizioni cal V+ - R 0 infty}
\begin{aligned}
& {\cal V}_{ \pm}(\tau; \vphi)  := \Phi_\pm( \tau; \vphi) {\cal V}(\vphi) \Phi_\pm( \tau; \vphi)^{- 1} -  \Phi_\pm(\tau; \vphi) \omega \cdot \partial_\vphi \Phi_\pm( \tau; \vphi)^{- 1}  \,,\\
& {\cal R}_{ \infty}(\tau; \vphi) := {\cal R}_{ \infty}^{(1)}(\tau; \vphi) - {\cal R}_{ \infty}^{(2)}(\tau; \vphi) \,. 
\end{aligned}
\end{equation}
It remains only to prove that the operator ${\cal R}_\infty(\tau; \vphi)$ satisfies the property \eqref{campo trasformato Pi + - lemma astratto}. All the terms in \eqref{definizione cal R0 infinito}, \eqref{cal R 0 infty (2) M = 1} can be analyzed in the same way, then we consider only the operator 
\begin{equation}\label{analisi solo un pezzo cal R infty}
{\cal R}(\tau; \vphi) :=  [\Pi_+ , \Phi_+( \tau; \vphi) {\cal V}(\vphi)  \Phi_{-}(\tau; \vphi)^{- 1}] \Pi_- \stackrel{\eqref{proprieta elementari Pi + -}}{=} \Pi_+ \Phi_+( \tau; \vphi) {\cal V}(\vphi)  \Phi_{-}(\tau; \vphi)^{- 1} \Pi_-  \,. 
\end{equation}
Uisng the property \eqref{proprieta elementari Pi + -}, we write 
\begin{align}
{\cal R}(\tau; \vphi) & = [\Pi_+ , \Phi_+(\tau; \vphi)] {\cal V}(\vphi) \Phi_-(\tau; \vphi)^{- 1} + \Phi_+(\tau; \vphi) [\Pi_+, {\cal V}(\vphi)] \Phi_-(\tau; \vphi)^{- 1} \Pi_- \nonumber\\
& \quad + \Phi_+(\tau; \vphi) {\cal V}(\vphi) [\Pi_+ , \Phi_-(\tau; \vphi)^{- 1}] \Pi_- \,. \label{peg pippo max}
\end{align}
By \eqref{operatori proiezione sui modi positivi negativi}, $\Pi_{\pm} \equiv {\rm Op}(\chi_{\pm}(\xi))$. Furthermore \eqref{propriet cut-off chi+}  implies that $\partial_\xi \chi_{\pm}(\xi) = 0$ for any $|\xi| \geq 1$. Therefore we can apply Lemma \ref{commutatore flusso Fourier multiplier costante} to the operators $[\Pi_+ , \Phi_+(\tau; \vphi)]$, $[\Pi_+ , \Phi_-(\tau; \vphi)^{- 1}]$ and Lemma \ref{commutatore Fourier multiplier derivata 0 simbolo generale} to the operator $[\Pi_+, {\cal V}(\vphi)]$, which together with \eqref{Phi pm bound nel lemma astratto} imply that ${\cal R}(\tau; \vphi) \in OPS^{- \infty}$.

\noindent
{\sc Proof of $(ii)$.} Applying item $(i)$ to the operator ${\cal V}(\vphi) = \Pi_+ {\cal V}_+(\vphi) \Pi_+ + \Pi_- {\cal V}_-(\vphi) \Pi_-$, one gets that 
\begin{equation}\label{prima forma cal V1 ii lemma astratto}
\begin{aligned}
& {\cal V}_1(\tau; \vphi) = \Phi_{\omega*} {\cal V}(\tau; \vphi) = \Pi_+ {\cal P}_+(\tau; \vphi) \Pi_+ + \Pi_- {\cal P}_-(\tau; \vphi) \Pi_- + OPS^{- \infty}\,, \\
& {\cal P}_{\pm}(\tau; \vphi)  := \Phi_{\pm}(\tau; \vphi) {\cal V}(\vphi) \Phi_{\pm}(\tau; \vphi)^{- 1} -  \Phi_{\pm}(\tau; \vphi) \omega \cdot \partial_\vphi \Phi_{\pm}(\tau; \vphi)^{- 1} \,. 
%& {\cal R}_{\infty, 1}(\tau; \vphi) \in {\cal B}(H^s, H^{s + \sigma})\,, \quad \sup_{\begin{subarray}{c}
%\vphi \in \T^\nu \\
%\tau \in [0, 1]
%\end{subarray}} \| {\cal R}_{\infty, 1}(\tau; \vphi) \|_{{\cal B}(H^s, H^{s + \sigma})} < + \infty\,, \quad \forall s, \sigma \geq 0\,. 
\end{aligned}
\end{equation}
Using that ${\cal V} = \Pi_+ {\cal V}_+ \Pi_+ + \Pi_- {\cal V}_- \Pi_-$ one gets 
\begin{align}
\Pi_+ \Phi_{+}(\tau; \vphi) {\cal V}(\vphi) \Phi_{+}(\tau; \vphi)^{- 1} \Pi_+ & = \Pi_+ \Phi_{+}(\tau; \vphi) \Pi_+ {\cal V}_+(\vphi) \Pi_+  \Phi_{+}(\tau; \vphi)^{- 1} \Pi_+ \nonumber\\
& \quad + \Pi_+ \Phi_{+}(\tau; \vphi) \Pi_- {\cal V}_-(\vphi) \Pi_- \Phi_{+}(\tau; \vphi)^{- 1} \Pi_+ \nonumber\\
& = \Pi_+ \Phi_{+}(\tau; \vphi)  {\cal V}_+(\vphi) \Phi_{+}(\tau; \vphi)^{- 1} \Pi_+ + {\cal R}_{\infty, +}(\tau; \vphi) \label{max pippo peg 0}
\end{align}
where, using the properties \eqref{proprieta elementari Pi + -}
\begin{align}
{\cal R}_{\infty, +}(\tau; \vphi) & := \Pi_+ [\Pi_+, \Phi_+(\tau; \vphi)] {\cal V}_+(\vphi) \Pi_+ \Phi_+(\tau; \vphi)^{- 1} \Pi_+ \nonumber\\ 
& \quad + \Pi_+ \Phi_+(\tau; \vphi) {\cal V}_+(\vphi) [\Pi_+ , \Phi_+(\tau; \vphi)^{- 1}] \Pi_+ \nonumber\\
& \quad + [\Pi_+ , \Phi_{+}(\tau; \vphi) ]\Pi_- {\cal V}_-(\vphi) \Pi_- \Phi_{+}(\tau; \vphi)^{- 1}  \Pi_+\,. \label{cal R infty + vphi lemma ii}
\end{align}
Using the same arguments used to analyze the remainder ${\cal R}(\tau; \vphi)$ in \eqref{peg pippo max}, one can show that the remainder ${\cal R}_{\infty, +}(\tau; \vphi) \in OPS^{- \infty}$ implying that 
\begin{equation}\label{boston u}
\Pi_+ \Phi_{+}(\tau; \vphi) {\cal V}(\vphi) \Phi_{+}(\tau; \vphi)^{- 1} \Pi_+ = \Pi_+ \Phi_{+}(\tau; \vphi)  {\cal V}_+(\vphi) \Phi_{+}(\tau; \vphi)^{- 1} \Pi_+ + OPS^{- \infty}\,.
\end{equation} 
In a similar way, one gets that 
\begin{align}
\Pi_- \Phi_{-}(\tau; \vphi) {\cal V}(\vphi) \Phi_{-}(\tau; \vphi)^{- 1} \Pi_- &    = \Pi_- \Phi_{-}(\tau; \vphi)  {\cal V}_-(\vphi) \Phi_{-}(\tau; \vphi)^{- 1} \Pi_- + OPS^{- \infty}\,.  \label{max pippo peg 1}
\end{align}
Therefore the expansion  \eqref{cal V1 split pm} follows by \eqref{prima forma cal V1 ii lemma astratto}, \eqref{max pippo peg 0}, \eqref{boston u}, \eqref{max pippo peg 1} and the lemma is proved. 
\end{proof}
We now recall some well known properties of the Hilbert transform. The Hilbert transform is defined by 
\begin{equation}\label{definizione hilbert transform}
{\cal H}(1) := 0\,, \quad {\cal H}(e^{\ii j x} ) := - \ii \sign(j) e^{\ii j x}\,, \quad \forall j \in \Z \setminus \{ 0 \}\,.
\end{equation}
We often identify the operator ${\cal H}$ with the operator associated to the symbol $- \ii \sign(\xi) \chi(\xi)$ (recall the definition \eqref{definizione cut off D alpha}), since the action on the $2 \pi$-periodic functions of ${\cal H}$ and ${\rm Op}\big( - \ii \,\sign(\xi) \chi(\xi) \big)$ is the same, i.e. 
\begin{equation}\label{simbolo esteso hilbert transform}
{\cal H} \equiv {\rm Op}\Big(- \ii \, \sign(\xi) \chi(\xi) \Big)\,. 
\end{equation}
We now state some classical results concerning the Hilbert transform. 
\begin{lemma}\label{commutatore H moltiplicazione}
Let $a \in {\cal C}^\infty(\T^{\nu + 1})$. Then the commutator $[a, {\cal H}] \in OPS^{- \infty}$.
\end{lemma}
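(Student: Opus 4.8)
The plan is to show that $[a,\mathcal{H}]\in OPS^{-\infty}$ by exploiting the fact that the symbol of $\mathcal{H}$, namely $-\ii\,\sgn(\xi)\chi(\xi)$, is constant in $\xi$ away from a bounded frequency region. Concretely, I would first note that multiplication by $a\in\mathcal{C}^\infty(\T^{\nu+1})$ is a pseudo-differential operator in $OPS^0$ (its symbol $a(\vphi,x)$ is $\xi$-independent, hence trivially satisfies the estimates of Definition \ref{classe simboli} with $m=0$). By the identification \eqref{simbolo esteso hilbert transform}, $\mathcal{H}={\rm Op}(g(\xi))$ with $g(\xi):=-\ii\,\sgn(\xi)\chi(\xi)\in S^0$, and since $\chi(\xi)=1$ for $|\xi|\ge 1$ while $\sgn$ is locally constant away from $0$, we have $\partial_\xi g(\xi)=0$ for all $|\xi|\ge 1$. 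Thus $g$ is precisely of the form required in the hypothesis of Lemma \ref{commutatore Fourier multiplier derivata 0 simbolo generale} (with $\delta=1$).

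Having set this up, the proof is then a direct invocation of Lemma \ref{commutatore Fourier multiplier derivata 0 simbolo generale} applied with the symbol $a=a(\vphi,x)\in S^0$ and the Fourier multiplier $g$: the lemma gives immediately that $[{\rm Op}(g),{\rm Op}(a)]=[\mathcal{H},a]\in OPS^{-\infty}$, and since $[a,\mathcal{H}]=-[\mathcal{H},a]$, the conclusion follows. The mechanism inside that lemma is the composition formula of Theorem \ref{teorema composizione pseudo}: the full symbol of the commutator has asymptotic expansion $\sum_{\alpha\ge 1}\frac{1}{\ii^\alpha\alpha!}(\partial_\xi^\alpha g)(\partial_x^\alpha a)$, and every term vanishes identically for $|\xi|\ge 1$ because $\partial_\xi^\alpha g\equiv 0$ there for $\alpha\ge 1$; hence each term lies in $S^{-\infty}$ and so does the whole symbol.

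There is essentially no obstacle here: the only point requiring a moment's care is verifying that $\partial_\xi g$ vanishes for large $|\xi|$, which uses both that $\chi$ is eventually constant equal to $1$ and that $\sgn(\xi)$ is locally constant on $\{|\xi|\ge 1\}$ (its only jump is at $\xi=0$, which is inside the region $|\xi|<1$). Since in the periodic setting only integer frequencies $\xi\in\Z$ act, one could alternatively argue directly from the definition \eqref{definizione hilbert transform}, but the cleanest route is to pass to the extended symbol $-\ii\,\sgn(\xi)\chi(\xi)$ as the paper already does in \eqref{simbolo esteso hilbert transform} and quote Lemma \ref{commutatore Fourier multiplier derivata 0 simbolo generale} verbatim. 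I would therefore keep the proof to two or three lines.
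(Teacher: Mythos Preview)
Your proof is correct. The paper itself does not give a proof of this lemma: it is introduced with the phrase ``We now state some classical results concerning the Hilbert transform'' and simply asserted. Your argument supplies exactly the proof one would expect within the paper's framework, namely a direct application of Lemma~\ref{commutatore Fourier multiplier derivata 0 simbolo generale} with $g(\xi)=-\ii\,\sign(\xi)\chi(\xi)$ and the multiplication symbol $a(\vphi,x)\in S^0$. The verification that $g\in S^0$ (smoothness at $\xi=0$ is guaranteed because $\chi$ vanishes on $|\xi|\le 1/2$) and that $\partial_\xi g(\xi)=0$ for $|\xi|\ge 1$ is exactly right, so Lemma~\ref{commutatore Fourier multiplier derivata 0 simbolo generale} applies verbatim.
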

Let $\alpha \in {\cal C}^\infty(\T^{\nu + 1}, \R)$ satisfy the condition \eqref{ansatz alpha egorov}. Then by Lemma \ref{diffeo del toro lemma astratto}, the map $x \mapsto x + \alpha(\vphi, x)$ is a diffeomorphism of the torus, with inverse given by $y \mapsto y + \widetilde \alpha(\vphi, y)$ and $\widetilde \alpha \in {\cal C}^\infty(\T^{\nu + 1}, \R)$ satisfy the properties stated in Lemma \ref{diffeo del toro lemma astratto}. The following lemma holds: 
\begin{lemma}\label{lemma cambi di variabile}
Let $\alpha \in {\cal C}^\infty(\T^{\nu + 1}, \R)$ satisfy the condition \eqref{ansatz alpha egorov}.

\noindent
$(i)$ The operator $\Psi(\vphi)[u] := u(x + \alpha(\vphi, x))$ is a bounded linear operator $H^s \to H^s$, for any $s \geq 0$, whose inverse is given by $\Psi(\vphi)^{- 1}[u] = u(y + \widetilde \alpha(\vphi, y))$ where $y \mapsto y + \widetilde \alpha(\vphi, y)$, $\widetilde \alpha \in {\cal C}^\infty(\T^{\nu + 1}, \R)$ is the inverse diffeomorphism of $x \mapsto x + \alpha(\vphi, x)$. The map $\Phi(\vphi) := \sqrt{1 + \alpha_x(\vphi, x)} \Psi(\vphi)$ is a symplectic bounded invertible operator $H^s \to H^s$ for any $s \geq 0$ and the inverse is given by $\Phi(\vphi)^{- 1} = \sqrt{1 + \widetilde \alpha(\vphi, y)} \Psi(\vphi)^{- 1}$. Furthermore, the following holds:  
\begin{equation}\label{d-alessio}
\sup_{\vphi \in \T^\nu} \{ \| \partial_\vphi^\alpha \Psi(\vphi)^{\pm 1}\|_{{\cal B}(H^{s + |\alpha|}, H^s)}, \| \partial_\vphi^\alpha \Phi(\vphi)^{\pm 1}\|_{{\cal B}(H^{s + |\alpha|}, H^s)}  \} < + \infty\,, \quad \forall s \geq 0\,,\,\alpha \in \N^\nu\,. 
\end{equation}
$(ii)$ Let $A(\vphi) \in OPS^m$. Then $\Phi(\vphi) A(\vphi) \Phi(\vphi)^{- 1} \in OPS^m$. 
\end{lemma}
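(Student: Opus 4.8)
The strategy is to reduce everything to the Egorov machinery of Section~\ref{sezione astratta Egorov}.

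For part $(i)$ I would start from Lemma~\ref{diffeo del toro lemma astratto}: by \eqref{ansatz alpha egorov} the map $x\mapsto x+\alpha(\vphi,x)$ is, for each $\vphi\in\T^\nu$, a diffeomorphism of $\T$ with smooth inverse $y\mapsto y+\widetilde\alpha(\vphi,y)$ satisfying \eqref{proprieta alpha tilde diffeo toro} and the identities \eqref{identita 1 + alpha alpha tilde partial}; hence $\Psi(\vphi)^{-1}$ is composition with this inverse diffeomorphism, and the stated formula for $\Phi(\vphi)^{-1}$ follows upon using \eqref{identita 1 + alpha alpha tilde partial} to rewrite its amplitude. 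The boundedness $\Psi(\vphi)^{\pm1}\in{\cal B}(H^s)$ is classical: for integer $s$ one differentiates $u\mapsto u(x+\alpha(\vphi,x))$ by the chain rule and estimates the $L^2$-norm by the change of variables $y=x+\alpha(\vphi,x)$, whose Jacobian $1+\alpha_x(\vphi,x)$ is bounded above and below by \eqref{ansatz alpha egorov}; for general $s\ge0$ one interpolates. Since $(1+\alpha_x(\vphi,x))^{\pm1/2}$ are smooth and bounded, multiplication by them is bounded and invertible on every $H^s$, so $\Phi(\vphi)^{\pm1}\in{\cal B}(H^s)$. Symplecticity is a one-line change of variables: by \eqref{forma simplettica coordinate complesse},
\[
\Omega\big[\Phi(\vphi)u_1,\Phi(\vphi)u_2\big]=\ii\int_\T\big(1+\alpha_x(\vphi,x)\big)\big(u_1\bar u_2-\bar u_1u_2\big)\big(x+\alpha(\vphi,x)\big)\,dx=\Omega[u_1,u_2],
\]
the last equality by the substitution $y=x+\alpha(\vphi,x)$.

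The derivative estimate \eqref{d-alessio} I would obtain by induction on $|\alpha|$: since $\partial_{\vphi_j}\big(\Psi(\vphi)u\big)=(\partial_{\vphi_j}\alpha)(\vphi,\cdot)\,\Psi(\vphi)\big(\partial_xu\big)$, each $\vphi$-derivative produces a smooth multiplicative factor and costs exactly one $x$-derivative, so $\partial_\vphi^\alpha\Psi(\vphi)\in{\cal B}(H^{s+|\alpha|},H^s)$ with finite supremum over $\vphi$; the same holds for $\Psi(\vphi)^{-1}$ (replace $\alpha$ by the smooth function $\widetilde\alpha$) and then, by the Leibniz rule applied to the smooth weights, for $\Phi(\vphi)^{\pm1}$. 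Equivalently — and this is the more useful viewpoint for part $(ii)$ — one may identify $\Phi(\vphi)$ with the time-$1$ flow $\Phi(1;\vphi)$ of the transport equation \eqref{trasporto per Egorov}--\eqref{definizione b alpha per egorov}: solving the characteristic ODE shows that this flow is a weighted composition operator which at $\tau=1$ equals $\sqrt{1+\alpha_x(\vphi,\cdot)}\,\Psi(\vphi)=\Phi(\vphi)$, and then \eqref{d-alessio} for $\Phi(\vphi)^{\pm1}$ is precisely the content of Lemma~\ref{proprieta flusso trasporto lemma astratto}.

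For part $(ii)$, with $\Phi(\vphi)=\Phi(1;\vphi)$ as above and ${\cal V}(\vphi)=A(\vphi)\in OPS^m$, Proposition~\ref{Teorema egorov generale} applied at $\tau=1$ gives at once that $\Phi(\vphi)A(\vphi)\Phi(\vphi)^{-1}={\cal P}(1;\vphi)$ is a pseudo-differential operator in $OPS^m$, with principal symbol $a\big(\vphi,x+\alpha(\vphi,x),(1+\alpha_x(\vphi,x))^{-1}\xi\big)$. \emph{Main obstacle.} The only genuinely non-routine point is the identification of the weighted composition operator $\Phi(\vphi)$ with the time-$1$ flow $\Phi(1;\vphi)$ of \eqref{trasporto per Egorov}--\eqref{definizione b alpha per egorov}: this is the classical computation showing that the $\tau$-dependent first order term $b_\alpha(\tau;\vphi,x)\partial_x$ transports $u$ along the family of diffeomorphisms interpolating between the identity and $x\mapsto x+\alpha(\vphi,x)$, while the half-divergence term $\frac{1}{2}(\partial_x b_\alpha)$ reconstructs the Jacobian weight that renders the flow symplectic. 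Everything else is either a direct change of variables or an appeal to the Egorov-type results of Section~\ref{sezione astratta Egorov}.
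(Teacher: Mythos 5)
Your proposal is correct in substance and follows essentially the route of the paper: both arguments realize $\Phi(\vphi)$ as the time-$1$ map of a $\tau$-dependent transport flow, obtain \eqref{d-alessio} from the flow estimates (Lemma \ref{flusso + derivate flusso}), and settle part $(ii)$ by an Egorov theorem; your direct arguments for boundedness, invertibility, symplecticity and the inductive $\vphi$-derivative bound $\partial_{\vphi_j}\Psi(\vphi)u=(\partial_{\vphi_j}\alpha)\,\Psi(\vphi)[\partial_x u]$ are all sound and in fact make part $(i)$ more self-contained than the paper's ``direct verification''. The one point you should fix is the identification you single out as the main obstacle: $\Phi(\vphi)$ is \emph{not} the time-$1$ flow of \eqref{trasporto per Egorov}--\eqref{definizione b alpha per egorov}. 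If you differentiate $u(\tau,x):=\sqrt{1+\tau\alpha_x(\vphi,x)}\,u_0(x+\tau\alpha(\vphi,x))$ in $\tau$, the first-order coefficient that comes out is $+\alpha/(1+\tau\alpha_x)$, i.e.\ the \emph{opposite} sign of $b_\alpha$ in \eqref{definizione b alpha per egorov}; the time-$1$ flow of \eqref{trasporto per Egorov} is instead the weighted composition with the inverse diffeomorphism $y\mapsto y+\widetilde\alpha(\vphi,y)$, that is, essentially $\Phi(\vphi)^{-1}$. This is exactly why the paper's proof does not reuse \eqref{trasporto per Egorov} but writes down a separate transport equation for the interpolating family $\Phi(\tau;\vphi)u=\sqrt{1+\tau\alpha_x}\,u(x+\tau\alpha)$ and then invokes Lemma \ref{flusso + derivate flusso} together with the classical Egorov theorem of \cite{Taylor}, rather than Proposition \ref{Teorema egorov generale}. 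The misidentification is harmless for the conclusion of part $(ii)$ — conjugation by either the flow or its inverse preserves $OPS^m$, or equivalently you may apply Proposition \ref{Teorema egorov generale} with $\alpha$ replaced by $\widetilde\alpha$ — but the principal symbol you write, $a(\vphi,x+\alpha(\vphi,x),(1+\alpha_x(\vphi,x))^{-1}\xi)$, is only justified once the direction of the conjugation is matched correctly, so you should either carry out the short computation producing the correct transport equation for $\Phi(\tau;\vphi)$ (as the paper does) or state part $(ii)$ without the symbol formula, which is all the lemma requires.
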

\begin{proof}
We prove the lemma for the map $\Phi(\vphi)$. the claimed statements for $\Psi(\vphi)$ follow by similar arguments. Define 
\begin{equation}\label{definizione Phi pm tau}
\Phi(\tau; \vphi)[u ] := \sqrt{1 + \tau (\partial_x \alpha)(\vphi, x)} \,\,u(x + \tau \alpha(\vphi, x))\,, \quad u \in L^2(\T)\,, \quad \tau \in [0, 1]\,. 
\end{equation} 
A direct verification shows that $\Phi(\tau; \vphi)$ is an invertible symplectic operator, whose inverse is given by 
\begin{equation}\label{definizione Phi pm tau inverso}
\Phi(\tau; \vphi)^{- 1}[u ] := \sqrt{1 + (\partial_y \widetilde \alpha )(\tau; \vphi, y)} \,\,u(y + \widetilde \alpha(\tau; \vphi, y))\,, \quad u \in L^2(\T)
\end{equation} 
where $y \mapsto y + \widetilde \alpha(\tau; \vphi, y)$ is the inverse diffeomorphism of $x \mapsto x + \tau \alpha(\vphi, x)$. 
%For $\tau = 1$, we set $\Phi_{\pm}(\vphi) := \Phi_{\pm}(1; \vphi)$ and defining $\widetilde \alpha(\vphi, y) := \widetilde \alpha(1; \vphi, y)$ we have that 
%\begin{equation}\label{definizione Phi pm}
%\Phi_{\pm}(\vphi)[u ] := \sqrt{1 +  (\partial_x \alpha_{\pm})(\vphi, x)} \,\,u(x +  \alpha_{\pm}(\vphi, x))\,, \quad u \in L^2(\T)\,, 
%\end{equation} 
%\begin{equation}\label{definizione Phi pm tau inverso}
%\Phi_{\pm}( \vphi)^{- 1}[u ] := \sqrt{1 + (\partial_y \widetilde \alpha_{\pm})( \vphi, y)} \,\,u(y + \widetilde \alpha_{\pm}(\vphi, y))\,, \quad u \in L^2(\T)\,.
%\end{equation} 
A direct calculation shows that the map $\Phi(\tau; \vphi)$ is the flow map of the transport PDE
\begin{equation}\label{nuovo trasporto M = 1}
\begin{aligned}
& \partial_\tau u = {\cal A}(\tau; \vphi)[u]\,, \quad {\cal A}(\tau; \vphi) :=  b_1 (\tau; \vphi, x)\partial_x  + b_{ 0}(\tau; \vphi , x) \,, \\
& b_{ 1} := \frac{\alpha }{\sqrt{1 + \tau (\partial_x\alpha)}} \,, \quad b_{  0} := \frac{\partial_\tau \big( \sqrt{1 + \tau (\partial_x\alpha)} \big)}{\sqrt{1 + \tau (\partial_x \alpha)}}\,.
\end{aligned}
\end{equation}
%or equivalently 
%\begin{equation}\label{equazione flusso Phi pm}
%\begin{cases}
%\partial_\tau \Phi_{\pm}(\tau; \vphi) = {\cal A}_{\pm}(\tau; \vphi) \Phi_{\pm}(\tau; \vphi) \\
%\Phi_{\pm}(0; \vphi) = {\rm Id}
%\end{cases}
%\end{equation}
%with
%\begin{equation}\label{def cal A pm M = 1}
%{\cal A}_{\pm}(\tau; \vphi) := b_{\pm, 1}(\tau; \vphi, x) \partial_x + b_{\pm, 0}(\tau; \vphi, x) \in OPS^1\,. 
%\end{equation}
The estimate \eqref{d-alessio} for $\Phi(\vphi) = \Phi(1; \vphi)$ follows by applying Lemma \ref{flusso + derivate flusso}. Moreover, by the classical Egorov Theorem (see Theorem in A.0.9 in \cite{Taylor}), one gets that if $A(\vphi) \in OPS^m$, $\Phi(\vphi) A(\vphi ) \Phi(\vphi)^{- 1} \in OPS^m$. 
\end{proof}
\begin{lemma}\label{hilbert transform cambi di variabile}
The following property holds:  $\Psi(\vphi) {\cal H} \Psi(\vphi)^{- 1} - {\cal H} \in OPS^{- \infty}$ and $\Phi(\vphi) {\cal H} \Phi(\vphi)^{- 1} - {\cal H} \in OPS^{- \infty}$.
\end{lemma}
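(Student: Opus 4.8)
The plan is to reduce the statement to an Egorov-type computation for the symbol $-\ii\,\sgn(\xi)\chi(\xi)$ attached to $\mathcal{H}$ via the identification \eqref{simbolo esteso hilbert transform}. I will treat $\Phi(\vphi)$ first; the statement for $\Psi(\vphi)$ then follows by the same argument since $\Phi(\vphi) = \sqrt{1+\alpha_x(\vphi,x)}\,\Psi(\vphi)$ and multiplication by the smooth, everywhere-positive function $\sqrt{1+\alpha_x}$ commutes with $\mathcal{H}$ up to $OPS^{-\infty}$ by Lemma \ref{commutatore H moltiplicazione}. Concretely, $\Phi\,\mathcal{H}\,\Phi^{-1} - \Psi\,\mathcal{H}\,\Psi^{-1} = \sqrt{1+\alpha_x}\,\Psi\,[\mathcal{H},\,(1+\alpha_x)^{-1/2}]\,\Psi^{-1}$, and since $[\mathcal H,(1+\alpha_x)^{-1/2}]\in OPS^{-\infty}$ and conjugation by the bounded operators $\Psi^{\pm1}$ preserves $OPS^{-\infty}$ (by Lemma \ref{lemma cambi di variabile}(ii)), the two differences agree modulo smoothing. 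So it suffices to handle one of them.

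For $\Psi(\vphi)$, I would embed it in the flow $\Phi(\tau;\vphi)$ of the transport PDE \eqref{nuovo trasporto M = 1} (more precisely, $\Psi(\vphi)$ differs from that flow by the harmless scalar factor $\sqrt{1+\alpha_x}$, so I can equivalently work with the symplectic flow $\Phi(\tau;\vphi)$ whose generator $\mathcal A(\tau;\vphi) = b_1\partial_x + b_0$ is in $OPS^1$ with $\mathcal A + \mathcal A^* \in OPS^0$). By Proposition \ref{Teorema egorov generale}, $\Phi(\tau;\vphi)\,\mathcal{H}\,\Phi(\tau;\vphi)^{-1}$ is a pseudo-differential operator in $OPS^0$ whose principal symbol is the pullback of $-\ii\,\sgn(\xi)\chi(\xi)$ along the characteristic flow, namely $-\ii\,\sgn\!\big((1+\tau\alpha_x)^{-1}\xi\big)\,\chi\!\big((1+\tau\alpha_x)^{-1}\xi\big)$ at the shifted point $x+\tau\alpha$. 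The key observation is that, by condition \eqref{ansatz alpha egorov}, $1+\tau\alpha_x > 0$ uniformly, hence $\sgn\!\big((1+\tau\alpha_x)^{-1}\xi\big) = \sgn(\xi)$ identically, and $\chi\!\big((1+\tau\alpha_x)^{-1}\xi\big)$ differs from $\chi(\xi)$ only on a bounded set of $\xi$, so the two differ by a symbol in $S^{-\infty}$. Therefore the principal symbol of the conjugate equals $-\ii\,\sgn(\xi)\chi(\xi)$ modulo $S^{-\infty}$, i.e. the principal part of $\Phi(\tau;\vphi)\,\mathcal{H}\,\Phi(\tau;\vphi)^{-1} - \mathcal{H}$ is already smoothing.

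It remains to control the lower-order terms $p_{\geq1}(\tau;\vphi,x,\xi)\in S^{-1}$ produced by Proposition \ref{Teorema egorov generale}, and this is the part needing a little care rather than being completely automatic. The cleanest route is a differential-equation argument in $\tau$: setting $Q(\tau;\vphi) := \Phi(\tau;\vphi)\,\mathcal{H}\,\Phi(\tau;\vphi)^{-1} - \mathcal{H}$, one computes $\partial_\tau Q(\tau;\vphi) = \mathcal A(\tau;\vphi)\,Q(\tau;\vphi) - Q(\tau;\vphi)\,\mathcal A(\tau;\vphi) + [\mathcal A(\tau;\vphi),\mathcal H]$ with $Q(0;\vphi)=0$. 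By Lemma \ref{commutatore Fourier multiplier derivata 0 simbolo generale} (applicable because $\partial_\xi\sgn(\xi)\chi(\xi)$ vanishes for $|\xi|\geq1$) the forcing term $[\mathcal A(\tau;\vphi),\mathcal H]\in OPS^{-\infty}$; then Duhamel's formula together with the fact that $\Phi(\tau;\vphi)^{\pm1}\in\mathcal B(H^s)$ for every $s$ (Lemma \ref{proprieta flusso trasporto lemma astratto}) shows $Q(\tau;\vphi)$ maps every $H^s$ into every $H^{s'}$, i.e. $Q(\tau;\vphi)\in OPS^{-\infty}$. Specializing to $\tau=1$ gives $\Phi(\vphi)\,\mathcal H\,\Phi(\vphi)^{-1}-\mathcal H\in OPS^{-\infty}$; the same computation with the scalar factors reinstated, using Lemma \ref{commutatore H moltiplicazione} for $[\sqrt{1+\alpha_x},\mathcal H]$ and $[\sqrt{1+\widetilde\alpha},\mathcal H]$, yields the statement for $\Psi(\vphi)$. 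The main (mild) obstacle is precisely this last bookkeeping: checking that the $OPS^{-\infty}$ class is stable under conjugation by the flow maps and that the commutator with the truncated sign symbol is smoothing — both of which are already packaged in Lemmata \ref{commutatore Fourier multiplier derivata 0 simbolo generale}, \ref{commutatore flusso Fourier multiplier costante}, and \ref{commutatore H moltiplicazione}, so in fact the whole proof is a short application of the machinery set up earlier in this section.
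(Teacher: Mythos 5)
Your proof is correct, and on the key point it takes a genuinely different route from the paper. The paper does not prove $\Psi(\vphi){\cal H}\Psi(\vphi)^{-1}-{\cal H}\in OPS^{-\infty}$ at all: it cites it as a classical fact (Lemmata 2.32, 2.36 in \cite{BertiMontalto}) and only carries out the algebraic reduction of the $\Phi$-statement to the $\Psi$-statement, via the identity $\Psi\sqrt{1+\widetilde\alpha_y}\,\Psi^{-1}=(1+\alpha_x)^{-1/2}$ (from \eqref{identita 1 + alpha alpha tilde partial}) and Lemma \ref{commutatore H moltiplicazione}. Your reduction between $\Phi$ and $\Psi$ is essentially the same as the paper's (up to a small bookkeeping slip: the function appearing in your commutator should be $\sqrt{1+\widetilde\alpha_y}$ transported to the new variable rather than $(1+\alpha_x)^{-1/2}$ literally, which is immaterial since Lemma \ref{commutatore H moltiplicazione} applies to any smooth function). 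Where you diverge is that you then actually prove the core conjugation statement, by realizing $\Phi(\vphi)$ as the time-one flow of \eqref{nuovo trasporto M = 1} and running the Duhamel argument on $Q(\tau)=\Phi(\tau;\vphi){\cal H}\Phi(\tau;\vphi)^{-1}-{\cal H}$, with forcing $[{\cal A},{\cal H}]\in OPS^{-\infty}$ by Lemma \ref{commutatore Fourier multiplier derivata 0 simbolo generale}; this is exactly the mechanism of Lemma \ref{commutatore flusso Fourier multiplier costante}, which you could in fact invoke directly, since $[\Phi(\tau;\vphi),{\cal H}]\in OPS^{-\infty}$ gives $\Phi{\cal H}\Phi^{-1}-{\cal H}=[\Phi,{\cal H}]\Phi^{-1}\in OPS^{-\infty}$. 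Your approach buys self-containedness (no appeal to the external reference) at the cost of redoing an argument already packaged in Section 2; the Egorov principal-symbol discussion is only motivational and, strictly speaking, Proposition \ref{Teorema egorov generale} is stated for the flow \eqref{equazione flusso operatoriale} rather than \eqref{nuovo trasporto M = 1}, but this does not affect the proof since the Duhamel step carries the whole weight.
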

\begin{proof}
The fact that $\Psi(\vphi) {\cal H} \Psi(\vphi)^{- 1} - {\cal H} \in OPS^{- \infty}$ is a classical result. For a detailed proof see for instance Lemmata 2.32, 2.36 in \cite{BertiMontalto}. Let us prove that $\Phi(\vphi) {\cal H} \Phi(\vphi)^{- 1} - {\cal H} \in OPS^{- \infty}$. To shorten notations we neglect the dependence on $\vphi$. By Lemma \ref{lemma cambi di variabile}, we have 
\begin{align}
\Phi {\cal H} \Phi^{- 1} - {\cal H} & = \sqrt{1 + \alpha_x}  \Psi {\cal H} \sqrt{1 + \widetilde \alpha_y} \Psi^{- 1} - {\cal H} \nonumber\\
& =   \sqrt{1 + \alpha_x} \circ (\Psi {\cal H} \Psi^{- 1}) \circ \Psi  \sqrt{1 + \widetilde \alpha_y} \Psi^{- 1} - {\cal H} \nonumber\\
& =  \sqrt{1 + \alpha_x} \circ (\Psi  {\cal H} \Psi^{- 1} - {\cal H}) \circ \Psi   \sqrt{1 + \widetilde \alpha_y} \Psi^{- 1} \nonumber\\
& \quad +   \sqrt{1 + \alpha_x} \circ {\cal H} \circ \Psi \sqrt{1 + \widetilde \alpha_y} \Psi^{- 1} - {\cal H}\,. \label{fuffa economica}
\end{align}
Note that $\Psi  \sqrt{1 + \widetilde \alpha_y} \Psi^{- 1} $ is a multiplication operator given by 
\begin{equation}\label{fuffa economica - 1}
\Psi  \sqrt{1 + \widetilde \alpha_y} \Psi^{- 1} = \sqrt{1 + \widetilde \alpha_y(\vphi, x +  \alpha_x(\vphi, x))}  \stackrel{\eqref{identita 1 + alpha alpha tilde partial}}{=} \frac{1}{\sqrt{1 +  \alpha_x(\vphi, x)}}\,.
\end{equation}
Hence 
\begin{align}
\sqrt{1 +  \alpha_x} \circ {\cal H} \circ \Psi  \sqrt{1 + \widetilde \alpha_y} \Psi^{- 1} - {\cal H} & = \sqrt{1 + \alpha_x} \circ {\cal H} \circ \frac{1}{\sqrt{1 +  \alpha_x}} - {\cal H} \nonumber\\
 & = \sqrt{1 + \alpha_x} \Big[{\cal H}, \frac{1}{\sqrt{1 +  \alpha_x}} \Big]\,.  \label{fuffa economica 0}
\end{align}
Finally \eqref{fuffa economica}-\eqref{fuffa economica 0}, Lemma \ref{commutatore H moltiplicazione} and using that $\Psi {\cal H} \Psi^{- 1} - {\cal H} \in OPS^{- \infty}$ one obtains that 
$$
\Phi{\cal H} \Phi^{- 1} - {\cal H} = \sqrt{1 +  \alpha_x} \circ (\Psi {\cal H} \Psi^{- 1} - {\cal H}) \circ \frac{1}{\sqrt{1 +  \alpha_x}} + \sqrt{1 + \alpha_x} \Big[{\cal H}, \frac{1}{\sqrt{1 +  \alpha_x}} \Big] \in OPS^{- \infty}\,. 
$$
\end{proof}
We conclude this section by stating an interpolation theorem, which is an immediate consequence of the classical Riesz-Thorin interpolation theorem in Sobolev spaces. 
\begin{theorem}\label{interpolazione sobolev}
Let $0 \leq s_0 < s_1$ and let $A \in {\cal B}(H^{s_0}) \cap {\cal B}(H^{s_1})$. Then for any $s_0 \leq s \leq s_1$ the operator $A \in {\cal B}(H^s)$ and 
$$
\| A \|_{{\cal B}(H^s)} \leq \| A\|_{{\cal B}(H^{s_0})}^\lambda \| A\|_{{\cal B}(H^{s_1})}^{1 - \lambda}\,, \quad \lambda := \frac{s_1 - s}{s_1 - s_0}\,. 
$$
\end{theorem}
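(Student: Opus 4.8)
\begin{pf}
The statement is a standard instance of complex (Stein) interpolation, and the plan is to reduce it to the Hadamard three-lines theorem on $L^2(\T)$. First I would pass to the Fourier side: for $r \in \R$ let $J^r := {\rm Op}(\langle \xi\rangle^r)$ be the Bessel potential of order $r$, which by the very definition of $\|\cdot\|_{H^r}$ satisfies $\|J^r u\|_{L^2}^2 = \sum_\xi \langle\xi\rangle^{2r}|\widehat u(\xi)|^2 = \|u\|_{H^r}^2$, so that $J^r : H^r \to L^2(\T)$ is an isometric isomorphism with inverse $J^{-r}$. Hence an operator $A$ lies in $\mB(H^r)$ exactly when $J^r A J^{-r} \in \mB(L^2(\T))$, with equal norms. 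Setting $\theta := s_1 - s_0 > 0$ and $B_0 := J^{s_0} A J^{-s_0}$, the hypotheses become: $B_0 \in \mB(L^2)$ with norm $M_0 := \|A\|_{\mB(H^{s_0})}$, and $B_1 := J^{\theta} B_0 J^{-\theta} = J^{s_1} A J^{-s_1} \in \mB(L^2)$ with norm $M_1 := \|A\|_{\mB(H^{s_1})}$; moreover, for $s = s_0 + t \theta$ with $t \in [0,1]$ one has $J^s A J^{-s} = J^{t\theta} B_0 J^{-t\theta}$, where $t = 1 - \lambda$.

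Next I would introduce the analytic family $T_z := J^{z\theta} B_0 J^{-z\theta}$ on the closed strip $\overline S := \{ 0 \le {\rm Re}\,z \le 1\}$. The key elementary point is that on the two edges of the strip the conjugating Fourier multiplier is unitary on $L^2$, since its symbol $\langle\xi\rangle^{i\tau\theta}$ has modulus one: thus $\|T_{i\tau}\|_{\mB(L^2)} = \|B_0\|_{\mB(L^2)} = M_0$, and factoring $J^{(1+i\tau)\theta} = J^{i\tau\theta} J^{\theta}$ gives $T_{1+i\tau} = J^{i\tau\theta} B_1 J^{-i\tau\theta}$, hence $\|T_{1+i\tau}\|_{\mB(L^2)} = M_1$. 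Then, fixing trigonometric polynomials $u, v$, the scalar function $F(z) := \langle T_z u, v\rangle_{L^2} = \langle B_0 J^{-z\theta} u, J^{\bar z\theta} v\rangle_{L^2}$ is holomorphic in the interior of $\overline S$, continuous and bounded on $\overline S$ (only finitely many Fourier modes are involved and the symbols of $J^{\mp z\theta}$ are uniformly bounded on them for ${\rm Re}\,z \in [0,1]$), with $|F| \le M_0 \|u\|_{L^2}\|v\|_{L^2}$ on $\{{\rm Re}\,z = 0\}$ and $|F| \le M_1 \|u\|_{L^2}\|v\|_{L^2}$ on $\{{\rm Re}\,z = 1\}$. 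The three-lines theorem then yields $|F(t)| \le M_0^{1-t} M_1^{t}\,\|u\|_{L^2}\|v\|_{L^2}$ for real $t \in [0,1]$.

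Finally, since $T_t = J^s A J^{-s}$ and trigonometric polynomials are dense in $L^2(\T)$, the bound on $F(t)$ says precisely that $J^s A J^{-s}$ extends to a bounded operator on $L^2(\T)$ with norm $\le M_0^{1-t} M_1^{t} = M_0^{\lambda} M_1^{1-\lambda}$; undoing the Bessel conjugation gives $A \in \mB(H^s)$ with the asserted bound. I do not expect a genuine obstacle here: the only points requiring (routine) care are checking that $F$ is bounded on the strip so that the three-lines theorem applies, and keeping the interpolation parameter straight, $t = 1 - \lambda$ with $\lambda = (s_1 - s)/(s_1 - s_0)$. Alternatively one could simply invoke the Stein--Weiss interpolation theorem with change of measure on the Fourier model $H^r(\T) \cong \ell^2\big(\Z, \langle\xi\rangle^{2r}\big)$ and bypass these verifications entirely.
\end{pf}
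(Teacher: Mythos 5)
Your proof is correct and follows essentially the same route as the paper, which simply cites the classical Riesz--Thorin (Stein--Weiss) interpolation theorem: your argument is the standard three-lines proof of that theorem, specialized to the weighted $\ell^2$ model of $H^s(\T)$ via the isometries $J^r = {\rm Op}(\langle\xi\rangle^r)$. All the steps check out, in particular the unitarity of $J^{i\tau\theta}$ on $L^2$, the boundedness and holomorphy of $F$ on the strip for trigonometric polynomials, and the bookkeeping $t=1-\lambda$.
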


\section{Regularization of the vector field $\ii {\cal V}(\vphi)$: the case $0 < M < 1$}\label{sezione riduzione M < 1}
In this section we develop the regularization procedure on the vector field $\ii {\cal V}(\vphi) = \ii \big(V(\vphi, x) |D|^M + {\cal W}(\vphi) \big)$, $0 < M < 1 $, see \eqref{forma iniziale cal V (t)}, which is needed to prove Theorem \ref{teorema riduzione}. We assume the hypotheses $\bf (H1)$, $\bf (H2)$, $\bf (H3)_{M < 1}$, i.e. ${\cal V}(\vphi)$ is self-adjoint, ${\cal W} \in OPS^{M - \frak e}$, $\frak e > 0$ and $V \in {\cal C}^\infty(\T^{\nu + 1}, \R)$ satisfies $\inf_{(\vphi, x) \in \T^{\nu + 1}} V(\vphi, x) > 0$. 

\noindent
In Section \ref{sezione ordine principale M < 1} we reduce to constant coefficients the highest order term $V(\vphi, x) |D|^M$, see Proposition \ref{teorema riassunto primo egorov}. Then, in Section \ref{sezione riduzione ordini bassi M < 1}, we perform the reduction of the lower order terms up to arbitrarily regularizing remainders, see Proposition \ref{descent method M  < 1}. At each step of the regularization procedure, the reduction to constant coefficients is split in two parts: first we remove the dependence on $\vphi$ and in a second step, we remove the dependence on $x$. 
\subsection{Reduction of the highest order}\label{sezione ordine principale M < 1}
In order to state precisely the main result of this section, we define 
\begin{equation}\label{costante frak e bar M < 1}
\overline{\frak e} := \min\{ 1- M, \frak e \}
\end{equation}
so that 
\begin{equation}\label{proprieta M - frak e bar M < 1}
M - \overline{\frak e} \geq M - \frak e\,,\, M - (1 - M)\,
\end{equation}
(recall that $0 < M < 1$). We prove the following 
\begin{proposition}\label{teorema riassunto primo egorov}
Let $\gamma \in (0, 1), \tau > \nu - 1$ and $\omega \in DC(\gamma, \tau)$ (recall \eqref{vettori diofantei}). There exist a symplectic family of invertible maps $\Phi_0(\vphi)$, $\vphi \in \T^\nu$ satisfying 
\begin{equation}\label{proprieta Phi 2}
\sup_{\vphi \in \T^\nu} \| \Phi_0(\vphi)^{\pm 1}\|_{{\cal B}(H^s)}  < + \infty\,, \quad \forall s  \geq 0\,,
\end{equation}
a constant $\lambda > 0$ and a self-adjoint operator ${\cal W}_1(\vphi) = {\rm Op}\Big( w_1(\vphi, x, \xi)\Big) \in OPS^{M - \overline{\frak e}}$ such that 
\begin{equation}\label{push forward cal V 1 teorema}
(\Phi_0^{- 1})_{\omega *}(\ii {\cal V})(\vphi) = \ii {\cal V}_1(\vphi) \quad \text{with} \quad {\cal V}_1(\vphi) := \lambda |D|^M + {\cal W}_1(\vphi)\,. 
\end{equation}
\end{proposition}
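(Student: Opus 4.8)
### Proof Proposal

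The plan is to realize the map $\Phi_0$ as a composition $\Phi_0 := \Phi_0^{(1)} \circ \Phi_0^{(2)}$ of two symplectic changes of variables, the first removing the $\vphi$-dependence from the highest order symbol $V(\vphi,x)|\xi|^M$ and the second removing the $x$-dependence from the resulting symbol $\langle V \rangle_\vphi(x)|\xi|^M$. Both conjugations will be handled by the Egorov-type Propositions \ref{Teorema egorov generale} and \ref{teorema egorov campo minore di uno}, which guarantee that the transformed operators stay in the appropriate pseudodifferential class and give the leading symbol explicitly.

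\textbf{Step 1: removing the $\vphi$-dependence.} First I would conjugate $\ii {\cal V}(\vphi)$ by the time-one flow map $\Phi_0^{(1)}(\vphi)$ of the Hamiltonian vector field $\ii {\cal G}_0^{(1)}(\vphi)$ with ${\cal G}_0^{(1)}(\vphi) := \alpha(\vphi,x)|D|^M + |D|^M \alpha(\vphi,x)$, which is self-adjoint and lies in $OPS^M$; since $M < 1$, we can use Proposition \ref{teorema egorov campo minore di uno} (with $\eta = M$) to expand the push-forward. The leading part of the transformed operator is $\big(V(\vphi,x) + 2\,\omega\cdot\partial_\vphi \alpha(\vphi,x)\big)|D|^M$, up to terms in $OPS^{M - \overline{\frak e}}$ (the loss $2(1-M)$ from the Egorov remainder, the loss $\frak e$ from ${\cal W}$, and the contribution $-\omega\cdot\partial_\vphi\Phi_0^{(1)}$ which is handled via the Duhamel expansion of the flow, all absorbed into $M - \overline{\frak e}$ by \eqref{proprieta M - frak e bar M < 1}). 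The homological equation $2\,\omega\cdot\partial_\vphi \alpha = \langle V \rangle_\vphi - V$ is solved by $\alpha := \tfrac12 (\omega\cdot\partial_\vphi)^{-1}(\langle V\rangle_\vphi - V)$, which is well-defined and smooth because $\omega \in DC(\gamma,\tau)$, and the right-hand side has zero $\vphi$-average; by \eqref{proprieta simbolo mediato e partial x - 1} the resulting $\alpha$ is $\cal C^\infty$ and real. One then sets ${\cal V}_0^{(1)} := (\Phi_0^{(1)})_{\omega*}^{-1}(\cal V)$ and obtains ${\cal V}_0^{(1)}(\vphi) = \langle V\rangle_\vphi(x)|D|^M + OPS^{M - \overline{\frak e}}$, with self-adjointness preserved since $\Phi_0^{(1)}$ is symplectic and $\ii\cal V$ Hamiltonian.

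\textbf{Step 2: removing the $x$-dependence.} Next I would conjugate $\ii {\cal V}_0^{(1)}$ by the time-one flow map $\Phi_0^{(2)}$ of the transport equation \eqref{trasporto per Egorov}--\eqref{definizione b alpha per egorov} associated to a $\vphi$-independent function $\beta(x)$ satisfying \eqref{ansatz alpha egorov}; since $\Phi_0^{(2)}$ is $\vphi$-independent, the push-forward is the plain conjugation ${\cal V}_1 := \Phi_0^{(2)} {\cal V}_0^{(1)} (\Phi_0^{(2)})^{-1}$. By Proposition \ref{Teorema egorov generale} (applied at $\tau=1$), the leading symbol becomes $\big[\langle V\rangle_\vphi(y)(1+\partial_y\widetilde\beta(y))^M\big]_{y = x + \beta(x)}|\xi|^M$, where $y \mapsto y + \widetilde\beta(y)$ is the inverse diffeomorphism of $x \mapsto x + \beta(x)$ given by Lemma \ref{diffeo del toro lemma astratto}. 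Using hypothesis $\bf (H3)_{M<1}$, which ensures $\langle V \rangle_\vphi(x) \geq \delta > 0$, I can solve $\langle V\rangle_\vphi(y)(1 + \partial_y\widetilde\beta(y))^M = \lambda$ for a constant $\lambda > 0$ and a periodic $\widetilde\beta$: setting $1 + \partial_y\widetilde\beta(y) = (\lambda / \langle V\rangle_\vphi(y))^{1/M}$, the periodicity constraint $\int_\T \partial_y\widetilde\beta\, dy = 0$ fixes $\lambda := 2\pi\big(\int_\T \langle V\rangle_\vphi(y)^{-1/M}\, dy\big)^{-M}$, and one then recovers $\beta$ as the inverse diffeomorphism. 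The positivity of $\langle V\rangle_\vphi$ guarantees $1 + \partial_y\widetilde\beta > 0$, so condition \eqref{ansatz alpha egorov} holds for $\widetilde\beta$ and hence for $\beta$. After this step ${\cal V}_1(\vphi) = \lambda|D|^M + {\cal W}_1(\vphi)$ with ${\cal W}_1 \in OPS^{M - \overline{\frak e}}$.

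\textbf{Remaining points.} The estimate \eqref{proprieta Phi 2} follows from Lemma \ref{flusso + derivate flusso} and Lemma \ref{proprieta flusso trasporto lemma astratto} applied to each of the two flows, together with the fact that both generators satisfy the required symmetry hypotheses (${\cal G}_0^{(1)} + ({\cal G}_0^{(1)})^* \in OPS^M$ trivially, and the transport generator in \eqref{trasporto per Egorov} is anti-self-adjoint). Self-adjointness of ${\cal W}_1$ and the reality of $\lambda$ are consequences of the Hamiltonian/symplectic structure: $\Phi_0$ being symplectic, the push-forward of the Hamiltonian vector field $\ii\cal V$ is again Hamiltonian, so ${\cal V}_1 = \lambda|D|^M + {\cal W}_1$ is self-adjoint, and since $|D|^M$ is self-adjoint with real symbol this forces $\lambda \in \R$ and ${\cal W}_1 = {\cal W}_1^*$. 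I expect the main obstacle to be bookkeeping the orders of the various remainder terms — in particular verifying that the Egorov remainder $OPS^{M - 2(1-M)}$, the lower-order part $\cal W \in OPS^{M-\frak e}$, and the Duhamel corrections from $-\omega\cdot\partial_\vphi\Phi_0^{(1)}$ all fit under the single threshold $M - \overline{\frak e}$ with $\overline{\frak e} = \min\{1-M, \frak e\}$ — and, on the analytic side, checking that the transport homological equation in Step 2 is genuinely solvable with a periodic $\widetilde\beta$ satisfying the non-degeneracy condition \eqref{ansatz alpha egorov}, which is exactly where $\bf (H3)_{M<1}$ enters.
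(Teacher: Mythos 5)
Your proposal follows essentially the same route as the paper: the same generator ${\cal G}_0^{(1)} = \alpha|D|^M + |D|^M\alpha$ with the same homological equation for $\alpha$, then the same $\vphi$-independent transport flow with the same equation $\langle V\rangle_\vphi(y)(1+\partial_y\widetilde\beta(y))^M=\lambda$, and the same use of the Hamiltonian structure to get $\lambda\in\R$ and ${\cal W}_1={\cal W}_1^*$. The only blemish is a harmless normalization slip: the zero-average condition gives $\lambda = \big(\tfrac{1}{2\pi}\int_\T \langle V\rangle_\vphi(y)^{-1/M}\,dy\big)^{-M}$, i.e. the prefactor is $(2\pi)^M$ rather than the $2\pi$ you wrote.
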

The rest of this subsection is devoted to the proof of Proposition \ref{teorema riassunto primo egorov}. In Section \ref{riduzione tempo ordine principale} we show how to remove the dependence on $\vphi$ from the highest order. Then, in Section \ref{riduzione spazio ordine massimo M < 1}, we remove the dependence on $x$. 
\subsubsection{Time reduction}\label{riduzione tempo ordine principale}
We consider a function $\alpha \in {\cal C}^\infty(\T^\nu \times \T, \R)$ (to be determined) and an operator of the form  
\begin{equation}\label{operator cal A M < 1}
{\cal G}_0^{(1)}(\vphi) := \alpha (\vphi, x) |D|^M + |D|^M \alpha (\vphi, x)\,.
\end{equation}
Note that ${\cal G}_0^{(1)} (\vphi) = {\cal G}_0^{(1)} (\vphi)^*$ for any $\vphi \in \T^\nu$ and by applying Theorem \ref{teorema composizione pseudo},
\begin{equation}\label{espansione g0 M < 1}
{\cal G}_0^{(1)}(\vphi) = {\rm Op}\Big( g_0^{(1)}(\vphi, x, \xi) \Big)\,, \quad g_0^{(1)}(\vphi, x, \xi) := 2 \alpha(\vphi, x) |\xi|^M\chi(\xi) +  S^{M - 1} 
\end{equation}
(recall the notation \eqref{notazione conveniente}).
Since ${\cal G}_0^{(1)}$ fullfills the hypotheses of Lemma \ref{flusso + derivate flusso}, one has that the flow $\Phi_0^{(1)}(\tau; \vphi)$, $\tau \in [0, 1]$, $\vphi \in \T^\nu$ of the autonomous PDE $\partial_\tau u = \ii {\cal G}_0^{(1)}(\vphi)[ u]$, i.e. 
\begin{equation}\label{flusso cal A M < 1}
\begin{cases}
\partial_\tau \Phi_0^{(1)}(\tau; \vphi) = \ii {\cal G}_0^{(1)}(\vphi) \Phi_0^{(1)}(\tau; \vphi) \\
\Phi_0^{(1)}(0; \vphi) = {\rm Id}
\end{cases}
\end{equation}
is an invertible map $H^s \to H^s$ and satisfies 
\begin{equation}\label{proprieta continuita Phi 0 M < 1}
\sup_{\begin{subarray}{c}
\tau \in [0, 1] \\
\vphi \in \T^\nu
\end{subarray}} \| \Phi_0^{(1)}(\tau; \vphi)\|_{{\cal B}(H^s)}, \quad \forall s \geq 0\,. 
\end{equation}
Since ${\cal G}_0^{(1)}(\vphi)$ is self-adjoint, $\Phi_0^{(1)}(\tau; \vphi)$ is a symplectic map. We set $\Phi_0^{(1)}(\vphi) := \Phi_0^{(1)}(1; \vphi)$ and we have  
$$
\big((\Phi_0^{(1)})^{- 1} \big)_{\omega *}{\cal V}(\vphi ) = \ii {\cal V}_0^{(1)}(\vphi)\,, \quad {\cal V}_0^{(1)}(\vphi) := \Phi_0^{(1)}(\vphi) {\cal V}(\vphi) \Phi_0^{(1)}(\vphi)^{- 1} + \ii \Phi_0^{(1)}(\vphi) \omega \cdot \partial_\vphi \Phi_0^{(1)}(\vphi)^{- 1}\,. 
$$
By applying Proposition \ref{teorema egorov campo minore di uno} with $m = \eta = M$, since  
$$
v(\vphi, x, \xi) = V(\vphi, x) |\xi|^M\chi(\xi) + w(\vphi, x, \xi) \quad \text{with} \quad w \in S^{M - \frak e}\,,
$$
by recalling \eqref{costante frak e bar M < 1}, \eqref{proprieta M - frak e bar M < 1}, using that by \eqref{inclusioni Sm OPSm}, $S^{M - \frak e}, S^{M - (1 - M)} \subseteq S^{M - \overline{\frak e}}$ one gets that 
\begin{equation}\label{def cal V0 (1) M < 1}
 \Phi_0^{(1)}(\vphi) {\cal V}(\vphi) \Phi_0^{(1)}(\vphi)^{- 1} = {\rm Op}\Big(V(\vphi, x) |\xi|^M\chi(\xi)  \Big) + OPS^{M - \overline{\frak e}}\,.
\end{equation}
%with
%\begin{equation}\label{espansione finale v (0) 1}
%v_0^{(1)}(\vphi, x, \xi) = V(\vphi, x) |\xi|^M\chi(\xi) + w_0^{(1)}(\vphi, x, \xi)\,, \quad w_0^{(1)}\in S^{M - \overline{\frak e}}\,. 
%\end{equation}
Moreover, defining $\Psi(\tau; \vphi) := \ii  \Phi_0^{(1)}(\tau; \vphi) \omega \cdot \partial_\vphi \big( \Phi_0^{(1)}(\tau; \vphi)^{- 1} \big)$, a direct calculation shows that 
$$
\Psi(\tau; \vphi) = \ii \int_0^\tau {\cal S}(\zeta; \vphi )\, d \zeta\,, \quad  {\cal S} (\zeta; \vphi) := \Phi_0^{(1)}(\zeta; \vphi) \omega \cdot \partial_\vphi  {\cal G}_0^{(1)}(\vphi) \Phi_0^{(1)}(\zeta; \vphi)^{- 1}, \quad \zeta \in [0, 1]\,. 
$$
Since $\omega \cdot \partial_\vphi {\cal G}_0^{(1)}(\vphi) \in OPS^M$, by Proposition \ref{teorema egorov campo minore di uno} (applied with $m = \eta = M$) and using \eqref{espansione g0 M < 1}, one has 
\begin{equation}\label{ordine Psi M < 1}
\Psi(\vphi) = \Psi(1 ; \vphi) = \ii \Phi_0^{(1)}(1; \vphi) \omega \cdot \partial_\vphi \big( \Phi_0^{(1)}(1 ; \vphi)^{- 1} \big) = {\rm Op}\Big(\psi (\vphi, x, \xi) \Big) \in OPS^M 
\end{equation}
with 
\begin{equation}\label{espansione psi 0 M < 1}
\psi(\vphi, x, \xi) := 2 \omega \cdot \partial_\vphi \alpha(\vphi, x) |\xi|^M \chi(\xi) + r_1 (\vphi, x, \xi)\,, \quad r_1 \in S^{M - (1 - M)} \stackrel{\eqref{proprieta M - frak e bar M < 1}, \eqref{inclusioni Sm OPSm}}{\subseteq} S^{M - \overline{\frak e}}\,. 
\end{equation}
Therefore, \eqref{def cal V0 (1) M < 1}-\eqref{espansione psi 0 M < 1} imply that ${\cal V}_0^{(1)}(\vphi) = {\rm Op}\Big( v_0^{(1)}(\vphi, x, \xi) \Big)$ with
\begin{equation}\label{espansione finale cal V 0}
\begin{aligned}
& v_0^{(1)}(\vphi, x, \xi) = \Big(V(\vphi, x) +  2 \omega \cdot \partial_\vphi \alpha (\vphi, x) \Big) |\xi|^M \chi(\xi) + S^{M - \overline{\frak e}}\,. 
 \end{aligned}
\end{equation}
Defining 
\begin{equation}\label{media vphi V M < 1}
\langle V \rangle_\vphi(x) := \frac{1}{(2 \pi)^\nu} \int_{\T^\nu} V(\vphi, x)\, d \vphi\,,
\end{equation}
we want to choose the function $\alpha_0$ so that 
\begin{equation}\label{equazione omologica 0 M < 1}
V(\vphi, x) +  2 \omega \cdot \partial_\vphi \alpha(\vphi, x)  = \langle V \rangle_\vphi(x)\,. 
\end{equation}
Since $\langle V \rangle_\vphi - V$ has zero average w.r. to $\vphi \in \T^\nu$ and $\omega \in DC(\gamma, \tau)$, one has that  
\begin{equation}\label{definizione alpha 0 M < 1}
\alpha := (\omega \cdot \partial_\vphi)^{- 1}\Big[\frac{\langle V \rangle_\vphi - V}{2} \Big]
\end{equation}
solves the equation \eqref{equazione omologica 0 M < 1}. 
Note that $\alpha \in {\cal C}^\infty(\T^\nu \times \T, \R)$, since $V \in {\cal C}^\infty(\T^\nu \times \T, \R)$ and since $V$ satisfies the hypothesis $\bf (H3)_{M < 1}$, by \eqref{media vphi V M < 1}, one sees that
\begin{equation}\label{positivita media vphi V}
\inf_{x \in \T} \langle V \rangle_\vphi(x) > 0\,. 
\end{equation}
Finally \eqref{espansione finale cal V 0}, \eqref{equazione omologica 0 M < 1} imply that 
\begin{equation}\label{forma finalissima cal V0 M < 1}
{\cal V}_0^{(1)}(\vphi) = {\rm Op}\Big( v_0^{(1)}(\vphi, x, \xi) \Big)\,, \quad v_0^{(1)}(\vphi, x, \xi) = \langle V \rangle_\vphi(x) |\xi|^M \chi(\xi) + S^{M - \overline{\frak e}}\,. 
\end{equation}
Since ${\cal V}(\vphi)$ is self-adjoint and $\Phi_0^{(1)}(\vphi)$ is symplectic, then also ${\cal V}_0^{(1)}(\vphi)$ is self-adjoint. 
\subsubsection{Space reduction}\label{riduzione spazio ordine massimo M < 1}
In this section, our purpose is to remove the dependence on $x$ from the highest order term $\ii \langle V \rangle_\vphi(x) |D|^M$ of the vector field $\ii {\cal V}_0^{(1)}(\vphi)$ given in \eqref{forma finalissima cal V0 M < 1}. To this aim, we consider a function $\beta \in {\cal C}^\infty(\T, \R)$ (that will be fixed later) satisfying the ansatz 
\begin{equation}\label{ansatz alpha 1 M < 1}
\inf_{x \in \T}(1 + \partial_x \beta(x)) > 0\,. 
\end{equation}
We then define 
\begin{equation}\label{definizione b1}
b(\tau; x) := - \frac{\beta (x)}{1 + \tau \partial_x \beta (x)}\,, \quad (\tau, x) \in [0, 1] \times \T
\end{equation}
and we consider the $\tau$-dependent vector field 
\begin{equation}\label{def cal G1 M < 1}
{\cal G}_0^{(2)}(\tau) := b(\tau; x) \partial_x + \frac{\partial_x b}{2}\,. 
\end{equation}
As explained in Section \ref{sezione astratta Egorov}, the flow of the PDE $\partial_\tau u = {\cal G}_0^{(2)}(\tau)[u]$, i.e. the map $\Phi_0^{(2)}(\tau)$, $\tau \in [0, 1]$ which solves 
\begin{equation}\label{Phi 1 M < 1}
\begin{cases}
\partial_\tau \Phi_0^{(2)}(\tau) = {\cal G}_0^{(2)}(\tau) \Phi_0^{(2)}(\tau) \\
\Phi_0^{(2)}(0) = {\rm Id}\,. 
\end{cases}
\end{equation}
 is a bounded invertible symplectic map satisfying 
 \begin{equation}\label{proprieta Phi 1 egorov M < 1}
 \sup_{\tau \in [0, 1] } \| \Phi_0^{(2)}(\tau)^{\pm 1}\|_{{\cal B}(H^s)} < + \infty\,, \quad \forall s \geq 0\,.
 \end{equation}
 Note that ${\cal G}_0^{(2)}(\tau)$ and $\Phi_0^{(2)}(\tau)$ are independent of $\vphi \in \T^\nu$. 
We set $\Phi_0^{(2)} := \Phi_0^{(2)}(1)$ and the transformed vector field is given by 
\begin{equation}\label{primo cal V1}
\big((\Phi_0^{(2)})^{- 1} \big)_{\omega *}\ii {\cal V}_0^{(1)}(\vphi) = \ii {\cal V}_1(\vphi)\,, \quad {\cal V}_1(\vphi) := \Phi_0^{(2)} {\cal V}_0^{(1)}(\vphi) (\Phi_0^{(2)})^{- 1}\,. 
\end{equation}
By applying Proposition \ref{Teorema egorov generale}, the operator ${\cal V}_1(\vphi) = {\rm Op}\Big( v_1(\vphi, x, \xi) \Big)$ admits the expansion 
\begin{equation}\label{prima espansione v1 M < 1}
v_1(\vphi, x, \xi) = v_0^{(1)}\Big(\vphi, x + \beta(x), (1 + \partial_x \beta(x))^{- 1} \xi \Big) + S^{M - 1}\,. 
\end{equation}
\begin{lemma}\label{espansione simbolo principale egorov}
The symbol $v_1(\vphi, x, \xi)$ has the form 
\begin{equation}\label{espansione simbolo q0 trasporto}
v_1(\vphi, x, \xi) = \Big[\langle V\rangle_\vphi( y) \big( 1 + \partial_y \widetilde \beta( y) \big)^M \Big]_{y = x + \beta( x)} |\xi|^M \chi(\xi) + S^{M - \overline{\frak e}}\,
\end{equation}
where we recall the definitions \eqref{definizione cut off D alpha}, \eqref{definizione D alpha} and  $y \mapsto y + \widetilde \beta(y)$ is the inverse diffeomorphism of $x \mapsto x + \beta (x)$. 
\end{lemma}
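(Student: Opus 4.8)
The plan is to start from the expansion \eqref{prima espansione v1 M < 1} furnished by Proposition \ref{Teorema egorov generale}, namely
$$
v_1(\vphi, x, \xi) = v_0^{(1)}\Big(\vphi, x + \beta(x), (1 + \partial_x \beta(x))^{- 1} \xi \Big) + S^{M - 1}\,,
$$
and substitute the precise form \eqref{forma finalissima cal V0 M < 1} of $v_0^{(1)}$, i.e. $v_0^{(1)}(\vphi, x, \xi) = \langle V \rangle_\vphi(x) |\xi|^M \chi(\xi) + S^{M - \overline{\frak e}}$. First I would argue that composing a symbol in $S^{M - \overline{\frak e}}$ with the diffeomorphism $x \mapsto x + \beta(x)$ and the dilation $\xi \mapsto (1 + \partial_x \beta(x))^{-1}\xi$ produces again a symbol in $S^{M - \overline{\frak e}}$: this follows from the chain rule together with the ansatz \eqref{ansatz alpha 1 M < 1}, which guarantees that $1 + \partial_x\beta(x)$ and its reciprocal are bounded away from $0$ and $\infty$, so the $\langle \xi \rangle$-weights are only distorted by bounded factors. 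Since $M - 1 \leq M - \overline{\frak e}$ by \eqref{proprieta M - frak e bar M < 1}, the remainder term $S^{M-1}$ in \eqref{prima espansione v1 M < 1} is absorbed into $S^{M - \overline{\frak e}}$ as well.

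It then remains to handle the leading term $\big[\langle V \rangle_\vphi(x') |\xi'|^M \chi(\xi')\big]$ evaluated at $x' = x + \beta(x)$, $\xi' = (1 + \partial_x\beta(x))^{-1}\xi$. The key identity is $|\xi'|^M = (1 + \partial_x\beta(x))^{-M} |\xi|^M$ together with the substitution $y = x + \beta(x)$, which by Lemma \ref{diffeo del toro lemma astratto} (with $\nu = 0$, i.e.\ the $\vphi$-independent case) has inverse $x = y + \widetilde\beta(y)$, and for which the identity \eqref{identita 1 + alpha alpha tilde partial} gives
$$
(1 + \partial_x\beta(x))^{-1} = 1 + \partial_y\widetilde\beta(y)\Big|_{y = x + \beta(x)}\,.
$$
Hence $(1 + \partial_x\beta(x))^{-M}|\xi|^M = \big[(1 + \partial_y\widetilde\beta(y))^M\big]_{y = x + \beta(x)}|\xi|^M$, so the leading symbol equals exactly
$$
\Big[\langle V\rangle_\vphi( y) \big( 1 + \partial_y \widetilde \beta( y) \big)^M \Big]_{y = x + \beta( x)} |\xi|^M \,,
$$
up to replacing the cut-off $\chi(\xi')$ by $\chi(\xi)$. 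To justify that last replacement, I would note that $\chi((1+\partial_x\beta(x))^{-1}\xi) - \chi(\xi)$ is supported in a bounded region of $\xi$ (since both cut-offs equal $1$ for $|\xi|$ large, thanks to the bounds on $1 + \partial_x\beta$), so multiplying the smooth bounded coefficient $\langle V\rangle_\vphi(y)(1+\partial_y\widetilde\beta(y))^M$ by this difference gives a symbol in $S^{-\infty} \subseteq S^{M - \overline{\frak e}}$.

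Putting these observations together yields exactly \eqref{espansione simbolo q0 trasporto}. I expect the only genuinely delicate point to be the bookkeeping of which error terms land in $S^{M - \overline{\frak e}}$: one must check that differentiating the composed symbols in $\xi$ indeed gains powers of $\langle\xi\rangle^{-1}$ (clear, since the dilation is $\xi$-linear with $x$-dependent but $\xi$-independent coefficient) and that differentiating in $x$ or $\vphi$ does not lose powers of $\langle\xi\rangle$ (also clear, since each $x$- or $\vphi$-derivative of $x + \beta(x)$ and of $(1+\partial_x\beta(x))^{-1}$ is smooth and bounded). All of this is routine symbol calculus given the hypotheses already in place; no new non-resonance condition or structural input is needed at this step, since the space reduction here only concerns the $\vphi$-independent diffeomorphism.
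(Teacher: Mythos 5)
Your proposal is correct and follows the route the paper intends: the paper omits the proof, referring to Lemma 3.2 of \cite{Montalto1}, and your argument (apply Proposition \ref{Teorema egorov generale} to get \eqref{prima espansione v1 M < 1}, substitute \eqref{forma finalissima cal V0 M < 1}, use the stability of $S^{M-\overline{\frak e}}$ under the fibered change of variables, and convert $(1+\partial_x\beta)^{-M}$ into $(1+\partial_y\widetilde\beta)^M$ via \eqref{identita 1 + alpha alpha tilde partial}) is exactly the standard reconstruction of that omitted proof. The handling of the remainders, including the absorption of $S^{M-1}$ using $\overline{\frak e}\leq 1-M$ and the $S^{-\infty}$ discrepancy between $\chi\big((1+\partial_x\beta)^{-1}\xi\big)$ and $\chi(\xi)$, is sound.
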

\begin{proof}
The Lemma follows by using the same aguments used in the proof of Lemma 3.2 in \cite{Montalto1}, hence the proof is omitted. 
\end{proof}
We now determine the function $\widetilde \beta$ and a constant $\lambda > 0$ so that 
\begin{equation}\label{equazione omologica x ordine alto}
\langle V\rangle_\vphi( y) \big( 1 + \partial_y \widetilde \beta( y) \big)^M = \lambda\,. 
\end{equation}
The equation \eqref{equazione omologica x ordine alto} is equivalent to the equation 
\begin{equation}\label{equazione omologica x ordine alto A}
\partial_y \widetilde \beta(y) = \frac{\lambda^{\frac{1}{M}}}{\langle V \rangle_\vphi(y)^{\frac{1}{M}}} - 1\,.
\end{equation} 
Note that by \eqref{positivita media vphi V}, the function $\langle V\rangle_\vphi$ does not vanish. Then we choose $\lambda$ so that the right hand side of the equation \eqref{equazione omologica x ordine alto A} has zero average, i.e. 
\begin{equation}\label{definizione lambda M < 1}
\lambda := \Big( \frac{1}{2 \pi} \int_\T \langle V \rangle_\vphi(y)^{- \frac{1}{M}}\, d y\Big)^{- M}
\end{equation}
and hence we define $\widetilde \beta$ as 
\begin{equation}\label{definizione widetilde alpha 1 M < 1}
\widetilde \beta := \partial_y^{- 1}\Big[ \frac{\lambda^{\frac{1}{M}}}{\langle V \rangle_\vphi(y)^{\frac{1}{M}}} - 1 \Big]\,.
\end{equation}
By the definition \eqref{definizione lambda M < 1} and recalling the property \eqref{positivita media vphi V}, one has that 
\begin{equation}\label{segno lambda}
\lambda > 0\,. 
\end{equation}
By \eqref{equazione omologica x ordine alto A}, \eqref{positivita media vphi V} one has that $\widetilde \beta \in {\cal C}^\infty(\T)$ and satisfies 
$$
\inf_{y \in \T} (1 + \partial_y \widetilde\beta (y)) > 0\,,
$$
hence, by applying Lemma \ref{diffeo del toro lemma astratto}, the inverse diffeomorphism $x \mapsto x + \beta(x)$ satisfies the ansatz \eqref{ansatz alpha 1 M < 1}. Finally, \eqref{espansione simbolo q0 trasporto}, \eqref{equazione omologica x ordine alto} imply that 
\begin{equation}\label{espansione finalissima v_1 M < 1}
{\cal V}_1(\vphi) = {\rm Op}\Big( v_1(\vphi, x, \xi) \Big)\,, \quad v_1(\vphi, x, \xi) = \lambda |\xi|^M \chi(\xi) + w_1(\vphi, x, \xi)\,, \quad w_1 \in S^{M - \overline{\frak e}}\,. 
\end{equation}
We then define $\Phi_0(\vphi) := \Phi_0^{(1)}(\vphi) \circ \Phi_0^{(2)}$. By \eqref{proprieta continuita Phi 0 M < 1}, \eqref{proprieta Phi 1 egorov M < 1}, the symplectic map $\Phi_0(\vphi)$ satisfies the property \eqref{proprieta Phi 2}. Since $\Phi_0$ is symplectic the vector field $\ii {\cal V}_1$ is Hamiltonian, i.e. ${\cal V}_1(\vphi)$ is self-adjoint. Since $\lambda \in \R$, and hence $\lambda |D|^M$ is self-adjoint, then also ${\cal W}_1(\vphi) = {\cal V}_1(\vphi) - \lambda |D|^M$ is self-adjoint and then the proof of Proposition \ref{teorema riassunto primo egorov} is concluded. 
\subsection{Reduction of the lower order terms}\label{sezione riduzione ordini bassi M < 1}
We now prove the following 
\begin{proposition}\label{descent method M  < 1}
Let $\gamma \in (0 1), \tau > \nu - 1$, $\omega \in DC(\gamma, \tau)$ and $N \in \N$. For any $n = 1, \ldots, N$ there exists a linear Hamiltonian vector field $\ii {\cal V}_n(\vphi)$ of the form 
\begin{equation}\label{forma cal Vn teorema}
{\cal V}_n(\vphi) := \lambda  |D|^M + \mu_n(D) + {\cal W}_n(\vphi)
\end{equation}
where 
\begin{equation}\label{mu n t D teo}
\mu_n( D) := {\rm Op}\Big( \mu_n( \xi)\Big)\,, \qquad \mu_n \in S^{M - \bar{\frak e}}\,,
\end{equation}
\begin{equation}\label{cal Wn teorema}
{\cal W}_n(\vphi) := {\rm Op}\Big( w_n(\vphi, x, \xi)\Big) \,, \qquad w_n \in S^{M - n \bar{\frak e}}\,,
\end{equation}
with $\mu_n(\xi)$ real and ${\cal W}_n(\vphi)$ self-adjoint, i.e. $w_n = w_n^*$ (see Theorem \ref{adjoint}).

\noindent
For any $n \in \{ 1, \ldots, N - 1\}$, there exists a symplectic invertible map $\Phi_n(\vphi)$ satisfying
\begin{equation}\label{proprieta Phi n teo 1}
\sup_{\vphi \in \T^\nu}\| \Phi_n(\vphi)^{\pm 1} \|_{{\cal B}(H^{s})}   < + \infty\,, \quad \forall s \geq 0 
\end{equation}
and 
\begin{equation}\label{cal V n + 1 cal V n}
\ii {\cal V}_{n + 1}(\vphi) = (\Phi_n^{- 1})_{\omega*}( \ii {\cal V}_n)(\vphi)\,, \qquad \forall n \in \{ 1, \ldots, N - 1 \}\,. 
\end{equation}
\end{proposition}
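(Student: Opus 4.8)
The plan is to establish the statement by induction on $n$, each step of the induction consisting of two conjugations --- a \emph{time reduction} that removes the $\vphi$-dependence of the top-order symbol, followed by a \emph{space reduction} that removes its $x$-dependence. For $n=1$ nothing new is needed: Proposition \ref{teorema riassunto primo egorov} produces $\ii{\cal V}_1(\vphi)=\ii\big(\lambda|D|^M+{\cal W}_1(\vphi)\big)$ with ${\cal W}_1={\rm Op}(w_1)\in OPS^{M-\overline{\frak e}}$ self-adjoint, and one takes $\mu_1:=0$ (a real Fourier multiplier in $S^{M-\overline{\frak e}}$), so that \eqref{forma cal Vn teorema}--\eqref{cal Wn teorema} hold for $n=1$ with $w_1\in S^{M-1\cdot\overline{\frak e}}$, $w_1=w_1^*$. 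Assume now that $\ii{\cal V}_n(\vphi)$ has been built with the required properties for some $n\in\{1,\dots,N-1\}$; I construct $\Phi_n$ and $\ii{\cal V}_{n+1}$ as follows.

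\emph{Time reduction.} Since $\omega\in DC(\gamma,\tau)$ and $\langle w_n\rangle_\vphi-w_n$ has zero $\vphi$-average, set $g_n^{(1)}:=(\omega\cdot\partial_\vphi)^{-1}\big[\langle w_n\rangle_\vphi-w_n\big]$, which belongs to $S^{M-n\overline{\frak e}}$ by \eqref{proprieta simbolo mediato e partial x - 1} and satisfies $(g_n^{(1)})^*=g_n^{(1)}$ by Lemma \ref{proprieta aggiunti small divisors} together with $w_n=w_n^*$; hence ${\cal G}_n^{(1)}(\vphi):={\rm Op}(g_n^{(1)})$ is self-adjoint, and as $M-n\overline{\frak e}<1$ its time-one flow $\Phi_n^{(1)}(\vphi)$ is, by Lemma \ref{flusso + derivate flusso}, a symplectic invertible map satisfying \eqref{proprieta Phi n teo 1}. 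Writing
$$
\big((\Phi_n^{(1)})^{-1}\big)_{\omega*}(\ii{\cal V}_n)=\Phi_n^{(1)}(\ii{\cal V}_n)(\Phi_n^{(1)})^{-1}+\big(\omega\cdot\partial_\vphi\Phi_n^{(1)}\big)(\Phi_n^{(1)})^{-1}
$$
and applying Proposition \ref{teorema egorov campo minore di uno} to both summands (with $m=M$, $\eta=M-n\overline{\frak e}$; the second summand after writing it as an integral of conjugations of $\omega\cdot\partial_\vphi{\cal G}_n^{(1)}$), the only surviving contribution at order $M-n\overline{\frak e}$ is $\omega\cdot\partial_\vphi g_n^{(1)}$, produced by the second term; the brackets $\{g_n^{(1)},\lambda|\xi|^M\chi\}$, $\{g_n^{(1)},\mu_n\}$, $\{g_n^{(1)},w_n\}$ and all the $p_{\geq2}$-type remainders of Proposition \ref{teorema egorov campo minore di uno} lie in $S^{M-(n+1)\overline{\frak e}}$, precisely because $M<1$ and $\overline{\frak e}\leq 1-M$. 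Since $w_n+\omega\cdot\partial_\vphi g_n^{(1)}=\langle w_n\rangle_\vphi$ by construction, the transformed vector field is $\ii{\cal V}_n^{(1)}(\vphi)$ with ${\cal V}_n^{(1)}(\vphi)=\lambda|D|^M+\mu_n(D)+\langle w_n\rangle_\vphi(x,D)+OPS^{M-(n+1)\overline{\frak e}}$.

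\emph{Space reduction.} The sublinear dispersion forces the generator to have the larger order $1-n\overline{\frak e}$: using $\partial_\xi\big(\lambda|\xi|^M\chi(\xi)\big)=\lambda M|\xi|^{M-1}{\rm sign}(\xi)$ for $|\xi|\geq1$, set
$$
g_n^{(2)}(x,\xi):=\partial_x^{-1}\Big[\frac{\big(\langle w_n\rangle_\vphi(x,\xi)-\langle w_n\rangle_{\vphi,x}(\xi)\big)\,\widetilde\chi(\xi)}{\lambda M|\xi|^{M-1}{\rm sign}(\xi)}\Big]\,,
$$
where $\widetilde\chi$ is a cut-off supported in $\{|\xi|\geq1/2\}$; the numerator has zero $x$-average, $g_n^{(2)}$ is $\vphi$-independent, and by \eqref{proprieta simbolo mediato e partial x - 1} one has $g_n^{(2)}\in S^{(M-n\overline{\frak e})+(1-M)}=S^{1-n\overline{\frak e}}$, of order strictly less than $1$. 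After replacing $g_n^{(2)}$ by its self-adjoint part --- a change by a symbol of order $\leq-n\overline{\frak e}$, which is absorbed into the remainder --- the operator ${\cal G}_n^{(2)}:={\rm Op}(g_n^{(2)})$ is self-adjoint, so its time-one flow $\Phi_n^{(2)}$ is a symplectic invertible map satisfying \eqref{proprieta Phi n teo 1}; being $\vphi$-independent, the $\omega\cdot\partial_\vphi$-term drops out of the push-forward. Proposition \ref{teorema egorov campo minore di uno} (with $m=M$, $\eta=1-n\overline{\frak e}$) then gives, since $\lambda|\xi|^M\chi$ is $x$-independent,
$$
\big((\Phi_n^{(2)})^{-1}\big)_{\omega*}(\ii{\cal V}_n^{(1)})=\ii\Big(\lambda|D|^M+\mu_n(D)+{\rm Op}\big(\langle w_n\rangle_\vphi+\{g_n^{(2)},\lambda|\xi|^M\chi\}\big)\Big)+OPS^{M-(n+1)\overline{\frak e}}\,,
$$
the remaining brackets and $p_{\geq2}$-remainders being in $OPS^{M-(n+1)\overline{\frak e}}$ since $n\geq1$. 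The choice of $g_n^{(2)}$ yields $\langle w_n\rangle_\vphi+\{g_n^{(2)},\lambda|\xi|^M\chi\}=\langle w_n\rangle_{\vphi,x}+S^{-\infty}$, so the transformed vector field is $\ii{\cal V}_{n+1}(\vphi)$ with ${\cal V}_{n+1}(\vphi)=\lambda|D|^M+\mu_{n+1}(D)+{\cal W}_{n+1}(\vphi)$, where $\mu_{n+1}:=\mu_n+\langle w_n\rangle_{\vphi,x}\in S^{M-\overline{\frak e}}$ and ${\cal W}_{n+1}\in OPS^{M-(n+1)\overline{\frak e}}$. Setting $\Phi_n:=\Phi_n^{(1)}\circ\Phi_n^{(2)}$ (as in Proposition \ref{teorema riassunto primo egorov}) we obtain a symplectic invertible map satisfying \eqref{proprieta Phi n teo 1} with $\ii{\cal V}_{n+1}=(\Phi_n^{-1})_{\omega*}(\ii{\cal V}_n)$, i.e. \eqref{cal V n + 1 cal V n}. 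Finally, since $\Phi_n$ is symplectic, ${\cal V}_{n+1}$ is self-adjoint; as $\mu_n$ is real and $\langle w_n\rangle_{\vphi,x}$ is real (Lemma \ref{proprieta aggiunti small divisors}$(i)$ with $w_n=w_n^*$), also $\mu_{n+1}$ is real, so $\lambda|D|^M+\mu_{n+1}(D)$ is self-adjoint and therefore ${\cal W}_{n+1}(\vphi)={\cal V}_{n+1}(\vphi)-\lambda|D|^M-\mu_{n+1}(D)$ is self-adjoint, $w_{n+1}=w_{n+1}^*$. This closes the induction.

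The two homological equations are solved explicitly as above, which is routine; the step requiring the most care is the order book-keeping in the two applications of Proposition \ref{teorema egorov campo minore di uno}, i.e. checking that every Poisson bracket and every higher remainder generated by the conjugations really lies in $OPS^{M-(n+1)\overline{\frak e}}$. This is exactly what dictates the definition $\overline{\frak e}=\min\{1-M,\frak e\}$ in \eqref{costante frak e bar M < 1}: because the dispersion is sublinear, the Poisson bracket against $\lambda|\xi|^M$ gains only $M-1$ in $\xi$ rather than $-1$, which forces $g_n^{(2)}$ to have the comparatively high order $1-n\overline{\frak e}$, and only the constraint $\overline{\frak e}\leq1-M$ ensures that the remainder genuinely improves by $\overline{\frak e}$ at each step. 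The singularity of $|\xi|^{M-1}$ at $\xi=0$ is harmless, since it affects only symbols supported in a bounded set of frequencies, which are smoothing.
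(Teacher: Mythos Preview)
Your proof is correct and follows essentially the same approach as the paper: an inductive step split into a time reduction via the flow of $\ii{\cal G}_n^{(1)}={\rm Op}\big((\omega\cdot\partial_\vphi)^{-1}[\langle w_n\rangle_\vphi-w_n]\big)$, followed by a $\vphi$-independent space reduction via the flow of $\ii{\cal G}_n^{(2)}$ with $g_n^{(2)}\in S^{1-n\overline{\frak e}}$, concluding with $\Phi_n=\Phi_n^{(1)}\circ\Phi_n^{(2)}$. The only cosmetic difference is in the handling of the self-adjointness of $g_n^{(2)}$: the paper builds it directly as $\sigma_n+\sigma_n^*$ (with the factor $1/2$ absorbed into $\sigma_n$) and then invokes Lemma~\ref{simbolo autoaggiunto fourier multiplier} to verify the ansatz $\sigma_n^*=\sigma_n+S^{-n\overline{\frak e}}$, whereas you first solve the homological equation and then symmetrize, observing that the correction $(g_n^{(2)})^*-g_n^{(2)}\in S^{-n\overline{\frak e}}$ contributes only to the remainder; these are equivalent.
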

The remaining part of this section is devoted to the proof of this Proposition. We describe the inductive step. Given $n \in \{ 1, \ldots, N\}$, we assume that the vector field $\ii{\cal V}_n(\vphi)$ satisfies the properties \eqref{forma cal Vn teorema}-\eqref{cal Wn teorema}. The reduction to constant coefficients of the order $M - n \bar{\frak e}$ is split in two parts: in Section \ref{ordini bassi tempo M < 1} we eliminate the dependence on $\vphi$ from the symbol $w_n$ and in Section \ref{reduction lower orders space M < 1}, we eliminate the dependence on $x$. 
\subsubsection{Time reduction}\label{ordini bassi tempo M < 1}
We consider an operator 
\begin{equation}\label{cal G n + 1 M < 1}
\begin{aligned}
& {\cal G}_{n }^{(1)}(\vphi) = {\rm Op}(g_{n }^{(1)}(\vphi, x, \xi))\,, \qquad g_{n}^{(1)} \in S^{M - n \overline{\frak e}}\,, \\
& {\cal G}_{n}^{(1)}(\vphi) = {\cal G}_{n}^{(1)}(\vphi)^*\,. \quad \forall \vphi \in \T^\nu
\end{aligned}
\end{equation}
Since the operator ${\cal G}_n^{(1)}$ satisfies the hypotheses of Lemma \ref{flusso + derivate flusso}, the flow $\Phi_{n }^{(1)}(\tau; \vphi)$ of the autonomous PDE $\partial_\tau u = \ii {\cal G}_{n}^{(1)}(\vphi) u$, i.e. 
\begin{equation}\label{Phi n + 1 (1) M < 1}
\begin{cases}
\partial_\tau \Phi_{n}^{(1)}(\tau; \vphi) = \ii {\cal G}_{n}^{(1)}(\vphi) \Phi_{n}^{(1)}(\tau; \vphi) \\
\Phi_{n }^{(1)}(0; \vphi) = {\rm Id}\,.  
\end{cases}
\end{equation}
is a well defined, invertible map satisfying 
\begin{equation}\label{proprieta Phi n (1) M < 1}
\sup_{\begin{subarray}{c}
\tau \in [0, 1] \\
\vphi \in \T^\nu
\end{subarray}} \| \Phi_n^{(1)}(\tau; \vphi)\|_{{\cal B}(H^s)}  < + \infty\,, \quad \forall s \geq 0\,. 
\end{equation}
Furthermore, since ${\cal G}_n^{(1)}$ is self-adjoint, the map $\Phi_n^{(1)}(\tau; \vphi)$ is symplectic. We define $\Phi_{n}^{(1)}(\vphi) := \Phi_{n}^{(1)}(1; \vphi)$. We then compute 
\begin{equation}\label{primo cal V n + 1}
\begin{aligned}
& (\Phi_{n}^{(1)})^{- 1}_{\omega *} \ii {\cal V}_n(\vphi) = \ii {\cal V}_{n}^{(1)}(\vphi)\,, \\
& {\cal V}_{n}^{(1)}(\vphi) := \Phi_{n }^{(1)}(\vphi) {\cal V}_{n }(\vphi)  \Phi_{n}^{(1)}(\vphi)^{- 1} + \ii \Phi_{n }^{(1)}(\vphi) \omega \cdot \partial_\vphi  \Phi_{n }^{(1)}(\vphi)^{- 1} \,.
\end{aligned}
\end{equation}
A direct calculation shows that 
\begin{equation}\label{cal S n Phi n + 1 M < 1}
\begin{aligned}
& \ii \Phi_{n }^{(1)}(\vphi) \omega \cdot \partial_\vphi  \Phi_{n }^{(1)}(\vphi)^{- 1} =  \int_0^1 {\cal S}_{n}(\tau; \vphi)\, d \tau\,, \\
& {\cal S}_{n}(\tau; \vphi) := \Phi_{n }^{(1)}(\tau; \vphi) \omega \cdot \partial_\vphi {\cal G}_{n }^{(1)}(\vphi) \Phi_{n }^{(1)}(\tau; \vphi)^{- 1}\,. 
\end{aligned}
\end{equation}
By \eqref{primo cal V n + 1}-\eqref{cal S n Phi n + 1 M < 1}, by applying Proposition \ref{teorema egorov campo minore di uno} (with $m = M$ and $\eta = M - n \overline{\frak e}$ for the term $\Phi_{n }^{(1)}(\vphi) {\cal V}_{n }(\vphi)  \Phi_{n}^{(1)}(\vphi)^{- 1}$ and $m = \eta =  M - n \overline{\frak e}$ for the term given in \eqref{cal S n Phi n + 1 M < 1}), one gets that 
\begin{equation}\label{espansione tilde vn M < 1}
\begin{aligned}
{\cal V}_{n}^{(1)}(\vphi) & = {\rm Op}\big( v_n^{(1)}(\vphi, x, \xi)\big) \in OPS^M \,, \\
v_n^{(1)}(\vphi, x, \xi)& = \lambda |\xi|^M \chi(\xi) + \mu_n(\xi) + w_n(\vphi, x, \xi) +  \omega \cdot \partial_\vphi g_{n}^{(1)}(\vphi, x, \xi)  +  S^{M - (n + 1)\overline{\frak e}}\,. 
\end{aligned}
\end{equation}
In order to eliminate the $\vphi$-dependence from the symbol $w_n(\vphi, x, \xi) +  \omega \cdot \partial_\vphi g_{n}^{(1)}(\vphi, x, \xi)$ (of order $M - n \overline{\frak e}$), we choose the symbol $g_{n }^{(1)}(\vphi, x, \xi)$ so that 
\begin{equation}\label{eq omologica n (1) M < 1}
w_n(\vphi, x, \xi) +  \omega \cdot \partial_\vphi g_{n }^{(1)}(\vphi, x, \xi) = \langle w_n \rangle_\vphi(x, \xi)
%\quad \langle w_n \rangle_\vphi(x, \xi) := \frac{1}{(2 \pi)^\nu} \int_{\T^\nu} w_n(\vphi, x, \xi)\, d \vphi\,. 
\end{equation}
where we recall the definition \eqref{simbolo mediato}. Then, since $\omega \in DC(\gamma, \tau)$, the equation \eqref{eq omologica n (1) M < 1} is solved by defining  
\begin{equation}\label{definizione g n+1 (1) M < 1}
g_{n }^{(1)}(\vphi, x, \xi) := (\omega \cdot \partial_\vphi)^{- 1}\Big[\langle w_n \rangle_\vphi(x, \xi)  - w_n(\vphi, x, \xi)  \Big]\,.
\end{equation}
Note that, since ${\cal W}_n = {\rm Op}(w_n)$ is self-adjoint, i.e. $w_n= w_n^*$, by applying Lemma \ref{proprieta aggiunti small divisors}, one gets that $g_{n }^{(1)} = (g_{n}^{(1)})^*$ implying that ${\cal G}_{n }^{(1)} = {\rm Op}(g_{n}^{(1)})$ is self-adjoint. By \eqref{espansione tilde vn M < 1}, \eqref{eq omologica n (1) M < 1} one then has 
\begin{equation}\label{espansione tilde vn finale M < 1}
\begin{aligned} 
{\cal V}_n^{(1)}  = {\rm Op}(v_n^{(1)}) \,, & \quad v_n^{(1)}(\vphi, x, \xi) = \lambda |\xi|^M \chi(\xi) + \mu_n(\xi) + \langle w_n \rangle_\vphi(x, \xi)  + S^{M - (n + 1)\overline{\frak e}}\,, \\
\mu_n \in S^{M - \overline{\frak e}}\,, & \quad \langle w_n \rangle_\vphi \in S^{M - n \overline{\frak e}}\,. 
\end{aligned}
\end{equation}
Since $\Phi_n^{(1)}(\vphi)$ is symplectic one has that ${\cal V}_n^{(1)}(\vphi)$ is self-adjoint. 
\subsubsection{Space reduction}\label{reduction lower orders space M < 1}
In order to remove the $x$-dependence from the symbol $\langle w_n \rangle_\vphi$ in \eqref{espansione tilde vn finale M < 1}, we consider a $\vphi$-independent pseudo-differential operator 
\begin{equation}\label{cal G n + 1 (2) M < 1}
\begin{aligned}
& {\cal G}_{n }^{(2)} = {\rm Op}\Big(g_{n }^{(2)}(x, \xi) \Big)\,, \quad g_{n }^{(2)} \in S^{1 - n \overline{\frak e}}\,,\quad {\cal G}_{n }^{(2)} = ({\cal G}_{n }^{(2)})^*\,. 
\end{aligned}
\end{equation}
Since the operator ${\cal G}_n^{(2)}$ satisfies the hypotheses of Lemma \ref{flusso + derivate flusso}, the flow $\Phi_{n}^{(2)}(\tau)$, $\tau \in [0, 1]$ of the PDE $\partial_\tau u = \ii {\cal G}_{n}^{(2)}[ u]$, i.e. 
\begin{equation}\label{flusso Phi n + 1 (2) M < 1}
\begin{cases}
\partial_\tau \Phi_{n }^{(2)}(\tau) = \ii {\cal G}_{n }^{(2)} \Phi_{n }^{(2)}(\tau) \\
\Phi_{n}^{(2)}(0) = {\rm Id} 
\end{cases}
\end{equation} 
is a well defined invertible map satisfying 
\begin{equation}\label{stima Phi n + 1 (2) M < 1}
\sup_{\tau \in [0, 1]} \| \Phi_{n}^{(2)}(\tau)^{\pm 1} \|_{{\cal B}(H^s)} < + \infty\,, \quad \forall s \geq 0\,,.
\end{equation}
Note that ${\cal G}_n^{(2)}$ as well as $\Phi_n^{(2)}$ is $\vphi$-independent. Since ${\cal G}_n^{(2)}$ is self-adjoint, the flow $\Phi_n^{(2)}(\tau)$ is symplectic. We set $\Phi_{n}^{(2)} := \Phi_{n}^{(2)}(1)$. The transformed vector field is then given by  
\begin{equation}\label{cal V n + 1 M < 1}
\ii {\cal V}_{n + 1} := ((\Phi_{n }^{(2)})^{- 1})_{\omega *} (\ii {\cal V}_n^{(1)}) (\vphi)\,, \quad {\cal V}_{n + 1}(\vphi) = \Phi_{n}^{(2)} {\cal V}_n^{(1)}(\vphi) (\Phi_{n }^{(2)})^{- 1}\,. 
\end{equation}
We prove the following 
\begin{lemma}\label{espansione cal V n+1 M < 1 prima eq omologica}
The operator ${\cal V}_{n + 1}(\vphi) = {\rm Op}\big( v_{n + 1}(\vphi, x, \xi)\big)$, $v_{n + 1} \in S^M$ has the expansion
\begin{equation}\label{espansione v n + 1 M < 1 A}
v_{n + 1} = \lambda |\xi|^M \chi(\xi) + \mu_n + \langle w_n \rangle_\vphi - M \lambda |\xi|^{M - 2} \xi \chi(\xi) \partial_x g_{n }^{(2)} +  S^{M - (n + 1) \overline{\frak e}}\,. 
\end{equation}
\end{lemma}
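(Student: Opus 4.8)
The plan is to apply the simplified Egorov theorem, Proposition \ref{teorema egorov campo minore di uno}, to the conjugation ${\cal V}_{n+1}(\vphi) = \Phi_n^{(2)} {\cal V}_n^{(1)}(\vphi) (\Phi_n^{(2)})^{-1}$ and then identify the first two terms of the resulting symbol expansion. First I would observe that $\Phi_n^{(2)}$ is precisely the time-$1$ flow of the pseudo-PDE $\partial_\tau u = \ii {\cal G}_n^{(2)}[u]$ with ${\cal G}_n^{(2)} = {\rm Op}(g_n^{(2)})$, $g_n^{(2)} \in S^{1 - n\overline{\frak e}}$; since we are in the regime $0 < M < 1$ and $\overline{\frak e} = \min\{1-M,\frak e\} \leq 1-M$, the order $\eta := 1 - n\overline{\frak e}$ of the generator satisfies $\eta < 1$ as required by Proposition \ref{teorema egorov campo minore di uno} (for $n \geq 1$; note $1 - \overline{\frak e} \leq M < 1$). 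Applying that proposition with $m = M$ gives
\[
v_{n+1}(\vphi, x, \xi) = v_n^{(1)}(\vphi, x, \xi) + \{ g_n^{(2)}, v_n^{(1)} \}(\vphi, x, \xi) + S^{M - 2(1 - \eta)}\,,
\]
and since $1 - \eta = n\overline{\frak e} \geq (n+1)\overline{\frak e} - n\overline{\frak e} \cdot \tfrac{?}{}$ — more simply, $2(1-\eta) = 2n\overline{\frak e} \geq (n+1)\overline{\frak e}$ whenever $n \geq 1$ — the remainder $p_{\geq 2}$ lies in $S^{M - (n+1)\overline{\frak e}}$.

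Next I would insert the known expansion \eqref{espansione tilde vn finale M < 1} of $v_n^{(1)}$, namely $v_n^{(1)} = \lambda |\xi|^M \chi(\xi) + \mu_n(\xi) + \langle w_n \rangle_\vphi(x,\xi) + S^{M - (n+1)\overline{\frak e}}$, into the Poisson bracket. Using bilinearity of $\{\cdot,\cdot\}$ and the fact that $g_n^{(2)} \in S^{1 - n\overline{\frak e}}$ raises the symbol order of its bracket partner by $1 - n\overline{\frak e} - 1 = -n\overline{\frak e}$, the contributions of $\mu_n$ (order $M - \overline{\frak e}$) and of $\langle w_n \rangle_\vphi$ (order $M - n\overline{\frak e}$) to $\{g_n^{(2)}, v_n^{(1)}\}$ fall into $S^{M - (n+1)\overline{\frak e}}$ and can be absorbed into the remainder; the same goes for the bracket of $g_n^{(2)}$ with the $S^{M-(n+1)\overline{\frak e}}$ error term. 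The only surviving top-order contribution is $\{ g_n^{(2)}, \lambda |\xi|^M \chi(\xi) \}$. A direct computation of this bracket, using $\{a,b\} = \partial_\xi a \,\partial_x b - \partial_x a\,\partial_\xi b$ and the fact that $g_n^{(2)}$ is $\vphi$-independent so that $\partial_x g_n^{(2)}$ is well defined, gives
\[
\{ g_n^{(2)}, \lambda|\xi|^M\chi(\xi) \} = -\,\partial_x g_n^{(2)} \cdot \partial_\xi\big(\lambda |\xi|^M \chi(\xi)\big) = -\,M\lambda |\xi|^{M-2}\xi\,\chi(\xi)\,\partial_x g_n^{(2)} + S^{-\infty}\,,
\]
since $\partial_\xi(|\xi|^M\chi(\xi)) = M|\xi|^{M-2}\xi\,\chi(\xi) + |\xi|^M\chi'(\xi)$ and $\chi'$ is compactly supported, so the term involving $\chi'$ contributes a symbol in $S^{-\infty} \subseteq S^{M-(n+1)\overline{\frak e}}$. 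Collecting everything yields exactly \eqref{espansione v n + 1 M < 1 A}.

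I expect no serious obstacle here; the statement is a bookkeeping computation on top of the abstract Egorov Proposition \ref{teorema egorov campo minore di uno}. The one point that deserves care is the verification that the leftover terms in the Poisson bracket genuinely land in $S^{M-(n+1)\overline{\frak e}}$ rather than only in some intermediate class — this uses the elementary order count for the Poisson bracket (it lowers total order by one relative to the product of orders) together with $g_n^{(2)} \in S^{1 - n\overline{\frak e}}$ and, crucially, that the smoothing-remainder class is closed under $\{g_n^{(2)},\cdot\}$. A secondary subtlety is the cut-off function $\chi$: one must note that differentiating it in $\xi$ produces compactly supported symbols that are automatically smoothing, so the clean-looking formula with $|\xi|^{M-2}\xi\,\chi(\xi)$ is exact modulo $S^{-\infty}$, which is harmless. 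One should also check that $1 - n\overline{\frak e} < 1$ strictly (so Proposition \ref{teorema egorov campo minore di uno} applies), which holds because $n \geq 1$ and $\overline{\frak e} > 0$.
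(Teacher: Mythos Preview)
Your proposal is correct and follows essentially the same approach as the paper: apply Proposition \ref{teorema egorov campo minore di uno} with $m=M$, $\eta=1-n\overline{\frak e}$, then expand $\{g_n^{(2)},v_n^{(1)}\}$ term by term and absorb all but the principal contribution $-M\lambda|\xi|^{M-2}\xi\,\chi(\xi)\,\partial_x g_n^{(2)}$ into $S^{M-(n+1)\overline{\frak e}}$, noting that the $\chi'$ term is smoothing. One harmless slip: you write ``$1-\overline{\frak e}\leq M$'', but in fact $\overline{\frak e}=\min\{1-M,\frak e\}$ gives $1-\overline{\frak e}\geq M$; the needed inequality $\eta=1-n\overline{\frak e}<1$ follows simply from $n\geq 1$ and $\overline{\frak e}>0$, as you correctly observe at the end.
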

\begin{proof}
By applying Proposition \ref{teorema egorov campo minore di uno} (with $m = M$ and $\eta = 1 - n \overline{\frak e}$), one gets that 
\begin{equation}
{\cal V}_{n + 1}(\vphi) = {\rm Op}\Big( v_{n + 1}(\vphi, x, \xi) \Big)\,, \quad v_{n + 1} \in S^M
\end{equation}
and the symbol $v_{n + 1}$ admits the expansion 
\begin{equation}\label{prima espansione v n + 1 M < 1}
v_{n + 1} =  v_n^{(1)} + \{ g_{n + 1}^{(2)}, v_n^{(1)} \} + p_{n , \geq 2}\,, \quad p_{n, \geq 2} \in S^{M - 2 n \overline{\frak e}} \stackrel{\eqref{inclusioni Sm OPSm}}{\subseteq} S^{M - (n + 1) \overline{\frak e}}\,. 
\end{equation}
Recalling \eqref{espansione tilde vn finale M < 1}, one gets 
\begin{align}
\{ g_{n }^{(2)}, v_n^{(1)} \}  & = \{ g_{n}^{(2)},  \lambda   |\xi|^M \chi(\xi) + \mu_n + \langle w_n \rangle_\vphi  + w_{n}^{(1)}\} \nonumber\\
& =  \{ g_{n }^{(2)},  \lambda   |\xi|^M \chi(\xi)\} + \{ g_{n }^{(2)}, \mu_n + \langle w_n \rangle_\vphi  + w_{n}^{(1)} \} \nonumber\\
& = - \lambda (\partial_x g_{n }^{(2)} ) \Big( \partial_\xi ( |\xi|^M \chi(\xi)) \Big) + \{ g_{n }^{(2)}, \mu_n + \langle w_n \rangle_\vphi  + w_{n}^{(1)} \} \nonumber\\
& = - M \lambda |\xi|^{M - 2} \xi \chi(\xi) \partial_x g_{n }^{(2)} - \lambda |\xi|^M (\partial_\xi \chi(\xi)) (\partial_x g_{n }^{(2)} ) + \{ g_{n}^{(2)}, \mu_n + \langle w_n \rangle_\vphi  + w_{n}^{(1)} \}\,. \label{espansione parentesi di Poisson g n + 1 (2) M < 1}
\end{align}
Since $\partial_\xi \chi(\xi) = 0$ for $|\xi| \geq 1$ (see \eqref{definizione cut off D alpha}), by Corollary \ref{corollario commutator} and by \eqref{espansione tilde vn finale M < 1}, \eqref{cal G n + 1 (2) M < 1}, one gets that 
\begin{align}
&M \lambda |\xi|^{M - 2} \xi \chi(\xi) \partial_x g_{n }^{(2)} \in S^{M - n \overline{\frak e}}\,, \quad  \lambda |\xi|^M (\partial_\xi \chi(\xi)) (\partial_x g_{n }^{(2)} ) \in S^{- \infty}\,, \quad \{ g_{n }^{(2)}, \mu_n \} \in S^{M - (n + 1)\overline{\frak e}}\,,  \label{wohnung 0 M < 1}\\
&  \{g_{n }^{(2)}, \langle w_n \rangle_\vphi \} \in S^{M - 2 n \overline{\frak e}}\subseteq S^{M - (n + 1)\overline{\frak e}}\,, \quad \{g_{n }^{(2)} , w_{n }^{(1)} \in S^{M - (2 n + 1)\overline{\frak e}} \subseteq S^{M - (n + 1)\overline{\frak e}} \label{wohnung 1 M < 1}\,. 
\end{align}
Thus, the expansion \eqref{espansione v n + 1 M < 1 A} follows by \eqref{espansione tilde vn finale M < 1}, \eqref{prima espansione v n + 1 M < 1}-\eqref{wohnung 1 M < 1}.
\end{proof}
\begin{lemma}\label{lemma equazione omologica spazio lower orders M < 1}
There exists a symbol $g_n^{(2)} \in S^{1 - n \overline{\frak e}}$ with $g_n^{(2)} = (g_n^{(2)})^*$ and satisfying
\begin{equation}\label{equazione omologica spazio lower orders M < 1}
\langle w_n \rangle_\vphi - \langle w_n \rangle_{\vphi, x}  - M \lambda |\xi|^{M - 2} \xi \chi(\xi) \partial_x g_{n }^{(2)} \in S^{M - (n + 1)\overline{\frak e}}
\end{equation}
where 
\begin{equation}\label{vphi x wn lower orders M < 1}
\langle w_n \rangle_{\vphi, x}(\xi) := \frac{1}{2\pi} \int_\T \langle w_n \rangle_\vphi(x, \xi)\, d x = \frac{1}{(2 \pi)^{\nu + 1}} \int_{\T^{\nu + 1}} w_n(\vphi, x, \xi)\, d \vphi \, d x\,. 
\end{equation}
\end{lemma}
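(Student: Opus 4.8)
The plan is to solve the homological equation \eqref{equazione omologica spazio lower orders M < 1} for the principal part by a division/$\partial_x^{-1}$ argument, and then to correct the resulting symbol so that it becomes exactly self-adjoint. First I would observe that, by \eqref{vphi x wn lower orders M < 1}, the symbol $\langle w_n \rangle_\vphi(x,\xi) - \langle w_n \rangle_{\vphi,x}(\xi)$ has vanishing average in $x$ for every fixed $\xi$, so the operator $\partial_x^{-1}$ of \eqref{definizione partial x - 1} applies to it and, by \eqref{proprieta simbolo mediato e partial x - 1} together with \eqref{cal Wn teorema}, produces a symbol in $S^{M - n \overline{\frak e}}$. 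Writing the coefficient occurring in \eqref{equazione omologica spazio lower orders M < 1} as $M\lambda\,|\xi|^{M-2}\xi\,\chi(\xi) = M\lambda\,{\rm sign}(\xi)\,|\xi|^{M-1}\,\chi(\xi)$, I would then set
$$
\widetilde g_n(x,\xi) := \frac{1}{M\lambda}\,{\rm sign}(\xi)\,|\xi|^{1-M}\,\chi(\xi)\;\partial_x^{-1}\big(\langle w_n \rangle_\vphi - \langle w_n \rangle_{\vphi,x}\big)(x,\xi)\,.
$$
The factor ${\rm sign}(\xi)|\xi|^{1-M}\chi(\xi)$ is a real-valued Fourier multiplier belonging to $S^{1-M}$ — it is smooth thanks to the cut-off $\chi$ of \eqref{definizione cut off D alpha}, which vanishes near $\xi=0$ — so $\widetilde g_n$ lies in $S^{(1-M)+(M - n\overline{\frak e})} = S^{1-n\overline{\frak e}}$.

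Next I would check that $\widetilde g_n$ solves \eqref{equazione omologica spazio lower orders M < 1} up to an $S^{-\infty}$ remainder. Since the Fourier-multiplier factor does not depend on $x$ and $\partial_x\partial_x^{-1}$ is the identity on symbols of vanishing $x$-average, a direct computation using ${\rm sign}(\xi)^2 = 1$ and $|\xi|^{M-1}|\xi|^{1-M}=1$ for $\xi \neq 0$ gives
$$
M\lambda\,|\xi|^{M-2}\xi\,\chi(\xi)\,\partial_x \widetilde g_n = \chi(\xi)^2\,\big(\langle w_n \rangle_\vphi - \langle w_n \rangle_{\vphi,x}\big)\,.
$$
Therefore $\langle w_n \rangle_\vphi - \langle w_n \rangle_{\vphi,x} - M\lambda\,|\xi|^{M-2}\xi\,\chi(\xi)\,\partial_x \widetilde g_n = \big(1-\chi(\xi)^2\big)\big(\langle w_n \rangle_\vphi - \langle w_n \rangle_{\vphi,x}\big)$, which is supported in $\{|\xi|\le 1\}$ and hence belongs to $S^{-\infty}\subseteq S^{M-(n+1)\overline{\frak e}}$.

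The remaining, and in my view the only genuinely delicate, point is to replace $\widetilde g_n$ by a symbol that is \emph{exactly} self-adjoint, as required for the flow $\Phi_n^{(2)}$ to be symplectic in \eqref{cal G n + 1 (2) M < 1}. In contrast with the $\vphi$-reduction step, here the multiplier ${\rm sign}(\xi)|\xi|^{1-M}\chi(\xi)$ does not commute, as an operator, with the $x$-dependent factor, so $\widetilde g_n$ is self-adjoint only modulo lower order. I would argue as follows: from $w_n = w_n^*$ and Lemma \ref{proprieta aggiunti small divisors}$(i)$, the symbol $\langle w_n \rangle_\vphi$ is self-adjoint and $\langle w_n \rangle_{\vphi,x}$ is a real Fourier multiplier, hence their difference is self-adjoint, and then by Lemma \ref{proprieta aggiunti small divisors}$(ii)$ so is $a := \partial_x^{-1}\big(\langle w_n \rangle_\vphi - \langle w_n \rangle_{\vphi,x}\big)\in S^{M-n\overline{\frak e}}$. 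Applying Lemma \ref{simbolo autoaggiunto fourier multiplier} to $a$ and to the real Fourier multiplier $(M\lambda)^{-1}{\rm sign}(\xi)|\xi|^{1-M}\chi(\xi)$ of order $1-M$ yields $\widetilde g_n^* = \widetilde g_n + S^{(M-n\overline{\frak e})+(1-M)-1} = \widetilde g_n + S^{-n\overline{\frak e}}$.

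I would then define
$$
g_n^{(2)} := \tfrac12\big(\widetilde g_n + \widetilde g_n^*\big)\,,
$$
which by Theorem \ref{adjoint} is a symbol in $S^{1-n\overline{\frak e}}$, satisfies $g_n^{(2)}=(g_n^{(2)})^*$, and differs from $\widetilde g_n$ by an element of $S^{-n\overline{\frak e}}$. Consequently $M\lambda\,|\xi|^{M-2}\xi\,\chi(\xi)\,\partial_x\big(g_n^{(2)}-\widetilde g_n\big)\in S^{(M-1)-n\overline{\frak e}}$, and since $\overline{\frak e}\le 1-M<1$ by \eqref{costante frak e bar M < 1} one has $M-1-n\overline{\frak e}\le M-(n+1)\overline{\frak e}$, so this error lies in $S^{M-(n+1)\overline{\frak e}}$; combined with the $S^{-\infty}$ defect found above, this gives \eqref{equazione omologica spazio lower orders M < 1} and concludes the proof. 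Everything apart from the self-adjoint symmetrization — and the short bookkeeping that its error does not exceed order $M-(n+1)\overline{\frak e}$ — is a routine application of the symbolic calculus recalled in Section \ref{sezione pseudo diff operator}.
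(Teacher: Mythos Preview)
Your proof is correct and follows essentially the same strategy as the paper: define a candidate symbol by multiplying $\partial_x^{-1}\big(\langle w_n\rangle_\vphi-\langle w_n\rangle_{\vphi,x}\big)$ by the reciprocal Fourier multiplier (localized away from $\xi=0$), then symmetrize and use Lemma~\ref{simbolo autoaggiunto fourier multiplier} to control the self-adjoint correction by $S^{-n\overline{\frak e}}$. The only cosmetic difference is that the paper introduces a separate cut-off $\chi_0$ supported in $\{|\xi|\ge 3/2\}$ rather than reusing $\chi$, and writes $g_n^{(2)}=\sigma_n+\sigma_n^*$ with $\sigma_n=\tfrac12\widetilde g_n$; the logic and error bookkeeping are identical.
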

\begin{proof}
Let $\chi_0 \in {\cal C}^\infty(\R, \R)$ be a cut-off function satisfying 
\begin{equation}\label{scelta cut off chi 0}
\begin{aligned}
&\chi_0(\xi) = 1 \,, \quad \forall |\xi| \geq 2\,, \\
&\chi_0(\xi) = 0\,, \quad \forall |\xi| \leq \frac32\,. 
\end{aligned}
\end{equation}
Writing $1 = \chi_0 + 1 - \chi_0$, one gets that 
\begin{align}
& - \lambda  M \, |\xi|^{M - 2} \xi \chi(\xi) (\partial_x g_n^{(2)})( x, \xi) + \langle w_n \rangle_\vphi(x, \xi) -  \langle w_n\rangle_{\vphi, x} ( \xi) \nonumber\\
& = - \lambda M \, |\xi|^{M - 2} \xi \chi(\xi) ( \partial_x g_n^{(2)} )(x, \xi) + \chi_0(\xi)\big( \langle w_n \rangle_\vphi ( x, \xi) -  \langle w_n\rangle_{\vphi, x} ( \xi) \big) \nonumber\\
& \quad + \big(1 - \chi_0(\xi) \big)\big( \langle w_n \rangle_\vphi( x, \xi) -  \langle w_n\rangle_{\vphi, x} ( \xi) \big)\,. \label{brotchen 0}
\end{align}
By the definition of $\chi_0$ given in \eqref{scelta cut off chi 0}, one easily gets that  
\begin{equation}\label{resto 1 - chi 0}
\big(1 - \chi_0(\xi) \big)\big( \langle w_n \rangle_\vphi( x, \xi) -  \langle w_n\rangle_{\vphi, x} (\xi) \big) \in S^{- \infty}\,,
\end{equation}
therefore we look for a solution $g_n^{(2)}$ of the equation 
\begin{equation}\label{equazione omologica simbolo g1 b}
- \lambda  M \, |\xi|^{M - 2} \xi \chi(\xi) (\partial_x g_n^{(2)})( x, \xi) + \chi_0(\xi) \big( \langle w_n \rangle_\vphi(x, \xi) -  \langle w_n\rangle_{\vphi, x} ( \xi) \big) \in S^{M - (n + 1) \bar{\frak e} }\,. 
\end{equation}
Since we require that ${\cal G}_n^{(2)} = {\rm Op}(g_n^{(2)})$ is self-adjoint, we look for a symbol of the form 
\begin{equation}\label{forma autoaggiunta g1}
g_n^{(2)}( x, \xi) = \sigma_{n}( x, \xi) + \sigma_{n}^*( x, \xi) \in S^{1 - n  \bar{\frak e}}
\end{equation}
with the property that 
\begin{equation}\label{ansatz g1 equazione omologica}
\sigma_{n}^*(x, \xi) = \sigma_{n}(x, \xi) + r_{n}(x, \xi),\qquad  r_{n} \in S^{- n  \bar{\frak e}}. 
\end{equation}
Plugging the ansatz \eqref{forma autoaggiunta g1} into the equation \eqref{equazione omologica simbolo g1 b}, using \eqref{ansatz g1 equazione omologica} and since  
\begin{equation}\label{brotchen 100}
- \lambda M |\xi|^{M - 2} \xi \chi(\xi) (\partial_x r_{n})(x, \xi) \in S^{M - 1 - n  \bar{\frak e}} \subseteq S^{M - (n + 1)  \bar{\frak e}}, 
\end{equation}
we are led to solve the equation
\begin{equation}\label{equazione per sigma n}
- 2 \lambda M |\xi|^{M - 2} \xi \chi(\xi) (\partial_x \sigma_{n})(x, \xi) + \chi_0(\xi)\Big(\langle w_n \rangle_\vphi( x, \xi) - \langle w_n \rangle_{\vphi, x} ( \xi) \Big) = 0
\end{equation}
whose solution is given by 
\begin{equation}\label{scelta sigma g1}
\sigma_{n}(x, \xi) := 
 \dfrac{\chi_0(\xi) \partial_x^{- 1} \Big[  \langle w_n \rangle_\vphi( x, \xi) -  \langle w_n \rangle_{\vphi, x}( \xi)   \Big]}{2 \lambda M |\xi|^{M - 2} \xi }\,.
 \end{equation}
Since $w_n, \langle w_n\rangle_x \in S^{M - n \bar{\frak e}}$,  $\frac{\chi_0(\xi)}{ |\xi|^{M - 2} \xi} \in S^{ 1 - M} $, one gets that $\sigma_n \in S^{1 - n \bar{\frak e}}$ and hence also $g_n = \sigma_n + \sigma_n^* \in S^{1 - n \bar{\frak e}}$. We now use Lemma \ref{simbolo autoaggiunto fourier multiplier} with $\vphi( \xi) = \frac{\chi_0(\xi)}{2 \lambda M |\xi|^{M - 2} \xi}$, $a( x, \xi) = \partial_x^{- 1} \Big[ \langle w_n \rangle_\vphi ( x, \xi)  -  \langle w_n \rangle_{\vphi, x}( \xi)   \Big]$. 
Recalling that $w_n = w_n^*$, by Lemma \ref{proprieta aggiunti small divisors} one has that $a = a^*$. Hence we can apply Lemma \ref{simbolo autoaggiunto fourier multiplier}, obtaining that the ansatz \eqref{ansatz g1 equazione omologica} is satisfied. By \eqref{brotchen 0}, \eqref{forma autoaggiunta g1}, \eqref{ansatz g1 equazione omologica}, \eqref{equazione per sigma n} one then gets  
$$
\begin{aligned}
& - \lambda  M \, |\xi|^{M - 2} \xi \chi(\xi) (\partial_x g_n^{(2)})( x, \xi) + \langle w_n \rangle_\vphi( x, \xi) -  \langle w_n\rangle_{\vphi, x} (\xi)  \\
& =  \big(1 - \chi_0(\xi) \big)\big( \langle w_n \rangle_\vphi(x, \xi) -  \langle w_n\rangle_{\vphi, x} ( \xi) \big) -  \lambda M |\xi|^{M - 2} \xi \chi(\xi) (\partial_x r_{n})( x, \xi) 
\end{aligned}
$$
and recalling \eqref{resto 1 - chi 0}, \eqref{brotchen 100} one then gets \eqref{equazione omologica spazio lower orders M < 1}. 
\end{proof}
By applying Lemmata \ref{espansione cal V n+1 M < 1 prima eq omologica}, \ref{lemma equazione omologica spazio lower orders M < 1}, one obtains that 
\begin{equation}\label{simbolo v n + 1 finale}
\begin{aligned}
& {\cal V}_{n + 1}(\vphi) = {\rm Op}\Big( v_{n + 1}(\vphi, x, \xi) \Big) \\
& v_{n + 1}(\vphi, x, \xi) = \lambda |\xi|^M \chi(\xi)+ \mu_{n + 1}(\xi) + w_{n + 1}(\vphi, x, \xi) \\
& \mu_{n + 1}(\xi) := \mu_n + \langle w_n \rangle_{\vphi, x} \in S^{M - \overline{\frak e}}\,, \quad w_{n + 1} \in S^{M - (n + 1) \overline{\frak e}}
\end{aligned}
\end{equation}
Since $\Phi_{n + 1}^{(2)}$ is symplectic, then ${\cal V}_{n + 1}(\vphi)$ is self-adjoint. Since $w_n = w_n^*$ and $\mu_n(\xi)$ is real by the induction hypothesis, Lemma \ref{proprieta aggiunti small divisors} implies that $\langle w_n \rangle_{\vphi, x }(\xi)$ is real and therefore $\mu_{n + 1}(\xi) = \mu_n(\xi) + \langle w_n \rangle_{\vphi, x}(\xi)$ is real too. This implies that $${\cal W}_{n + 1}(\vphi) = {\cal V}_{n + 1}(\vphi) - {\rm Op}\Big(\lambda |\xi|^M \chi(\xi) + \mu_{n + 1}(\xi) \Big)$$ 
is self-adjoint. The proof of Proposition \ref{descent method M  < 1} is then concluded by setting $\Phi_{n }(\vphi) := \Phi_{n }^{(1)}(\vphi) \circ \Phi_{n}^{(2)}$ and by applying \eqref{proprieta Phi n (1) M < 1}, \eqref{stima Phi n + 1 (2) M < 1} to obtain \eqref{proprieta Phi n teo 1}. 
\subsection{Proof of Theorem \ref{teorema riduzione}.}
Let $K > 0$ and let us fix the integer $N_K \in \N$ so that $M - N_K \overline{\mathfrak e} < - K$, i.e. 
\begin{equation}\label{teorema principale K M < 1}
N_K := \Big[ \frac{M + K}{\overline{\frak e}} \Big] + 1
\end{equation}
(recall the definition of $\overline{\frak e}$ given in \eqref{costante frak e bar M < 1}) where for any $a \in \R$, we denote by $[a]$ its integer part. Then we define
\begin{equation}\label{trasformazione mappa teo principale M < 1}
\begin{aligned}
& {\cal T}_K(\vphi) : = \Phi_0(\vphi)^{- 1} \circ \Phi_1(\vphi)^{- 1} \circ \ldots \circ \Phi_{N_K - 1}(\vphi)^{- 1}\,, \quad \vphi \in \T^\nu \\
&  {\cal R}_K(\vphi) := {\cal W}_{N_K}(\vphi)\,, \quad \vphi \in \T^\nu  \\
& \lambda_K( D) := \lambda |D|^M + \mu_{N_K}(D)
\end{aligned}
\end{equation}
where the map $\Phi_0(\vphi)$ is given in Proposition \ref{teorema riassunto primo egorov}, $\lambda$ is defined in \eqref{definizione lambda M < 1} and for any $n \in \{1, \ldots, N_K - 1 \}$, $\Phi_n(\vphi)$, ${\cal W}_n(\vphi)$, $\mu_n(D)$ are given in Proposition \ref{descent method M  < 1}. By \eqref{proprieta Phi 2}, \eqref{proprieta Phi n teo 1}, one gets that ${\cal T}_K^{\pm 1} \in {\cal B}(H^s)$, $s \geq 0$ with ${\rm sup}_{\vphi \in \T^\nu} \| {\cal T}_K(\vphi)^{\pm 1} \|_{{\cal B}(H^s)} < + \infty$. Finally, by \eqref{push forward cal V 1 teorema}, \eqref{forma cal Vn teorema}, \eqref{cal V n + 1 cal V n} one obtains \eqref{campo finalissimo cal VK}, with $\lambda_K( D)$, ${\cal R}_K(\vphi)$ defined in \eqref{trasformazione mappa teo principale M < 1}.
\section{Regularization of the vector field $\ii {\cal V}(\vphi)$: the case $M = 1$}\label{regolarizzazione caso M = 1}
In this section we develop the regularization procedure on the vector field $\ii {\cal V}(\vphi)$, ${\cal V}(\vphi) = V(\vphi, x) |D| + {\cal W}(\vphi)$ needed to prove Theorem \ref{teorema riduzione M = 1}. Recall that in order to prove such a theorem, we assume the hypotheses ${\bf (H1)}$, ${\bf (H2)}$, ${\bf (H3)_{M = 1}}$, hence ${\cal V}(\vphi)$ is self-adjoint, ${\cal W} \in OPS^{1 - \frak e}$ for some $\frak e > 0$ and $V(\vphi, x) = 1 + \e P(\vphi, x)$ with $P \in {\cal C}^\infty(\T^{\nu + 1}, \R)$. 
In Section \ref{sezione ordine principale M = 1} we reduce to constant coefficients the highest order term $\ii V(\vphi, x) |D|$, see Proposition \ref{teorema riassunto primo egorov M = 1}. In order to perform such a reduction, we need a smallness condition on the parameter $\e$, since we shall apply Proposition \ref{proposizione principale trasporto}. Then, in Section \ref{sezione ordini bassi M = 1}, we perform the reduction of the lower order terms up to arbitrarily regularizing remainders, see Proposition \ref{iterazione lower orders M = 1 pm}. 
\subsection{Reduction of the highest order}\label{sezione ordine principale M = 1}
In this section we prove the following proposition. We recall that for a function $\mu : \Omega_o \to \R$, $\Omega_o \subseteq \R^\nu$, given $\gamma > 0$, we define 
\begin{equation}\label{Lipschitz norm}
\begin{aligned}
& |\mu|^\Lipg := |\mu|^{\rm sup} + \gamma |\mu|^{\rm lip}\,, \quad |\mu|^{\rm sup} := \sup_{\omega \in \Omega_o} |\mu(\omega)|\,, \quad |\mu|^{\rm lip} := \sup_{\begin{subarray}{c}
\omega_1 , \omega_2 \in \Omega_o \\
\omega_1 \neq \omega_2
\end{subarray}} \frac{|\mu(\omega_1) - \mu(\omega_2)|}{|\omega_1 - \omega_2|}\,. 
\end{aligned}
\end{equation}
In order to state precisely the main result of this section we define the constant $\overline{\frak e} > 0$ as 
\begin{equation}\label{def bar e M = 1}
\overline{\frak e} := {\rm min}\{\frak e, 1 \} \quad \text{so that} \quad  1 - \overline{\frak e} \geq {\rm max}\{1 - \frak e \,,\, 0 \}\,. 
\end{equation}
\begin{proposition}\label{teorema riassunto primo egorov M = 1}
Let $\gamma \in (0, 1)$ and $\tau > \nu$. Then there exists $\delta \in (0, 1)$ such that if $\e \gamma^{- 1} \leq \delta$ the following holds. There exist two Lipschitz functions $\mu_\pm : \Omega \to \R$ satisfying $|\mu_\pm - 1|^\Lipg \lesssim \e$ such that for any $\omega \in \Omega_{\gamma, \tau}^{\mu_+} \cap \Omega_{\gamma, \tau}^{\mu_-}$, where 
\begin{equation}\label{def cal O mu pm gamma}
\Omega_{\gamma, \tau}^{\mu_\pm} := \Big\{ \omega \in \Omega : |\omega \cdot \ell + \mu_\pm(\omega )\, j | \geq \frac{\gamma}{\langle \ell \rangle^\tau}\,, \quad \forall (\ell, j) \in \Z^{\nu + 1} \setminus \{(0,0) \} \Big\}
\end{equation} 
there exists an invertible map $\Phi_0(\vphi)$ satisfying 
\begin{equation}\label{trasformazione ordine principale M = 1}
\sup_{\vphi \in \T^\nu} \| \Phi_0(\vphi)^{\pm 1}\|_{{\cal B}(H^s)}  < +\infty\,, \quad \forall s \geq 0
\end{equation}
such that the push forward of the vector field $\ii {\cal V}(\vphi)$ has the form 
\begin{equation}\label{push forward ordine massimo M = 1}
\begin{aligned}
& (\Phi_0)_{\omega *} (\ii {\cal V})(\vphi) = \ii {\cal V}_1(\vphi)\,,  \\
& {\cal V}_1(\vphi) = \Pi_+ {\cal V}_{1, +}(\vphi) \Pi_+ +  \Pi_- {\cal V}_{1, -}(\vphi) \Pi_- + OPS^{- \infty}\,, \\
& {\cal V}_{1, \pm}(\vphi) = \lambda_{\pm} |D| + {\cal W}_{1, \pm}(\vphi)
\end{aligned}
\end{equation}
where the projection operators $\Pi_+, \Pi_-$ are defined in \eqref{operatori proiezione sui modi positivi negativi azione}, the functions $\lambda_{\pm} : \Omega^\gamma_{\mu_+} \cap \Omega^\gamma_{\mu_-} \to \R$ are Lipschitz and satisfy $|\lambda_{\pm} - 1|^\Lipg \lesssim \e$, ${\cal W}_{1, \pm}(\vphi) \in OPS^{1 - \overline{\frak e}}$ are self-adjoint operators. 
%and the operator ${\cal R}_{0, \infty}(\vphi)$ is a smoothing operator satisfying 
%\begin{equation}\label{proprieta smoothing cal R M = 1 ordine alto}
%\sup_{\vphi \in \T^\nu} \|{\cal R}_{0, \infty}(\vphi) \|_{{\cal B}(H^s, H^{s + \sigma})} < +\infty\,, \quad \forall s, \sigma \geq 0\,. 
%\end{equation}
\end{proposition}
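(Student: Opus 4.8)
The strategy is to conjugate $\ii {\cal V}(\vphi)$ by a map of the type introduced in Lemma~\ref{push forward splitting Pi +-}, namely
$$
\Phi_0(\vphi) := \Phi_+(\vphi)^{- 1} \Pi_+ + \Phi_-(\vphi)^{- 1} \Pi_-\,,
$$
where $\Phi_{\pm}(\vphi)$ are the weighted composition operators $u(x)\mapsto \sqrt{1+(\partial_x\alpha_\pm)(\vphi,x)}\,u(x+\alpha_\pm(\vphi,x))$ associated to two functions $\alpha_\pm$ still to be determined. By Lemma~\ref{lemma cambi di variabile} each $\Phi_{\pm}(\vphi)$ is a symplectic bounded invertible operator on every $H^s$ with the $\vphi$-derivative bounds \eqref{d-alessio}, and by Lemma~\ref{push forward splitting Pi +-}$(i)$ the push-forward of $\ii{\cal V}(\vphi)$ under $\Phi_0(\vphi)$ splits as $\Pi_+(\ii{\cal V}_{1,+})\Pi_+ + \Pi_-(\ii{\cal V}_{1,-})\Pi_- + OPS^{-\infty}$, with ${\cal V}_{1,\pm}$ given by \eqref{campo trasformato Pi + - lemma astratto}. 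The task then reduces to computing the principal symbol of each ${\cal V}_{1,\pm}(\vphi)$ and choosing $\alpha_\pm$ to make it a constant.

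\medskip

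\textbf{Step 1: expansion of the conjugated symbols.}
I would first write $\Phi_{\pm}(\vphi)=\Phi_{\pm}(1;\vphi)$ as the time-$1$ flow of the transport PDE \eqref{nuovo trasporto M = 1}, so that the classical Egorov Theorem (Theorem A.0.9 in \cite{Taylor}, as used in Lemma~\ref{lemma cambi di variabile}$(ii)$) gives $\Phi_{\pm}(\vphi) A \Phi_{\pm}(\vphi)^{-1}\in OPS^m$ for $A\in OPS^m$, with principal symbol obtained by the change of variables $x\mapsto x+\alpha_\pm(\vphi,x)$, $\xi\mapsto (1+\partial_x\alpha_\pm)^{-1}\xi$. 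Applied to ${\cal V}(\vphi)=V(\vphi,x)|D|+{\cal W}(\vphi)$, the top-order term contributes $\big[\,(1+\e P)\,(1+\partial_y\widetilde\alpha_\pm)\,\big]_{y=x+\alpha_\pm(\vphi,x)}|D|$, using the identities \eqref{identita 1 + alpha alpha tilde partial} for the Jacobian. The term $-\Phi_{\pm}(\vphi)\,\omega\cdot\partial_\vphi(\Phi_{\pm}(\vphi)^{-1})$ is handled as in Section~\ref{riduzione tempo ordine principale}, writing it as an integral of $\Phi_{\pm}(\zeta;\vphi)\,\omega\cdot\partial_\vphi{\cal G}_\pm(\zeta;\vphi)\,\Phi_{\pm}(\zeta;\vphi)^{-1}$; its principal symbol is $\mp\big(\omega\cdot\partial_\vphi\widetilde\alpha_\pm\big)_{y=x+\alpha_\pm(\vphi,x)}|\xi|$ once one uses $\ii|D|\Pi_\pm = \pm\partial_x\Pi_\pm$. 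All remainders land in $OPS^{1-\overline{\frak e}}$ by \eqref{def bar e M = 1}, since ${\cal W}\in OPS^{1-\frak e}$ and the sub-principal corrections from the Egorov expansion are of order $1-1=0\le 1-\overline{\frak e}$. This gives \eqref{ii V0 intro}.

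\medskip

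\textbf{Step 2: solving the transport equations.}
To reduce the top order to a constant I need $\alpha_\pm$ (equivalently $\widetilde\alpha_\pm$) and constants $\lambda_\pm$ with
$$
\big(1+\e P\big)\big(1+\partial_y\widetilde\alpha_\pm\big) \mp \omega\cdot\partial_\vphi\widetilde\alpha_\pm = \lambda_\pm\,.
$$
This is precisely a quasi-periodic transport equation of the form treated by Proposition~\ref{proposizione principale trasporto} in the appendix; it is here, and only here, that the smallness assumption $\e\gamma^{-1}\le\delta$ enters. Invoking that proposition yields $\widetilde\alpha_\pm\in{\cal C}^\infty(\T^{\nu+1},\R)$ small (of size $O(\e)$) together with Lipschitz functions $\lambda_\pm:\Omega\to\R$ with $|\lambda_\pm-1|^{\Lipg}\lesssim\e$, provided $\omega$ lies in a non-resonance set defined by first-order Melnikov conditions $|\omega\cdot\ell+\mu_\pm(\omega)j|\ge\gamma\langle\ell\rangle^{-\tau}$ for suitable Lipschitz $\mu_\pm$ with $|\mu_\pm-1|^{\Lipg}\lesssim\e$; one then sets $\lambda_\pm:=\mu_\pm$ on $\Omega_{\gamma,\tau}^{\mu_+}\cap\Omega_{\gamma,\tau}^{\mu_-}$. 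Since $\e\gamma^{-1}$ is small, the smallness $\|\widetilde\alpha_\pm\|\lesssim\e$ guarantees the diffeomorphism condition $\inf(1+\partial_y\widetilde\alpha_\pm)>0$, so Lemma~\ref{diffeo del toro lemma astratto} applies and produces the direct diffeomorphisms $x\mapsto x+\alpha_\pm(\vphi,x)$ satisfying \eqref{ansatz alpha egorov}.

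\medskip

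\textbf{Step 3: conclusion and self-adjointness.}
With these choices, \eqref{ii V0 intro} becomes ${\cal V}_1(\vphi)=\Pi_+(\lambda_+|D|+{\cal W}_{1,+}(\vphi))\Pi_+ + \Pi_-(\lambda_-|D|+{\cal W}_{1,-}(\vphi))\Pi_- + OPS^{-\infty}$ with ${\cal W}_{1,\pm}\in OPS^{1-\overline{\frak e}}$, which is \eqref{push forward ordine massimo M = 1}. The bound \eqref{trasformazione ordine principale M = 1} follows from \eqref{d-alessio} and the boundedness of $\Pi_\pm$. Finally, since $\ii{\cal V}(\vphi)$ is Hamiltonian and each $\Phi_\pm(\vphi)$ is symplectic (but $\Phi_0$ itself need not be), one argues as in the introduction's sketch: the operators ${\cal V}_{1,\pm}(\vphi)=(\Phi_\pm^{-1})_{\omega*}{\cal V}(\vphi)$ are push-forwards of a self-adjoint operator under a symplectic flow, hence self-adjoint; because $\lambda_\pm\in\R$, subtracting $\lambda_\pm|D|$ leaves ${\cal W}_{1,\pm}(\vphi)$ self-adjoint as well. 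I expect the main obstacle to be a careful bookkeeping in Step~1 — tracking how the commutators with $\Pi_\pm$, the Egorov sub-principal terms, and the $\omega\cdot\partial_\vphi$-contribution combine so that everything not of the desired form is genuinely in $OPS^{1-\overline{\frak e}}$ (or $OPS^{-\infty}$); the quantitative transport estimate in Step~2 is delegated wholesale to Proposition~\ref{proposizione principale trasporto}.
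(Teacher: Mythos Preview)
Your proposal is correct and follows essentially the same architecture as the paper: the same map $\Phi_0(\vphi)=\Phi_+(\vphi)^{-1}\Pi_+ + \Phi_-(\vphi)^{-1}\Pi_-$, the same appeal to Lemma~\ref{push forward splitting Pi +-} for the $\Pi_\pm$-decomposition with $OPS^{-\infty}$ remainder, the same quasi-periodic transport equation solved via Proposition~\ref{proposizione principale trasporto}, and the same self-adjointness argument via the symplecticity of $\Phi_\pm$.

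The only noteworthy difference is in the bookkeeping of Step~1. The paper does not invoke a general Egorov expansion for $\Phi_\pm V|D|\Phi_\pm^{-1}$; instead it writes $|D|=\partial_x{\cal H}$, conjugates the first-order differential part $V\partial_x$ explicitly by the change of variables, and uses Lemma~\ref{hilbert transform cambi di variabile} ($\Phi_\pm{\cal H}\Phi_\pm^{-1}-{\cal H}\in OPS^{-\infty}$) to dispose of the Hilbert transform. Likewise, the paper computes $\ii\Phi_\pm\,\omega\cdot\partial_\vphi\Phi_\pm^{-1}$ directly from the explicit formula for $\Phi_\pm^{-1}$ rather than through the integral representation you propose. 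Both routes lead to the same principal-symbol identity, but the paper's is a one-line computation specific to the one-dimensional structure $|D|=\partial_x{\cal H}$, whereas yours requires tracking the Egorov principal symbol through the $\tau$-integral. One small slip: you write ``one then sets $\lambda_\pm:=\mu_\pm$'', but in the paper (and in Proposition~\ref{proposizione principale trasporto}) the constants $\lambda_\pm=1+\mathtt c_\pm$ produced by the transport equation and the functions $\mu_\pm$ defining the non-resonance sets are a priori distinct objects, both $O(\e)$-close to $1$; they need not be identified.
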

The rest of this section is devoted to the proof of Proposition \ref{teorema riassunto primo egorov M = 1}. 
Let $\alpha_+, \alpha_{-} \in {\cal C}^\infty(\T^{\nu + 1}, \R)$ satisfy the ansatz 
\begin{equation}\label{ansatz alpha pm}
\inf_{(\vphi, x) \in \T^{\nu + 1}} \big(1 + (\partial_x \alpha_\pm)(\vphi, x) \big) > 0\,. 
\end{equation}
Then, by applying Lemma \ref{lemma cambi di variabile}, the operators 
\begin{equation}\label{def Phi pm M = 1}
\Phi_\pm(\vphi) := \sqrt{1 + (\partial_x \alpha_\pm)(\vphi, x)} \Psi_\pm(\vphi)\,, \quad \Psi_\pm(\vphi)[u] :=  u(x + \alpha_\pm(\vphi, x))
\end{equation}
are symplectic bounded linear operators $H^s \to H^s$, $s \geq 0$ with inverse given by 
\begin{equation}\label{def Phi pm inverso M = 1}
\Phi_\pm(\vphi)^{- 1}[u] := \sqrt{1 + (\partial_x \widetilde \alpha_\pm)(\vphi, x)} \Psi_{\pm}(\vphi)^{- 1}[u]\,, \quad \Psi(\vphi)^{- 1}[u](y) = u(y + \widetilde \alpha_\pm(\vphi, y))\,, 
\end{equation}
where $y \mapsto y + \widetilde \alpha(\vphi, y)$ is the inverse diffeomorphism of $x \mapsto x + \alpha_\pm(\vphi, x)$ and. By Lemma \ref{diffeo del toro lemma astratto}, $\widetilde \alpha_\pm$ satisfies 
\begin{equation}\label{ansatz widetilde alpha pm}
\inf_{(\vphi, y) \in \T^{\nu + 1}} \big(1 + (\partial_y \widetilde \alpha_\pm)(\vphi, y) \big) > 0\,. 
\end{equation} 
% We now consider a cut off function $\chi_+ \in {\cal C}^\infty(\R, \R)$ satisfying the following properties: 
%\begin{equation}\label{propriet cut-off chi+}
%\chi_+(\xi) = \begin{cases}
% 1\,, \quad \forall \xi \geq \frac12 \\
% 0 \,, \quad \forall \xi \leq - \frac12, \\
%\end{cases} \quad  \chi_+(\xi) + \chi_+(- \xi) = 1\,, \quad \forall \xi \in \R
%\end{equation}
%and we set 
%\begin{equation}\label{definizione chi-}
%\chi_-(\xi) := \chi_+(- \xi)\,, \qquad \forall \xi \in \R\,. 
%\end{equation}
%Note that the corresponding operators (acting on $2\pi$-periodic functions) are given by 
%\begin{equation}\label{operatori proiezione sui modi positivi negativi}
%\Pi_{\pm} := {\rm Op}(\chi_{\pm}(\xi))
%\end{equation}
%are given by 
%\begin{equation}\label{operatori proiezione sui modi positivi negativi azione}
%\Pi_{+}u (x) := \sum_{\xi \geq 0} \widehat u(\xi) e^{\ii x \xi}\,, \quad \Pi_{- } u(x) := \sum_{\xi \leq 0} \widehat u(\xi) e^{\ii x \xi}\,, \quad u \in L^2(\T)\,. 
%\end{equation}
We then consider the operator 
\begin{equation}\label{definizione prima coniugazione caso M = 1}
\Phi_0 ( \vphi) := \Phi_+( \vphi)^{- 1} \Pi_+ + \Phi_{-}( \vphi)^{- 1} \Pi_-\,, \quad \tau \in [0, 1]\,, 
\end{equation}
whose inverse is given by 
\begin{equation}\label{inverso Phi 0 pm M = 1}
\Phi_0 (\vphi)^{- 1} := \Pi_+ \Phi_+( \vphi) + \Pi_{- } \Phi_-( \vphi)\,,
\end{equation}
see Lemma \ref{push forward splitting Pi +-}-$(i)$. The property \eqref{trasformazione ordine principale M = 1} for $\Phi_0(\vphi)^{\pm 1}$ holds by applying Lemma \ref{lemma cambi di variabile}-$(i)$. 
By Lemma \ref{push forward splitting Pi +-}, the push forward $\ii {\cal V}_1(\vphi) := (\Phi_0)_{\omega*} (\ii {\cal V})(\vphi)$ is given by  
\begin{equation}\label{primo cal V0 M = 1}
(\Phi_0)_{\omega*} (\ii {\cal V})(\vphi) = \ii {\cal V}_1(\vphi)\,, \quad {\cal V}_1(\vphi) := \Phi_0(\vphi)^{- 1} {\cal V}(\vphi) \Phi_0(\vphi) + \ii \Phi_0 (\vphi)^{- 1} \omega \cdot \partial_\vphi \Phi_0 (\vphi)\,, 
\end{equation}
\begin{equation}\label{espansion cal V + - 0}
{\cal V}_1(\vphi) = \Pi_+ {\cal V}_{1, +}(\vphi) \Pi_+ + \Pi_- {\cal V}_{1, -}(\vphi) \Pi_- + OPS^{- \infty}
\end{equation}
where 
\begin{equation}\label{definizioni cal V+ - R 0 infty}
 {\cal V}_{1, \pm}(\vphi)  := \Phi_\pm( \vphi) {\cal V}(\vphi) \Phi_\pm( \vphi)^{- 1} + \ii  \Phi_\pm(\vphi) \omega \cdot \partial_\vphi \Phi_\pm( \vphi)^{- 1}  \,.
%& {\cal R}_{0, \infty}(\vphi) \in {\cal B}(H^s, H^{s + \sigma})\,, \quad \sup_{\vphi \in \T^\nu} \| {\cal R}_{0, \infty}(\vphi) \|_{{\cal B}(H^s, H^{s + \sigma})} < + 
%\infty \,, \quad \forall s, \sigma \geq 0 \,. 
\end{equation}
Note that, since $\Phi_{\pm}(\vphi)$ are symplectic maps, then ${\cal V}_{1, \pm}(\vphi)$ are self-adjoint operators. This implies that even if the transformation $\Phi(\vphi)$ (see \eqref{definizione prima coniugazione caso M = 1}) is not symplectic, the transformed vector field $\ii {\cal V}_1(\vphi)$ is Hamiltonian {\it up to smoothing operators}. 
\subsubsection{Expansion of ${\cal V}_{1, \pm}(\vphi)$.}
In this section we provide an expansion of the operators ${\cal V}_{1, \pm}(\vphi)$ given in \eqref{definizioni cal V+ - R 0 infty}. 
One has 
\begin{equation}\label{nuova espansione Phi pm cal V}
 \Phi_\pm( \vphi) {\cal V}(\vphi) \Phi_\pm( \vphi)^{- 1} =  \Phi_\pm( \vphi) V(\vphi, x) |D| \Phi_\pm( \vphi)^{- 1} +  \Phi_\pm( \vphi) {\cal W}(\vphi) \Phi_\pm( \vphi)^{- 1}\,. 
\end{equation}
By applying Lemma \ref{lemma cambi di variabile}-$(ii)$, using that $V |D| \in OPS^1$, ${\cal W} \in OPS^{1 - \frak e}$,  one gets that 
\begin{equation}\label{resina fossile 0}
\begin{aligned}
& \Phi_\pm( \vphi) V(\vphi, x) |D| \Phi_\pm( \vphi)^{- 1}  \in OPS^1\,, \quad   \Phi_\pm( \vphi) {\cal W}(\vphi) \Phi_\pm( \vphi)^{- 1} \in  OPS^{1 - \frak e}\,.
\end{aligned}
\end{equation}
In order to compute the highest order term in \eqref{nuova espansione Phi pm cal V} (the term of order $1$), we need to expand the pseudo-differential operator $\Phi_\pm( \vphi) V(\vphi, x)|D| \Phi_\pm( \vphi)^{- 1} \in OPS^1$. We write $|D| = \partial_x {\cal H}$ where ${\cal H}$ is the Hilbert transform, see \eqref{definizione hilbert transform}, \eqref{simbolo esteso hilbert transform}. Hence, 
\begin{align}
\Phi_\pm( \vphi) V(\vphi, x) |D| \Phi_\pm( \vphi)^{- 1} & = \Phi_\pm( \vphi)  V(\vphi, x)\partial_x  \Phi_{\pm}(\vphi)^{- 1} \Phi_{\pm}(\vphi) {\cal H}\Phi_\pm( \vphi)^{- 1} \nonumber\\
& = \Phi_\pm( \vphi) V(\vphi, x) \partial_x  \Phi_{\pm}(\vphi)^{- 1} {\cal H} \nonumber\\
& \quad + \Phi_\pm( \vphi) V(\vphi, x) \partial_x  \Phi_{\pm}(\vphi)^{- 1}\Big( \Phi_{\pm}(\vphi) {\cal H}\Phi_\pm( \vphi)^{- 1} - {\cal H} \Big) \nonumber\\
& = \sqrt{1 + (\partial_x\alpha_{\pm})} \Psi_{\pm}\Big(  V(\vphi, x) (1 + (\partial_y \widetilde \alpha_{\pm}))^{\frac32} \Big) \partial_x{\cal H} + {\cal R}_{\pm, V}(\vphi)\,, \label{resina fossile 1}
\end{align}
\begin{equation}
\begin{aligned}
{\cal R}_{\pm, V}(\vphi) & := \sqrt{1 + (\partial_x\alpha_{\pm})} \Psi_{\pm}\Big(  V(\vphi, x) \partial_y \sqrt{1 + (\partial_y \widetilde \alpha_{\pm})} \Big) \\
& \quad + \Phi_\pm( \vphi) V(\vphi, x)\partial_x  \Phi_{\pm}(\vphi)^{- 1}\Big( \Phi_{\pm}(\vphi) {\cal H}\Phi_\pm( \vphi)^{- 1} - {\cal H} \Big) \,. 
\end{aligned}
\end{equation}
 The property \eqref{identita 1 + alpha alpha tilde partial} applied to $\alpha_\pm$, $\widetilde \alpha_\pm$ implies that 
%Note that, since $y \mapsto y + \widetilde \alpha_{\pm}(\vphi, x)$ is the inverse diffeomorphism of $x \mapsto x + \alpha_{\pm}(\vphi, x)$, one has that 
%\begin{equation}\label{resina fossile - 1}
%1 + (\partial_y \widetilde \alpha_{\pm})(\vphi, x + \alpha_{\pm}(\vphi, x)) = \frac{1}{1 + (\partial_x \alpha_{\pm})(\vphi, x)}
%\end{equation}
%implying that 
\begin{align}
& \sqrt{1 + (\partial_x\alpha_{\pm})} \Psi_{\pm}\Big( V (1 + (\partial_y \widetilde \alpha_{\pm}))^{\frac32} \Big) = \Big( V \big( 1 + (\partial_y \widetilde \alpha_{\pm}) \big) \Big)_{| y = x + \alpha_{\pm}(\vphi, x)}\,. \label{coefficiente ordine principale |D| M = 1}
\end{align}
and therefore, $\sqrt{1 + (\partial_x\alpha_{\pm})} \Psi_{\pm}\Big(  V(\vphi, x) \partial_y \sqrt{1 + (\partial_y \widetilde \alpha_{\pm})} \Big)$ is the multiplication operator by the function $\Big( V \big( 1 + (\partial_y \widetilde \alpha_{\pm}) \big) \Big)_{| y = x + \alpha_{\pm}(\vphi, x)}$. 
Moreover, recalling Lemma \ref{hilbert transform cambi di variabile}, the operator 

\noindent
$\Phi_\pm( \vphi) V(\vphi, x)\partial_x  \Phi_{\pm}(\vphi)^{- 1}\Big( \Phi_{\pm}(\vphi) {\cal H}\Phi_\pm( \vphi)^{- 1} - {\cal H} \Big) \in OPS^{- \infty}$, hence ${\cal R}_{\pm, V} \in OPS^0$.
Then, summarizing \eqref{nuova espansione Phi pm cal V}, \eqref{resina fossile 0}, \eqref{resina fossile 1}, \eqref{coefficiente ordine principale |D| M = 1}, recalling that $1 - \overline{\frak e} \geq 1 - \frak e\,,\, 0$ (see \eqref{def bar e M = 1}), so that $OPS^{1 - \frak e} \,,\, OPS^0 \subseteq OPS^{1 - \overline{\frak e}}$, one gets that 
\begin{equation}\label{resina fossile 2}
\begin{aligned}
& \Phi_\pm( \vphi) {\cal V}(\vphi) \Phi_\pm( \vphi)^{- 1} = \Big(  V \big( 1 + (\partial_y \widetilde \alpha_{\pm}) \big) \Big)_{| y = x + \alpha_{\pm}(\vphi, x)} |D| + OPS^{1 - \overline{\frak e}}\,. 
\end{aligned}
\end{equation}
%where 
%\begin{equation}
%v_{0, \pm}^{(1)}(\vphi, x, \xi) = v\Big(\vphi, x + \alpha_{\pm }(\vphi, x), \frac{\xi}{1 + \partial_x \alpha_{\pm}(\vphi, x)} \Big) + w_{0, \pm}^{(1)}(\vphi, x, \xi)\,, \quad w_{0, \pm}^{(1)} \in S^{0}\,. 
%\end{equation}
Now we compute the term $\ii  \Phi_\pm(\vphi) \omega \cdot \partial_\vphi \Phi_\pm( \vphi)^{- 1}$ appearing in the definitions of ${\cal V}_{\pm, 1}(\vphi)$ given in \eqref{definizioni cal V+ - R 0 infty}. A direct calculation shows that 
\begin{align}
\ii  \Phi_\pm(\vphi) \omega \cdot \partial_\vphi \Phi_\pm( \vphi)^{- 1} & =  \ii \sqrt{1 + (\partial_x \alpha_{\pm})} \Psi_{\pm}\Big( \omega \cdot \partial_\vphi \widetilde \alpha_{\pm} \sqrt{1 + (\partial_y \widetilde \alpha_{\pm})}  \Big) \partial_x  \nonumber\\
& \quad + \ii \sqrt{1 + (\partial_x \alpha_{\pm})} \Psi_{\pm}\Big( \omega \cdot \partial_\vphi \sqrt{1 + (\partial_y \widetilde \alpha_{\pm})}\Big)\,.  \label{resina fossile 3}
\end{align}
By applying the equality \eqref{identita 1 + alpha alpha tilde partial} to $\alpha_\pm$, $\widetilde \alpha_\pm$, one has 
\begin{equation}\label{resina fossile 4}
\sqrt{1 + (\partial_x \alpha_{\pm})} \Psi_{\pm}\Big( \omega \cdot \partial_\vphi \widetilde \alpha_{\pm} \sqrt{1 + (\partial_y \widetilde \alpha_{\pm})} \Big) = \Big( \omega \cdot \partial_\vphi \widetilde \alpha_{\pm}  \Big)_{y = x + \alpha_{\pm}(\vphi, x)}\,,
\end{equation}
thus \eqref{resina fossile 3} becomes 
\begin{equation}\label{resina fossile 5}
\begin{aligned}
\ii  \Phi_\pm(\vphi) \omega \cdot \partial_\vphi \Phi_\pm( \vphi)^{- 1} = &  \ii \Big( \omega \cdot \partial_\vphi \widetilde \alpha_{\pm}  \Big)_{y = x + \alpha_{\pm}(\vphi, x)} \partial_x + OPS^0\,. 
% {\cal W}_{\pm, 2}(\vphi) := & \ii  \sqrt{1 + (\partial_x \alpha_{\pm})} \Psi_{\pm}\Big( \omega \cdot \partial_\vphi \sqrt{1 + (\partial_y \widetilde \alpha_{\pm})}%\Big) \in OPS^0\,. 
 \end{aligned}
\end{equation}
Therefore \eqref{definizioni cal V+ - R 0 infty}, \eqref{resina fossile 2}, \eqref{resina fossile 5} and $OPS^0 \subseteq OPS^{1 - \overline{\frak e}}$ imply that 
\begin{equation}\label{forma finale cal V pm 0 M = 1}
\begin{aligned}
{\cal V}_{\pm, 1}(\vphi) & = \Big(  V \big( 1 + (\partial_y \widetilde \alpha_{\pm}) \big) \Big)_{| y = x + \alpha_{\pm}(\vphi, x)} |D| + \ii \Big( \omega \cdot \partial_\vphi \widetilde \alpha_{\pm}  \Big)_{y = x + \alpha_{\pm}(\vphi, x)} \partial_x +  OPS^{1 - \overline{\frak e}}\,. 
\end{aligned}
\end{equation}
We now provide the final expansion of the operator ${\cal V}_1(\vphi)$ defined in \eqref{espansion cal V + - 0}. Using the elementary properties $\ii \partial_x \Pi_+ = - |D| \Pi_+$, $\ii \partial_x \Pi_- = |D| \Pi_-$, by \eqref{forma finale cal V pm 0 M = 1} one gets 
\begin{equation}\label{forma finalissima cal V 0 M = 1 prima eq omologiche}
\begin{aligned}
{\cal V}_1(\vphi)  & = \Pi_+ \Big(\Big(  V( 1 + (\partial_y \widetilde \alpha_{+}) \big) - \omega \cdot \partial_\vphi \widetilde \alpha_{+} \Big)_{| y = x + \alpha_{\pm}(\vphi, x)}  |D| + {\cal W}_{1, +}(\vphi) \Big) \Pi_+ \\
& \quad + \Pi_- \Big(\Big( V \big( 1 + (\partial_y \widetilde \alpha_{-}) \big) + \omega \cdot \partial_\vphi \widetilde \alpha_{-} \Big)_{| y = x + \alpha_{\pm}(\vphi, x)}  |D| + {\cal W}_{1, -}(\vphi) \Big) \Pi_- \\
& \quad + OPS^{- \infty}\,. \\
%{\cal W}_0(\vphi) &:= \Pi_+ {\cal W}_{+, 0}(\vphi) \Pi_+ + \Pi_- {\cal W}_{-, 0}(\vphi) \Pi_- \in OPS^{1 - \e}\,. 
\end{aligned}
\end{equation} 
with ${\cal W}_{1, \pm} \in OPS^{1 -\overline{\frak e}}$. By applying Proposition \ref{proposizione principale trasporto}, given $\gamma \in (0, 1)$, for $\e \gamma^{- 1} \leq \delta$, for some $\delta \in (0, 1)$ small enough, there exist Lipschitz functions $ \mathtt c_\pm : \Omega \to \R$ such that for any $\omega \in \Omega_{\gamma, \tau}^{\mu_+} \cap \Omega_{\gamma, \tau}^{\mu_-}$ (see \eqref{def cal O mu pm gamma}) there exist two ${\cal C}^\infty$ functions $\widetilde \alpha_{\pm}$ (depending on $\omega$) such that 
\begin{equation}\label{soluzione trasporto alpha pm}
\begin{aligned}
& \mp \omega \cdot \partial_\vphi \widetilde \alpha_{\pm}  +  (1 + \e P)\partial_y \widetilde \alpha_{\pm} + \e P = \mathtt c_\pm  \\
& \| \widetilde \alpha_{\pm}\|_s \lesssim_s \e \gamma^{- 1}\,, \quad \forall s \geq 0\,, \quad |\mathtt c_{\pm}|^\Lipg \lesssim \e\,.
\end{aligned}
\end{equation} 
For $\e \gamma^{-1}$ small enough, $\widetilde \alpha_\pm$ satisfies the property \eqref{ansatz widetilde alpha pm}, therefore by applying Lemma \ref{diffeo del toro lemma astratto}, the functions $\alpha_\pm$ satisfy the ansatz \eqref{ansatz alpha pm}. The operator ${\cal V}_0(\vphi)$ defined in \eqref{forma finalissima cal V 0 M = 1 prima eq omologiche} takes the form 
\begin{equation}\label{forma finalissima cal V 0 M = 1}
\begin{aligned}
& (\Phi_0)_{\omega*} \ii {\cal V}(\vphi) = \ii {\cal V}_1(\vphi)\,, \\
& {\cal V}_1(\vphi)   = \Pi_+ \Big(\lambda_+  |D| + {\cal W}_{1, +}(\vphi) \Big) \Pi_+  + \Pi_- \Big(\lambda_{-} |D| + {\cal W}_{1, -}(\vphi) \Big) \Pi_- + OPS^{- \infty}\,, \\
&  \lambda_{\pm}  := 1 + \mathtt c_{\pm}\,,  \quad |\mathtt c_{\pm}|^\Lipg \lesssim \e\,, \qquad {\cal W}_{1, \pm} \in OPS^{1 - \overline{\frak e}}\,, 
\end{aligned}
\end{equation}
where ${\cal W}_{1, +}(\vphi), {\cal W}_{1, -}(\vphi)$ are self-adjoint. This concludes the proof of Proposition \ref{teorema riassunto primo egorov M = 1}. 
\subsection{Reduction of the lower order terms.}\label{sezione ordini bassi M = 1}
In order to state the main result of this section, for $\gamma \in (0, 1)$ and $\tau > \nu$, we introduce the set 
\begin{equation}\label{Cantor finale riduzione M = 1}
\Omega_{\gamma, \tau} := \Big\{ \omega \in \Omega_{\gamma, \tau}^{\mu_+} \cap \Omega_{\gamma, \tau}^{\mu_-} : |\omega \cdot \ell + \lambda_{\pm}(\omega )\, j| \geq \frac{\gamma}{\langle \ell\rangle^\tau}\,, \quad \forall (\ell, j) \in \Z^{\nu + 1} \setminus \{(0, 0) \} \Big\}\,. 
\end{equation}
\begin{proposition}\label{iterazione lower orders M = 1 pm}
Let $\gamma \in (0, 1)$, $\tau > \nu$, $N \in \N$. Then for any $\omega \in \Omega_{\gamma, \tau}$, for any $n = 1, \ldots, N$ there exists a linear  vector field $\ii {\cal V}_n(\vphi)$, $\vphi \in \T^\nu$ of the form 
\begin{equation}\label{cal Vn con + - nel lemma iterativo M = 1}
{\cal V}_n(\vphi) = \Pi_+ {\cal V}_{n, +}(\vphi) \Pi_+ + \Pi_- {\cal V}_{n, -}(\vphi) \Pi_- + OPS^{- \infty}
\end{equation} 
(recall \eqref{operatori proiezione sui modi positivi negativi azione}) where the  vector fields $\ii {\cal V}_{n, \pm}(\vphi)$ are Hamiltonian and have the form 
\begin{equation}\label{forma cal Vn teorema M = 1}
{\cal V}_{n, \pm}(\vphi) := \lambda_\pm |D| + \mu_{n, \pm}( D) + {\cal W}_{n, \pm}(\vphi)\,,
\end{equation}
where 
\begin{equation}\label{mu n t D teo M = 1}
\mu_{n, \pm}(D) := {\rm Op}\Big( \mu_{n, \pm}( \xi)\Big)\,, \qquad \mu_{n, \pm} \in S^{1 - \overline{\frak e}}\,,
\end{equation}
\begin{equation}\label{cal Wn teorema M = 1}
{\cal W}_{n, \pm}(\vphi) := {\rm Op}\Big( w_{n, \pm}(\vphi, x, \xi)\Big) \,, \qquad w_{n, \pm} \in S^{1 - n \overline{\frak e}}\,,
\end{equation}
 $\mu_{n, \pm}(\xi)$ is real and ${\cal W}_{n, \pm}(\vphi)$ is self-adjoint. 
% The remainder ${\cal R}_{n, \infty}(\vphi)$ satisfies 
%\begin{equation}\label{proprieta cal R n infty}
%{\cal R}_{n, \infty}(\vphi) \in {\cal B}(H^s, H^{s + \sigma})\,, \quad \sup_{\vphi \in \T^\nu} \| {\cal R}_{n, \infty}(\vphi) \|_{{\cal B}(H^s, H^{s + \sigma})} < + \infty\,,\quad \forall s, \sigma \geq 0\,. 
%\end{equation}

\noindent
For any $n \in \{1, \ldots, N - 1 \}$, there exist an invertible map $\Phi_{n}(\vphi)$ satisfying
\begin{equation}\label{proprieta Phi n teo M = 1}
\sup_{\vphi \in \T^\nu} \| \Phi_{n}(\vphi)^{\pm 1}\|_{{\cal B}(H^s)} < + \infty\,, \quad \forall s \geq 0
\end{equation}
such that
\begin{equation}\label{cal V n + 1 cal V n M = 1}
\ii {\cal V}_{n + 1}(\vphi) = (\Phi_{n})_{\omega *} \ii {\cal V}_n (\vphi)\,, \qquad \forall n \in \{ 1, \ldots, N -1 \}\,. 
\end{equation}
\end{proposition}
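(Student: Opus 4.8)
The plan is to argue by induction on $n$, mimicking the structure of the proof of Proposition \ref{descent method M  < 1} but now working with the $\Pi_\pm$-block decomposition and replacing the pure diophantine condition by the first order Melnikov conditions defining $\Omega_{\gamma,\tau}$ in \eqref{Cantor finale riduzione M = 1}. The base case $n=1$ is exactly Proposition \ref{teorema riassunto primo egorov M = 1}: one sets $\mu_{1,\pm}(D):=0$ (a real Fourier multiplier in $S^{1-\overline{\frak e}}$), takes ${\cal W}_{1,\pm}$ as the operators produced there, and notes that $\ii{\cal V}_{1,\pm}=\ii(\lambda_\pm|D|+{\cal W}_{1,\pm})$ is Hamiltonian since $\lambda_\pm\in\R$ and ${\cal W}_{1,\pm}$ is self-adjoint. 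So fix $n\in\{1,\dots,N-1\}$, assume ${\cal V}_n$ has the stated form \eqref{cal Vn con + - nel lemma iterativo M = 1}--\eqref{cal Wn teorema M = 1}, and let us construct $\Phi_n$ so that the push-forward ${\cal V}_{n+1}$ has the same form with $n$ replaced by $n+1$.

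For each sign $\pm$ I would fix a self-adjoint operator ${\cal G}_{n,\pm}(\vphi)={\rm Op}(g_{n,\pm}(\vphi,x,\xi))$ with $g_{n,\pm}\in S^{1-n\overline{\frak e}}$ to be determined. Since $1-n\overline{\frak e}<1$, Lemma \ref{flusso + derivate flusso} gives that the flow $\Phi_{n,\pm}(\tau;\vphi)$ of $\partial_\tau u=\ii{\cal G}_{n,\pm}(\vphi)[u]$ is a well-defined symplectic invertible family with uniform bounds on $\partial_\vphi^\alpha\Phi_{n,\pm}(\tau;\vphi)^{\pm1}$ in ${\cal B}(H^{s+|\alpha|},H^s)$; put $\Phi_{n,\pm}(\vphi):=\Phi_{n,\pm}(1;\vphi)$ and $\Phi_n(\vphi):=\Phi_{n,+}(\vphi)^{-1}\Pi_++\Phi_{n,-}(\vphi)^{-1}\Pi_-$, which is invertible by Lemma \ref{push forward splitting Pi +-}-$(i)$ and satisfies \eqref{proprieta Phi n teo M = 1}. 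Writing ${\cal V}_n=\bigl(\Pi_+{\cal V}_{n,+}\Pi_++\Pi_-{\cal V}_{n,-}\Pi_-\bigr)+R_n$ with $R_n\in OPS^{-\infty}$, I apply Lemma \ref{push forward splitting Pi +-}-$(ii)$ to the split part (with the roles of the abstract generators played by $\ii{\cal G}_{n,\pm}$, whose symmetrized part vanishes) and check, arguing exactly as in the proof of that lemma — i.e. via the commutator Lemmata \ref{commutatore flusso Fourier multiplier costante}, \ref{commutatore Fourier multiplier derivata 0 simbolo generale} and Proposition \ref{teorema egorov campo minore di uno} applied with arbitrarily negative order — that $\Phi_n(\vphi)^{-1}(\ii R_n)\Phi_n(\vphi)\in OPS^{-\infty}$; this yields
\begin{equation*}
(\Phi_n)_{\omega*}(\ii{\cal V}_n)(\vphi)=\ii{\cal V}_{n+1}(\vphi),\qquad {\cal V}_{n+1}=\Pi_+{\cal V}_{n+1,+}\Pi_++\Pi_-{\cal V}_{n+1,-}\Pi_-+OPS^{-\infty},
\end{equation*}
with ${\cal V}_{n+1,\pm}(\vphi):=\Phi_{n,\pm}(\vphi){\cal V}_{n,\pm}(\vphi)\Phi_{n,\pm}(\vphi)^{-1}+\ii\Phi_{n,\pm}(\vphi)\,\omega\cdot\partial_\vphi\Phi_{n,\pm}(\vphi)^{-1}$; in particular $\ii{\cal V}_{n+1,\pm}$ is Hamiltonian, because each $\Phi_{n,\pm}$ is symplectic.

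Next I would expand the symbol of ${\cal V}_{n+1,\pm}$. Applying Proposition \ref{teorema egorov campo minore di uno} with $m=1$, $\eta=1-n\overline{\frak e}$ to the term $\Phi_{n,\pm}{\cal V}_{n,\pm}\Phi_{n,\pm}^{-1}$, and with $m=\eta=1-n\overline{\frak e}$ to $\ii\Phi_{n,\pm}\omega\cdot\partial_\vphi\Phi_{n,\pm}^{-1}=\int_0^1\Phi_{n,\pm}(\tau)\,\omega\cdot\partial_\vphi{\cal G}_{n,\pm}\,\Phi_{n,\pm}(\tau)^{-1}\,d\tau$, and using the elementary identities $\{g_{n,\pm},\lambda_\pm|\xi|\chi(\xi)\}=-\lambda_\pm\,\sign(\xi)\chi(\xi)\,\partial_x g_{n,\pm}+S^{-\infty}$ together with $\{g_{n,\pm},\mu_{n,\pm}+w_{n,\pm}\}\in S^{1-(n+1)\overline{\frak e}}$ (Corollary \ref{corollario commutator}), one obtains
\begin{equation*}
v_{n+1,\pm}=\lambda_\pm|\xi|\chi(\xi)+\mu_{n,\pm}(\xi)+w_{n,\pm}(\vphi,x,\xi)+\omega\cdot\partial_\vphi g_{n,\pm}(\vphi,x,\xi)-\lambda_\pm\,\sign(\xi)\chi(\xi)\,\partial_x g_{n,\pm}(\vphi,x,\xi)+S^{1-(n+1)\overline{\frak e}}.
\end{equation*}
Thus I need to solve, modulo $S^{1-(n+1)\overline{\frak e}}$, the homological equation $\omega\cdot\partial_\vphi g_{n,\pm}-\lambda_\pm\,\sign(\xi)\,\partial_x g_{n,\pm}=\langle w_{n,\pm}\rangle_{\vphi,x}(\xi)-w_{n,\pm}(\vphi,x,\xi)$. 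Cutting off to $|\xi|\geq\tfrac32$ by a real Fourier multiplier $\chi_0$ (so that $\chi\equiv1$ and $\sign(\xi)$ is locally constant there, the complementary piece being smoothing), on $\{\xi>0\}$ the operator becomes $\omega\cdot\partial_\vphi-\lambda_\pm\partial_x$ and on $\{\xi<0\}$ it becomes $\omega\cdot\partial_\vphi+\lambda_\pm\partial_x$, both invertible on symbols of zero $(\vphi,x)$-average precisely because $\omega\in\Omega_{\gamma,\tau}$ satisfies $|\omega\cdot\ell+\lambda_\pm j|\geq\gamma\langle\ell\rangle^{-\tau}$, using \eqref{om lambda - 1} with $\lambda=\mp\lambda_\pm$. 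By \eqref{proprieta simbolo mediato e partial x - 1} this determines $g_{n,\pm}\in S^{1-n\overline{\frak e}}$, and — arguing exactly as in Lemma \ref{lemma equazione omologica spazio lower orders M < 1}, i.e. seeking $g_{n,\pm}=\sigma_{n,\pm}+\sigma_{n,\pm}^*$ with $\sigma_{n,\pm}^*=\sigma_{n,\pm}+S^{-n\overline{\frak e}}$ and invoking Lemmata \ref{simbolo autoaggiunto fourier multiplier} and \ref{proprieta aggiunti small divisors} — one may take ${\cal G}_{n,\pm}$ self-adjoint. With this choice $v_{n+1,\pm}=\lambda_\pm|\xi|\chi(\xi)+\mu_{n+1,\pm}(\xi)+w_{n+1,\pm}(\vphi,x,\xi)$, where $\mu_{n+1,\pm}:=\mu_{n,\pm}+\langle w_{n,\pm}\rangle_{\vphi,x}\in S^{1-\overline{\frak e}}$ is real (by the induction hypothesis and Lemma \ref{proprieta aggiunti small divisors}-$(i)$) and $w_{n+1,\pm}\in S^{1-(n+1)\overline{\frak e}}$; since ${\cal V}_{n+1,\pm}$ is self-adjoint and $\lambda_\pm|D|+\mu_{n+1,\pm}(D)$ is a real Fourier multiplier, ${\cal W}_{n+1,\pm}={\rm Op}(w_{n+1,\pm})$ is self-adjoint, closing the induction.

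The main obstacle is the homological step just described. Unlike the case $M<1$, where the effect of $\partial_t$ dominates that of $|D|^M$ and one removes the $\vphi$- and $x$-dependence in two successive stages using only a diophantine condition, here $|D|$ and $\partial_t$ have the same order, so the homological operator genuinely couples the two sets of frequencies and one is forced to invert $\omega\cdot\partial_\vphi\mp\lambda_\pm\partial_x$; this is possible only on the Cantor-like set $\Omega_{\gamma,\tau}$ carved out by the first order Melnikov conditions on $(\omega,\lambda_\pm)$, which moreover must be threaded through consistently from the base case. A secondary, more bookkeeping-type difficulty — already visible in Lemma \ref{push forward splitting Pi +-} — is that the conjugating map $\Phi_n$ is \emph{not} symplectic and mixes the $\Pi_+$ and $\Pi_-$ blocks, so at each step one must verify that the off-diagonal contributions and the $OPS^{-\infty}$ tails stay smoothing and that the Hamiltonian structure is recovered blockwise from the symplecticity of $\Phi_{n,\pm}$.
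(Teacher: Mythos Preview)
Your proposal is correct and follows essentially the same route as the paper: induction via the conjugation $\Phi_n=\Phi_{n,+}^{-1}\Pi_++\Phi_{n,-}^{-1}\Pi_-$, Egorov expansion of each block by Proposition \ref{teorema egorov campo minore di uno}, and a homological equation solved by splitting at the sign of $\xi$ and inverting $\omega\cdot\partial_\vphi\mp\lambda_\pm\partial_x$ under the Melnikov conditions in $\Omega_{\gamma,\tau}$. The paper packages the homological step as its own lemma (Lemma \ref{lemma equazione omologica ordini bassi M = 1}) and writes the self-adjoint ansatz as $g_{n,\pm}=\tfrac12(q_{n,\pm}+q_{n,\pm}^*)$ rather than $\sigma_{n,\pm}+\sigma_{n,\pm}^*$, but this is only a cosmetic difference.
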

The rest of this section is devoted to the proof of the above proposition, which is proved by induction. We describe the induction step of the proof. Assume that for $n \in \{1, \ldots, N - 1\}$ there exists an operator ${\cal V}_n(\vphi)$ satisfying the properties \eqref{iterazione lower orders M = 1 pm}-\eqref{cal Wn teorema M = 1}. We consider an operator 
\begin{equation}\label{cal G n pm (vphi)}
{\cal G}_{n, \pm}(\vphi) = {\rm Op}\big( g_{n, \pm}(\vphi, x, \xi)\big)\,, \quad g_{n, \pm} \in S^{1 - n \overline{\mathfrak e}}\,, \quad g_{n, \pm} = g_{n, \pm}^*\,. 
\end{equation}
Let $\Phi_{n, \pm}(\tau; \vphi)$, $\tau \in [0, 1]$ be the symplectic flow generated by the Hamiltonian vector field $\ii {\cal G}_{n, \pm}(\vphi)$, namely
\begin{equation}\label{Phi n pm}
\begin{cases}
\partial_\tau \Phi_{n , \pm}(\tau; \vphi) = \ii {\cal G}_{n, \pm}(\vphi) \Phi_{n , \pm}(\tau; \vphi)  \\
\Phi_{n , \pm}(0; \vphi) = {\rm Id}\,. 
\end{cases}
\end{equation}
By applying Lemma \ref{flusso + derivate flusso}, the maps $\Phi_{n, \pm}(\tau; \vphi)$ satisfy the property \eqref{proprieta Phi n teo M = 1}. We then define for any $\tau \in [0, 1]$ the map 
\begin{equation}
\Phi_n(\tau; \vphi) := \Phi_{n , +}(\tau; \vphi)^{- 1} \Pi_+ + \Phi_{n, -}(\tau; \vphi)^{- 1} \Pi_- \,,
\end{equation} 
whose inverse is given by 
\begin{equation}
\Phi_{n}(\tau; \vphi)^{- 1} = \Pi_+ \Phi_{n, +}(\tau; \vphi) + \Pi_- \Phi_{n, -}(\tau; \vphi)\,,
\end{equation}
see Lemma \ref{push forward splitting Pi +-}-$(i)$. We set $\Phi_n(\vphi) := \Phi_n(1; \vphi)$, $\Phi_{n, \pm}(\vphi) := \Phi_{n, \pm}(1; \vphi)$. By applying Lemma \ref{push forward splitting Pi +-}-$(ii)$ one gets that $\ii {\cal V}_{n + 1} = (\Phi_n)_{\omega *} \ii {\cal V}_n$ with 
\begin{equation}\label{prima definizione cal V n + 1 M = 1}
\begin{aligned}
& {\cal V}_{n +1}(\vphi) = \Pi_+ {\cal V}_{n + 1, +}(\vphi) \Pi_+ + \Pi_- {\cal V}_{n + 1, -}(\vphi) \Pi_- + OPS^{- \infty}\,,  \\
& {\cal V}_{n + 1, \pm}(\vphi) := \Phi_{n, \pm}(\vphi) {\cal V}_{n, \pm}(\vphi) \Phi_{n, \pm}(\vphi)^{- 1} + \ii \Phi_{n, \pm}(\vphi) \omega \cdot \partial_\vphi \Phi_{n, \pm}(\vphi)^{- 1}\,. 
%& {\cal R}_{\infty, n+ 1}(\vphi) \in {\cal B}(H^s, H^{s + \sigma})\,, \quad \sup_{\vphi \in \T^\nu} \| {\cal R}_{\infty, n+ 1}(\vphi) \|_{{\cal B}(H^s, H^{s + \sigma})} < + \infty \,, \quad \forall s, \sigma \geq 0\,. 
\end{aligned}
\end{equation}
Since $\Phi_{n, \pm}(\vphi)$ are symplectic maps, the vector fields $\ii {\cal V}_{n + 1, \pm}(\vphi)$ are Hamiltonian, i.e. ${\cal V}_{n + 1, \pm}(\vphi)$ are self-adjoint operators. In the following, we provide an expansion of the operators ${\cal V}_{n + 1, \pm}(\vphi)$.  

\noindent
{\sc Analysis of $ \Phi_{n, \pm}(\vphi) {\cal V}_{n, \pm}(\vphi) \Phi_{n, \pm}(\vphi)^{- 1} $.} The symbol $v_{n, \pm} \in S^1$ of the operator ${\cal V}_{n, \pm}$ is given by 
\begin{equation}\label{brotchen - 1}
v_{n, \pm} (\vphi, x, \xi) = \lambda_{\pm} |\xi| \chi(\xi) + \mu_{n, \pm}(\xi) + w_{n, \pm}(\vphi, x, \xi)\,.
\end{equation}
Applying Proposition \ref{teorema egorov campo minore di uno} one gets that 
\begin{equation}\label{brotchen 1}
\Phi_{n, \pm}(\vphi) {\cal V}_{n, \pm}(\vphi) \Phi_{n, \pm}(\vphi)^{- 1}  = {\rm Op}\big( v_{n + 1 , \pm}^{(1)} \big)\,,
\end{equation}
where
\begin{equation}\label{brotchen 2}
 v_{n + 1 , \pm}^{(1)} := v_{n, \pm} + \{ g_{n, \pm}, v_{n, \pm} \} + r_{n, \pm}^{(\geq 2)}\,, \quad r_{n, \pm}^{(\geq 2)} \in S^{1 - 2 n \overline{\frak e}} \subseteq S^{1 - ( n + 1) \overline{\frak e}}\,. 
\end{equation}
We write 
\begin{align}
\{ g_{n, \pm}, v_{n, \pm} \} & = \lambda_{\pm} \{ g_{n, \pm},  |\xi| \chi(\xi)\} + \{ g_{n, \pm}, \mu_{n , \pm}\} + \{ g_{n, \pm},  w_{n, \pm}\} \nonumber\\
& = - \lambda_{\pm} \partial_x g_{n, \pm} \frac{\xi}{|\xi|} \chi(\xi) - \lambda_{\pm} \partial_x g_{n, \pm} |\xi| \partial_\xi \chi(\xi) + \{ g_{n, \pm}, \mu_{n , \pm}\} + \{ g_{n, \pm},  w_{n, \pm}\} \label{brotchen 3}
\end{align}
By Corollary \ref{corollario commutator} and since $\partial_\xi \chi(\xi) = 0$ for $|\xi| \geq 1$ (see \eqref{definizione cut off D alpha}), $\mu_{n, \pm} \in S^{1 - \overline{\frak e}}$ and $w_{n, \pm}, g_{n, \pm} \in S^{1 - n \overline{\frak e}}$, one gets that 
\begin{equation}\label{brotchen 4}
\begin{aligned}
& - \lambda_{\pm} \partial_x g_{n, \pm} |\xi| \partial_\xi \chi(\xi) \in S^{- \infty} \subseteq S^{1 - (n + 1) \overline{\frak e}}\,, \quad \{ g_{n, \pm}, \mu_{n , \pm}\} \in S^{1 - (n + 1) \overline{\frak e}}\,, \\
& \{ g_{n, \pm},  w_{n, \pm}\} \in S^{1 - 2 n \overline{\frak e}} \subseteq S^{1 - (n + 1) \overline{\frak e}}\,. 
\end{aligned}
\end{equation}
By \eqref{brotchen - 1}, \eqref{brotchen 2}--\eqref{brotchen 4} one gets 
\begin{equation}\label{brotchen 5}
\begin{aligned}
& v_{n + 1, \pm}^{( 1 )} = \lambda_{\pm} |\xi | \chi(\xi) + \mu_{n, \pm} + w_{n, \pm} - \lambda_{\pm} \partial_x g_{n, \pm} \frac{\xi}{|\xi|} \chi(\xi) + S^{1 - (n + 1) \overline{\frak e}}\,. 
%& \mathtt r_{n + 1, \pm}^{(1 )} := - \lambda_{\pm} \partial_x g_{n, \pm} |\xi| \partial_\xi \chi(\xi) + \{ g_{n, \pm}, \mu_{n , \pm}\} +  \{ g_{n, \pm},  w_{n, \pm}\} %+r_{n, \pm}^{(\geq 2)} \in S^{1 - (n + 1) \frak e}\,. 
\end{aligned}
\end{equation}

\medskip

\noindent
{\sc Analysis of the term $\ii \Phi_{n, \pm}(\vphi) \omega \cdot \partial_\vphi \Phi_{n, \pm}(\vphi)^{- 1}$.}  We define ${\cal V}_{n + 1, \pm}^{(2)}(\tau; \vphi) := \ii  \Phi_{n, \pm}(\tau; \vphi) \omega \cdot \partial_\vphi \big( \Phi_{n, \pm}(\tau; \vphi)^{- 1} \big)$. A direct calculation shows that 
$$
{\cal V}_{n + 1, \pm}^{(2 )}(\tau; \vphi) =  \int_0^\tau {\cal S}_{n + 1, \pm}(\zeta; \vphi )\, d \zeta\,, \quad  {\cal S}_{n+ 1, \pm}(\zeta; \vphi) := \Phi_{n, \pm}(\zeta; \vphi) \circ  \omega \cdot \partial_\vphi  {\cal G}_{n ,\pm}(\vphi) \circ \Phi_{n , \pm}(\zeta; \vphi)^{- 1}, \quad \zeta \in [0, 1]\,. 
$$
Since $\omega \cdot \partial_\vphi {\cal G}_{n , \pm}(\vphi) \in OPS^{1 - n \overline{\frak e}}$, by Proposition \ref{teorema egorov campo minore di uno} 
\begin{equation}
{\cal V}_{n + 1, \pm}^{(2 )}( \vphi) = {\cal V}_{n + 1, \pm}^{(2 )}(1 ; \vphi) = {\rm Op}\Big(v_{n + 1, \pm}^{(2)}(\vphi, x, \xi) \Big) \in OPS^{1 - n \overline{\frak e}} 
\end{equation}
with 
\begin{equation}\label{brotchen 6}
v_{n + 1, \pm}^{(2 )}(\vphi, x, \xi)  := \omega \cdot \partial_\vphi g_{n, \pm}(\vphi, x, \xi)+ S^{1 - (n + 1) \overline{\frak e}}\,. 
\end{equation}
Collecting \eqref{prima definizione cal V n + 1 M = 1}, \eqref{brotchen 1}, \eqref{brotchen 2}, \eqref{brotchen 6} one then gets 
\begin{equation}\label{brotchen 7}
\begin{aligned}
v_{n + 1, \pm} & = \lambda_{\pm} |\xi | \chi(\xi) + \mu_{n, \pm} + w_{n, \pm} + \omega \cdot \partial_\vphi g_{n, \pm} - \lambda_{\pm} \partial_x g_{n, \pm} \frac{\xi}{|\xi|} \chi(\xi) + S^{1 - (n + 1) \overline{\frak e}}\,. 
\end{aligned}
\end{equation}
%, we rewrite \eqref{brotchen 7} as
%\begin{equation}\label{brotchen 100}
%\begin{aligned}
%v_{n + 1, \pm} & = \lambda_{\pm} |\xi | \chi(\xi) + \mu_{n, \pm} + \chi_1 w_{n, \pm} + \omega \cdot \partial_\vphi g_{n, \pm} - \lambda_{\pm} \partial_x g_{n, \pm} \frac{\xi}{|\xi|} \chi(\xi)  + \mathtt r_{n + 1, \pm}^{(3)}\\
%\mathtt r_{n + 1, \pm}^{(3)} & := \mathtt r_{n + 1, \pm}^{(1)} +  \mathtt r_{n + 1, \pm}^{(2 )} + (1 - \chi_1) w_{n, \pm} \in S^{1 - (n + 1) \frak e}\,.
%\end{aligned}
%\end{equation}
%In order to state the following lemma, we introduce the sets 
%\begin{equation}\label{Cantor equazione omologica ordini bassi}
%{\cal G}_\pm^\gamma := \Big\{ \omega \in \Omega_{\pm}^\gamma : |\omega \cdot \ell +  \lambda_\pm(\omega)\, j| \geq \frac{\gamma}{\langle \ell\rangle^\tau}\,, \quad \forall (\ell, j) \in (\Z^\nu \times \Z) \setminus \{(0, 0) \}  \Big\}\,. 
%\end{equation}
In the next Lemma we show how to choose ${\cal G}_{m, \pm}(\vphi) = {\rm Op}\big( g_{n, \pm}(\vphi, x, \xi)\big)$ in order to reduce to constant coefficients the term $w_{n, \pm} + \omega \cdot \partial_\vphi g_{n, \pm} - \lambda_{\pm} \partial_x g_{n, \pm} \frac{\xi}{|\xi|} \chi(\xi) $ of order $1 - n \overline{\frak e}$ in \eqref{brotchen 7}. 
\begin{lemma}\label{lemma equazione omologica ordini bassi M = 1}
For any $\omega \in \Omega_{\gamma, \tau}$ (see \eqref{Cantor finale riduzione M = 1}) there exist symbols $g_{n + 1, \pm} \in S^{1 - n \overline{\frak e}}$ satisfying $g_{n + 1, \pm} = g_{n + 1, \pm}^*$ and 
$$
 w_{n, \pm} + \omega \cdot \partial_\vphi g_{n, \pm} - \lambda_{\pm} \partial_x g_{n, \pm} \frac{\xi}{|\xi|} \chi(\xi) - \langle w_{n, \pm} \rangle_{\vphi, x} \in S^{1 - (n + 1) \overline{\frak e}}
$$
(recall the definition \eqref{simbolo mediato}).  
\end{lemma}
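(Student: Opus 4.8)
The plan is to solve the \emph{homological equation} that makes the term of order $1-n\overline{\frak e}$ in \eqref{brotchen 7} independent of $(\vphi,x)$: one looks for a self--adjoint symbol $g_{n,\pm}\in S^{1-n\overline{\frak e}}$ with
\[
\omega\cdot\partial_\vphi g_{n,\pm} - \lambda_\pm\frac{\xi}{|\xi|}\chi(\xi)\,\partial_x g_{n,\pm} = \langle w_{n,\pm}\rangle_{\vphi,x} - w_{n,\pm} + S^{1-(n+1)\overline{\frak e}}.
\]
Treating $\xi$ as a parameter, on $\{\xi\ge 1\}$ (resp.\ $\{\xi\le -1\}$) this is the constant--coefficient transport equation $(\omega\cdot\partial_\vphi - \lambda_\pm\partial_x)g_{n,\pm} = \langle w_{n,\pm}\rangle_{\vphi,x} - w_{n,\pm}$ (resp.\ with $+\lambda_\pm$); its right--hand side has zero $(\vphi,x)$--average, so it can be inverted by the operators $(\omega\cdot\partial_\vphi + \lambda\partial_x)^{-1}$ from \eqref{om lambda - 1} with $\lambda = -\lambda_\pm$ and $\lambda = +\lambda_\pm$ respectively. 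For $|\xi|\lesssim 1$ everything involved is a smooth symbol with compact $\xi$--support, hence in $S^{-\infty}$, and can be discarded; I would make this precise with a pair of cut--offs $\widetilde\chi_\pm\in{\cal C}^\infty(\R,\R)$ supported in $\pm\xi\ge \tfrac12$ and satisfying $\widetilde\chi_+ + \widetilde\chi_-\equiv 1$ on $|\xi|\ge 1$.

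Concretely, mimicking the scheme of Lemma \ref{lemma equazione omologica spazio lower orders M < 1}, I would first set
\[
\sigma_{n,\pm} := \tfrac12\widetilde\chi_+(\xi)\,(\omega\cdot\partial_\vphi - \lambda_\pm\partial_x)^{-1}\big[\langle w_{n,\pm}\rangle_{\vphi,x} - w_{n,\pm}\big] + \tfrac12\widetilde\chi_-(\xi)\,(\omega\cdot\partial_\vphi + \lambda_\pm\partial_x)^{-1}\big[\langle w_{n,\pm}\rangle_{\vphi,x} - w_{n,\pm}\big]
\]
and then symmetrise, $g_{n,\pm} := \sigma_{n,\pm} + \sigma_{n,\pm}^*$, which is self--adjoint by construction and lies in $S^{1-n\overline{\frak e}}$ by \eqref{proprieta simbolo mediato e partial x - 1} and Theorem \ref{adjoint}. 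The small--divisor operators are well defined precisely on $\Omega_{\gamma,\tau}$: the conditions defining \eqref{Cantor finale riduzione M = 1} give $|\omega\cdot\ell + \lambda_\pm(\omega)j|\ge\gamma\langle\ell\rangle^{-\tau}$ for all $(\ell,j)\ne(0,0)$, hence also $|\omega\cdot\ell - \lambda_\pm(\omega)j|\ge\gamma\langle\ell\rangle^{-\tau}$ (replace $j$ by $-j$), which is exactly the non--resonance condition \eqref{prima di mel sez pseudo} needed for the two pieces. Applying $\omega\cdot\partial_\vphi - \lambda_\pm\tfrac{\xi}{|\xi|}\chi(\xi)\partial_x$ to $\sigma_{n,\pm}$ then returns $\tfrac12(\langle w_{n,\pm}\rangle_{\vphi,x} - w_{n,\pm})$ modulo $S^{-\infty}$, since $\widetilde\chi_\pm$ and $\chi$ commute with $\partial_\vphi,\partial_x$ and agree with the constant $1$ (resp.\ $\mp1$) on $|\xi|\ge 1$, while the discrepancies are compactly supported in $\xi$.

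The self--adjointness bookkeeping is the one point requiring care, and it is handled exactly as in Lemma \ref{lemma equazione omologica spazio lower orders M < 1}. By the inductive hypothesis $w_{n,\pm}=w_{n,\pm}^*$, and $\langle w_{n,\pm}\rangle_{\vphi,x}$ is a Fourier multiplier with real symbol by Lemma \ref{proprieta aggiunti small divisors}$(i)$, so $\langle w_{n,\pm}\rangle_{\vphi,x} - w_{n,\pm}$ is self--adjoint; by Lemma \ref{proprieta aggiunti small divisors}$(iv)$ (with $\lambda = \mp\lambda_\pm$, legitimate by the same non--resonance conditions) each of $(\omega\cdot\partial_\vphi\mp\lambda_\pm\partial_x)^{-1}\big[\langle w_{n,\pm}\rangle_{\vphi,x} - w_{n,\pm}\big]$ is self--adjoint, and multiplying by the real Fourier multipliers $\widetilde\chi_\pm(\xi)$ and invoking Lemma \ref{simbolo autoaggiunto fourier multiplier} yields $\sigma_{n,\pm}^* = \sigma_{n,\pm} + S^{-n\overline{\frak e}}$. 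Hence $g_{n,\pm} = 2\sigma_{n,\pm} + S^{-n\overline{\frak e}}$, and applying the homological operator gives $\omega\cdot\partial_\vphi g_{n,\pm} - \lambda_\pm\tfrac{\xi}{|\xi|}\chi(\xi)\partial_x g_{n,\pm} - \langle w_{n,\pm}\rangle_{\vphi,x} + w_{n,\pm}\in S^{-n\overline{\frak e}}\subseteq S^{1-(n+1)\overline{\frak e}}$ (using $\overline{\frak e}\le 1$), which is the claim.

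The main obstacle is conceptual rather than computational: unlike the case $M<1$, the $\vphi$--dependence and the $x$--dependence cannot be removed in two separate averaging steps, because $\omega\cdot\partial_\vphi$ and $\lambda_\pm\partial_x$ are both first order, so the homological equation genuinely couples time and space and must be solved at once. This is precisely what forces the first--order Melnikov conditions $|\omega\cdot\ell + \lambda_\pm(\omega)j|\ge\gamma\langle\ell\rangle^{-\tau}$ defining $\Omega_{\gamma,\tau}$; once those are granted, every remaining step is a routine application of the pseudo--differential and small--divisor calculus of Section \ref{sezione pseudo diff operator}, the only delicate thread being the self--adjointness of $g_{n,\pm}$, dealt with by the symmetrisation above.
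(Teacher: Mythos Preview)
Your proposal is correct and follows essentially the same route as the paper's proof: split by the sign of $\xi$ using smooth cut--offs, invert the constant--coefficient transport operator $(\omega\cdot\partial_\vphi \mp \lambda_\pm\partial_x)$ on each piece using the Melnikov conditions defining $\Omega_{\gamma,\tau}$, symmetrise to enforce self--adjointness, and verify the ansatz $\sigma_{n,\pm}^* = \sigma_{n,\pm} + S^{-n\overline{\frak e}}$ via Lemmata \ref{simbolo autoaggiunto fourier multiplier} and \ref{proprieta aggiunti small divisors}. The paper uses the cut--offs $\chi_1^\pm(\xi) := \chi_1(\xi)\,{\mathbb I}_{\{\pm\xi>0\}}$ with a single radial $\chi_1$ (vanishing on $|\xi|\le\tfrac32$) in place of your $\widetilde\chi_\pm$, and writes $g_{n,\pm} = \tfrac12(q_{n,\pm}+q_{n,\pm}^*)$ rather than $\sigma_{n,\pm}+\sigma_{n,\pm}^*$, but these differences are purely cosmetic; your explicit remark that $|\omega\cdot\ell - \lambda_\pm j| \ge \gamma\langle\ell\rangle^{-\tau}$ follows from the stated condition by $j\mapsto -j$ is a point the paper uses without comment.
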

\begin{proof}
Let $\chi_1 \in {\cal C}^\infty(\R, \R)$ be a cut-off function satisfying: 
\begin{equation}\label{altro cut off chi1}
\begin{aligned}
%& \chi_1 (\xi) = \chi_1(- \xi)\,, \quad \forall \xi \in \R\,, \\
& \chi_1(\xi) = 0\,, \quad \forall |\xi| \leq \frac32\,, \\
& \chi_1(\xi ) = 1\,, \quad \forall |\xi| \geq 2\,. 
\end{aligned}
\end{equation}
Writing $1 =  \chi_1 + (1 - \chi_1) $, one has 
\begin{align}
& w_{n, \pm} + \omega \cdot \partial_\vphi g_{n, \pm} - \lambda_{\pm} \partial_x g_{n, \pm} \frac{\xi}{|\xi|} \chi(\xi) - \langle w_{n, \pm} \rangle_{\vphi, x} \nonumber\\
& = \chi_1 (w_{n, \pm} - \langle w_{n, \pm} \rangle_{\vphi, x} )+ \omega \cdot \partial_\vphi g_{n, \pm} - \lambda_{\pm} \partial_x g_{n, \pm} \frac{\xi}{|\xi|} \chi(\xi)  + (1 - \chi_1)(w_{n, \pm} - \langle w_{n, \pm} \rangle_{\vphi, x} ) \,. \label{ursula 0}
\end{align}
By the definition of $\chi_1$ given in \eqref{altro cut off chi1}, one has that 
\begin{equation}\label{ursula 1} 
(1 - \chi_1)(w_{n, \pm} - \langle w_{n, \pm} \rangle_{\vphi, x} )\in S^{- \infty},
\end{equation} 
therefore we look for a symbol $g_{n, \pm} $ satisfying 
\begin{equation}\label{ursula 2}
\chi_1 (w_{n, \pm} - \langle w_{n, \pm} \rangle_{\vphi, x} )+ \omega \cdot \partial_\vphi g_{n, \pm} - \lambda_{\pm} \partial_x g_{n, \pm} \frac{\xi}{|\xi|} \chi(\xi) \in S^{1 - (n + 1) \overline{\frak e}}\,. 
\end{equation}
Since we require that $g_{n, \pm} = g_{n, \pm}^*$, we look for a symbol of the form $g_{n, \pm} = \frac{q_{n, \pm} + q_{n, \pm}^*}{2}$ and we make the ansatz 
\begin{equation}\label{ursula 3}
q_{n, \pm}^* =  q_{n, \pm} +   S^{ - n \overline{\frak e}}\,. 
\end{equation}
By \eqref{ursula 2}, \eqref{ursula 3}, we write 
\begin{align}
& \chi_1 (w_{n, \pm} - \langle w_{n, \pm} \rangle_{\vphi, x} )+ \omega \cdot \partial_\vphi g_{n, \pm} - \lambda_{\pm} \partial_x g_{n, \pm} \frac{\xi}{|\xi|} \chi(\xi) \nonumber\\
& = \chi_1 (w_{n, \pm} - \langle w_{n, \pm} \rangle_{\vphi, x} )+ \omega \cdot \partial_\vphi q_{n, \pm} - \lambda_{\pm} \partial_x q_{n, \pm} \frac{\xi}{|\xi|} \chi(\xi) \nonumber\\
& \quad + \Big( \omega \cdot \partial_\vphi - \lambda_{\pm} \frac{\xi}{|\xi|} \chi(\xi) \partial_x \Big) \Big[ \frac{q_n^* - q_n}{2} \Big]\,.  \label{ursula 4}
\end{align}
By \eqref{ursula 3} we have that 
\begin{equation}\label{ursula 5}
\Big( \omega \cdot \partial_\vphi - \lambda_{\pm} \frac{\xi}{|\xi|} \chi(\xi) \partial_x \Big) \Big[ \frac{q_n^* - q_n}{2} \Big] \in S^{ -n  \overline{\frak e}} \stackrel{\overline{\frak e} \leq 1}{\subseteq} S^{1 - (n + 1) \overline{\frak e}}\,,
\end{equation}
hence it is enough to determine the symbol $q_{n, \pm}$ so that 
\begin{equation}\label{scelta equazione omologica q n pm}
\chi_1 (w_{n, \pm} - \langle w_{n, \pm} \rangle_{\vphi, x} )+ \omega \cdot \partial_\vphi q_{n, \pm} - \lambda_{\pm} \partial_x q_{n, \pm} \frac{\xi}{|\xi|} \chi(\xi) = 0 \,.
\end{equation}
The equation above, can be solved for any $\omega \in \Omega_{\gamma, \tau}$, by defining $q_{n, \pm}$ as 
\begin{equation}\label{definizione q n pm}
\begin{aligned}
q_{n, \pm}(\vphi, x, \xi) & := (\omega \cdot \partial_\vphi - \lambda_{\pm} \partial_x)^{- 1}\big[  \langle w_{n, \pm}  \rangle_{\vphi, x}(\xi) - w_{n, \pm}(\vphi, x, \xi) \big] \chi_1^+(\xi) \\
& \quad  + (\omega \cdot \partial_\vphi +  \lambda_{\pm} \partial_x)^{- 1}\big[ \langle w_{n, \pm}  \rangle_{\vphi, x}(\xi) - w_{n, \pm}(\vphi, x, \xi)   \big] \chi_1^-(\xi)
\end{aligned}
\end{equation}
where $\chi_1^+(\xi) := \chi_1(\xi) {\mathbb I}_{\{\xi > 0 \}}$, $\chi_1^-(\xi) := \chi_1(\xi) {\mathbb I}_{\{\xi \leq 0\}}$ where ${\mathbb I}_{\{\xi > 0 \}}$ (resp. ${\mathbb I}_{\{\xi \leq 0\}}$) is the characteristic function of the set $\{ \xi \in \R : \xi > 0 \}$ (resp. $\{ \xi \in \R : \xi \leq 0 \}$). Note that, by \eqref{altro cut off chi1}, the functions $\chi_1^+, \chi_1^-$ are $C^\infty$. 
%\begin{equation}\label{definizione q n pm}
%q_{n, \pm}(\vphi, x, \xi) := \begin{cases}
%(\omega \cdot \partial_\vphi - \lambda_{\pm} \partial_x)^{- 1}\big[  \langle w_{n, \pm}  \rangle_{\vphi, x}(\xi) - w_{n, \pm}(\vphi, x, \xi) \big] \chi_1(\xi) \,, \quad \text{if} \quad \xi > 1\,, \\
%0 \,, \quad \text{if} \quad |\xi| \leq 1 \,, \\
%(\omega \cdot \partial_\vphi +  \lambda_{\pm} \partial_x)^{- 1}\big[ \langle w_{n, \pm}  \rangle_{\vphi, x}(\xi) - w_{n, \pm}(\vphi, x, \xi)   \big] \chi_1(\xi) \,, \quad \text{if} \quad \xi <  1\,. 
%\end{cases}
%\end{equation}
Using that $w_{n, \pm} \in S^{1 - n \overline{\frak e}}$ and recalling the property \eqref{proprieta simbolo mediato e partial x - 1}, one has that $q_{n, \pm} \in S^{1 - n \overline{\frak e}}$ and therefore $g_{n , \pm} \in S^{1 - n \overline{\frak e}}$. Furthermore, using that $w_{n, \pm} = w_{n, \pm}^*$, by applying Lemma \ref{simbolo autoaggiunto fourier multiplier} (with $\vphi(\xi) = \chi_{1}^+(\xi)$, $a = (\omega \cdot \partial_\vphi - \lambda_{\pm} \partial_x)^{- 1}\big[  \langle w_{n, \pm}  \rangle_{\vphi, x} - w_{n, \pm} \big]$ and $\vphi(\xi) = \chi_1^{-}(\xi)$, $a = (\omega \cdot \partial_\vphi + \lambda_{\pm} \partial_x)^{- 1}\big[  \langle w_{n, \pm}  \rangle_{\vphi, x} - w_{n, \pm}  \big]$) and Lemma \ref{proprieta aggiunti small divisors}, one gets that the symbols $q_{n, \pm}$ verify the ansatz \eqref{ursula 3}. Finally collecting \eqref{ursula 0}, \eqref{ursula 1}, \eqref{ursula 4}, \eqref{ursula 5}, \eqref{scelta equazione omologica q n pm}, the claimed statement follows. 
\end{proof}  
By \eqref{prima definizione cal V n + 1 M = 1}, \eqref{brotchen 7} and Lemma \ref{lemma equazione omologica ordini bassi M = 1}, one gets that 
\begin{align}
{\cal V}_{n + 1}(\vphi) = \Pi_+ {\cal V}_{n + 1, +}(\vphi) \Pi_+ + \Pi_- {\cal V}_{n + 1, -}(\vphi) \Pi_- + OPS^{- \infty} 
\end{align}
where 
\begin{equation}\label{forma finalissima cal V n + 1 pm M = 1}
\begin{aligned}
& {\cal V}_{n + 1, \pm}(\vphi) = \lambda_{\pm} |D| + \mu_{n + 1, \pm}(D) + {\cal W}_{n + 1, \pm}(\vphi)\,, \\
& \mu_{n + 1, \pm}(D) := \mu_{n, \pm}(D) +  {\rm Op}\big( \langle w_{n, \pm}  \rangle_{\vphi, x}(\xi) \big)\,, \quad  {\cal W}_{n + 1, \pm}(\vphi) \in OPS^{1 - (n + 1) \overline{\frak e}}\,. 
%:= {\rm Op}\Big( \mathtt r_{n + 1, \pm}^{(1)}(\vphi, x, \xi) +  \mathtt r_{n + 1, \pm}^{(2 )}(\vphi, x,\xi) + w_{n, \pm}  \\
%& \quad \quad \quad \quad  + \omega \cdot \partial_\vphi g_{n, \pm} - \lambda_{\pm} \partial_x g_{n, \pm} \frac{\xi}{|\xi|} \chi(\xi) - \langle w_{n, \pm} 
%\rangle_{\vphi, x} \Big)\,. 
\end{aligned}
\end{equation}
%Formulae \eqref{brotchen 5}, \eqref{brotchen 6} and Lemma \ref{lemma equazione omologica ordini bassi M = 1} imply that ${\cal W}_{n + 1, \pm} (\vphi) %\in OPS^{1 - (n + 1) \frak e}$. 
By the induction hypothesis $w_{n , \pm} = w_{n , \pm}^*$ and $\mu_{n, \pm}(\xi)$ is real, hence by Lemma \ref{proprieta aggiunti small divisors} one has that $\langle w_{n, \pm}  \rangle_{\vphi, x}(\xi)$ is real and therefore $\mu_{n + 1, \pm}(\xi) = \mu_{n, \pm}(\xi) + \langle w_{n, \pm}  \rangle_{\vphi, x}(\xi)$ is real too. Since $\Phi_{n, \pm}$ are symplectic maps, $\ii {\cal V}_{n \pm}$ are Hamiltonian vector fields, then also $\ii {\cal V}_{n + 1, \pm} = (\Phi_{n, \pm}^{- 1})_{\omega *} \ii {\cal V}_{n, \pm}$ are Hamiltonian vector fields, implying that 
$$
{\cal W}_{n + 1, \pm}(\vphi) =  {\cal V}_{n + 1, \pm}(\vphi) -  \lambda_{\pm} |D| - \mu_{n + 1, \pm}(D)
$$
are self-adjoint operators. The proof of Proposition \ref{iterazione lower orders M = 1 pm} is then concluded. 
\subsection{Proof of Theorem \ref{teorema riduzione M = 1}.}
Let $K > 1$ and let fix an integer $N_K$ so that  $1 - N_K \overline{\frak e} \leq - K$ 
\begin{equation}\label{costante N K nela caso M = 1}
N_K :=  \Big[ \frac{K + 1}{\overline{\frak e}}\Big] + 1\,. 
\end{equation}
We define 
\begin{equation}\label{cal TK M = 1 def nella proof}
{\cal T}_K(\vphi) := \Phi_0 (\vphi) \circ \Phi_1(\vphi) \circ \ldots \circ \Phi_{N_K - 1}(\vphi)
\end{equation}
where $\Phi_0(\vphi)$ is defined in \eqref{definizione prima coniugazione caso M = 1} and for any $n = 1, \ldots, N_K - 1$, the maps $\Phi_n(\vphi)$ are given in Lemma \ref{iterazione lower orders M = 1 pm}. By \eqref{inverso Phi 0 pm M = 1}, \eqref{trasformazione ordine principale M = 1}, \eqref{proprieta Phi n teo M = 1}, the map ${\cal T}_K(\vphi)$ is invertible with inverse given by ${\cal T}_K(\vphi)^{- 1} =  \Phi_{N_K}(\vphi)^{- 1}  \circ \ldots \circ \Phi_1(\vphi)^{- 1}\circ \Phi_0(\vphi)^{- 1}$ and ${\cal T}_K(\vphi)^{\pm 1}$ satisfy 
\begin{equation}
\sup_{\vphi \in \T^\nu} \| {\cal T}_K(\vphi)^{\pm 1}\|_{{\cal B}(H^s)} < + \infty\,, \quad \forall s \geq 0\,. 
\end{equation}
By \eqref{forma finalissima cal V 0 M = 1} and by Lemma \ref{iterazione lower orders M = 1 pm}, one gets that $({\cal T}_K)_{\omega*} \ii {\cal V}(\vphi) = \ii {\cal V}_{N_K}(\vphi)$ where ${\cal V}_{N_K}(\vphi)$ is given by formula \eqref{cal Vn con + - nel lemma iterativo M = 1} for $n = N_K$. Then we can write 
$$
{\cal V}_{N_K}(\vphi) = \lambda_K(D) + {\cal R}_K(\vphi)
$$
where, recalling that $|D| = {\rm Op}(|\xi| \chi(\xi))$ (see \eqref{definizione D alpha}) and $\Pi_{\pm} = {\rm Op}(\chi_{\pm}(\xi))$ (see \eqref{operatori proiezione sui modi positivi negativi})
$$
\begin{aligned}
& \lambda_K(D) = {\rm Op}(\lambda_K(\xi))\,, \quad \lambda_K(\xi) :=  \big(\lambda_+ |\xi| \chi(\xi) + \mu_{N_K, +}(\xi) \big) \chi_+(\xi) +  \big(\lambda_- |\xi| \chi(\xi) + \mu_{N_K, -}(\xi) \big) \chi_-(\xi) \,, \\
& {\cal R}_K(\vphi) := \Pi_+ {\cal W}_{N_K, +}(\vphi) \Pi_+ + \Pi_- {\cal W}_{N_K, -}(\vphi) \Pi_- + OPS^{- \infty} \,. 
\end{aligned}
$$
Note that $\lambda_K(\xi)$ is real. By \eqref{cal Wn teorema M = 1}  (applied with $n = N_K$), using that $1 - N_K \overline{\frak e} \leq - K$ and recalling Theorem \ref{conitnuita pseudo}, one gets that ${\cal R}_K \in OPS^{- K}$. The proof of Theorem \ref{teorema riduzione M = 1} is then concluded. 
\section{Proof of Theorems \ref{teo growth of sobolev norms M < 1}, \ref{teo growth of sobolev norms M = 1}.}\label{proof crescita}
Let $s > 0$, $t_0 \in \R$, $u_0 \in H^s(\T)$. We fix the constant $K \in \N$ appearing in Theorem \ref{teorema riduzione} so that $K > s$,  
\begin{equation}\label{relazione K s}
K = K_s :=[s] + 1\,. 
\end{equation}
By applying Theorems \ref{teorema riduzione}, \ref{teorema riduzione M = 1}, taking $\omega \in DC(\gamma, \tau)$ in the case $M < 1$ and $\omega \in \Omega_{\gamma, \tau}$, $\e \gamma^{- 1}$ small enough, in the case $M = 1$, one has that $u(t)$ is a solution of the Cauchy problem 
\begin{equation}\label{cauchy problem prova main theorem}
\begin{cases}
\partial_t u = \ii {\cal V}(\omega t)[u] \\
u(t_0) = u_0
\end{cases}
\end{equation}
if and only if $v(t) := {\cal T}_{K_s}^{- 1}(\omega t) [u(t)]$ is a solution of the Cauchy problem 
\begin{equation}\label{cauchy problem trasformato main theorem}
\begin{cases}
\partial_t v = \ii \lambda_{K_s}(D) v + \ii {\cal R}_{K_s}(\omega t)[v] \\
v(t_0) = v_0\,,
\end{cases} \qquad v_0 := {\cal T}_{K_s}^{- 1}(\omega t_0) [u_0]\,
\end{equation}
with $\lambda_{K_s}(D) = {\rm Op}(\lambda_{K_s}( \xi)) \in OPS^M$ with $\lambda_{K_s}(\xi) = \overline{\lambda_{K_s}( \xi)}$. 
%Note that the fact that the symbol $\lambda_{K_s}(t, \xi)$ is a real symbol implies that 
%\begin{equation}\label{lambda Ks autoaggiunto}
%\lambda_{K_s}(t, D) = \lambda_{K_s}(t, D)^*\,.
%\end{equation}  
Moreover, since $K_s > s > 0$, one has that ${\cal R}_{K_s} \in OPS^{- K_s} \subset OPS^{- s}$ implying that 
\begin{equation}\label{proprieta cal W Ks teorema}
 \quad \sup_{\vphi \in \T^\nu} \| {\cal 	R}_{K_s}(\vphi)\|_{{\cal B}(L^2, H^s)} < + \infty\,. 
\end{equation}
Moreover, since ${\cal T}_{K_s}(\vphi)$ is bounded and invertible, we have that 
\begin{equation}\label{u t v t}
\| u(t)\|_{H^s} \simeq_s \| v (t)\|_{H^s} \quad \text{and} \quad  \| u(t)\|_{L^2} \simeq \| v (t)\|_{L^2}\,, \quad \forall t \in \R\,.  
\end{equation}
%By applying Lemma \ref{cauchy schrodinger astratto} one gets that there exists a unique global solution $v \in {\cal C}^0(\R, H^s)$ of the Cauchy problem \eqref{cauchy problem trasformato main theorem}, therefore $u \in {\cal C}^0(\R, H^s)$ is the unique solution of the Cauchy problem \eqref{cauchy problem prova main theorem}. In order to conclude the proof, it remains only to prove the bound \eqref{scopo dell articolo 0}. 
Writing the Duhamel formula for the Cauchy problem \eqref{cauchy problem trasformato main theorem}, one obtains 
\begin{equation}\label{duhamel equazione ridotta}
v(t) = e^{\ii \lambda_{K_s}(D) t } v_0 + \int_{t_0}^t e^{\ii \big(t - \tau \big)\lambda_{K_s}( D) } {\cal R}_{K_s}(\omega \tau)[v(\tau)]\, d \tau\,. 
\end{equation} 
Using that $\lambda_{K_s}(\xi)$ is real which implies that the propagator $e^{\ii t \lambda_{K_s}( D)}$ is unitary on $H^s(\T)$ for any $t \in \R$ and by \eqref{proprieta cal W Ks teorema}, \eqref{u t v t} one gets the estimate $\| v(t) \|_{H^s}  \lesssim_s \| v_0\|_{H^s} +  |t - t_0| \| v_0\|_{L^2}$. Applying again \eqref{u t v t} we then get 
\begin{equation}\label{u t stima lineare in t}
\| u(t) \|_{H^s}  \lesssim_s \| u_0\|_{H^s} +  |t - t_0| \| u_0\|_{L^2}\,, \quad \forall t \in \R\,. 
\end{equation}
The above argument implies that the propagator ${\cal U}(t_0, t)$ of the PDE $\partial_t u = \ii {\cal V}(\omega t)[u]$, i.e. 
$$
\begin{cases}
\partial_t {\cal U}(t_0, t) = \ii {\cal V}(\omega t) {\cal U}(t_0, t) \\
{\cal U}(t_0, t_0) = {\rm Id}
\end{cases}
$$ 
satisfies 
\begin{equation}\label{propagatore cal U Hs}
\| {\cal U}(t_0, t)\|_{{\cal B}(H^s)} \lesssim_s 1 + |t - t_0|\,, \quad \forall s > 0\,, \quad \forall t, t_0 \in \R\,. 
\end{equation}
 Furthermore, since ${\cal V}(\vphi)$ is self-adjoint, the $L^2$ norm of the solutions is constant, namely 
\begin{equation}\label{propagatore cal U L2}
\| {\cal U}(t_0, t)\|_{{\cal B}(L^2)}= 1\,, \quad  \forall t, t_0 \in \R\,.
\end{equation}
Hence, for any $0 < s < S$, by applying Theorem \ref{interpolazione sobolev}, one gets that 
\begin{align}
\| {\cal U}(t_0, t)\|_{{\cal B}(H^s)} & \leq \| {\cal U}(t_0, t)\|_{{\cal B}(L^2)}^{\frac{S - s}{S}} \| {\cal U}(t_0, t)\|_{{\cal B}(H^S)}^{\frac{s}{S}} \stackrel{\eqref{propagatore cal U Hs}, \eqref{propagatore cal U L2}}{\lesssim_S} (1 + |t - t_0|)^{\frac{s}{S}}\,.
\end{align}
Then, for any $\eta > 0$, choosing $S$ large enough so that $s/S \leq \eta$, the estimate \eqref{crescita sobolev generale} follows. 

\section{Appendix: a quasi-periodic transport equation}\label{sezione riducibilita equazione trasporto}
In this appendix we state some results concerning quasi-periodic transport equations.
%If $\lambda: \Omega_o \to \R$ is a Lipschitz function, given $\gamma > 0$, we define $|\lambda|^\Lipg := |\lambda|^{\rm sup} + \gamma |\lambda|^{\rm lip}$ where $|\lambda |^{\rm sup}$ is the sup norm of $\lambda$ on $\Omega_o$ and $|\lambda|^{\rm lip}$ is the Lipschitz seminorm on $\Omega_o$.  
The following statement is a direct consequence of Corollary 4.3 in \cite{trasporto paper}. 
\begin{proposition}\label{proposizione principale trasporto}
Let $\gamma  \in (0, 1)$, $\tau > \nu$, $P \in {\cal C}^\infty(\T^{\nu + 1}, \R)$, $\mathtt m \in \R$, $\frac12 < |\mathtt m| < 2$. There exists a constant $\delta_* = \delta_*(\tau, \nu) > 0$, such that if $\e \gamma^{- 1} \leq \delta_*$, then the following holds: there exists a Lipschitz function $\mu: \Omega \to \R$ satisfying $|\mu - \mathtt m |^\Lipg \lesssim \e $ (recall the definition \eqref{Lipschitz norm}) such that for any $\omega$ in the set 
\begin{equation}\label{Cantor del trasporto}
\Omega_{\gamma, \tau}^{\mu} := \Big\{ \omega \in \Omega : |\omega \cdot \ell + \mu (\omega) \, j | \geq \frac{\gamma}{\langle \ell \rangle^\tau}\,, \quad \forall (\ell, j) \in \Z^{\nu + 1} \setminus \{(0,0) \} \Big\}
\end{equation}
there exists a ${\cal C}^\infty$ function $\alpha(\vphi, x; \omega)$ satisfying $\| \alpha\|_s \lesssim_s \e \gamma^{- 1}$, $\forall s \geq 0$ and a Lipschitz family of constants $\mathtt c(\omega)$ satisfying $|\mathtt c|^\Lipg \lesssim \e $, such that 
\begin{equation}\label{equazione trasporto per applicazione}
\omega \cdot \partial_\vphi \alpha(\vphi, x) + \Big(\mathtt m + \e P(\vphi, x) \Big) \partial_x \alpha(\vphi, x) + \e P(\vphi, x) = \mathtt c\,, \quad \forall (\vphi, x) \in \T^\nu \times \T\,. 
\end{equation}
\end{proposition}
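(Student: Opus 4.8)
The plan is to recognize equation \eqref{equazione trasporto per applicazione} as the conjugacy equation that \emph{straightens} the transport vector field
$X := \om \cdot \pa_\vphi + \big(\mathtt m + \e P(\vphi, x)\big)\pa_x$ on $\T^{\nu+1}$ to the constant--coefficient field $\om \cdot \pa_\vphi + \mu\, \pa_y$ by means of the torus diffeomorphism $(\vphi, x) \mapsto (\vphi, x + \alpha(\vphi, x))$. Indeed, setting $y = x + \alpha(\vphi, x)$ one computes $X = \om\cdot\pa_\vphi|_y + \big[\om\cdot\pa_\vphi\alpha + (\mathtt m + \e P)(1 + \pa_x\alpha)\big]\pa_y$, so requiring the coefficient of $\pa_y$ to be a constant $\mu := \mathtt m + \mathtt c$ is exactly \eqref{equazione trasporto per applicazione}. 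This reducibility statement is the content of Corollary 4.3 in \cite{trasporto paper}, so the shortest route is simply to quote it; below I sketch the self-contained argument, since the quantitative bounds $\|\alpha\|_s \lesssim_s \e\gamma^{-1}$, $|\mathtt c|^\Lipg \lesssim \e$ and the Lipschitz dependence of $\mu$ on $\om$ are exactly what one reads off from such a scheme.

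First I would run a quadratically convergent (KAM) iteration with a Lipschitz counterterm $\mathtt m \rightsquigarrow \mu(\om)$. At the $n$-th step one has a speed $\mathtt m_n + p_n(\vphi, x)$, with $\mathtt m_n$ a Lipschitz function of $\om$ close to $\mathtt m$ and $\|p_n\|_s$ super-exponentially small in a suitable sense; one conjugates by $(\vphi, x)\mapsto(\vphi, x + h_n(\vphi, x))$, where $h_n$ solves the homological equation $\om\cdot\pa_\vphi h_n + \mathtt m_n\pa_x h_n = \langle p_n\rangle_{\vphi, x} - p_n$. This is solvable, with a loss $|\om\cdot\ell + \mathtt m_n j|^{-1} \lesssim \gamma^{-1}\langle\ell\rangle^\tau$, provided $\om$ obeys first--order Melnikov conditions relative to $\mathtt m_n$; one then sets $\mathtt m_{n+1} := \mathtt m_n + \langle p_n\rangle_{\vphi, x}$, and the new $\vphi, x$--dependent remainder $p_{n+1}$ is quadratically smaller. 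The composition of all the near--identity maps converges in ${\cal C}^\infty(\T^{\nu+1}, \R)$ to the desired $\alpha$, $\mathtt m_n \to \mu$, and $\mathtt c := \mu - \mathtt m = \sum_{k\ge 0}\langle p_k\rangle_{\vphi, x}$ inherits $|\mathtt c|^\Lipg \lesssim \e$.

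The hard part — and the only genuinely delicate point — will be the uniform control of the small divisors along the iteration while the internal frequency $\mathtt m_n$ is itself moving. The standard device is to impose the Melnikov conditions once and for all for the \emph{limiting} $\mu$, i.e. to work on the set $\Omega_{\gamma,\tau}^\mu$ of \eqref{Cantor del trasporto}, and to prove a telescoping estimate: on the region where a divisor can be small one has $|j| \lesssim \langle\ell\rangle$ (using $\tfrac12 < |\mu| < 2$ and $\om$ bounded), so $|\om\cdot\ell + \mathtt m_n j| \ge |\om\cdot\ell + \mu j| - |\mu - \mathtt m_n||j| \ge \gamma\langle\ell\rangle^{-\tau} - C\|p_{\ge n}\|\langle\ell\rangle$; a Nash--Moser truncation to $|\ell| \le N_n$ with $N_n$ growing super-exponentially, combined with the quadratic decay of $p_n$, keeps the right-hand side $\ge \tfrac{\gamma}{2}\langle\ell\rangle^{-\tau}$ at every step. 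The Lipschitz dependence of $\mu$ on $\om$ (needed even to make sense of \eqref{Cantor del trasporto}, and for the measure bound $|\Omega\setminus\Omega_{\gamma,\tau}^\mu| = O(\gamma)$ recorded in Remark \ref{remark cantor set Omega gamma}) follows by differentiating the whole scheme in $\om$ and propagating the $\Lipg$--weighted norms; it is precisely at this point, and in closing the convergence estimates, that the smallness hypothesis $\e\gamma^{-1} \le \delta_*$ is used.
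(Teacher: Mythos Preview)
Your proposal is correct and matches the paper's approach exactly: the paper simply states that Proposition~\ref{proposizione principale trasporto} is a direct consequence of Corollary~4.3 in~\cite{trasporto paper}, with no further argument. Your sketch of the underlying KAM/Nash--Moser scheme with a Lipschitz counterterm is an accurate outline of what that reference proves, so you have in fact gone beyond what the paper provides.
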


\begin{thebibliography}{10}
\bibitem{BBM-Airy}
P. Baldi, M. Berti, R. Montalto,
\emph{KAM  for quasi-linear and fully nonlinear forced perturbations of Airy equation}. 
Math. Annalen 359, 471-536, 2014. 
  
\bibitem{BBM-auto}
P. Baldi, M. Berti, R. Montalto,
\emph{KAM for autonomous quasi-linear perturbations of KdV}.
Ann. I. H. Poincar\'e (C) Anal. Non Lin\'eaire 33, 1589-1638, 2016.  

\bibitem{BBHM} P. Baldi, M. Berti, E. Haus, R. Montalto, \emph{Time quasi-periodic gravity water waves in finite depth.} Preprint arXiv:1708.01517, 2017. 

\bibitem{Bambusi1} D. Bambusi, \emph{Reducibility of 1-d Schr\"odinger equation with time quasiperiodic unbounded perturbations, I}. Trans. Amer. Math. Soc., doi:10.1090/tran/7135, 2017. 

\bibitem{Bambusi2} D. Bambusi, \emph{Reducibility of 1-d Schr\"odinger equation with time quasiperiodic unbounded perturbations, II}. Comm. in Math. Phys. doi:10.1007/s00220-016-2825-2, 2017.

\bibitem{albertino3} D. Bambusi, B. Grebert, A. Maspero, D. Robert, \emph{Reducibility of the Quantum Harmonic Oscillator in
d-dimensions with Polynomial Time Dependent
Perturbation}, Analysis and PDEs, 11(3): 775Ð799, 2018. 

\bibitem{albertino2}D. Bambusi, B. Grebert, A. Maspero, D. Robert, \emph{Growth of Sobolev norms for abstract linear Schr\"odinger Equations}, preprint 2017. 
\bibitem{BM16} 
M. Berti, R. Montalto, 
\newblock \emph{Quasi-periodic water waves}. J. Fixed Point Theory Appl., 
 19, no. 1, 129-156, 2017.

\bibitem{BertiMontalto} M. Berti, R. Montalto, \emph{Quasi-periodic standing wave solutions for gravity-capillary water waves}, to appear on Memoirs of the Amer. Math. Society. MEMO 891. Preprint arXiv:1602.02411v1, 2016. 

\bibitem{bourgain-1} J. Bourgain, {\it Growth of Sobolev norms in linear Schr\"odinger equations with quasi periodic potential.} Comm. in Math. Phys. 204, no. 1, 207-247, 1999. 

\bibitem{bourgain-2} J. Bourgain, {\it On growth of Sobolev norms in linear Schr\"odinger equations with smooth time dependent potential.} Journal d'Analyse Math\'ematique 77, 315-348, 1999.


\bibitem{delort} J.M. Delort, {\it Growth of Sobolev Norms of Solutions of Linear Schr\"odinger Equations on Some Compact Manifolds.} Int. Math. Res. Notices, Vol. 2010, No. 12, pp. 2305-2328. 

%\bibitem{growth sublineare JSP} P. Duclos, O. Lev, and P. Sovek.
%  {\it On the energy growth of some periodically driven quantum systems with 
%shrinking
%gaps in the spectrum.} J. Stat. Phys., 130(1):169Ð193, 2008.

\bibitem{EK1} L. H. Eliasson, S. Kuksin, {\it On reducibility of Schr\"odinger equations with
quasiperiodic in time potentials}. Comm. Math. Phys. 286, 125-135, 2009. 
\bibitem{Feola}
R. Feola,
\emph{KAM for quasi-linear forced hamiltonian NLS}. Preprint arXiv:1602.01341, 2016. 

\bibitem{Feola-Procesi}
R. Feola, M. Procesi,
\emph{Quasi-periodic solutions for fully nonlinear forced reversible Schr{\"o}dinger equations}. J. Differential Equations 259, no.\,7, 3389-3447, 2015. 
%\bibitem{KP} S. Kuksin, J. P\"oschel, {\it
%Invariant Cantor manifolds of quasi-periodic oscillations
%for a nonlinear Schr\"{o}dinger equation}. Annals of Math. (2) 143, 149-179, 1996. 
\bibitem{trasporto paper} R. Feola, F. Giuliani, R. Montalto, M. Procesi, \emph{Reducibility of first order linear operators on tori via Moser's theorem.} Preprint  arXiv:1801.04224, 2018.
\bibitem{Giuliani} F. Giuliani, {\it Quasi-periodic solutions for quasi-linear generalized KdV equations.} J. Differential Equations 262,  5052-5132, 2017.

% \bibitem{Giuliani1} F. Giuliani, {\it KAM for quasi-linear PDEs}, SISSA PhD thesis, 2017. 
\bibitem{GrebertPaturel} B. Grebert, E. Paturel, {\it On reducibility of quantum harmonic oscillator on $\R^d$ with quasiperiodic in time potential}. Preprint arXiv:1603.07455, 2016.

\bibitem{albertino} A. Maspero, D. Robert, {\it On time dependent Schr\"odinger equations: Global well-posedness and growth of Sobolev norms.} Journal of Functional analysis 273, 721-781, 2017. 

\bibitem{Montalto} R. Montalto, \emph{Quasi-periodic solutions of forced Kirchhoff equation}. Nonlinear Differ. Equ. Appl. NoDEA, 24:9, doi:10.1007/s00030-017-0432-3, 2017. 

\bibitem{Montalto1} R. Montalto, \emph{On the growth of Sobolev norms for a class of Schr\"odinger equations with superlinear dispersion}. To appear on Asymptotic Analysis. Preprint arXiv:1706.09704, 2017. 

\bibitem{Montaltoreducibility} R. Montalto, \emph{A reducibility result for a class of linear wave equations on $\T^d$.} Int. Math. Res. Notices, doi: 10.1093/imrn/rnx167, 2017. 

\bibitem{wang-1} W.-M. Wang,  {\it Logarithmic bounds on Sobolev norms for time dependent linear Schr\"odinger equations.} Comm. in Partial Differential Equations 33, no. 10-2,  2164-2179, 2008. 

%\bibitem{wang-2} W. M. Wang, {\it Bounded Sobolev norms for linear Schr\"odinger equations under resonant
%perturbations.} Journal of Functional Analysis 254, no. 11,  2926-2946, 2008. 

\bibitem{SV}
J. Saranen, G. Vainikko, {\it Periodic Integral and Pseudodifferential Equations with Numerical Approximation.} {Springer Monographs in Mathematics}, 2002.

\bibitem{Taylor} M. Taylor, {\it Pseudo-differential operators and nonlinear PDEs}, Birkh\"auser, 1991. 


%\bibitem{Alaz-Bal} T.\ Alazard, P.\ Baldi, \emph{Gravity capillary standing water waves},
% {Arch. Rat. Mech.  Anal},  217, 3, 741-830,  2015. 

%\bibitem{ABZ1}
%Alazard T., Burq N., Zuily C.
%\newblock On the water-wave equations with surface tension.
%\newblock {\em Duke Math. J.}, 158, 413-499, 2011.

%
%%\bibitem{AM} Alazard T., Metivier G., {\it Paralinearization of the Dirichlet to Neumann operator
%%and regularity of the three dimensional water waves}, preprint. 

%%\bibitem{AT} Amick C.J., Toland J.F. {\it The semi-analytic theory of standing waves},
%%Proc. Roy. Soc. Lond., A, 411, 1987, 123-138.

%\bibitem{Baldi Benj-Ono} 
%Baldi P., Periodic solutions of fully nonlinear autonomous equations of Benjamin-Ono type, 
%Ann. I. H. Poincar\'e (C) Anal. Non Lin\'eaire 30 (2013), no.\,1, 33-77. 

%
%\bibitem{BBM-Airy}
%Baldi P., Berti M., Montalto R.
%\newblock K{AM} for quasi-linear and fully nonlinear forced perturbations of
%  {A}iry equation. \newblock Math. Annalen, 359, 1-2, 471-536, 2014.
%  
%  
%  \bibitem{BBM-auto}
%Baldi P., Berti M.,  Montalto R.
%\newblock K{AM} for autonomous quasi-linear perturbations of {K}d{V}.
%\newblock {\em Ann. I. H. Poincar\'e (C) Anal. Non Lin\'eaire}, to
%appear, printed online 3 August 2015.

%\bibitem{BBM-mKdV}
%Baldi P., Berti M., Montalto R.
%\newblock K{AM} for autonomous quasi-linear perturbations of m{K}d{V}.
% \newblock {\em Bollettino Unione Matematica Italiana},  9, 143-188, 2016.

%%\bibitem{BBM-WW}
%%Baldi P., Berti M., Montalto R.
%%\newblock K{AM} for gravity water waves
%% \newblock In Preparation

%

%\bibitem{BaBM}
%Bambusi D., Berti M., Magistrelli E.
%\newblock Degenerate KAM theory for partial differential equations, 
%\newblock Journal Diff. Equations, 250, 8, 3379-3397, 2011.

%%\bibitem{B08} Berti M., {\it Nonlinear oscillations in Hamiltonian PDEs}, 
%%Progress in Nonlinear Differential Equations and Its Applications, 74, Birkh\"auser, Boston, 2008.

%%\bibitem{BBi10}  Berti M., Biasco L., Branching of Cantor manifolds of elliptic tori
%%and applications to  PDEs, Comm. Math. Phys, 305, 3,  741-796, 2011.

%\bibitem{Berti-Biasco-Procesi-Ham-DNLW}
%Berti M., Biasco L., Procesi M.
%\newblock K{AM} theory for the {H}amiltonian derivative wave equation.
%\newblock { Ann. Sci. \'Ec. Norm. Sup\'er. (4)}, 46(2):301--373,
%  2013.

%\bibitem{Berti-Biasco-Procesi-rev-DNLW}
%Berti M., Biasco L., Procesi M.
%\newblock K{AM} for {R}eversible {D}erivative {W}ave {E}quations.
%\newblock { Arch. Ration. Mech. Anal.}, 212(3):905--955, 2014.

%\bibitem{BB06} Berti M., Bolle P.,  Cantor families of periodic solutions for completely resonant nonlinear wave
%equations, Duke Mathematical Journal, 134, issue 2, 359-419, 2006.

%\bibitem{BB13JEMS} Berti M., Bolle P.,   Quasi-periodic solutions 
%with Sobolev regularity of NLS on $ \T^d $ with a multiplicative potential, 
%Eur. Jour. Math. 15 (2013), 229-286.

%\bibitem{BB13} Berti M., Bolle P.,  A Nash-Moser approach to KAM theory, Fields Institute Communications, 
%special volume ``Hamiltonian PDEs and Applications'', 
%255-284, special volume ``Hamiltonian PDEs and Applications'', 2015. 

%\bibitem{BB14} Berti M., Bolle P.,  Quasi-periodic solutions for nonlinear wave equations with a multiplicative
%potential, in preparation.

%

%
%\bibitem{BB4} Berti M., Bolle P., {\it
%Cantor families of periodic solutions for completely resonant nonlinear
%wave equations}, Duke Math. J. 134 (2006) 359-419.

%%\bibitem{BBi} Berti M., Biasco L., {\it Branching of Cantor manifolds of elliptic tori
%%and applications to  PDEs}, preprint 2010.

%%\bibitem{BB07} Berti M., Bolle P., 
%%{\it Cantor families of periodic solutions of wave equations with $ C^k $ nonlinearities},  NoDEA
%%Nonlinear Differential Equations Appl. 15, 247-276, 2008.

%\bibitem{BertiMontalto} 
%Berti M., Montalto R., {\it KAM for gravity capillary water waves}, 
%preprint.

%%\bibitem{BB}
%%Berti M., Bolle P., {\it Sobolev Periodic solutions of nonlinear wave equations in higher spatial dimension}, 
%%Archive for Rational Mechanics and Analysis, 195, 609-642, 2010.

%%\bibitem{BBP}%
%%Berti M., Bolle P., Procesi M., {\it An abstract Nash-Moser theorem with parameters 
%%and applications to PDEs}, Ann. I. H. Poincar\'e, 27, 377-399, 2010.

%
%%\bibitem{Bo1}  Bourgain J., {\it  Construction of quasi-periodic solutions 
%%for Hamiltonian perturbations of linear equations and applications 
%%to nonlinear PDE}, Internat. Math. Res. Notices, no. 11, 1994.

%%
%%\bibitem{B3}  Bourgain J., {\it Quasi-periodic solutions of Hamiltonian
%%perturbations of $2D$ linear Schr\"odinger equations},%
%%Ann. of Math. 148, 363-439, 1998.

%
%%\bibitem{B2} J. Bourgain, {\it Periodic solutions of nonlinear wave
%%equations},  Harmonic analysis and partial differential equations,
%%Chicago Lectures in Math., Univ. Chicago Press, (1999), pp.69-97.

%%\bibitem{B6} Bourgain J., {\it  Estimates on Green's functions, localization and the quantum kicked rotor 
%%model}, Ann. of Math., 2, 156,  1, 249-294, 2002.

%%\bibitem{B5}  Bourgain J.,  {\it Green's function estimates for lattice Schr\"odinger 
%%operators and applications}, Annals of Mathematics Studies 158, 
%%Princeton University Press, Princeton, 2005.

%%\bibitem{CY} Chierchia L., You J., {\it KAM tori for 1D
%%nonlinear wave equations with periodic boundary
%%conditions}, Comm. Math. Phys. 211, 497-525, 2000.

%%\bibitem{C} Craig W.,  {\it Probl\`emes de petits diviseurs dans
%%les \'equations aux d\'eriv\'ees partielles},
%%Panoramas et Synth\`eses, 9, 
%%Soci\'et\'e Math\'ematique de France, Paris, 2000. 

%
%\bibitem{CraigPanorama}
%Craig W.
%\newblock { Probl\`emes de petits diviseurs dans les \'equations aux
%  d\'eriv\'ees partielles}, volume~9 of { Panoramas et Synth\`eses}.
%\newblock Soci\'et\'e Math\'ematique de France, Paris, 2000.

%\bibitem{CN}
%Craig  W., Nicholls D..
%\newblock Travelling two and three dimensional capillary gravity water waves.
%\newblock { SIAM J. Math. Anal.}, 32(2):323--359 (electronic), 2000.

%\bibitem{CN2}
%Craig W.,  Nicholls D.
%\newblock Traveling gravity water waves in two and three dimensions.
%\newblock { Eur. J. Mech. B Fluids}, 21(6):615--641, 2002.

%
%\bibitem{CrSu}
%Craig W., Sulem C.
%\newblock Numerical simulation of gravity waves.
%\newblock { J. Comput. Phys.}, 108(1):73--83, 1993.

%\bibitem{Craig-Worfolk}
%Craig W., Worfolk P.
%\newblock An integrable normal form for water waves in infinite depth.
%\newblock { Phys. D}, 84 (1995), no. 3-4, 513-531. 

%\bibitem{DLZ}
%A.I. Dyachenko, Y.V. Lvov, V.E. Zakharov 
%{\it Five-wave interaction on the surface of deep fluid}, Physica D 87 (1995) 233-261.

%\bibitem{HF}
%Fejoz J., 
%\newblock D\'emonstration du th\'eor\'eme d' Arnold sur la stabilit\'e du syst\'eme plan\'etaire (d' apr\'es Herman), 
%\newblock Ergodic Theory Dynam. Systems 24 (5) (2004) 1521-1582.

%
%%\bibitem{CW} Craig W., Wayne C. E., {\it Newton's method and periodic solutions
%%of nonlinear wave equation}, Comm. Pure  Appl. Math. 46, 1409-1498, 1993.

%%\bibitem{E} Eliasson, L.H., 
%%{\it Perturbations of stable invariant tori for Hamiltonian systems}, Ann. Sc. Norm. Sup. Pisa., 15, 115-147, 1988.

%
%%\bibitem{EK} Eliasson L. H., Kuksin S.,  {\it   KAM for nonlinear Schr\"odinger equation}, to appear in 
%%Annals of Math.. 

%%\bibitem{EK1} Eliasson L. H., Kuksin S., {\it On reducibility of Schr\"odinger equations with
%%quasiperiodic in time potentials}, Comm. Math. Phys, 286, 125-135, 2009.

% 
%%\bibitem{Ha} Hamilton, R.S., {\it The inverse function theorem of Nash 
%%and Moser}, Bull. A.M.S., 7,  65-222, 1982. 

%%\bibitem{K1} Kuksin S., {\it Hamiltonian perturbations
%%of infinite-dimensional linear systems with imaginary spectrum},
%%Funktsional Anal. i Prilozhen. 2, 22-37, 95, 1987.

%%\bibitem{K2} Kuksin S., {\it Analysis of Hamiltonian PDEs}, Oxford
%%Lecture series in Mathematics and its applications 19, Oxford University
%%Press, 2000.

%%\bibitem{KP} Kuksin S., P\"oschel J., {\it
%%Invariant Cantor manifolds of quasi-periodic oscillations
%%for a nonlinear Schr\"{o}dinger equation}, Ann. of Math. (2)  143, 149-179, 1996.

%%\bibitem{K15} Kuksin S., {\it Fifteen Years of KAM for PDE},  Amer. Math. Soc. Transl.

%%\bibitem{LM} Lions J.L., Magenes E., {\it Probl\'ems aux limites non homog\'enes et applications}, 
%%Lions J.-L., Magenes E., Probl«emes aux limites non homog«enes et applications,
%%Dunod, Paris, 1968. %English transl. Springer-Verlag, Berlin, 1972.

%
%%\bibitem{Po2} P\"oschel J.,
%%{\it A KAM-Theorem for some nonlinear partial differential equations},  Ann. Scuola Norm. Sup.
%%Pisa Cl. Sci.(4), 23, 119-148, 1996.

%
%%\bibitem{W3} Wang W. M., {\it SEE},  preprint 2009.

%%\bibitem{Wa1}  Wayne E., {\it Periodic and quasi-periodic solutions
%%of nonlinear wave equations via KAM theory},
%%Comm. Math. Phys. 127, 479-528, 1990.

%%\bibitem{Z} Zehnder E., {\it Generalized implicit function theorems with
%%applications to some small divisors problems I-II}, Comm. Pure Appl. Math.,
%%28, 91-140, 1975; 29, 49-113, 1976. 

%%\bibitem{I99} Iooss G..,  {\it The semi-analytic theory of standing waves, for several dominant modes},
%%Proc. Roy. Soc. Lond., A, 455, 1999, 3513-3526.

%%\bibitem{IF} Iooss G..,  {\it On the standing wave problem in deep water}, J. Math. Fluid Mech.,
%%4, 2002, 155-185.

%
%%\bibitem{Iooss-JDDS}
%%Iooss G.
%%\newblock Small divisor problems in fluid mechanics.
%%\newblock J. {D}ynam. {D}ifferential {E}quations, to appear.

%
%\bibitem{IP-SW2}
%Iooss G., Plotnikov P.
%\newblock Existence of multimodal standing gravity waves.
%\newblock { J. Math. Fluid Mech.}, 7(suppl. 3):S349--S364, 2005.

%\bibitem{IP-SW1}
%Iooss G., Plotnikov P.
%\newblock Multimodal standing gravity waves: a completely resonant system.
%\newblock { J. Math. Fluid Mech.}, 7(suppl. 1):S110--S126, 2005.

%\bibitem{IP-Mem-2009}
%Iooss G.,  Plotnikov P.
%\newblock Small divisor problem in the theory of three-dimensional water
%  gravity waves.
%\newblock { Mem. Amer. Math. Soc.}, 200(940):viii+128, 2009.

%\bibitem{IP2}
%Iooss G.,  Plotnikov P.
%\newblock Asymmetrical tridimensional travelling gravity waves.
%\newblock { Arch. Rat. Mech. Anal.}, 200(3):789--880, 2011.

%\bibitem{IPT}
%Iooss G., Plotnikov P., Toland J.
%\newblock Standing waves on an infinitely deep perfect fluid under gravity.
%\newblock { Arch. Ration. Mech. Anal.}, 177(3):367--478, 2005.

%

%%\bibitem{Koosis} P. Koosis, Introduction to $H\sb p$ spaces. 
%%Cambridge University Press, Volume 115, Cambridge (1998).

%
%\bibitem{LannesLivre}
%Lannes D..
%\newblock The water waves problem: mathematical analysis and asymptotics.
%\newblock { Mathematical Surveys and Monographs}, 188, 2013.

%

%\bibitem{Met}
%M\'etivier G.
%\newblock 
%Para-differential Calculus and Applications to the Cauchy Problem for Nonlinear Systems.
%\newblock 
%{\em Pubblicazioni Scuola Normale Pisa}, 5, 2008. 

%
%\bibitem{Kuksin-Oxford}
%Kuksin S.
%\newblock { Analysis of {H}amiltonian {PDE}s}, volume~19 of { Oxford
%  Lecture Series in Mathematics and its Applications}.
%\newblock Oxford University Press, Oxford, 2000.

%
%\bibitem{Kuttler}
%Kuttler, Topics in Analysis, pdf on line.
%% Vedi anche Triebel, la bibbia del Banach-valued, ref. 87 in Kuttler references.

%
%% \bibitem{Moser67} J. Moser, \emph{Quasi periodic motions etc.}, 1967.

% 
%\bibitem{PlTo}
%Plotnikov P.,  Toland J.
%\newblock Nash-{M}oser theory for standing water waves.
%\newblock { Arch. Ration. Mech. Anal.}, 159(1):1--83, 2001.

%\bibitem{Po2} P\"oschel J.,
%{\it A KAM-Theorem for some nonlinear PDEs},  Ann. Sc. Norm. Pisa, 23, (1996) 119-148.

%\bibitem{Po82} 
%P\"oschel J., Integrability of Hamiltonian systems on Cantor sets, 
%Comm. Pure Applied. Math., XXXV,  653-695, 1982. 

%\bibitem{Pja}
%Pjartli A.S., 
%\newblock Diophantine approximations of submanifolds of a Euclidean space, 
%\newblock { Funktsional. Anal. i Prilozhen}. 3 (4) (1969) 59-62.

%

%\bibitem{RS}
%Robbin J.,  Salamon D., 
%\newblock The exponential Vandermonde matrix, 
%\newblock {Linear Algebra Appl.} 317 (2000) 225.

%

%\bibitem{Ru1}
%R\"ussmann H., 
%\newblock Invariant tori in non-degenerate nearly integrable Hamiltonian systems, 
%\newblock Regul. Chaotic Dyn. 6 (2) (2001) 119-204.

%\bibitem{SV}
%Saranen J., Vainikko G. 
%\newblock Periodic Integral and Pseudodifferential Equations with Numerical Approximation, 
%\newblock Springer Monographs in Mathematics, 2002.

%

%%\bibitem{Sha-Tol} E. Shargorodsky, J.F. Toland, 
%%\emph{Bernoulli free-boundary problems},
%%Memoirs of Amer. Math. Soc.,\textbf{914}, ISSN 0065-9266, Providence, RI, 2008.

%

%\bibitem{Stein} 
%E.M. Stein, Singular integrals and differentiability properties of functions. 
%Princeton Mathematical Series, no. 30. 
%Princeton University Press, Princeton, N.J. 1970.

%%\bibitem{SWZ}
%%Yang S., Wu H., Zhang Q.
%%\newblock  Generalization of Vandermonde determinants
%%\newblock Linear Algebra and its Applications 336, 201-204, 2001.
% 

%\bibitem{Zakharov1968} Zakharov V. E., {\it Stability of periodic waves of finite
%amplitude on the surface of a deep fluid}, J. Appl. Mech. Tech. Phys., 9,
% 190-194, 1968.
% 
% 
% 

%

%
%%\bibitem{ABZ1}
%%Alazard T., Burq N., Zuily C.
%%\newblock On the water-wave equations with surface tension.
%%\newblock {\em Duke Math. J.}, 158, 413--499, 2011.

%%\bibitem{AlDe}
%%Alazard T., Delort J-M.
%%\newblock Sobolev estimates for two dimensional gravity water waves.
%%\newblock Preprint, 2013.

%

%%\bibitem{BG}%
%%K.~Beyer and M.~G{\"u}nther.
%%\newblock On the {C}auchy problem for a capillary drop. {I}. {I}rrotational
%%  motion.
%%\newblock { Math. Methods Appl. Sci.}, 21(12):1149--1183, 1998.

%%\bibitem{BoGl}
%%Bolte, J., Glaser. R.
%%\newblock 
%%A Semiclassical Egorov Theorem and Quantum Ergodicity for Matrix Valued Operators
%%\newblock
%%Commun. Math. Phys. 247, 391-419 (2004).

%

%
%%
%%\bibitem{Ho1}
%%H{\"o}rmander L. 
%%\newblock The analysis of linear partial differential operators {III}. 
%%Pseudo-differential operartors.
%%%	Distribution theory and Fourier analysis. 
%%\newblock Springer-Verlag, Berlin, 1990.

%

%
%%\bibitem{KP} Kuksin S., P\"oschel J., 
%%\newblock 
%%{ Invariant Cantor manifolds of quasi-periodic oscillations
%%for a nonlinear Schr\"{o}dinger equation}, 
%%\newblock  
%%Annals of Math. 2  143, (1996), 149-179.

%%\bibitem{Met}
%%M\'etivier G.
%%\newblock 
%%Para-differential Calculus and Applications to the Cauchy Problem for Nonlinear Systems, 
%%\newblock 
%%Pubblicazioni Scuola Normale Pisa, 5, 2008. 

%

%%\bibitem{Liu-Yuan}
%%J.~Liu and X.~Yuan.
%%\newblock A {KAM} theorem for {H}amiltonian partial differential equations with
% % unbounded perturbations.
%%\newblock { Comm. Math. Phys.}, 307(3):629--673, 2011.

%
%%\bibitem{MZ}
%%M.~Ming and Z.~Zhang.
%%\newblock Well-posedness of the water-wave problem with surface tension.
%%\newblock { J. Math. Pures Appl. (9)}, 92(5):429--455, 2009.


\end{thebibliography}
\end{document}